\newcommand{\beq}{\begin{equation}}
\newcommand{\R}{{\mathbb R}}
\newcommand{\eeq}{\end{equation}}
\newcommand{\ben}{\begin{eqnarray}}
\newcommand{\een}{\end{eqnarray}}
\newcommand{\beno}{\begin{eqnarray*}}
\newcommand{\eeno}{\end{eqnarray*}}
\newtheorem{thm}{Theorem}[section]
\newtheorem{lem}[thm]{Lemma}
\newtheorem{prop}[thm]{Proposition}
\newtheorem{coro}[thm]{Corollary}
\newtheorem{rmk}[thm]{Remark}
\renewcommand{\theequation}{\thesection.\arabic{equation}}
\title{\textbf{The structure of finite Morse index solutions to two free boundary
problems in $\R^2$}}
\author{Kelei Wang
\\
{\small   School of Mathematics and Statistics \& Computational
Science Hubei Key Laboratory}
\\
{\small  Wuhan University, Wuhan 430072, China}\\
{\small wangkelei@whu.edu.cn } \ }
\date{}
\begin{document}
\maketitle
\begin{abstract}
We give a description of the structure of finite Morse index
solutions to two free boundary problems in $\R^2$. These free
boundary problems are models of phase transition and they are
closely related to minimal hypersurfaces. We show that finite Morse
index solutions in $\R^2$ have finitely many ends and they converge
exponentially to these ends at infinity. As an important tool for
the proof, a quadratic decay estimate for the curvature of free
boundaries is established.
\end{abstract}

\noindent {\sl Keywords:} {\small Finite Morse index solution; phase
transition; free boundary problem; minimal surface.}\

\vskip 0.2cm

\noindent {\sl AMS Subject Classification (2000):} {\small 35B08,
35B35, 35J61, 35R35.}

\renewcommand{\theequation}{\thesection.\arabic{equation}}
\setcounter{equation}{0}

\tableofcontents

\section{Introduction}\label{Sec 1}
\numberwithin{equation}{section}
 \setcounter{equation}{0}

In nonlinear elliptic problems, the structure of stable or finite
Morse index solutions is always of great interest. For example, the
classification of stable minimal hypercones is related to Bernstein
problem and it plays an important role in the partial regularity
theory for minimal hypersurfaces. For sets with minimal perimeter
(i.e. minimal hypersurfaces which are {\em minimizers}) in $\R^n$,
the celebrated Bernstein theorem states that if $n\leq 7$, it must
be flat, i.e. an half space. For {\em stable} minimal hypersurfaces,
it has been long conjectured that the same conclusion should be true, although
only the dimension $3$ case has been proven ( M. do Carmo and C.-K. Peng
\cite{Carmo-Peng}, Fischer-Colbrie and R. Schoen \cite{F-Schoen}).
It turns out that this characterization of stable minimal surfaces
in $\R^3$ and various tools developed in its proof (e.g. interior
curvature estimates for stable minimal surfaces \cite{Schoen}) is
very helpful in the study of the structure of minimal surfaces in
$\R^3$, e.g. in the Colding-Minicozzi theory.

Although by now only a few results about stable minimal
hypersurfaces are known in high dimensions, it was proved by Cao-Shen-Zhu \cite{Cao-S-Z} that, for any $n$, a stable minimal hypersurface
in $\R^n$ has one end (in other words, it is connected at infinity).
For minimal hypersurfaces with finite Morse index, Li and Wang
\cite{Li-Wang} also show the finiteness of ends. In $\R^3$ the later
fact was known for a long time, because a minimal surface with
finite Morse index in $\R^3$ has finite total curvature
(Fishcer-Colbrie \cite{Fischer}), and a classical result of Osserman
says such a surface is conformal to a Riemannian surface with
finitely many points removed (corresponding to the ends), see for
example \cite[Section 2.3]{Meeks}.

In the realm of the Allen-Cahn equation,
\begin{equation}\label{Allen-Cahn}
\Delta u=u^3-u,
\end{equation}
 we face a similar situation. Due to
its close connection with minimal hypersurfaces, there is the De
Giorgi conjecture concerning
the one dimensional symmetry of entire solutions, which corresponds to the Bernstein theorem. Similar to the
minimal surface theory, for {\em minimizers} of \eqref{Allen-Cahn},
the De Giorgi conjecture has been proved by Savin \cite{Savin} (see
also the author \cite{Wang} for a new proof). For {\em stable}
solutions, at present the one dimensional symmetry is only known to
be true in dimension $2$ (due to an observation of Dancer) and the case
$3\leq n\leq 7$ remains open. Note that a proof of {\em stable De Giorgi conjecture}
in $\R^{n-1}$ will imply the original De Giorgi conjecture in
$\R^n$, see \cite{A-C}.

In view of the above mentioned results about minimal surfaces with
finite Morse index, and because {\em stable De Giorgi conjecture} is
known to be true in dimension $2$, it is conjectured that (see for
example \cite{DKW 3} and \cite{Gui 1})

{\it{\bf Conjecture } A finite Morse index solution of
\eqref{Allen-Cahn} in $\R^n$ has finitely many ends.}

Here an end of $u$ is defined to be an unbounded connected component
of $\{u=0\}$. In fact, this conjecture will imply that near
infinity, a finite Morse index solution is composed by a finite
number of stable solution (which is one dimensional by the {\em
stable De Giorgi conjecture}), patched together suitably.

Solutions with finite ends have been studied by many authors, see
\cite{Gui 1, DKP, KLP 1, KLP 2, KLP 3}. This conjecture has been proved by the author and Wei in \cite{Wang-Wei 2}.

In this paper, we study two free boundary problems related to the
Allen-Cahn equation and prove the above conjecture for these two
models.

\subsection{The first problem}\label{subsection 1.1}
The first problem is
\begin{equation}\label{equation 0.1}
 \left\{\begin{aligned}
&\Delta u=0,\ \ \ \mbox{in}\ \Omega:=\{-1<u<1\},\\
&u=\pm 1,\ \ \ \mbox{outside}\ \Omega,\\
&|\nabla u|=1,~~\mbox{on}~\partial\Omega.
                          \end{aligned} \right.
\end{equation}
Throughout this paper we only consider classical solutions, i.e.
$\Omega$ is assumed to be an open domain of $\R^2$ with smooth
boundary and $u\in C(\R^2)\cap C^2(\overline{\Omega})$, with the
equation and the boundary conditions in \eqref{equation 0.1}
satisfied pointwisely. Of course, in view of the regularity theory
for free boundaries in \cite{AC} and \cite{Weiss}, this hypothesis
can be relaxed a lot, but we will not pursue it here.

Equation \eqref{equation 0.1} arises as the Euler-Lagrange equation
of the functional
\begin{equation}\label{functional 0.1}
\int\left(|\nabla u|^2+\chi_{\{-1<u<1\}}\right).
\end{equation}
Here the potential energy $\chi_{\{-1<u<1\}}$ can be viewed as a double well
potential, although in a rather degenerate manner. 

The problem \eqref{equation 0.1} is
similar to the Allen-Cahn equation in many aspects. This model has been studied in \cite{CC} and \cite{Kam}. In
\cite{CC}, Caffarelli and C\'{o}rdoba put this problem in a
continuous family of phase transition models with double well
potentials and proved the uniform $C^{1,\alpha}$ regularity of
Lipschitz transition layers in the corresponding singular
perturbation problems. Compared to the Allen-Cahn equation, there are only minor technical differences. In \cite{Kam},
Kamburov developed the techniques introduced in \cite{DKW} which
deals with the Allen-Cahn equation, and gave a counterexample of the
De Giorgi conjecture for \eqref{equation 0.1} in $\R^n$, $n\geq9$.

Define the quadratic form
\[Q(\eta):=\int_\Omega|\nabla\eta|^2-\int_{\partial\Omega}H\eta^2,\quad \eta\in C_0^\infty(\R^2),\]
where $H$ is the mean curvature of $\partial\Omega$ with respect to
$\nu$, the unit normal vector of $\partial\Omega$ pointing to
$\Omega^c$. This is the second variation form of the functional
\eqref{functional 0.1} at $u$, see \cite{CJK} and
\cite{JS} for the derivation.

A solution $u$ is said to be of finite Morse index, if
\[\sup\mbox{dim}\{X: X \mbox{ subspace of } C_0^\infty(\R^2), Q\lfloor_{X}\leq 0\}<+\infty.\]

A standard argument shows that a finite Morse index solution is
stable outside a compact set, that is, there exists an $R_0>0$ such
that for any $\eta\in C_0^\infty(\R^2\setminus B_{R_0}(0))$,
\begin{equation}\label{stability inequality}
\int_\Omega|\nabla\eta|^2\geq\int_{\partial\Omega}H\eta^2.
\end{equation}

Our main result for this problem can be stated as follows:
\begin{thm}\label{main result 1}
Let $u$ be a solution of \eqref{equation 0.1} in $\R^2$. Assume
$\Omega$ to be connected. If $u$ is stable outside a compact set,
then
\begin{enumerate}
\item $u$ has the natural energy growth bound
\[\int_{B_R(0)}\left(\frac{1}{2}|\nabla u|^2+\chi_{\{-1<u<1\}}\right)\leq CR,\quad\forall R>0,\]
for some constant $C$ (depending on $u$);
\item the total curvature is finite,
\[\int_\Omega\left(|\nabla^2u|^2-|\nabla|\nabla u||^2\right)<+\infty.\]
\item for some $R>0$ large, there are only finitely many connected components of $\Omega\setminus
B_R$, which we denote by $D_i$, $1\leq i\leq N$ for some $N>0$;
\item $\Omega^c\setminus B_R$ consists only of finitely many unbounded connected
components;
\item each $D_i$ has the form
\[D_i=\{x: f_i^-(e_i\cdot x+b_i)\leq e_i^\bot\cdot x+a_i\leq f_i^+(e_i\cdot x+b_i)\},\]
where $e_i$ is a unit vector, $a_i$ and $b_i$ are constants, and
$f_i^{\pm}$ two smooth functions defined on $[R,+\infty)$, $f_i^+$
convex and $f_i^-$ concave, satisfying
\[f_i^-< f_i^+<f_i^-+4;\]
\item we have the balancing formula
\[\sum_{i=1}^N e_i=0;\]
\item for each $i$,
\[\lim_{t\to+\infty}f_i^\pm(t)=\pm1,\]
where the convergence rate is exponential.
\end{enumerate}
\end{thm}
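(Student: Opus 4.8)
The plan is to follow the structure theory for finite Morse index minimal surfaces in $\R^3$ and for the Allen--Cahn equation, the essential new ingredient --- and the step I expect to be the main obstacle --- being a quadratic curvature decay estimate for the free boundary. First the preliminaries: since $u$ is harmonic in $\Omega$, $|\nabla u|^2$ is subharmonic there, giving the a priori bound $|\nabla u|\le1$ (hence the crude bound $\int_{B_R}\frac12|\nabla u|^2+\chi_\Omega=O(R^2)$), and $\nabla u$ is normal to $\partial\Omega$; one also needs a Weiss--type monotonicity formula for \eqref{functional 0.1} (cf. \cite{Weiss}) and the classification of entire stable solutions of \eqref{equation 0.1} in $\R^2$ (up to rigid motions, $u\equiv\pm1$ or the $1$D solution $\Omega=\{-1<e\cdot x<1\}$, $u=e\cdot x$), which I would establish first. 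The estimate asserts that if $u$ is stable in $B_{2r}(x_0)$ then $|A_{\partial\Omega}|^2\le Cr^{-2}$ on $\partial\Omega\cap B_r(x_0)$, with the companion bound $\int_{\partial\Omega\cap B_r(x_0)}|A|^2\le C$. Although \eqref{equation 0.1} is not scale invariant, rescaling $u$ by a large factor $\lambda$ turns it into the same equation with $|\nabla u|=\lambda$ on the free boundary --- the singular perturbation problem with $\varepsilon=\lambda^{-1}\to0$ --- so by the uniform $C^{1,\alpha}$ regularity of transition layers of Caffarelli--C\'ordoba \cite{CC} the rescaled free boundaries subconverge, locally smoothly away from $x_0$, to a minimal hypersurface of $\R^2$, i.e. a line. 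Arguing by contradiction --- if the estimate fails, rescale at the worst point to obtain an entire stable solution whose free boundary has unit curvature somewhere, contradicting the classification --- then yields the estimate. The difficulty is exactly this passage to the limit: ruling out concentration and multiplicity of the rescaled free boundaries and transporting the stability inequality past the compact ``unstable'' core, for which one leans on the monotonicity formula, the bound $|\nabla u|\le1$, and the uniform free boundary regularity.

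Next I would deduce the geometry at infinity, items 1--5. Applying the curvature estimate with $r\sim|x_0|$ gives $|A(x)|\le C|x|^{-1}$ along $\partial\Omega$ and a summable dyadic bound for $\int|A|^2$, so for $R$ large every connected component of $\partial\Omega\setminus B_R$ has small total curvature and is a graph, with small bounded slope, over its asymptotic tangent line. A winding--number/Gauss--Bonnet argument (no nearly geodesic short loop can sit far out; in particular $\Omega^c\setminus B_R$ has no bounded component) shows there are only finitely many such components, which is items 3 and 4; and since $|\nabla u|\le1$ with $u$ harmonic forces $\Omega$ to lie within bounded distance of $\partial\Omega$ far out, each unbounded component $D_i$ of $\Omega\setminus B_R$ is exactly the region between two such graphs, which is item 5. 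Then $\partial\Omega\cap B_R$ has length $O(R)$ and $\Omega\cap B_R$ area $O(R)$, giving item 1; and integrating the identity $|\nabla^2u|^2-|\nabla|\nabla u||^2=|\nabla u|^2\big(|A_{\{u=c\}}|^2+|\nabla_T\log|\nabla u||^2\big)$ over $\Omega$, using the curvature estimate, $\int_{\partial\Omega}|A|^2<\infty$ and the decay of $|\nabla u|-1$ along the $D_i$, gives item 2.

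Finally, items 5--7. On each $D_i$ the solution is a small perturbation of the $1$D profile. Convexity of $f_i^+$ and concavity of $f_i^-$ come from a one--sided sign on the curvature of $\partial D_i$, obtained by testing the stability inequality on $D_i$ together with the fact that the only bounded solution of the linearized problem on a half--strip vanishing on the two edges is trivial. The bounds $f_i^-<f_i^+<f_i^-+4$ and $f_i^+-f_i^-\to2$ follow from comparing $u$ near each edge with translates of the $1$D profile, which forces the two transition layers to overlap. The balancing formula comes from integrating the divergence--free stress tensor $T=2\,\nabla u\otimes\nabla u-(|\nabla u|^2+\chi_\Omega)\,\mathrm{Id}$ --- its divergence vanishes by $\Delta u=0$ in $\Omega$ and by $|\nabla u|=1$ across $\partial\Omega$ --- over $B_R$: one has $\int_{\partial B_R}T\nu=0$ for every $R$, while as $R\to\infty$ the flux through the arc meeting $D_i$ tends to a fixed negative multiple of $e_i$ and the flux through the complementary arc tends to $0$, whence $\sum_ie_i=0$. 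Lastly, writing the harmonicity and free boundary conditions on $D_i$ as a second order ODE system for $(f_i^+,f_i^-)$ whose linearization at the straight strip has a spectral gap gives the exponential convergence of $f_i^\pm$ to their linear asymptotes; deriving that effective ODE with good enough error control is the delicate point of this last step.
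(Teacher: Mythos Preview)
Your overall architecture is sound, but you are working much harder than necessary and you miss the key structural observation the paper exploits: the convexity of each component of $\Omega^c$ is a \emph{global} consequence of the Modica inequality $|\nabla u|\le 1$, not of stability. Since $|\nabla u|^2$ is subharmonic in $\Omega$ and equals $1$ on $\partial\Omega$, one has $\partial_\nu|\nabla u|^2\ge 0$ there, and the computation of \cite{CJK} turns this into $H\ge 0$ (strict unless $u$ is one-dimensional). So convexity/concavity of $f_i^\pm$ is available from the outset, not something to be extracted later from the stability inequality on each end as you propose; your suggested derivation of it via the linearized problem on a half-strip is at best circuitous and possibly circular.

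With convexity in hand, the paper avoids entirely the pointwise quadratic curvature decay you make your centerpiece. Instead it uses only an \emph{integral} curvature bound: testing the stability inequality with $|\nabla u|\varphi$ and a log cutoff gives $\int_{\Omega\setminus B_R}|\nabla u|^2|A|^2\le C/\log R$, which via Gauss--Bonnet rules out small bounded components of $\Omega^c$ far out and forces each piece of $\partial\Omega$ to be nearly straight (your winding-number step). Combined with a compactness lemma showing that translations of $u$ to infinity converge to one-dimensional solutions, this gives items 3--5 directly; convexity plus the width bound $\approx 2$ then yields item 1, and item 2 follows by integrating the integral curvature estimate. The balancing formula comes from blowing down \`a la Hutchinson--Tonegawa (your stress-tensor argument would also work). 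For item 7 the paper does not set up an effective ODE for $(f_i^+,f_i^-)$; it works with the scalar $v=1-|\nabla u|$, derives $\frac{d^2}{dx_1^2}\int v^2\ge c\int v^2$ on each end via Poincar\'e, and reads off exponential decay.

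The pointwise estimate $|A|\le C/|x|$ that you propose \emph{is} what the paper needs for the second problem (Theorem~\ref{main result 2}), and your sketch seriously underestimates its difficulty: after the doubling-lemma rescaling one lands in the singular perturbation regime, and the blow-up limit is a line, not an entire solution of \eqref{equation 0.1}, so there is no immediate contradiction with the stable classification. Ruling out curvature concentration in this regime---your acknowledged ``difficulty''---occupies the entire Part~III of the paper and requires a delicate case analysis (multiplicity one vs.\ two, several subcases for the blow-up of the gap function). For Theorem~\ref{main result 1} none of this is needed.
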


An important result used in the proof of this theorem is the
following characterization of stable solutions.
\begin{thm}\label{stable solutions 1}
Let $u$ be a stable solution of \eqref{equation 0.1}. Then there
exists a unit vector $e$ such that $u(x)\equiv u(x\cdot e)$.
\end{thm}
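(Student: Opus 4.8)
The plan is to show that a stable solution of the free boundary problem \eqref{equation 0.1} must be one-dimensional, mirroring the Dancer-type argument for the stable De Giorgi conjecture in $\R^2$. First I would record the basic structure: since $\Delta u = 0$ in $\Omega$ and $u = \pm 1$ on $\partial\Omega$ with $|\nabla u| = 1$ there, a Hopf-lemma/maximum-principle argument gives $|\nabla u| \le 1$ throughout $\Omega$ (the harmonic function $|\nabla u|^2$ — or rather, a subharmonic comparison — attains its maximum on the free boundary), and the level sets $\{u = c\}$ are well-behaved. The key new analytic input is the \emph{stable De Giorgi-type identity}: for any test function $\varphi \in C_0^\infty(\R^2)$, one has the geometric Poincaré (or "Sternberg--Zumbrun") inequality obtained by inserting $\eta = |\nabla u|\varphi$ into the stability inequality \eqref{stability inequality}. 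Carrying out the computation — integrating by parts in $\Omega$ and using the boundary condition $|\nabla\eta|=1$ together with the fact that on $\partial\Omega$ the curvature $H$ matches the normal derivative of $|\nabla u|$ (this is exactly the interplay encoded in the second variation form $Q$) — yields an inequality of the form
\[
\int_\Omega |\nabla u|^2 \Big( |\nabla_T \nu|^2 + |\nabla_T \log|\nabla u||^2 \Big)\varphi^2 \;\le\; \int_\Omega |\nabla u|^2 |\nabla\varphi|^2,
\]
where $\nabla_T$ denotes the tangential gradient along level sets of $u$ and $|\nabla_T\nu|^2$ is (the square of) the curvature of those level curves. In $\R^2$ each level curve is a curve, so $|\nabla_T\nu|^2$ is just its curvature squared.

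Next I would run the standard logarithmic cutoff argument: choosing $\varphi = \varphi_R$ equal to $1$ on $B_{\sqrt R}$, supported in $B_R$, with $|\nabla\varphi_R| \le C/(|x|\log R)$ on the annulus, one gets $\int_{B_R} |\nabla u|^2|\nabla\varphi_R|^2 \le C/\log R \to 0$ as $R\to\infty$, provided we know the energy bound $\int_{B_R}|\nabla u|^2 \le CR$. That linear energy growth for \emph{stable} solutions in $\R^2$ is itself something to be proved — either directly from stability (testing with a suitable cutoff, as in Ambrosio--Cabré) or by a monotonicity-formula/density argument for this functional; I would establish it as a preliminary lemma. Letting $R\to\infty$ then forces $|\nabla_T\nu| \equiv 0$ and $\nabla_T|\nabla u| \equiv 0$ on $\{\nabla u \ne 0\}$, i.e.\ every level curve of $u$ is a straight line and $|\nabla u|$ is constant along it; combined with $|\nabla u|=1$ on $\partial\Omega$ and harmonicity, this pins down $|\nabla u|\equiv 1$ in $\Omega$, all level lines parallel, and hence $u(x) = u(e\cdot x)$ for a fixed unit vector $e$.

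The main obstacle I anticipate is \emph{twofold}, both at the boundary. First, the derivation of the geometric Poincaré inequality requires differentiating the stability form across the free boundary $\partial\Omega$; one must handle the boundary terms carefully, using the Neumann-type condition $|\nabla u|=1$ to convert $\int_{\partial\Omega} H\eta^2$ into a curvature integral that cooperates with the bulk term. Showing there is no "leftover" bad boundary contribution — essentially that the second fundamental form of the free boundary is controlled by the tangential derivatives of $\nu$ and $|\nabla u|$ extended from $\Omega$ — is where the argument is genuinely different from the Allen-Cahn case and where I expect the technical heart of the proof to lie. Second, one must argue that $\{\nabla u = 0\}$ is empty (or negligible) so that the conclusion "level lines are straight" globalizes; in $\R^2$ a harmonic function with an interior critical point has a saddle-type level set there, which a stability/degree argument should rule out, but this needs care near $\partial\Omega$. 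Once these boundary issues are settled, the rest is the by-now-classical Dancer/Farina machinery.
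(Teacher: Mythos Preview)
Your approach via the Sternberg--Zumbrun geometric Poincar\'e inequality is valid and is explicitly mentioned in the paper's introduction as an alternative route; however, the paper's actual proof proceeds differently. The paper uses the Ghoussoub--Gui/Ambrosio--Cabr\'e Liouville argument: stability furnishes a positive solution $\varphi$ of the linearized problem ($\Delta\varphi = 0$ in $\Omega$, $\varphi_\nu = -H\varphi$ on $\partial\Omega$), and since any directional derivative $u_e$ satisfies the same equation, the ratio $\psi = u_e/\varphi$ solves $\mathrm{div}(\varphi^2\nabla\psi) = 0$ in $\Omega$ with $\psi_\nu = 0$ on $\partial\Omega$. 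Testing with $\psi\eta^2$ and using only the pointwise bound $\varphi^2\psi^2 = u_e^2 \le |\nabla u|^2 \le 1$ makes the log-cutoff argument immediate, and $\nabla\psi\equiv 0$ gives one-dimensionality.

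The chief practical difference is that the paper's method needs \emph{no} energy growth estimate: the gradient bound $|\nabla u|\le 1$ suffices. In fact your method needs no more either --- the linear energy bound you flag as a preliminary lemma is unnecessary, since the same bound $|\nabla u|\le 1$ you already recorded gives $\int |\nabla u|^2|\nabla\varphi_R|^2 \le \int |\nabla\varphi_R|^2 \le C/\log R$ directly. (In the paper's logical order the linear energy bound is established only \emph{after} Theorem~1.2, using the finite-ends structure, so it is not available here.) Your anticipated boundary obstacle is genuine but routine: the paper carries out exactly this computation in Step~1 of Lemma~3.1, where the identity $(|\nabla u|^2)_\nu = -2H$ on $\partial\Omega$ makes the boundary contributions cancel and yields $\int_\Omega |\nabla u|^2|A|^2\varphi^2 \le \int_\Omega |\nabla u|^2|\nabla\varphi|^2$. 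What your approach buys is the curvature information on level sets directly, which the paper exploits separately for the integral curvature estimates; what the paper's approach buys is a cleaner path that sidesteps both the boundary computation and any concern about critical points of $u$.
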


In Theorem \ref{main result 1}, each end $D_i$ corresponds to a
stable solution. As in the Allen-Cahn equation \cite{GG} and
\cite{A-C}, the proof of Theorem \ref{stable solutions 1} uses the
Liouville theorem for the degenerate elliptic equation
\begin{equation}\label{degenerate equation}
\mbox{div}(\sigma^2\nabla \psi)=0.
\end{equation}

Another method to prove Theorem \ref{stable solutions 1} involves an equivalent formulation of the stability condition of
 Sternberg-Zumbrun  (see for
example \cite{S-Z} and \cite{FSV}),
\begin{equation}\label{S-Z inequality}
  \int |\nabla\eta|^2|\nabla u|^2\geq\int \eta^2|B|^2|\nabla u|^2.
\end{equation}
Here
\begin{equation}\label{Sternberg-Z}
|B|^2:=\frac{|\nabla^2u|^2-|\nabla|\nabla u||^2}{|\nabla
u|^2}=|A|^2+|\nabla_T\log|\nabla u||^2,
\end{equation}
where $A$ is the second
fundamental form of the level set (the curvature of the level set
because we are in $\R^2$), and $\nabla_T$ is the tangential derivative
along the level set.

 This
inequality is also used in this paper to establish a local integral
curvature bound. In turn, this curvature bound implies that at
infinity the solution is close to a one dimensional solution at
$O(1)$ scale.

In Theorem \ref{main result 1}, if the hypothesis on the
connectedness of $\Omega$ is removed, we have the following strong
half space theorem, which holds for any solution of \eqref{equation
0.1}, without any stability condition.
\begin{thm}\label{strong half space theorem}
Let $u$ be a solution of \eqref{equation 0.1} in $\R^2$
. If $\Omega$ is not connected, then $u$ is one dimensional.
\end{thm}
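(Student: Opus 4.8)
The plan is to exploit the harmonicity of $u$ together with the overdetermined boundary condition $|\nabla u|=1$ to rule out the possibility that two components of $\Omega$ "face" each other. Suppose $\Omega$ is disconnected, and let $\Omega_1$ be one connected component. On $\partial\Omega_1$ we have $u=\pm1$, so $\partial\Omega_1$ splits into a part $\Gamma_1^+$ where $u=1$ and a part $\Gamma_1^-$ where $u=-1$; adjacent to $\Gamma_1^+$ the function is identically $1$ and adjacent to $\Gamma_1^-$ it is identically $-1$. The first step is to understand the geometry of a single component: since $u$ is harmonic in the planar strip-like domain $\Omega_1$ with $u=\pm1$ on the two parts of the boundary and $|\nabla u|=1$, one shows (this is essentially the local structure already encoded in part (5) of Theorem \ref{main result 1}, but here we need it without any stability) that $\Omega_1$ is a strip-type region of width at most something like $2$ separating a region where $u\equiv1$ from a region where $u\equiv -1$. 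In fact the cleanest route is: consider the harmonic conjugate. Because $\Omega_1$ is a planar domain and $u$ is harmonic, locally there is a harmonic conjugate $v$ with $u+iv$ holomorphic; the map $w=u+iv$ sends $\Omega_1$ into the strip $\{-1<\mathrm{Re}\,w<1\}$, and $|\nabla u|=1$ on $\partial\Omega_1$ translates into the boundary values of $w'$ having modulus $1$. Then a Phragmén–Lindelöf / Liouville argument on $w'$ (or on $\log w'$) in the strip forces $w'$ to be constant of modulus $1$, hence $u$ is affine (one dimensional) on $\Omega_1$.

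Carrying this out, each component $\Omega_i$ of $\Omega$ is a straight strip $\{a_i < e_i\cdot x < b_i\}$ (up to the affine description of part (5), but with $f_i^\pm$ constant) of width $b_i-a_i$, with $u$ the linear interpolation between $-1$ and $1$; note $|\nabla u|=1$ pins the width to be exactly $2$. Now the second, decisive step: the complement $\R^2\setminus\Omega$ is a disjoint union of closed regions on each of which $u\equiv 1$ or $u\equiv -1$. If $\Omega$ has at least two components, $\R^2$ is partitioned by finitely or infinitely many parallel-or-not strips; but two distinct strips of width $2$ cannot coexist in $\R^2$ while leaving the "outside" regions consistently labelled $\pm1$ unless they are parallel and the $+1$ side of one abuts the $-1$ side of the other — which is impossible because between any two strips there must be a full open region where $u$ is a constant $\pm1$, and tracing the sign across consecutive strips in, say, a transversal line forces the configuration to be a single infinite family of parallel strips; then connectivity of $\Omega$ (which is the union of the open strips) is violated only if there is a genuine gap, i.e. a positive-width slab where $u\equiv\pm1$. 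The point is that in $\R^2$ one cannot fit two disjoint width-$2$ strips together with a separating slab and still have $\Delta u=0$ with $|\nabla u|=1$ on all the interfaces, because the union would not be able to close up — one shows any such configuration is, after all, a single strip, forcing $u$ to be one dimensional.

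The main obstacle I anticipate is the passage from "each component is a flat strip" to "there is only one component", i.e. ruling out, say, two parallel strips separated by a slab of constant $u$. Intuitively this should be forbidden because nothing prevents it geometrically — so the real content must be a nonexistence statement of a different flavor, presumably the following: if $\Omega=\Omega_1\sqcup\Omega_2$ with $\Omega_1$ a strip, then on the other side of $\Omega_1$ from $\Omega_2$ we have an unbounded region with $u$ constant, and by a moving-plane / sliding argument (or by the maximum principle applied to $u-$(the one dimensional solution coming from $\Omega_1$)) one forces $u$ to agree globally with the one dimensional profile, whence $\Omega$ is connected — a contradiction. So the hard step is choosing the right comparison function and the right version of the maximum principle (likely a sliding method adapted to the free boundary, in the spirit of the disconnectedness arguments for the half-space theorems for minimal surfaces), and checking that the overdetermined condition $|\nabla u|=1$ is exactly what makes the comparison rigid. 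I would spend most of the effort there; the strip structure of a single component and the holomorphic-conjugate reduction should be comparatively routine in $\R^2$.
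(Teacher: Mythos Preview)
Your proposal has a genuine gap rooted in a misreading of the statement. ``One dimensional'' in this paper means $u(x)=u(x\cdot e)$ for some unit vector $e$; the paper explicitly lists the functions $v^\ast$ built from \emph{several} parallel transition strips separated by slabs where $u\equiv\pm1$. So the configuration you spend most of your effort trying to exclude --- two (or more) disjoint width-$2$ strips with a constant slab between them --- is \emph{not} forbidden; it is precisely an admissible one-dimensional solution. Consequently the ``contradiction'' in your last paragraph (``one forces $u$ to agree globally with the one dimensional profile, whence $\Omega$ is connected'') is not a contradiction at all: a one-dimensional profile can have $\Omega$ disconnected. The theorem does not assert that $\Omega$ must be connected; it asserts that disconnectedness forces one-dimensionality.

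Once this is understood, your outline can be salvaged quite easily \emph{provided} your first step is sound: if every component $\Omega_i$ is a straight strip of width $2$, then disjointness of infinite strips in $\R^2$ forces them all to be parallel, and $u$ is automatically of the form $v^\ast$. But your justification of the first step is incomplete. The harmonic-conjugate map $w=u+iv$ and a Phragm\'en--Lindel\"of argument on $w'$ do not obviously yield $w'$ constant without an a priori interior bound. What you actually need is the Modica-type inequality $|\nabla u|\le 1$ in $\Omega$ (Proposition~\ref{gradient bound}); then $|w'|\le 1$ in $\Omega_1$ and $|w'|=1$ on $\partial\Omega_1$, and the maximum principle forces $w'$ constant. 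You also need $\Omega_1$ simply connected for $v$ to be globally defined, which again follows from the convexity of the components of $\Omega^c$ (Proposition~\ref{strict convexity}), itself a consequence of the Modica inequality. So the Modica inequality is the missing engine of your argument.

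The paper's route is shorter and avoids complex analysis entirely. From $|\nabla u|\le 1$ one gets that every component of $\{u=\pm1\}$ is convex, and strictly so unless $u$ is one dimensional. If $\Omega$ has two components $\Omega_1,\Omega_2$, then $\Omega_2$ lies in a single convex component $D$ of $\Omega_1^c$, hence in a half space. One then proves a weak half space theorem: if $\{-1<u<1\}$ lies in a half space $\{x_1>0\}$, the convex component of the complement containing $\{x_1<0\}$ has boundary a graph $x_1=f(x_2)$ with $f\ge 0$ concave on all of $\R$, hence constant; its boundary is a straight line, and the strict-convexity alternative then forces $u$ to be one dimensional. No sliding, no conjugate, no comparison function is needed --- only convexity of the free boundary.
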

Note that we can single out the restriction of $u$ to a connected
component of $\Omega$ as a solution to \eqref{equation 0.1}. Hence
two components of $\Omega$ give two such solutions, $u_1$ and $u_2$,
satisfying $u_1\geq u_2$. This is similar to the situation met in
the strong half space theorem for minimal surfaces \cite{H-Meeks}.
Of course, compared to their proofs, the proof of our strong half
space theorem is rather direct. This is because we are in $\R^2$ and
thus the free boundary $\partial\Omega$ is convex (mean convex if we
are in higher dimension spaces).

\subsection{The second problem}\label{subsection 1.2}
 The second problem is a one phase free boundary problem,
\begin{equation}\label{equation 0.2}
 \left\{\begin{aligned}
&\Delta u=W^\prime(u),\ \ \ \mbox{in}\ \Omega:=\{u>0\},\\
&u=0,\ \ \ \mbox{outside}\ \Omega,\\
&|\nabla u|=\sqrt{2W(0)},~~\mbox{on}~\partial\Omega.
                          \end{aligned} \right .
\end{equation}
The solution is a critical point of the following functional
\begin{equation}\label{functional 0.2}
\int \left(\frac{1}{2}|\nabla u|^2+W(u)\chi_{\{u>0\}}\right).
\end{equation}
Here $W$ is a standard double well potential, that is, $W\in
C^2[0,+\infty)$ satisfying
\begin{itemize}
\item [{\bf W1)}]{\em $W\geq0$, $W(1)=0$ and $W>0$ in $[0,1)$;}
\item [{\bf W2)}] {\em $W^\prime\leq0$ on
$[0,1]$;}
\item [{\bf W3)}] {\em there exist two constants $\kappa>0$ and $\gamma\in(0,1)$ such that $W^{\prime\prime}\geq\kappa>0$
on $[\gamma,+\infty)$;}
\item [{\bf W4)}] {\em there exists a constant $p>1$, $W^\prime(u)\geq c(u-1)^p$ for
$u>1$.}
\end{itemize}
A typical example is $W(u)=(1-u^2)^2/4$ which gives the standard
Allen-Cahn nonlinearity.

The potential $W(u)\chi_{\{u>0\}}$ can still be viewed as a double
well potential, degenerate on the negative side. Hence this free
boundary problem still shares many similarities with the Allen-Cahn
equation.

The one phase free boundary problem, especially the partial
regularity theory for its free boundaries, has been studied for a
long time, see for example \cite{AC, Weiss}. This problem also
arises in the study of Serrin's overdetermined problem, see
\cite{Wang-Wei 1}, where a De Giorgi type conjecture was proved for
minimizers of \eqref{functional 0.2} in $\R^n$, $n\leq 7$.

The finite Morse index condition can be defined similarly as in
problem \eqref{equation 0.1}. This condition still implies that $u$
is stable outside a compact set, that is, there exists an $R_0>0$
such that, for any $\eta\in C_0^\infty(\R^2\setminus B_{R_0}(0))$,
\begin{equation}\label{stability inequality 2}
\int_\Omega\biggl(|\nabla\eta|^2+W^{\prime\prime}(u)\eta^2\biggr)\geq\int_{\partial\Omega}\left(-\frac{W^\prime(0)}{\sqrt{2W(0)}}+H\right)\eta^2.
\end{equation}
Here $H$ is the mean curvature of $\partial\Omega$ with respect to
$\nu$, the unit normal vector of $\partial\Omega$ pointing to
$\Omega^c$.

Our main result for this problem is similar to the first one.
\begin{thm}\label{main result 2}
Let $u$ be a solution of \eqref{equation 0.2} in $\R^2$.   Assume
$\Omega$ to be connected.  If $u$ is stable outside a compact set,
then
\begin{enumerate}
\item $u$ has the natural energy growth bound
\[\int_{B_R(0)}\left(\frac{1}{2}|\nabla u|^2+W(u)\chi_{\{u>0\}}\right)\leq CR,\quad\forall R>0,\]
for some constant $C$ (depending on $u$);
\item the total curvature is finite,
\[\int_\Omega\bigl(|\nabla^2u|^2-|\nabla|\nabla u||^2\bigr)<+\infty.\]
\item for any $R>0$ large, there are only finitely many connected components of $\Omega\setminus
B_R$;
\item for some $R_\ast>0$ large, $\Omega^c\setminus B_{R_\ast}$ consists only of finitely many unbounded connected
components, which we denote by $D_i$, $1\leq i\leq N$ for some
$N>0$;
\item each $D_i$ has the form
\[D_i=\{x: f_i^-(e_i\cdot x+b_i)\leq e_i^\bot\cdot x+a_i\leq f_i^+(e\cdot x+b_i),\]
where $e_i$ is a unit vector, $a_i$ and $b_i$ are constants, and
$f_i^{\pm}$ two smooth functions defined on $[R_\ast,+\infty)$, $f_i^+$
 concave and $f_i^-$ convex;
\item for each $i$, both the limits
\[\kappa_i^\pm:=\lim_{t\to+\infty}\frac{d f_i^\pm}{dt}(t)\]
exist. Moreover, by denoting $e_i^\pm$ the asymptotic direction of
the curve $\{e_i^\bot\cdot x+a_i=f_i^\pm(e\cdot x+b_i)\}$ at
infinity, we have the balancing formula
\[\sum_{i=1}^N \left(e_i^++e_i^-\right)=0;\]
\item if $i\neq j$, $\{e_i^+,e_i^-\}\cap\{e_j^+,e_j^-\}=\emptyset$;
\item the limits
\[\lim_{t\to+\infty}\left(f_i^\pm(t)-\kappa_i^\pm t\right)\]
exist, where the convergence rate is exponential.
\end{enumerate}
\end{thm}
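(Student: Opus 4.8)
The plan is to re-run the argument behind Theorem \ref{main result 1}, this time bookkeeping the potential $W$ and the free boundary condition $|\nabla u|=\sqrt{2W(0)}$. Everything reduces to three ingredients, from which items (1)--(8) are then read off essentially as in the first problem: (A) a classification of \emph{stable} solutions of \eqref{equation 0.2} --- the statement that such a solution is one dimensional, the analogue of Theorem \ref{stable solutions 1}; (B) the linear energy bound, item (1); and (C) a quadratic decay estimate $|H(x)|\le C|x|^{-1}$ for the curvature of $\partial\Omega$ as $|x|\to\infty$.

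For (A) and item (2) the engine is the Sternberg--Zumbrun geometric Poincar\'e inequality adapted to \eqref{equation 0.2}. Writing $\sigma=|\nabla u|$, the Bochner identity in $\Omega$ is $\Delta\sigma-W''(u)\sigma=|A|^2\sigma$, and differentiating $u=0$ and $|\nabla u|^2=2W(0)$ along $\partial\Omega$ and using $\Delta u=W'(0)$ there gives the Robin--type relation
\[\partial_\nu\sigma=\Bigl(-\tfrac{W'(0)}{\sqrt{2W(0)}}+H\Bigr)\sigma\qquad\text{on }\partial\Omega.\]
Taking $\eta=\sigma\varphi$ in \eqref{stability inequality 2} (only the values on $\overline\Omega$ enter) and integrating by parts, the free-boundary integral on the left becomes exactly $\int_{\partial\Omega}(-W'(0)/\sqrt{2W(0)}+H)\sigma^2\varphi^2$ by the Robin relation, hence cancels the right-hand side of \eqref{stability inequality 2}, and there remains
\[\int_\Omega|A|^2|\nabla u|^2\varphi^2\le\int_\Omega|\nabla u|^2|\nabla\varphi|^2,\qquad\varphi\in C_0^\infty(\R^2\setminus B_{R_0}).\]
Feeding in item (1) and a standard cut-off kills the outer contribution, and since $\varphi$ must vanish on $B_{R_0}$ only the fixed finite inner contribution survives, giving $\int_\Omega|A|^2|\nabla u|^2<+\infty$, i.e. item (2); when $u$ is stable on all of $\R^2$ there is no inner cut-off, the whole right side goes to $0$, so $|A|\equiv0$, which forces the level curves to be parallel straight lines on which $|\nabla u|$ is constant, whence $u$ is one dimensional --- this is (A). Item (1) itself I would obtain from \eqref{stability inequality 2} (which controls the length of $\partial\Omega\cap B_R$ by $CR$) together with the monotonicity formula for \eqref{functional 0.2}, whose blow-down limits are stable, hence one dimensional by (A), hence of finite density; this forces $\int_{B_R}(\tfrac12|\nabla u|^2+W(u)\chi_{\{u>0\}})\le CR$, as in \cite{CC}.

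Next comes (C): near infinity, the local curvature bound from item (2) together with $\varepsilon$-regularity for \eqref{equation 0.2} and elliptic estimates up to a free boundary of small curvature, in the spirit of \cite{CC}, shows that on each unit ball far out $u$ is $C^{2,\alpha}$-close to a one dimensional profile; differentiating and running a Jacobi-type equation and maximum principle for $H$ along $\partial\Omega$ yields $|H(x)|\le C|x|^{-1}$ and, in particular, a constant sign of $H$ on each component of $\partial\Omega\setminus B_R$. With (A), (B), (C) at hand the structure follows as for Theorem \ref{main result 1}: the monotonicity formula and item (1) give a finite density at infinity $\Theta$, and each unbounded component of $\Omega^c\setminus B_R$, being $O(1)$-close to a one dimensional solution, carries a definite amount of $\Theta$, so there are only finitely many --- items (3), (4); the closeness to a one dimensional solution plus the sign of $H$ gives the graph representation of each $D_i$ with $f_i^+$ concave and $f_i^-$ convex and the existence of the slopes $\kappa_i^\pm$ --- items (5), (6). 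The balancing formula in item (6) I would derive from the stress--energy tensor $T_{jk}=\partial_ju\,\partial_ku-\delta_{jk}\bigl(\tfrac12|\nabla u|^2+W(u)\chi_{\{u>0\}}\bigr)$: the domain-variation identity for \eqref{functional 0.2} gives $\sum_k\partial_kT_{jk}=0$ distributionally across $\partial\Omega$, so $\int_{\partial B_R}T\!\cdot\!\nu$ is independent of $R$; evaluating the flux through each end from its asymptotic one dimensional profile identifies it with a positive multiple of $e_i^++e_i^-$, the flux through the bounded part vanishing in the limit, whence $\sum_i(e_i^++e_i^-)=0$. Item (7) follows because two ends with a common asymptotic direction would be two asymptotically parallel half-lines, and the strip of $\Omega$ between them would, by stability, be a one dimensional solution, contradicting the connectedness of $\Omega$ and the structure already obtained. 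Finally, on each end the free boundary curve is $C^1$-asymptotic to a line and satisfies a scalar second order ODE whose coefficients converge exponentially to constants; solving it yields $f_i^\pm(t)=\kappa_i^\pm t+c_i^\pm+O(e^{-\delta t})$, which is item (8).

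The hard part will be ingredient (C): upgrading the integral curvature bound of item (2) to the pointwise decay $|H(x)|\le C|x|^{-1}$ requires elliptic regularity up to a free boundary whose curvature is a priori controlled only in an averaged sense, followed by a Jacobi-type argument on a curve of infinite length --- this is the analogue, and the technical core, of the curvature estimate announced in the abstract. A secondary delicate point is the exact cancellation of the free-boundary terms in the geometric Poincar\'e inequality, which works only because $|\nabla u|$ is constant on $\partial\Omega$, producing the Robin relation $\partial_\nu|\nabla u|=(-W'(0)/\sqrt{2W(0)}+H)|\nabla u|$.
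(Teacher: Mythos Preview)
Your derivation of the Sternberg--Zumbrun inequality and of the Robin relation $\partial_\nu|\nabla u|=(-W'(0)/\sqrt{2W(0)}+H)|\nabla u|$ on $\partial\Omega$ is correct and matches the paper (Section \ref{sec 7}); items (A) and (2) do follow from it together with the $\log$ cut-off. But the global logical order is off, and this matters. You propose to obtain the energy bound (item 1) first, then the curvature decay, then the finiteness of ends. In the paper the order is essentially reversed: the integral curvature bound and the classification of stable solutions first give that translations at infinity converge to one-dimensional profiles (Lemma \ref{translation at infinity}); from this a Dancer-type nodal-domain argument (Lemma \ref{crucial lem}: each connected component of $\{u_e\neq 0\}$ must meet $B_{R_0}$) yields finiteness of the \emph{unbounded} components of $\Omega^c$ (Section \ref{sec 8}), \emph{without} any global energy bound; the quadratic curvature decay (Theorem \ref{thm curvature decay}) is then used only to confine the \emph{bounded} components of $\Omega^c$ (Proposition \ref{finite bounded components}); and only after all this is the energy bound proved (Theorem \ref{thm energy growth}), via the Hamiltonian identity along each end. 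Your route to item (1) through blow-down and Hutchinson--Tonegawa is circular: that theory already presupposes the local energy bound you are trying to establish, and it is not clear how the stability inequality alone would control the length of $\partial\Omega\cap B_R$.

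More seriously, your sketch of (C) does not work. You propose ``a Jacobi-type equation and maximum principle for $H$'' to upgrade the integral bound to $|H(x)|\le C|x|^{-1}$. The paper explicitly notes (Section \ref{subsection 1.3}) that no Simons-type identity for $|A|^2$ is available in the semilinear setting, so this route is blocked. The actual proof of Theorem \ref{thm curvature decay} is all of Part III: a doubling lemma reduces the estimate to the local singular-perturbation problem ``Question (CD)''; the multiplicity-one case is handled by a momentum (first-variation) decay iteration; the multiplicity-two case splits into four subcases, the hardest of which (Subcase 2.3, single contact point of the two sheets) requires an adaptation of Gui's Hamiltonian method, an $L^2$ error estimate between $u_\varepsilon$ and a canonical profile, and an intermediate-scale blow-up, while another (Subcase 2.4) uses a sliding argument. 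You correctly flag (C) as the hard part, but your proposed mechanism is not the one that carries the proof.

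Two smaller points. Your argument for item (7), ``the strip of $\Omega$ between them would, by stability, be a one-dimensional solution'', is not the right mechanism: stability on a strip does not force one-dimensionality. The paper's argument (the lemmas in Section \ref{sec 8}, in particular Lemma \ref{lem 4.15}) is that if two free-boundary curves bounding a component of $\Omega$ had the same asymptotic direction, their distance would stay bounded, contradicting the fact that translations of $u$ along the strip converge to the one-dimensional profile $g$, whose positivity set is a half-plane. Likewise, your finiteness-of-ends argument via ``each end carries a definite amount of $\Theta$'' presupposes the energy bound you have not yet established; the paper's Dancer-type argument avoids this circularity entirely.
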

Compared to Theorem \ref{main result 1}, the new feature is (7). This
is because in the positive part $\{u>0\}$, different ends are pushed
away, while in the first problem, a pair of $\partial\{u=1\}$ and
$\partial\{u=-1\}$ stay at finite distance. For these two problems,
outside $\Omega$, different ends could be at finite distance (for
example, we could have $\kappa_i^+=\kappa_i^-$ in Theorem \ref{main
result 2} (6)), because in this part there is no interaction between
different ends.

With this description, we can prove
\begin{coro}\label{coro 1}
Let $u$ be a solution of \eqref{equation 0.2} satisfying all of the
hypothesis in Theorem \ref{main result 2}. Assume furthermore that
$u$ has only two ends, then it is one dimensional.
\end{coro}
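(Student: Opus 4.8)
The plan is to use the detailed structure from Theorem \ref{main result 2} under the extra hypothesis $N=2$, and show that a two-ended configuration forces the two ends to be parallel half-lines, hence the whole solution to be one dimensional. Recall that with two ends $D_1$ and $D_2$, each $D_i$ is a region of the form $\{f_i^-(e_i\cdot x+b_i)\le e_i^\bot\cdot x+a_i\le f_i^+(e_i\cdot x+b_i)\}$ with $f_i^+$ concave and $f_i^-$ convex, and each boundary curve has an asymptotic direction $e_i^+$ or $e_i^-$. The balancing formula from item (6) now reads $e_1^++e_1^-+e_2^++e_2^-=0$, and item (7) says the four directions $e_1^+,e_1^-,e_2^+,e_2^-$ are pairwise distinct. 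First I would argue that, in fact, for a single end $D_i$ the two asymptotic directions must be \emph{antipodal}, $e_i^-=-e_i^+$: indeed $\Omega^c$ is (after removing a large ball) exactly $D_1\cup D_2$, and $\Omega=\{u>0\}$ is connected and unbounded; the complement of a convex-over-concave strip region that does not degenerate would have to split $\Omega$, which together with the mean-convexity of $\partial\Omega$ (the free boundary curves the ``wrong'' way to close up) is incompatible unless the strip is asymptotically a genuine half-plane with two anti-parallel boundary lines. Here one also uses the exponential convergence in item (8): $f_i^\pm(t)-\kappa_i^\pm t\to c_i^\pm$, so $D_i$ is, up to exponentially small error, an honest wedge or strip determined by the slopes $\kappa_i^\pm$.

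Once $e_i^-=-e_i^+$ for $i=1,2$, writing $v_1=e_1^+$ and $v_2=e_2^+$, the balancing formula becomes $v_1-v_1+v_2-v_2=0$, which is automatically satisfied and gives no information; so the real work is to rule out $v_1\ne \pm v_2$. If $v_1$ and $v_2$ were not parallel, then $D_1$ and $D_2$ are two strips in $\R^2$ whose asymptotic directions are transverse, and hence $D_1\cap D_2$ is bounded while $D_1\cup D_2$ has a complement $\Omega$ with \emph{four} unbounded components (the four ``corners'' between the two crossing strips). But $\Omega$ is assumed connected — contradiction. Therefore $v_1\parallel v_2$, i.e. $e:=v_1=\pm v_2$, and both $D_1,D_2$ are asymptotically strips perpendicular to the same direction $e^\bot$, lying on opposite sides (otherwise $\Omega$ is again disconnected or bounded). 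Thus $\Omega$ is asymptotically a single strip $\{|e^\bot\cdot x|\le \text{const}\}$, and the free boundary $\partial\Omega$ consists of two curves each converging exponentially to a line orthogonal to $e^\bot$.

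It remains to upgrade ``asymptotically one dimensional'' to ``exactly one dimensional.'' For this I would invoke the stability / monotonicity machinery behind Theorem \ref{stable solutions 1}: with $N=2$ and the ends straightened out, $u$ is a solution of \eqref{equation 0.2} in a slab-like domain that is asymptotic at \emph{both} ends to the \emph{same} one dimensional profile $g$ (the unique solution of the ODE $g''=W'(g)$ on an interval with $|g'|=\sqrt{2W(0)}$ at the free boundary point). The curvature decay estimate (the quadratic decay of $|B|$ for the free boundary, advertised in the abstract and used throughout) forces $\partial\Omega$ to be \emph{globally} two parallel straight lines, not merely asymptotically: a bounded convex curve that is a line at infinity in both directions and has integrable curvature is a line. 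With $\partial\Omega$ a pair of parallel lines, $\Omega$ is an exact strip; then $u$ restricted to $\Omega$ solves an ODE-type problem invariant under translation along $e$, and a standard moving-plane or sliding argument in the $e$-direction (or directly the uniqueness for the one dimensional free boundary ODE with the given boundary data) gives $u(x)\equiv g(e^\bot\cdot x)$ up to translation, i.e. $u$ is one dimensional.

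The main obstacle I anticipate is the step that rules out transverse asymptotic directions while simultaneously controlling the \emph{global} (not just asymptotic) geometry — one has to combine the topological input ``$\Omega$ connected'' with the quantitative curvature decay to exclude configurations where two nearly-parallel ends bend toward each other over a compact region; the convexity/concavity of the $f_i^\pm$ from Theorem \ref{main result 2}(5) is exactly what prevents such bending, so the argument is really: concave/convex $+$ asymptotically linear $+$ connected complement $\Rightarrow$ exactly linear and parallel.
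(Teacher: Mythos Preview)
Your proposal rests on a misreading of what ``two ends'' means here. In the second problem, the $D_i$ of Theorem~\ref{main result 2}(4) are the unbounded connected components of $\Omega^c=\{u=0\}$, and each such $D_i$ contributes \emph{two} ends, namely the two free boundary curves $\{e_i^\bot\cdot x+a_i=f_i^\pm(e_i\cdot x+b_i)\}$. The paper says this explicitly in its own proof (``each unbounded connected component of $\Omega^c$ gives two ends''), and this is why the number of ends is always even. Hence ``$u$ has two ends'' forces $N=1$, not $N=2$ as you assume. Your whole comparison of $D_1$ and $D_2$ --- transverse strips, four corners disconnecting $\Omega$, etc.\ --- is therefore built on the wrong setup.

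There is a second, independent error: you claim $e_i^-=-e_i^+$ for each $i$. But in the parametrization of Theorem~\ref{main result 2}(5)--(6), both $f_i^+$ and $f_i^-$ are defined on $[R,+\infty)$ in the $e_i$-direction, so both asymptotic directions $e_i^\pm$ point into the half-plane $\{e_i\cdot x>0\}$; they are never antipodal. (The paper even remarks that $\kappa_i^+=\kappa_i^-$, i.e.\ $e_i^+=e_i^-$, is allowed.) Your topological justification for antipodality is accordingly vacuous, and in any case two distinct $D_i$ are disjoint convex sets, so the ``crossing strips'' picture cannot arise.

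The paper's actual proof is much shorter. With $N=1$, the blow-down analysis (Proposition~\ref{limit varifold}, Remark~\ref{rmk 10.4}, and the density proposition following it) shows that the blow-down of $\partial D_1$ lies in a single line. Since $D_1$ is convex, its recession cone is then a half-plane, and a convex set whose recession cone is a half-plane is itself a half-plane. Thus $\partial D_1$ is an exact straight line, and Lemma~\ref{convexity} (free boundary components are \emph{strictly} convex unless $u$ is one dimensional) finishes immediately. Your closing intuition --- ``concave/convex plus asymptotically linear implies exactly linear'' --- is in the right spirit, but the paper reaches it by pure convexity and blow-down, with no need for curvature decay, sliding, or any stability argument at this step.
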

Since the number of ends is even, this implies that $u$ has at least
four ends, unless it is one dimensional.

As in the first problem, an important result used in the proof of
this theorem is the following characterization of stable solutions.
The proof is similar to the one for Theorem \ref{stable solutions
1}.
\begin{thm}\label{stable solutions 2}
Let $u$ be a stable solution of \eqref{equation 0.2}. Then there
exists a unit vector $e$ such that $u(x)\equiv u(x\cdot e)$.
\end{thm}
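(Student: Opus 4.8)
The plan is to follow the scheme used for Theorem~\ref{stable solutions 1}, which is itself the free boundary analogue of the proof of the stable De Giorgi conjecture in $\R^2$ for the Allen--Cahn equation (\cite{GG}, \cite{A-C}); the only genuinely new point is to keep track of the contributions of the free boundary $\partial\Omega$ and to verify that they organise themselves into the curvature term already present in the stability inequality \eqref{stability inequality 2}.

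Set $\sigma:=|\nabla u|$. Two preliminary identities are needed. In $\Omega$, differentiating $\Delta u=W'(u)$ and using the Bochner-type identity $\sigma\Delta\sigma=\nabla u\cdot\nabla\Delta u+|\nabla^2u|^2-|\nabla\sigma|^2$ gives $\Delta\sigma=W''(u)\sigma+\sigma|A|^2$, where $\sigma^2|A|^2=|\nabla^2u|^2-|\nabla\sigma|^2\ge0$. On $\partial\Omega$, since $u\equiv0$ and $\nabla u=-\sqrt{2W(0)}\,\nu$ there, differentiating the free boundary condition $|\nabla u|\equiv\sqrt{2W(0)}$ tangentially and combining with $\Delta u=W'(0)$ yields, with the sign convention for $H$ used in \eqref{stability inequality 2},
\[\partial_\nu\sigma=\Big(-\frac{W'(0)}{\sqrt{2W(0)}}+H\Big)\sigma\qquad\text{on }\partial\Omega;\]
the key point is that the coefficient on the right is exactly the Robin coefficient appearing on the right-hand side of \eqref{stability inequality 2}. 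Now insert $\eta=\phi\sigma$, $\phi\in C_0^\infty(\R^2)$, into \eqref{stability inequality 2} (which holds for every test function since $u$ is stable), integrate by parts over $\Omega$, and substitute the two identities. The interior term $\int_\Omega W''(u)\phi^2\sigma^2$ so produced cancels the potential term in \eqref{stability inequality 2}; the boundary integral $\int_{\partial\Omega}\phi^2\sigma\,\partial_\nu\sigma$ coming from Green's formula cancels, thanks to the boundary identity, the curvature integral $\int_{\partial\Omega}(-\frac{W'(0)}{\sqrt{2W(0)}}+H)\phi^2\sigma^2$; and the cross boundary terms $\pm\int_{\partial\Omega}\phi\sigma^2\partial_\nu\phi$ cancel one another. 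What survives is the Sternberg--Zumbrun type geometric Poincar\'e inequality
\[\int_\Omega\phi^2\big(|\nabla^2u|^2-|\nabla|\nabla u||^2\big)\le\int_\Omega|\nabla u|^2\,|\nabla\phi|^2,\qquad\forall\,\phi\in C_0^\infty(\R^2).\]
(Near interior zeros of $\nabla u$, where $|A|$ is singular although $\sigma^2|A|^2$ stays continuous, one replaces $\sigma$ by $\sqrt{|\nabla u|^2+\varepsilon}$ and lets $\varepsilon\downarrow0$, or uses that the critical set of $u$ is discrete and hence of zero capacity in $\R^2$.)

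Now the two-dimensionality enters: taking $\phi=\phi_R$ the logarithmic cutoff ($\phi_R\equiv1$ on $B_{\sqrt R}$, $\phi_R\equiv0$ off $B_R$, $\phi_R(x)=\log(R/|x|)/\log\sqrt R$ in between) and using the a priori bound $|\nabla u|\le C$ (for \eqref{equation 0.2} this follows from the $L^\infty$ bound on $u$, a Keller--Osserman estimate based on {\bf W4)}, together with boundary gradient estimates), one gets $\int_\Omega|\nabla u|^2|\nabla\phi_R|^2\le C/\log\sqrt R\to0$, whence $\int_\Omega(|\nabla^2u|^2-|\nabla|\nabla u||^2)=0$, i.e.\ $|A|\equiv0$ in $\Omega$. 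Thus every connected piece of every level set of $u$ is a straight segment and $|\nabla u|$ is constant along it; equivalently, the Bochner identity becomes $\Delta\sigma=W''(u)\sigma$, so $\psi_k:=\partial_ku/\sigma$ is a bounded solution of the degenerate equation \eqref{degenerate equation} in $\Omega$ with $\int_{B_R}\sigma^2\psi_k^2=\int_{B_R}|\partial_ku|^2\le CR^2$. One then concludes exactly as in the Allen--Cahn case: either invoke the Liouville theorem for \eqref{degenerate equation} to obtain $\psi_k\equiv c_k$, or argue directly that $|A|\equiv0$ forces $e:=\nabla u/|\nabla u|$ to be locally constant on $\Omega\setminus\{\nabla u=0\}$ (a short computation gives $\nabla e\cdot e\equiv0$ along the integral curves of $e$). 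Either way $\nabla u=|\nabla u|\,e$ for a fixed unit vector $e$, hence $u(x)\equiv u(x\cdot e)$; and if $\Omega$ is disconnected the same holds on each component, but distinct components would then be disjoint slabs with non-parallel axes, which is impossible in $\R^2$, so a single $e$ works. The step requiring the most care is the exact cancellation of the $\partial\Omega$ integrals in the derivation of the geometric Poincar\'e inequality: it rests on identifying the Robin coefficient $-W'(0)/\sqrt{2W(0)}+H$ in \eqref{stability inequality 2} with $\partial_\nu\log|\nabla u|$ along the free boundary, while the remaining points (the gradient bound, the treatment of the interior critical set) are routine adaptations of the Allen--Cahn arguments.
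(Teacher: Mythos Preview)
Your proof is correct and takes a genuinely different route from the paper's. The paper argues via the Ghoussoub--Gui/Ambrosio--Cabr\'e mechanism: stability produces a positive solution $\varphi$ of the linearised Robin problem \eqref{linearized equation}, the directional derivative $u_e$ solves the same problem, and then $\psi=u_e/\varphi$ satisfies \eqref{degenerate equation with boundary condition}; testing with $\psi\eta^2$ and the log cut-off (using only $|\nabla u|\le C$) gives $\nabla\psi\equiv0$, hence $u_e=c\varphi$ and one-dimensionality follows. You instead run the Sternberg--Zumbrun/Farina--Sciunzi--Valdinoci scheme: substituting $\phi|\nabla u|$ directly into the stability inequality, you obtain the geometric Poincar\'e inequality $\int_\Omega\phi^2|\nabla u|^2|A|^2\le\int_\Omega|\nabla u|^2|\nabla\phi|^2$, and the log cut-off then forces $|A|\equiv0$. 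Your key boundary identity $\partial_\nu|\nabla u|=(-W'(0)/\sqrt{2W(0)}+H)|\nabla u|$ is exactly what makes the free-boundary contributions cancel, and it is correct with the paper's conventions.

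What each approach buys: the paper's method is slightly cleaner at the endgame, since $\nabla\psi\equiv0$ immediately gives a constant direction for $\nabla u$ on each component of $\Omega$ without any discussion of the interior critical set; your route has the advantage that the very same computation is reused verbatim in the paper's next lemma (the integral curvature bound), so in effect you are front-loading a calculation the paper does anyway. Your final passage from $|A|\equiv0$ to one-dimensionality is a bit terse---the Liouville alternative you sketch does go through with the log cut-off because $\sigma^2\psi_k^2=u_k^2\le C$, and the ``direct'' geometric alternative also works since in $\R^2$ vanishing of $|A|$ forces parallel straight level lines on each component of $\{\nabla u\ne0\}$---but both need the remark that interior critical points are isolated (or of zero $H^1$-capacity) to glue across them. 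Your handling of disconnected $\Omega$ is slightly misstated: the components could a priori be \emph{parallel} slabs, in which case a common $e$ exists trivially; only non-parallel slabs are excluded by the disjointness, so either way a single $e$ works (alternatively invoke Theorem~\ref{strong half space theorem 2}).
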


Similar to Theorem \ref{strong half space theorem}, we also have a
strong half space theorem for \eqref{equation 0.2}.
\begin{thm}\label{strong half space theorem 2}
Let $u$ be a solution of \eqref{equation 0.2} in $\R^2$
. If $\Omega$ is not connected, then $u$ is one dimensional.
\end{thm}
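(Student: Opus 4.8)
The plan is to argue by a sliding/comparison method exploiting the mean convexity of the free boundary, exactly as in the proof of Theorem \ref{strong half space theorem} for the first problem, which the excerpt advertises as being "rather direct." Suppose $\Omega$ is not connected, and pick two distinct connected components $U_1,U_2$ of $\Omega$. Restricting $u$ to each of them produces two solutions $u_1,u_2$ of \eqref{equation 0.2} (extended by $0$ outside the respective component), and after relabeling we may assume there is a direction $e$ along which the "slab" containing $U_2$ lies above the slab containing $U_1$; translating $u_1$ far in the direction $+e$ makes its support disjoint from that of $u_2$, so the translates $u_1^t(x):=u_1(x-te)$ satisfy $u_1^t\le u_2$ for $t$ large, while for $t$ very negative one has $u_1^t\ge u_2$ (since then the slab of $U_1$ is pushed entirely above that of $U_2$). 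Define $t_0$ to be the critical translation parameter where $u_1^{t_0}$ first touches $u_2$ from one side; by continuity $u_1^{t_0}\le u_2$ (say) everywhere with equality somewhere.

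The next step is to derive a contradiction at the touching configuration. There are two cases for the contact point $x_0$. If $x_0$ is an interior point of both free domains (i.e. $u_2(x_0)>0$ and $u_1^{t_0}(x_0)=u_2(x_0)>0$), then $w:=u_2-u_1^{t_0}\ge0$ solves a linear elliptic equation $\Delta w = W'(u_2)-W'(u_1^{t_0})=c(x)w$ near $x_0$ with $w(x_0)=0$, so the strong maximum principle forces $w\equiv0$ in a neighborhood, hence (by a continuation argument along the connected component) $u_1^{t_0}\equiv u_2$ on the component of $\Omega$ containing $x_0$; this means the translate $U_1+t_0e$ coincides with $U_2$ and $u$ restricted to the union is invariant under translation by $t_0e$, which one then upgrades to one-dimensionality. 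If instead the contact occurs on the free boundary — $u_1^{t_0}(x_0)=u_2(x_0)=0$ and $x_0\in\partial(U_1+t_0e)\cap\partial U_2$ — then the two free boundaries are internally tangent at $x_0$ with $\{u_2>0\}$ lying on one side; here the overdetermined condition $|\nabla u_1|=|\nabla u_2|=\sqrt{2W(0)}$ on the respective free boundaries, together with the Hopf lemma applied to $w\ge0$, gives a contradiction with the strict ordering unless again $w\equiv0$. The crucial geometric input is that $\partial\Omega$ is mean convex (which the excerpt emphasizes): the curvature of the free boundary has a sign that is compatible with this sliding, so the bad case where two free boundaries touch "the wrong way" cannot persist.

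The main obstacle I anticipate is handling the non-compactness: the touching point $x_0$ realizing the infimum of $w$ (or the first contact of the sliding family) may escape to infinity, so one cannot blindly invoke the strong maximum principle at a genuine minimum. To deal with this I would combine the structural description from Theorem \ref{main result 2} — or, more elementarily, the fact that both $u_1,u_2$ lie between $0$ and a bound and their free boundaries are graphs with controlled geometry outside a large ball — to show that the ordering $u_1^{t_0}\le u_2$ degenerates only in a bounded region, or else to run a moving-plane-type argument in which, if contact is lost to infinity, one uses the asymptotic one-dimensional profile of each end (each $D_i$ asymptotic to a half-plane, the solution asymptotic to the one-dimensional heteroclinic) to get an explicit comparison forbidding two disjoint slabs to coexist. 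A cleaner route, mirroring the first problem, is to observe that since $\partial\Omega$ is convex, each unbounded component of $\Omega$ is contained in a slab of width governed by the one-dimensional solution and bounded by two convex graphs pointing away from $\Omega$; two such disjoint convex regions on opposite sides then force, via the balancing/asymptotics in Theorem \ref{main result 2}, the whole configuration to be translation-invariant, hence $u$ one-dimensional. The write-up will make the "escape to infinity" dichotomy precise and quote the needed asymptotics; everything else is a standard maximum-principle and Hopf-lemma argument.
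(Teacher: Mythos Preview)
Your proposal takes a fundamentally different route from the paper, and the route you chose has a genuine gap.

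The paper's proof (for both Theorem~\ref{strong half space theorem} and Theorem~\ref{strong half space theorem 2}) is \emph{not} a sliding argument. It is a short convexity argument: if $\Omega$ has two components $\Omega_1,\Omega_2$, then $\Omega_2$ lies in a connected component $D$ of $\Omega_1^c$; by Lemma~\ref{convexity} each component of $\Omega^c$ is convex, so $D$ is convex and, being a proper convex subset of $\R^2$, lies in a half space. One is therefore reduced to a \emph{weak} half space theorem: if a solution of \eqref{equation 0.2} has $\{u>0\}$ contained in $\{x_1>0\}$, then the component of $\{u=0\}$ containing $\{x_1<0\}$ is convex with boundary a graph $\{x_1=f(x_2)\}$; since $f\ge 0$ is concave on all of $\R$, $f$ is constant, the free boundary is a straight line, and the strict-convexity alternative in Lemma~\ref{convexity} forces $u$ to be one dimensional. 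No compactness, no maximum principle at infinity, no asymptotics.

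Your sliding approach, by contrast, runs into exactly the non-compactness problem you flag, and your proposed fix is circular: you invoke Theorem~\ref{main result 2} (the structure/asymptotics of finite Morse index solutions), but Theorem~\ref{strong half space theorem 2} carries \emph{no} stability or finite-Morse-index hypothesis, and moreover Theorem~\ref{main result 2} itself assumes $\Omega$ connected. So those asymptotics are unavailable here. Even the setup of the slide is shaky: the claim that for $t$ large one has $u_1^t\le u_2$ and for $t$ very negative $u_1^t\ge u_2$ presupposes a slab-type geometry for $\Omega_1,\Omega_2$ that you have not established without already knowing some one-dimensionality. The correct fix is to drop the sliding entirely and use the convexity of $\Omega^c$ as above.
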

This can be proved by the same method as in Theorem \ref{strong half
space theorem}.

\subsection{Idea of the proof}\label{subsection 1.3}

Although these two free boundary problems look more complicated than
the Allen-Cahn equation, it turns out that the classification of
finite Morse index solutions to these two problems is a little
simpler. This is mainly due to the
convexity (mean convexity if the dimension larger than $2$) of free
boundaries in these two problems. This convexity is a consequence of
the Modica type inequality. Although it is also believed that the
Modica inequality in the Allen-Cahn equation gives a kind of (mean)
convexity, it seems not so easy to realize this. For a connection of
the Modica inequality with a kind of convexity in the Allen-Cahn
equation, see \cite{Smrynelis}.

This convexity provides us with the crucial Lipschitz regularity of
free boundaries at infinity. This Lipschitz regularity in turn, combined
with the stability of the solution near infinity and some
topological considerations, allows us to deduce the finiteness
of ends. For the first problem, this is fairly direct, because by
using the stability we can show that at infinity a pair of
$\partial\{u=1\}$ and $\partial\{u=-1\}$ stay at finite distance.
(This fact also implies that bounded connected components of
$\Omega^c$ stay in a fixed compact set.) For the second problem,
because different components of $\partial\Omega$ are expected to be
pushed away on the positive side, we have to make use of an idea of
Dancer in \cite{Dancer}, employing the stability condition to prove
that the number of nodal domains of solutions to the linearized
equation of \eqref{equation 0.2} is finite. This uses the Liouville
property for nonnegative subsolutions to the degenerate elliptic
equation \eqref{degenerate equation}. Of course, the fact that we are in dimension $2$
is crucial here. Using the convexity of free boundaries, this
finiteness information is transferred to $u$, which implies the
finiteness of unbounded components of $\Omega$.

In the second problem, the above argument does not give any
information on bounded components of $\Omega^c$. To show that
bounded components of $\Omega^c$ stay in a fixed compact set, we
turn to a quadratic curvature decay, which can be stated as
\begin{thm}\label{thm curvature decay}
Let $u$ be a solution of \eqref{equation 0.2} which is stable
outside a compact set. There exists a constant $C$  such that
\[H(x)\leq \frac{C}{1+|x|}, \ \ \ \ \forall\ x\in \partial\Omega.\]
\end{thm}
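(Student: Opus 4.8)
# Proof Proposal for Theorem \ref{thm curvature decay}

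\textbf{Overall strategy.} The plan is to argue by contradiction using a blow-up (rescaling) argument combined with the doubling lemma of Pol\'{a}cik--Quittner--Souplet. Suppose the conclusion fails; then there exist points $x_k \in \partial\Omega$ with $(1+|x_k|)H(x_k) \to +\infty$. Set $M_k := H(x_k)$. The doubling lemma produces new points $y_k \in \partial\Omega$ with $H(y_k) \geq M_k$, with $(1+|y_k|)H(y_k) \to \infty$, and with the crucial property that $H(z) \leq 2 H(y_k)$ for all $z \in \partial\Omega \cap B_{\,N/H(y_k)}(y_k)$, where $N = N_k \to \infty$ may be chosen. In particular $|y_k| \to \infty$ so all the asymptotic structure from Parts II.7--II.8 (Lemma \ref{translation at infinity}, the convexity of $\partial\Omega$, the local graph representation, Lemma \ref{lem 4.15}) applies near $y_k$. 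Then I rescale: set $\varepsilon_k := 1/H(y_k) \to 0$ (note $\varepsilon_k / |y_k| \to 0$ as well), translate $y_k$ to the origin and rescale $x \mapsto \varepsilon_k x$, so that the rescaled free boundary has curvature $\approx 1$ at the origin and curvature $\leq 2$ on a ball of radius $N_k \to \infty$. Writing $u_{\varepsilon_k}(x) := u(y_k + \varepsilon_k x)$ (up to the natural scaling of the equation), this $u_{\varepsilon_k}$ satisfies precisely the singular-perturbation system in Question (CD) on balls of radius growing to infinity, with uniformly bounded energy (by the natural energy growth bound, Theorem \ref{main result 2}(1), which is already available here since $u$ is stable outside a compact set) and with $\partial\{u_{\varepsilon_k}>0\}$ bounded in $C^{1,1}$.

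\textbf{Reduction to Question (CD) and the contradiction.} Because $\partial\Omega$ is convex (Lemma \ref{convexity}), near $y_k$ the positivity set $\{u>0\}$ has exactly one of the two forms described after Question (CD): the multiplicity-one case $\{x_2 > f_{\varepsilon_k}(x_1)\}$ with $f_{\varepsilon_k}$ concave, or the multiplicity-two case $\{f^-_{\varepsilon_k}(x_1) < x_2 < f^+_{\varepsilon_k}(x_1)\}$ with $f^-_{\varepsilon_k}$ concave and $f^+_{\varepsilon_k}$ convex --- here Lemma \ref{lem 4.15} (applied before rescaling) guarantees this dichotomy persists under the blow-up. The affirmative answer to Question (CD) --- whose proof occupies Part III and may be assumed here --- then asserts that the curvature of $\partial\{u_{\varepsilon_k}>0\}$ tends to $0$ uniformly on compact sets. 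In particular the rescaled curvature at the origin tends to $0$. But by construction the rescaled curvature at the origin equals $\varepsilon_k H(y_k) = 1$, a contradiction. This completes the proof.

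\textbf{Technical points to verify.} Several routine-but-necessary checks are needed. First, the hypotheses of Question (CD) must genuinely hold after rescaling: the uniform $C^{1,1}$ bound on $\partial\{u_{\varepsilon_k}>0\}$ follows from the doubling-lemma curvature bound $H \leq 2/\varepsilon_k$ together with the interior-type estimates for free boundaries (via \cite{AC}, \cite{KN}), and the uniform energy bound follows from Theorem \ref{main result 2}(1) after noting $\varepsilon_k / |y_k| \to 0$ so that $B_{N_k \varepsilon_k}(y_k) \subset B_{2|y_k|}(0)$ for $k$ large, whence the energy in the rescaled ball is $\varepsilon_k^{-1} \cdot O(|y_k|)$ times the correct power --- one must rescale the energy functional correctly so this is $O(1)$; this is exactly the natural scaling that makes the Modica-type energy density a measure of total length, and the linear energy growth makes it work. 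Second, one should record that the doubling lemma applies to the function $x \mapsto H(x)$ restricted to the (one-dimensional, connected-component-wise) set $\partial\Omega$; since $\partial\Omega$ is a smooth properly embedded curve, this is a legitimate setting. Third, the stability condition is used only here, to invoke Theorem \ref{main result 2}(1) (equivalently Lemma \ref{energy growth}'s analogue in Part II) for the uniform energy bound; once one is inside Question (CD) no stability is needed, consistent with the remark in the introduction.

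\textbf{Main obstacle.} The genuinely hard content is entirely hidden in Question (CD) itself --- that is Part III, and in particular Subcase 2.3 (excluding contact points of $\tilde f^+$ and $\tilde f^-$, which requires Gui's Hamiltonian inequality and a delicate choice of intermediate scale) and Subcase 2.4 (the sliding argument \`a la \cite{Savin}). Granting that, the only real subtlety in the present proof is making sure the doubling/rescaling is set up so that \emph{both} the $C^{1,1}$ bound \emph{and} the bounded-energy hypothesis of Question (CD) survive the blow-up simultaneously; the energy bound in particular relies essentially on the linear energy growth (Theorem \ref{main result 2}(1)) rather than on any a priori local control, and the scaling exponents must be tracked carefully.
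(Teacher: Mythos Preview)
Your overall strategy---contradiction via the doubling lemma of Pol\'{a}cik--Quittner--Souplet, rescaling so that the curvature at the origin is normalized to $1$ while bounded by $2$ on balls of growing radius, and then invoking the resolution of Question~(CD)---matches the paper's approach in Section~\ref{Sec 12} essentially verbatim. But there is a genuine gap in how you obtain the uniform energy bound after rescaling.

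You invoke Theorem~\ref{main result 2}(1), the linear energy growth $\int_{B_R}\leq CR$, for this purpose. In the paper's logical order, however, that statement is Theorem~\ref{thm energy growth} in Section~\ref{sec 10}, proved \emph{after} Theorem~\ref{thm curvature decay}; its proof begins by quoting Proposition~\ref{finite bounded components} (bounded components of $\Omega^c$ lie in a fixed ball), whose proof in turn uses the corollary of Theorem~\ref{thm curvature decay}. So your argument is circular. This is not merely an artifact of presentation: without first knowing that bounded components of $\Omega^c$ do not escape to infinity, the Hamiltonian-identity computation that drives the linear energy bound cannot be set up.

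The paper instead obtains the energy bound \emph{locally}, after the rescaling, from the structure of the free boundary alone. After the blow-up, $\partial\{u_k>0\}\cap B_r(0)$ consists of one or two convex graphs with uniform $C^{1,1}$ bound (this is exactly what the doubling-lemma curvature control buys). One then argues: in the strip $\{0<d(x)<M\varepsilon_k\}$ the energy density is $O(\varepsilon_k^{-1})$ while the area is $O(\varepsilon_k)$ by the coarea formula and the convexity length bound \eqref{9.1}; in $\{d(x)>M\varepsilon_k\}$ one has $u_k>\gamma$ (from the uniform $C^{1,1}$ regularity, or from stability plus Theorem~\ref{stable solutions 2}), whence the energy density satisfies a differential inequality giving exponential decay \eqref{9.2}, and integrating via the coarea formula yields \eqref{A.0.2}. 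This is the content of \eqref{energy bound}--\eqref{A.0.2} in Section~\ref{Sec 12}, and it is what you must supply in place of the appeal to Theorem~\ref{main result 2}(1).

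A minor separate slip: you write $\varepsilon_k:=1/H(y_k)\to 0$, but since $|y_k|\to\infty$ and $H(x)\to 0$ as $|x|\to\infty$ (Lemma~\ref{translation at infinity}), in fact $H(y_k)\to 0$ and $1/H(y_k)\to\infty$. The paper sets $\varepsilon_k:=H(y_k)\to 0$ and rescales by $\varepsilon_k^{-1}$.
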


The formulation of Sternberg-Zumbrun on the stability condition for semilinear
equations is quite similar to the one for minimal hypersurfaces.
Hence it is natural to search for an inequality for the curvature type term $|B|^2$ (see \eqref{Sternberg-Z}), corresponding to the Simons inequality for the second
fundamental form of minimal hypersurfaces. However, as far as the
author knows, this goal has not been achieved yet.

Instead, in this paper we give a proof of Theorem \ref{thm curvature
decay} by a contradiction and blow up method. First, we use the blow
up method (more precisely, the doubling lemma of
Pol\'{a}cik-Quittner-Souplet \cite{Polacik-Q-S}) to reduce the
estimate to some uniform estimates in the corresponding singular
perturbation problem, that is:

{\bf Question (CD):} {\it Consider a solution $u_\varepsilon$ to the
problem
\begin{equation*}
 \left\{\begin{aligned}
&\varepsilon\Delta u_\varepsilon=\frac{1}{\varepsilon}W^\prime(u_\varepsilon),\ \ \ &\mbox{in}\ \{u_\varepsilon>0\}\cap B_1(0),\\
&|\nabla
u_\varepsilon|=\frac{1}{\varepsilon}\sqrt{2W(0)},~~&\mbox{on}~\partial\{u_\varepsilon>0\}\cap
B_1(0).
                          \end{aligned} \right .
\end{equation*}
Assume $\partial\{u_\varepsilon>0\}$ is uniformly bounded in $C^{1,1}$ norm. Can we
 deduce that the curvature of $\partial\{u_\varepsilon>0\}$ converges to $0$ uniformly?}

Due to the presence of free boundaries, under the hypothesis in
Question (CD), we can assume $\{u_\varepsilon>0\}$ have the
following forms:
\begin{itemize}
\item Case 1. (Multiplicity $1$) $\{x_2>f_\varepsilon(x_1)\}$ for a concave function
$f_\varepsilon$;
\item Case 2. (Multiplicity $2$)
$\{f_\varepsilon^-(x_1)<x_2<f_\varepsilon^+(x_1)\}$ for a concave
function $f_\varepsilon^-$ and a convex function $f_\varepsilon^+$.
\end{itemize}

To prove this type of uniform regularity, we use the method in \cite{Wang-Wei 2}. The main idea is to find the elliptic equation satisfied by $f_\varepsilon$ or $f_\varepsilon^\pm$. This needs a
 good approximate solution, which is almost the composition of the one dimensional solution $g$ with the distance to free boundaries. Therefore Fermi coordinates with respect to free boundaries are introduced. We also need to use the nondegeneracy condition on $g$ to get a good estimate on the error between $u_\varepsilon$ and the approximate solution. It turns out the approximate solution can be taken to be $g(d)$ in Case 1, while a perturbation  along the normal direction is needed in Case 2 to fulfill an orthogonal condition.

In Case 1, $f_\varepsilon$ satisfies the minimal surface equation with some remainder terms of higher order. Then second order estimate on $f_\varepsilon$ follows from standard elliptic estimates. The stability condition is not needed in this case. In Case 2, there is an interaction between $f_\varepsilon^\pm$, which is exactly the Liouville equation with some remainder terms of higher order. Here we revise the reduction method in \cite{Wang-Wei 2} to localize the interaction between different components of transition layers, i.e. it suffices to consider interactions between adjacent components. For the Allen-Cahn equation
\[\Delta u=W^\prime(u),\]
this localization technique allows us to consider the case when $W^{\prime\prime}(1)\neq W^{\prime\prime}(-1)$, which is not covered in the method in \cite{Wang-Wei 2}.

We also show that the stability condition of $u_\varepsilon$ induces a stability condition for this Liouville equation. By the fact that there is no entire stable solution of Liouville equation in dimension $1$, we get a second order estimate on $f_\varepsilon^\pm$.

Next, let us discus the exponential convergence of $u$ at infinity.
Once
we know that at infinity $u$ is close to a finite number of one
dimensional solutions patched together, by using the uniform
Lipschitz regularity of $\partial\Omega$, we can further show that
the convergence rate (to the one dimensional profile) is
exponential. This is mainly due to the following two facts: (i)
because we are in dimension $2$, the minimal hypersurfaces are just
straight lines, and hence there is no effect of the curvature; (ii)
the second eigenvalue for the linearized problem at the one
dimensional solution $g$ is positive. Note that the one dimensional
solution is stable. However, there is an eigenfunction with
eigenvalue $0$. Fortunately this eigenfunction is exactly
$g^\prime$, which comes from the translation invariance of the
problem. We would like to mention that the positivity of the second
eigenvalue can be viewed as a {\em nondegeneracy } condition,
because the first eigenvalue $0$ comes from the translation
invariance of the problem. This fact has been used a lot in the
construction of solutions to the Allen-Cahn equation, see for
example \cite{DKW 2}.

To prove the exponential convergence, we view the equation as an
evolution problem in the form
\[\frac{d^2 u}{dt^2}=\nabla\mathcal{J}(u),\]
where $\mathcal{J}$ is the corresponding functional defined on the
real line $\R$. Let $\mathcal{M}$ be the manifold of one dimensional
solutions. (This manifold is the real line $\R$, formed by
translations of a one dimensional solution $g$.) Take the nearest
point $P(u)$ on $\mathcal{M}$ to $u$. Then roughly speaking,
$u-P(u)$ almost lies in the subspace orthogonal to the first
eigenfunction of $P(u)$. By some more computations we get
\begin{equation}\label{main estimate}
\frac{d^2}{dt^2}\|u-P(u)\|^2\geq\mu\|u-P(u)\|^2+\mathcal{R},
\end{equation}
 where
$\mu$ is a positive constant (related to the second eigenvalue of
$g$). The norm $\|\cdot\|$ is usually taken to be a $L^2$ one. The
remainder term $\mathcal{R}$ is of the order $O(e^{-ct})$ for some
constant $c>0$. This then implies the exponential convergence of
$\|u-P(u)\|^2$, and the exponential convergence of $u(t)$ with some
more work.

This approach was used in Gui \cite{Gui}. In this paper we take a
related but different one. For the first free boundary problem, we
use the $L^2$ norm of $1-|\nabla u|$ to control the convergence
rate. This quantity is in fact equivalent to the $L^2$ norm of
$\frac{du}{dt}$. For the second free boundary problem, we take $P(u)$ to be
the one dimensional solution with the same free boundary point. This
turns out to be a rather good approximation of the nearest point, which is sufficient for our use.

Another approach to prove the exponential convergence  is presented
in del Pino-Kowalczyk-Pacard \cite{DKP}, using linear operator
theory in weighted Sobolev spaces. However, due to the presence of
free boundaries, it is not obvious to extend this method to our
setting. A second problem related to this linear theory is that, in
this paper we do not show the equivalence of the condition of
stability outside a compact set and the finite Morse index
condition. (The Schr\"{o}dinger operator case was proved in
\cite{dev}.) We also do not establish any relation between the Morse
index and the number of ends either. For related discussion about
the Morse index of minimal surfaces, see for examples \cite{Choe}
and \cite{Li-Wang 2}.

Finally, in a recent preprint of Jerison and Kamburov \cite{JK},
they also study the structure of solutions to one phase free
boundary problems in $\R^2$. Their study is more on the line of the Colding-Minicozzi
theory, that is, instead of the finite Morse index condition, they
use some assumptions on the topology of the set $\Omega=\{u>0\}$ and
from these assumptions the structure of solutions is obtained. For
Serrin's overdetermined problem, the topology and geometry of
solutions in $\R^2$ have also been studied in
\cite{pacard,ros,ros-r-s,tra}, which could be more complicated due
to the lack of a variational structure. Here we want to emphasize
that the stability condition is very strong and it provides much
better control than these topological conditions. One example is the
curvature estimates derived from the stability condition, which
gives us a clear picture of the solution at $O(1)$ scale and enables
us to obtain the convergence of translations of a fixed solution.

The paper is divided into three parts, the first two dealing with
the two free boundary problems respectively. The organization of the
first two parts are almost the same and can be read independently.
We first prove some uniform estimates for entire solutions of these
two free boundary problems such as the Modica inequality. Then we
prove the one dimensional symmetry of stable solutions and use this
and an integral curvature estimate to prove the finiteness of ends.
In the last step we prove the refined asymptotics at infinity.  In
Part II, more effort is needed to prove the finiteness of bounded
connected components of $\Omega^c$, the proof of which is postponed
to Part III due to its length. The whole Part III is devoted to the
proof of Theorem \ref{thm curvature decay}, which is also relatively
independent of the first two parts. Finally there is an appendix dealing with two nondegeneracy results used in this paper.

\part{The first problem}

In dimension $1$, the problem \eqref{equation 0.1} has a solution
$g$ defined by
 \makeatletter
\let\@@@alph\@alph
\def\@alph#1{\ifcase#1\or \or $'$\or $''$\fi}\makeatother
\begin{equation*}
{g(x)=}
\begin{cases}
1, &x\geq 1, \nonumber\\
x, &-1<x<1,\\
-1,&x\leq-1.\nonumber
\end{cases}
\end{equation*}
\makeatletter\let\@alph\@@@alph\makeatother

Of course, the trivial extension $u(x_1,x_2):=g(x_1)$ is a solution
of \eqref{equation 0.1}. Furthermore, for any $\cdots<t_i<t_i+2\leq
t_{i+1}<t_{i+1}+2\leq \cdots$, with $i\in I$ ($I$ finite or
countably infinite), the function
\begin{equation*}
v^\ast(x_1,x_2):=\left(-1\right)^ig(x_1-t_i) \quad\quad \mbox{if } \frac{t_{i-1}+t_i}{2}\leq x_1 \leq \frac{t_i+t_{i+1}}{2},
\end{equation*}
 is still a solution of
\eqref{equation 0.1}. All of these solutions are stable in $\R^2$. We
call them one dimensional solutions.

 Notation:
$\partial^{\pm}\Omega:=\partial\Omega\cap\{u=\pm1\}$.

\section{Uniform estimates}\label{sec 2}
\setcounter{equation}{0}

In this section we prove a Modica type inequality and establish the
convexity of free boundaries. Note that we do not need any stability
condition here and $u$ only denotes a solution to \eqref{equation
0.1}.

The main result in this section is
\begin{prop}\label{gradient bound}
In $\Omega$, $|\nabla u|\leq 1$.
\end{prop}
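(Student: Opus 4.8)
The plan is to establish the Modica-type inequality $|\nabla u|\le 1$ in $\Omega$ by a maximum-principle argument applied to the quantity $P:=|\nabla u|^2$. First I would compute the equation satisfied by $P$ where $u$ is harmonic: since $\Delta u=0$ in $\Omega$, the Bochner identity gives $\Delta P = 2|\nabla^2 u|^2 \ge 0$, so $P$ is subharmonic in $\Omega$. Hence $P$ cannot attain an interior maximum unless it is constant, and the supremum of $P$ over $\overline\Omega$ is attained on $\partial\Omega$ (or approached at infinity). On the free boundary $\partial\Omega$ we have the boundary condition $|\nabla u|=1$, i.e. $P\equiv 1$ there. The remaining issue is to rule out that $P$ escapes to a value larger than $1$ at infinity; this is where a little care is needed, so I would instead argue by a localized/barrier comparison rather than a naive global maximum principle.

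The cleaner route, which I would actually carry out, is a direct comparison argument. Fix a point $x_0\in\Omega$ and consider the connected component of $\{P>1\}$ containing $x_0$ (arguing by contradiction, suppose it is nonempty). On the boundary of this open set, either we are on $\partial\Omega$ where $P=1$, or we are at an interior point of $\Omega$ where $P=1$ by definition of the component; in all cases $P\le 1$ on the (topological) boundary of the component inside $\R^2$. Since $P$ is subharmonic on this open set and $P>1$ in its interior while $P\le 1$ on its boundary, the maximum principle forces a contradiction, \emph{provided} the component is bounded or we have enough control at infinity. To handle unbounded components I would use the structure of the equation: $|\nabla u|\le 1$ should first be obtained on each nodal piece by noting that $|\nabla u|$ is bounded (by elliptic estimates, since $u$ is bounded by $1$ and harmonic in $\Omega$ with $|\nabla u|=1$ on $\partial\Omega$), so $P$ is a bounded subharmonic function on $\Omega$ with boundary value $1$; then a standard Phragm\'en--Lindel\"of argument in the plane (where bounded subharmonic functions on a domain are controlled by their boundary values) gives $P\le 1$ throughout $\Omega$.

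An alternative I would keep in reserve, closer in spirit to the classical Modica proof, is to work with the auxiliary function $c_0(u):=$ the one-dimensional energy density, but here the ``potential'' is just the characteristic function $\chi_{\{-1<u<1\}}$, so the natural statement degenerates to exactly $|\nabla u|^2\le 1$ with equality giving the rigidity; the subharmonicity of $|\nabla u|^2$ together with the boundary condition is really the whole content. I expect the main obstacle to be purely the behavior at infinity: unlike the bounded-domain case, one must justify that no ``bubble'' of $|\nabla u|>1$ survives out near infinity, and in $\R^2$ this is exactly where one invokes that a bounded subharmonic function is majorized by the harmonic extension of its boundary data (or, equivalently, that $\limsup$ at infinity of a bounded subharmonic function cannot exceed its boundary supremum on a domain whose complement is nonthin). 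Once $|\nabla u|\le 1$ is in hand, equality on $\partial\Omega$ together with the strong maximum principle / Hopf lemma will later yield the convexity of $\partial\Omega$ (mean convexity of the free boundary), but that is the content of the subsequent results, not of this proposition.
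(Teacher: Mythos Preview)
Your approach is essentially correct but genuinely different from the paper's. Both routes require first establishing a uniform a~priori bound $|\nabla u|\le C$; you wave at this (``by elliptic estimates''), but interior estimates for a bounded harmonic function give only $|\nabla u(x)|\le C/\mbox{dist}(x,\partial\Omega)$, so the work is near the boundary. The paper handles this in two preliminary lemmas via the Hopf lemma and Harnack inequality, and you would need the same ingredients. Once $P=|\nabla u|^2$ is known to be bounded, your Phragm\'en--Lindel\"of step can be made precise in $\R^2$: since $\partial\Omega$ is smooth, $\Omega^c$ contains a ball $B_r(p)$, and on $\Omega\cap B_R(p)$ one compares the subharmonic $P$ with the harmonic barrier $1+(C^2-1)\dfrac{\log(|x-p|/r)}{\log(R/r)}$, then lets $R\to\infty$. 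Your parenthetical ``bounded subharmonic functions on a domain are controlled by their boundary values'' is \emph{not} a universal principle and should be replaced by this explicit log-barrier argument.

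The paper instead runs a blow-up: assuming $\sup_\Omega P>1$, it takes an approximate maximizing sequence $x_k$, translates (or rescales, if $\mbox{dist}(x_k,\partial\Omega)\to0$), and passes to a limit $u_\infty$ where $|\nabla u_\infty|^2$ attains an interior maximum; the strong maximum principle then forces $u_\infty$ to be linear with slope $\lambda>1$, and the contradiction comes from the free boundary condition surviving in the viscosity sense on the limit, which forces $\lambda\le1$. The payoff of the paper's route is that it works verbatim in every $\R^n$ (as the paper notes), whereas your logarithmic comparison is specific to $n=2$: in $\R^n$, $n\ge3$, the exterior of a single ball admits bounded positive harmonic functions vanishing on the boundary, so a single ball in $\Omega^c$ no longer suffices for Phragm\'en--Lindel\"of. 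Your argument is shorter in the planar case at hand; the paper's is dimension-free and also illustrates the compactness machinery used repeatedly later.
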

We would like to interpret this gradient bound as a Modica
inequality. As the following proof shows, this gradient bound holds
for entire solutions of \eqref{equation 0.1} in $\R^n$, for any
$n\geq 1$.

To prove Proposition \ref{gradient bound}, we need the following two
lemmas, which are basically consequences of the Hopf Lemma.

\begin{lem}\label{lem 2.2}
There exists a constant $d_A$ 
such that,
\[\mbox{dist}(x,\partial\Omega)\geq d_A\]
for any $x\in\{-3/4<u<3/4\}$.
\end{lem}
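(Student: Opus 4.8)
The plan is to deduce the bound from a scale-invariant (quantitative) form of the Hopf lemma, the free boundary condition $|\nabla u|=1$ furnishing exactly the extra information needed to upgrade Hopf's strict inequality into a quantitative distance estimate. Let $x$ satisfy $-3/4<u(x)<3/4$; then $x\in\Omega$, so $d:=\mathrm{dist}(x,\partial\Omega)>0$, the open ball $B_d(x)$ lies in $\Omega$ (it is connected, contains $x$, and misses $\partial\Omega$), and some point $z\in\partial B_d(x)\cap\partial\Omega$ exists. Exactly one of $u(z)=1$ or $u(z)=-1$ holds; I would carry out the case $u(z)=1$, the other being identical with $1+u$ in place of $1-u$.

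Set $h:=1-u$. Then $h\ge 0$ on $\R^2$ because $|u|\le 1$, $h$ is harmonic in $B_d(x)$, $h(z)=0$, and $|\nabla h(z)|=|\nabla u(z)|=1$ by the boundary condition in \eqref{equation 0.1} (the gradient at $z$ being the one-sided gradient from $\overline\Omega$, which exists since $u\in C^2(\overline\Omega)$). Since $u(x)<3/4$ we have $h(x)>1/4$, so $h\not\equiv 0$ on $B_d(x)$. Rescaling $B_d(x)$ to the unit ball and comparing $h$ from below, in the annulus $B_d(x)\setminus B_{d/2}(x)$, with the harmonic barrier equal to $\inf_{\partial B_{d/2}(x)}h$ on $\partial B_{d/2}(x)$ and to $0$ on $\partial B_d(x)$, while using Harnack's inequality in $B_d(x)$ to see that $\inf_{\partial B_{d/2}(x)}h\ge c_0\,h(x)$, produces a universal constant $c>0$ with
\[-\,\partial_\nu h(z)\ \ge\ \frac{c}{d}\,h(x),\]
$\nu$ being the outward unit normal of $B_d(x)$ at $z$.

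Since $|\nabla h(z)|\ge -\partial_\nu h(z)$ and $|\nabla h(z)|=1$, this forces $d\ge c\,h(x)=c\,(1-u(x))>c/4$; in the case $u(z)=-1$ the same chain with $h=1+u$ gives $d\ge c\,(1+u(x))>c/4$. Either way $\mathrm{dist}(x,\partial\Omega)>c/4$, so the lemma holds with $d_A:=c/4$, which---$c$ being purely dimensional---may even be taken independent of $u$. There is no serious obstacle here: the only points deserving a little care are the scale bookkeeping in the quantitative Hopf inequality (the factor $d^{-1}$ and the universality of $c$), which the rescaling to $B_1(0)$ settles, and the trivial observation that $h$ cannot vanish identically on $B_d(x)$ since $h(x)>1/4$. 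No stability hypothesis, and nothing about $\partial\Omega$ beyond the standing classical regularity, is used.
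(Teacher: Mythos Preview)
Your proof is correct and follows essentially the same approach as the paper: apply the quantitative Hopf lemma to the nonnegative harmonic function $1\pm u$ on the ball $B_d(x)$ tangent to $\partial\Omega$, then use the free boundary condition $|\nabla u|=1$ to convert the Hopf inequality into a lower bound on $d$. The only difference is cosmetic---you spell out the Harnack-plus-barrier derivation of the quantitative Hopf constant, whereas the paper simply invokes it.
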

\begin{proof}
Take an arbitrary point $x\in\{-3/4<u<3/4\}$.
Assume $\mbox{dist}(x,\partial\Omega)=:h$ is attained at
$y\in\partial^-\Omega$. Then $B_h(x)$ is tangent to
$\partial^-\Omega$ at $y$.

The function $\tilde{u}:=1+u$ is non-negative, harmonic in $B_h(x)$,
satisfying $\tilde{u}(x)>1/4$ and $\tilde{u}(y)=0$. Hence by the
Hopf lemma, there exists a universal constant $c$ such that
\begin{equation}\label{using the Hopf lemma}
1=|\nabla \tilde{u}(y)|=\Big|\frac{y-x}{|y-x|}\cdot\nabla
\tilde{u}(y)\Big|\geq\frac{c\tilde{u}(x)}{h}\geq\frac{c}{h},
\end{equation}
which gives $h\geq 2c=:d_A$.
\end{proof}

\begin{lem}\label{lem 2.3}
There exists a universal constant $C$ such that $|\nabla u|\leq C$
in $\Omega$.
\end{lem}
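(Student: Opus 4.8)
The plan is to localize: split $\Omega$ into the ``bulk'', far from the free boundary, and a thin collar around $\partial\Omega$. In the bulk there is nothing to prove, since $u$ is harmonic in $\Omega$ with $|u|\le 1$ everywhere, so the interior gradient estimate for harmonic functions gives
\[
|\nabla u(x)|\le\frac{C}{\mbox{dist}(x,\partial\Omega)}\qquad(x\in\Omega),
\]
which already yields a universal bound whenever $\mbox{dist}(x,\partial\Omega)\ge d_A/2$, with $d_A$ the constant of Lemma~\ref{lem 2.2}. Everything therefore reduces to the collar $\{x\in\Omega:\mbox{dist}(x,\partial\Omega)<d_A/2\}$, where Lemma~\ref{lem 2.2} forces $|u(x)|\ge 3/4$; by the symmetry $u\mapsto -u$ we may assume $u(x)\ge 3/4$.

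For such an $x$, set $h:=\mbox{dist}(x,\partial\Omega)<d_A/2$ and pick a nearest boundary point $y$, so that $B_h(x)\subset\Omega$ and $y\in\partial B_h(x)$. First I would check that $y\in\partial^+\Omega$: otherwise $u(y)=-1$, and along the segment $[x,y]\subset\overline{B_h(x)}$ the continuous function $u$ would drop from $u(x)\ge 3/4$ to $-1$, hence hit the value $0$ at some interior point $z$ of the segment; but then $z\in B_h(x)\subset\Omega$ with $\mbox{dist}(z,\partial\Omega)\le|z-y|<h<d_A$, contradicting Lemma~\ref{lem 2.2}. Now $w:=1-u$ is nonnegative and harmonic in $\Omega$ (in particular in $B_h(x)$), strictly positive there, and $w(y)=0$, while the free boundary condition gives $|\nabla w(y)|=|\nabla u(y)|=1$. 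The key step is to run the Hopf lemma on the interior tangent ball $B_h(x)$, exactly as in the proof of Lemma~\ref{lem 2.2} but read in the opposite direction: it produces a universal $c>0$ with
\[
1=|\nabla w(y)|\ge -\frac{\partial w}{\partial n}(y)\ge\frac{c\,w(x)}{h},
\]
i.e.\ $1-u(x)=w(x)\le h/c$. Finally, since $w\ge 0$ is harmonic in $B_h(x)$, the gradient estimate for nonnegative harmonic functions gives $|\nabla u(x)|=|\nabla w(x)|\le C\,w(x)/h\le C/c$, a universal constant, which finishes the proof. (The argument is dimension independent, consistent with the remark that the bound holds in $\R^n$.)

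The only genuinely delicate point is the collar: there the plain interior estimate degenerates like $1/h$, and one must use the normalization $|\nabla u|=1$ on $\partial\Omega$ to see that $u$ detaches from its boundary value $\pm1$ at precisely the linear rate $\mbox{dist}(\cdot,\partial\Omega)$. The clean device for this is the Hopf lemma on $B_h(x)$, which simultaneously encodes the positivity and harmonicity of $1-u$ and the boundary normalization; notably no stability or global information is needed. One could instead try to use that $|\nabla u|^2$ is subharmonic and equals $1$ on $\partial\Omega$, but since $\Omega$ is unbounded that route would require a Phragm\'en--Lindel\"of-type input, whereas the Hopf argument above is entirely local and gives a clean universal constant.
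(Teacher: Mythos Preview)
Your proof is correct and follows essentially the same approach as the paper: split into bulk and collar, and in the collar use the Hopf lemma on the interior tangent ball $B_h(x)$ to get $1-|u(x)|\le Ch$, then the gradient estimate for nonnegative harmonic functions. The only cosmetic differences are that you explicitly argue (via Lemma~\ref{lem 2.2}) that the nearest boundary point lies on the correct component $\partial^\pm\Omega$, whereas the paper simply says ``without loss of generality,'' and the paper spells out the final step as Harnack plus the oscillation gradient estimate rather than invoking the nonnegative-harmonic gradient bound directly.
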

\begin{proof}
For $x\in\Omega\setminus\{x:\mbox{dist}(x,\partial\Omega)>d_A/4\}$,
$B_{d_A/8}(x)\subset\Omega$. Applying the standard gradient estimate
for harmonic functions we deduce that
\[|\nabla u(x)|\leq \frac{C}{d_A}\sup_{B_{d_A/8}(x)}|u|\leq C.\]

Next, given $x\in\{x\in\Omega:\mbox{dist}(x,\partial\Omega)\leq
d_A/4\}$, let $y\in\partial\Omega$ attain
$\mbox{dist}(x,\partial\Omega)=:r$. Without loss of generality,
assume $y\in \partial^-\Omega$. Then $\tilde{u}:=1+u$ is positive,
harmonic in $B_r(x)$. Since $y\in\partial B_r(x)$ and
$\tilde{u}(y)=0$, as in \eqref{using the Hopf lemma}, we deduce that
\[\tilde{u}(x)\leq Cr.\]
Then by the Harnack inequality,
\[cr\leq \inf_{B_{r/2}(x)}\tilde{u}\leq\sup_{B_{r/2}(x)}\tilde{u}\leq Cr.\]
By the interior gradient estimate for harmonic functions,
\[|\nabla u(x)|=|\nabla \tilde{u}(x)|\leq C\frac{\mbox{osc}_{B_{r/2}(x)}\tilde{u}}{r}\leq C.\qedhere\]
\end{proof}
Now we come to the proof of Proposition \ref{gradient bound}.
\begin{proof}[Proof of Proposition \ref{gradient bound}]
Take $x_k\in\Omega$ so that $|\nabla u(x_k)|\to\sup_\Omega|\nabla
u|$, which we assume to be strictly larger than $1$. The proof is
divided into two cases.

{\bf Case 1.} $\mbox{dist}(x_k,\partial\Omega)$ does not converge to $0$.\\
Consider
\[u_k(x):=u(x_k+x).\]
Clearly $u_k$ is still a solution of \eqref{equation 0.1} in $\R^2$.
Since they are uniformly bounded in $\mbox{Lip}(\R^2)$, after passing to a subsequence $u_k$ converges to $u_\infty$ uniformly on any compact
set of $\R^2$.

The set $\Omega_\infty:=\{-1<u_\infty<1\}$ is open. It is clear that
$\Delta u_\infty=0$ in $\Omega_\infty$.

By the above construction, the origin $0\in\Omega_\infty$ and
\[\lambda:=|\nabla u_\infty(0)|=\sup_{\Omega_\infty}|\nabla u_\infty|>1.\]
Since $\Delta|\nabla u_\infty|^2=2|\nabla^2 u_\infty|^2\geq0$ in the
connected component of $\Omega_\infty$ containing $0$, by the strong
maximum principle, $|\nabla u_\infty|$ is constant and
$\nabla^2u_\infty\equiv 0$ in this component. Note that we still
have $|u_\infty|\leq 1$ in $\R^n$. Hence, after a rotation and a
translation, this component is $\{|x_1|<1/\lambda\}$ and in this
domain
\[
u_\infty(x)\equiv\lambda x_1.
\]
However, arguing as in the proof of \cite[Lemma 4.2]{JK} we can also
show that $\lambda\leq 1$. (Roughly speaking, this is because $u_k$
are classical solutions, hence their uniform limit $u_\infty$ is a
viscosity subsolution.) This is a contradiction and we finish the
proof in this case.

{\bf Case 2.} $h_k:=\mbox{dist}(x_k,\partial\Omega)\to0$.\\
Take a point $y_k\in\partial\Omega$ to attain this distance. Without
loss of generality, assume $y_k\in\partial^-\Omega$.

Let
\[v_k(x):=\frac{1}{h_k}\biggl[1+u_k(y_k+h_kx)\biggr].\]
By Lemma \ref{lem 2.2}, $u<-3/4$ in $B_{2h_k}(y_k)$. Thus in
$B_2(0)$, $v_k$ is nonnegative and harmonic in its positivity set.
Hence it is subharmonic in $B_2(0)$.

Denote $(x_k-y_k)/h_k$ by $z_k$. Since $|z_k|=1$, after passing to a subsequence it
converges to a limit point $z_\infty$.

As in Case 1, $v_k$ converges to a Lipschitz function $v_\infty$
uniformly on any compact set, and $\Delta v_\infty=0$ in
$\{v_\infty>0\}$.

Since $\Delta v_k=0$ and $v_k>0$ in $B_1(z_k)$, by noting that
$|\nabla v_k(z_k)|>1$, we obtain
\[\inf_{B_r(z_k)}v_k\geq c(r)>0,\quad \mbox{for }\forall r\in(0,1)  \mbox{ and  some constant } c(r),\]
where $c(r)$ is independent of $k$. Thus $\{v_\infty>0\}$ is
nonempty.

Still as in Case 1, we get
\[|\nabla v_\infty(z_\infty)|=\sup_{\{v_\infty>0\}}|\nabla v_\infty|>1.\]
Using the strong maximum principle, after suitable rotation and
translation, $v_\infty$ has the form $|\nabla v_\infty(z_\infty)|
x_2^+$. This leads to a contradiction with the free boundary
condition as in Case 1.
\end{proof}

\begin{prop}\label{strict convexity}
Every connected component of $\{u=1\}$ or $\{u=-1\}$ is convex.
Moreover, it is strictly convex unless $u$ is one dimensional.
\end{prop}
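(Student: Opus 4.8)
The plan is to derive convexity of the free boundary from the gradient bound $|\nabla u|\le 1$ of Proposition \ref{gradient bound}, which plays the role of the Modica inequality here. The key observation is that, since $\tilde u:=1+u$ is nonnegative and harmonic in $\Omega$ near a component of $\partial^-\Omega$, and $|\nabla\tilde u|=1$ there with $\tilde u=0$ on the boundary, the function $|\nabla u|^2$ attains its maximum value $1$ on $\partial\Omega$. First I would compute, using $\Delta|\nabla u|^2=2|\nabla^2u|^2\ge 0$ in $\Omega$, that $|\nabla u|^2$ is subharmonic, so by the strong maximum principle it cannot attain an interior maximum unless it is constant. Evaluating the Hopf-type boundary identity at $\partial\Omega$ (as in \eqref{using the Hopf lemma}) shows $|\nabla u|=1$ on $\partial\Omega$. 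Now I would apply the boundary version of the Hopf lemma to $1-|\nabla u|^2\ge 0$: at any free boundary point $y$, the inward normal derivative of $|\nabla u|^2$ is nonnegative, and a direct computation of $\partial_\nu|\nabla u|^2$ at $y$ in terms of the second fundamental form gives $\partial_\nu|\nabla u|^2 = 2\,\nabla^2u(\nabla u,\nu)$; since $\nabla u$ is parallel to $\nu$ on $\partial\Omega$ (because $u$ is constant on each component of $\partial\Omega$, so its tangential gradient vanishes), this equals $2\,\nabla^2 u(\nu,\nu)$ up to the sign fixed by which side we are on.

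Next I would relate $\nabla^2u(\nu,\nu)$ on $\partial\Omega$ to the mean curvature $H$. Differentiating $|\nabla u|^2=1$ along $\partial\Omega$ is not available since $|\nabla u|^2$ is only $\le 1$ in the interior; instead, I would use the standard identity on a level set: for a harmonic function, $\Delta u = u_{\nu\nu} + H\,u_\nu + \Delta_T u = 0$, where $\Delta_T u$ is the tangential Laplacian, which vanishes since $u$ is constant on the component; hence $u_{\nu\nu} = -H\,u_\nu = -H$ (with the appropriate sign convention since $|u_\nu|=1$). Combining this with the sign of $\partial_\nu|\nabla u|^2\ge 0$ from Hopf forces $H\ge 0$ on $\partial\Omega$ with respect to $\nu$ pointing into $\Omega^c$, which is precisely the statement that each component of $\{u=\pm1\}$ is convex. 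For strict convexity, I would argue by the Hopf lemma's rigidity clause: if $\partial_\nu|\nabla u|^2=0$ at some boundary point $y$, then $1-|\nabla u|^2$ vanishes to high order at $y$, and since it is a nonnegative supersolution of a uniformly elliptic equation that vanishes at an interior-approached boundary point with zero normal derivative, it must vanish identically in the connected component of $\Omega$; then $|\nabla u|\equiv 1$ and $\nabla^2u\equiv 0$ there, so $u$ is affine, hence (using $-1\le u\le 1$) one dimensional. Thus if $u$ is not one dimensional, $H>0$ everywhere on $\partial\Omega$, giving strict convexity.

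The main obstacle I anticipate is making the Hopf-lemma argument rigorous at the free boundary: the function $|\nabla u|^2$ is smooth up to $\overline{\Omega}$ by the assumed $C^2$ regularity, but one must be careful that $\partial\Omega$ is only assumed smooth (so Hopf applies with an interior ball condition, which holds by smoothness), and that the sign computation of $\partial_\nu|\nabla u|^2$ correctly accounts for the two cases $\partial^+\Omega$ and $\partial^-\Omega$. A secondary technical point is the rigidity step: one needs $1-|\nabla u|^2$ to satisfy a uniformly elliptic equation with bounded coefficients so that the strong maximum principle / Hopf rigidity applies; this follows from expanding $\Delta(1-|\nabla u|^2) = -2|\nabla^2u|^2$ and rewriting $|\nabla^2 u|^2$ in terms of $\nabla|\nabla u|^2$ and lower-order terms, using $|\nabla u|$ bounded away from $0$ near $\partial\Omega$ (by Lemma \ref{lem 2.2} combined with Harnack, which gives a positive lower bound for $|\nabla u|$ in a neighborhood of $\partial\Omega$). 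I would also double-check the alternative that the distance-maximizing point lies on $\partial^+\Omega$, which is handled symmetrically by replacing $u$ with $-u$.
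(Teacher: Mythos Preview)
Your approach is correct and is precisely the argument the paper has in mind: the paper defers to \cite{CJK}, \cite{JK}, \cite{JS}, and carries out the parallel computation explicitly for the second problem in Lemma~\ref{convexity}, using the $P$-function $P=\tfrac12|\nabla u|^2-W(u)$ (which for the first problem reduces to $\tfrac12(|\nabla u|^2-1)$, exactly your $-\tfrac12(1-|\nabla u|^2)$). Two minor comments: the rigidity step is simpler than you indicate, since $\Delta(1-|\nabla u|^2)=-2|\nabla^2u|^2\le 0$ already puts you in the Laplacian Hopf setting without any rewriting or lower bound on $|\nabla u|$; and your closing sentence about a ``distance-maximizing point'' seems to be stray text unrelated to this argument.
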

This follows from the calculation in \cite{CJK} (see also \cite{JK}
and \cite{JS}).

This convexity implies that
\begin{coro}
$\Omega$ is unbounded.
\end{coro}
\begin{proof}
Assume $\Omega\subset B_R(0)$ for some $R>0$. Then the connected
component of $\Omega^c$ containing $\R^2\setminus B_R(0)$ is the
whole $\R^2$, because it is convex. This is a contradiction.
\end{proof}

Next we give a proof of Theorem \ref{strong half space theorem}.
Thus assume $\Omega$ is not connected and take two different
connected components of $\Omega$, $\Omega_i$, $i=1,2$.
 For each $i=1,2$,
define $u_i$ to be the restriction of $u$ to $\Omega_i$, with the
obvious extension to $\Omega^c_i$. Then $u_i$ are still
solutions of \eqref{equation 0.1}.

Since $\Omega_2$ is connected, it is contained in a connected
component of $\Omega^c_1$, say $D$. Because $D$ is convex, it is
contained in an half space $H$, say $\{x_1>0\}$. In this setting, we
have the following {\em weak half space theorem}.
\begin{thm}\label{half space theorem}
Let $v$ be a solution of \eqref{equation 0.1}. Suppose
$\{-1<v<1\}$ is contained in an half space. Then $v$ is one
dimensional.
\end{thm}
\begin{proof}
As before, we can assume $\{-1<v<1\}$ to be connected and it is
contained in $\{x_1>0\}$.

Denote by $D$ the connected component of $\R^2\setminus\{-1<v<1\}$
containing $\{x_1<0\}$. Since $D$ is convex, it can be directly
checked that $\partial D$ is a graph in the form $\{x_1=f(x_2)\}$.
By definition, $f$ is a nonnegative concave function, hence a
constant. Then Proposition \ref{strict convexity} implies that $g$
is one dimensional.
\end{proof}
Theorem \ref{strong half space theorem} follows from this theorem.

For applications below, we present a non-degeneracy result for the
set $\{u=\pm 1\}$.
\begin{prop}\label{nondegenarcy of contact set 1}
If $u\neq v^\ast$ (with a unit vector $e$ and $\cdots<t_i\leq
t_{i+1}\leq \cdots$, where there are two constants $t_i=t_{i+1}$),
then for every $x\in\partial^\pm\Omega$ and $r>0$,
$|B_r(x)\setminus\Omega|>0$.
\end{prop}
\begin{proof}
Assume there is an $x_0\in\partial\Omega$ and $r_0>0$, such that
$|B_{r_0}(x_0)\setminus\Omega|=0$. By the convexity of
$\partial\Omega$, $\partial\Omega\cap B_{r_0}(x_0)$ are straight
lines. By Lemma \ref{strict convexity}, in $B_{r_0}(x_0)$,
$u=v^{\ast}$
for a unit vector $e$ and two constants $t_1=t_2$. 
 Then by Theorem \ref{strong half space theorem} and the unique continuation principle applied to $u$, this holds everywhere in $\R^2$.
\end{proof}
In the following, we will always assume $u$ satisfies this
non-degeneracy condition.

\section{The stable De Giorgi conjecture}\label{sec 3}
 \setcounter{equation}{0}

In this section we prove the stable De Giorgi conjecture. Then we
use the stability to derive an integral curvature bound and use this
to study the convergence of translations at infinity of a solution
$u$ to \eqref{equation 0.1}, if it is stable outside a
compact set.

Let us first prove Theorem \ref{stable solutions 1}.
\begin{proof}[Proof of Theorem \ref{stable solutions 1}]
Similar to \cite[Section 2.3]{JS}, the stability condition implies
the existence of a positive function $\varphi\in
C^\infty(\overline{\Omega})$, satisfying
\begin{equation*}
 \left\{\begin{aligned}
&\Delta \varphi=0,\ \ \ \mbox{in}\ \Omega,\\
&\varphi_\nu=-H\varphi,~~\mbox{on}~\partial\Omega.
                          \end{aligned} \right.
\end{equation*}
By a direct differentiation, for any unit vector $e$, the directional
derivative $u_e:=e\cdot\nabla u$ also satisfies this equation.

Let $\psi:=u_e/\varphi$. It satisfies
\begin{equation}\label{degenerate equation with boundary condition}
 \left\{\begin{aligned}
&\mbox{div}\left(\varphi^2\nabla\psi\right)=0,\ \ \ \mbox{in}\ \Omega,\\
&\psi_\nu=0,~~\mbox{on}~\partial\Omega.
                          \end{aligned} \right.
\end{equation}
For any $\eta\in C_0^\infty(\R^2)$, testing this equation with
$\psi\eta^2$ and integrating by parts on $\Omega$, we obtain
\[\int_\Omega\biggl(\varphi^2|\nabla\psi|^2\eta^2+2\varphi^2\eta\psi\nabla\eta\nabla\psi\biggr)=0.\]
By the Cauchy inequality,
\[\int_\Omega\varphi^2|\nabla\psi|^2\eta^2\leq 8\int_\Omega\varphi^2\psi^2|\nabla\eta|^2.\]
Then we can use standard log cut-off test functions to show that
\[\int_\Omega\varphi^2|\nabla\psi|^2=0.\]
As in \cite{GG} or \cite{A-C}, this implies the one dimensional
symmetry of $u$.
\end{proof}

In the following part of this section, we assume $u$ to be a
solution of \eqref{equation 0.1}, and it is stable outside a compact set of
$\R^2$.
\begin{lem}\label{lem 3.1}
For any $L>1$, there exists an $R(L)$ such that, there is no bounded
component of $\{u=\pm1\}$ contained in $B_{R(L)}(0)^c$ with diameter
smaller than $L$.
\end{lem}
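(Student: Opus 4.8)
The plan is to argue by contradiction using a blow-down/compactness argument together with the stability inequality \eqref{stability inequality} outside a compact set. Suppose the statement fails for some $L>1$: then there is a sequence of bounded connected components $\gamma_k$ of $\{u=1\}$ or $\{u=-1\}$, with $\gamma_k\subset B_{R_k}(0)^c$, $R_k\to+\infty$, and $\mathrm{diam}(\gamma_k)<L$. By Proposition \ref{strict convexity} each $\gamma_k$ bounds a convex bounded component $K_k$ of $\Omega^c$ (or is such a component); pick a point $x_k\in\gamma_k$ and translate, setting $u_k(x):=u(x+x_k)$ (adjusting the sign so that the component is always $\{u_k=1\}$). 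Since $|x_k|\geq R_k\to+\infty$, each $u_k$ is a solution of \eqref{equation 0.1} that is stable on all of $\R^2$ for $k$ large (the unstable compact set recedes to infinity). By Proposition \ref{gradient bound} and Lemma \ref{lem 2.3} the $u_k$ are uniformly Lipschitz, so along a subsequence $u_k\to u_\infty$ locally uniformly, with $u_\infty$ a stable solution of \eqref{equation 0.1} in $\R^2$, and $0$ lies on the boundary of a convex component $K_\infty$ of $\Omega_\infty^c$ of diameter at most $L$.

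By Theorem \ref{stable solutions 1}, $u_\infty$ is one dimensional: $u_\infty(x)\equiv g_\infty(x\cdot e)$ for some unit vector $e$ and some one dimensional profile $g_\infty$ (necessarily of the form $v^\ast$, up to the obvious normalizations). But the contact sets $\{g_\infty(\cdot)=\pm1\}$ for a one dimensional solution are half-planes or infinite slabs; in particular every connected component of $\{u_\infty=1\}$ is unbounded. This contradicts the fact that $0$ lies on a bounded convex component $K_\infty$ of diameter $\leq L$. Hence no such sequence $\gamma_k$ exists, which is exactly the assertion: there is an $R(L)$ such that no bounded component of $\{u=\pm1\}$ contained in $B_{R(L)}(0)^c$ has diameter smaller than $L$.

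The step I expect to be the main obstacle is the convergence argument that passes the relevant bounded component $\gamma_k$ to a genuine bounded component of the limit. Uniform convergence of the $u_k$ gives convergence of the sets $\{u_k=1\}$ only in a weak (Hausdorff, on compact sets) sense, and a priori a bounded component of $\{u_k=1\}$ could degenerate — for instance collapse to a segment or escape to spatial infinity within the translated frame. To rule this out I would use two inputs: first, the non-degeneracy Proposition \ref{nondegenarcy of contact set}, which forces $|B_r(x)\setminus\Omega_k|>0$ for $x\in\partial^{\pm}\Omega_k$ and hence keeps the component from collapsing to a lower-dimensional set; second, since $\mathrm{diam}(\gamma_k)<L$ and $x_k\in\gamma_k$, the whole component $\gamma_k$ stays in $B_L(0)$ in the translated coordinates, so its limit is trapped in $\overline{B_L(0)}$ and cannot escape. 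Combining these with the convexity of $\partial\Omega_k$ (uniform $C^2$ bounds on the free boundary away from degeneration, from Proposition \ref{strict convexity} and standard free-boundary regularity) yields a nonempty bounded convex limiting component, completing the contradiction. One should also address the borderline case where the limit profile is exactly the degenerate $v^\ast$ with $t_i=t_{i+1}$, but this is excluded by the standing non-degeneracy assumption adopted after Proposition \ref{nondegenarcy of contact set}.
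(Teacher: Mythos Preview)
Your compactness-and-classification strategy is natural, but it is circular in the logical structure of the paper and has a real gap at the convergence step. To apply Theorem \ref{stable solutions 1} to the limit $u_\infty$ you need $u_\infty$ to be a \emph{classical} solution of \eqref{equation 0.1}: the proof of Theorem \ref{stable solutions 1} uses the boundary condition $\varphi_\nu=-H\varphi$ and the fact that $u_e$ solves the linearized problem, both of which require $\partial\Omega_\infty$ to be $C^2$ and $u_\infty\in C^2(\overline{\Omega_\infty})$. Uniform $C^0$ convergence from the Lipschitz bound does not give this. In the paper, the $C^\ell$ convergence of translations (Lemma \ref{limit at infinity}) is obtained only \emph{after} Lemma \ref{lem 3.1}, via Corollary \ref{integral curvature estimate}: the integral curvature bound makes the free boundaries flat, and only then does Alt--Caffarelli/Kinderlehrer--Nirenberg regularity kick in to give uniform $C^\ell$ bounds. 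Your invocation of ``uniform $C^2$ bounds \dots\ from Proposition \ref{strict convexity} and standard free-boundary regularity'' is exactly the missing ingredient: convexity alone gives no upper bound on $H$, and without flatness there are no uniform higher regularity estimates available at this stage.

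The same issue undermines the survival of the bounded component. Proposition \ref{nondegenarcy of contact set} gives $|B_r(x)\setminus\Omega|>0$ but with no uniform lower bound; the convex sets $K_k$ could collapse to a segment (their curvature blowing up while Gauss--Bonnet keeps $\int_{\partial K_k}H=2\pi$), leaving no bounded component in the limit and no contradiction. The paper's proof avoids all of this by a direct quantitative argument: test the stability condition with $\varphi|\nabla u|$ and use log cut-offs to get $\int_{\Omega\setminus B_R}|\nabla u|^2|A|^2\le C/\log R$, then bound $\int_{\partial D}H$ by this quantity via an integration by parts of $(|\nabla u|^2)_\nu$; since Gauss--Bonnet forces $\int_{\partial D}H=2\pi$ for any bounded convex component, this yields the contradiction without passing to any limit.
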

\begin{proof}
{\bf Step 1.} For any $R$ large, take a $\varphi\in
C_0^\infty(\R^2\setminus B_R(0))$ and test the stability condition with
$\varphi|\nabla u|$. After some integration by parts, we obtain
\begin{eqnarray}\label{3.0.1}
\int_{\partial\Omega}H\varphi^2
&\leq&\int_\Omega\biggl(|\nabla\varphi|^2|\nabla u|^2+2\varphi|\nabla
u|\nabla\varphi\cdot\nabla |\nabla
u|+\varphi^2|\nabla|\nabla u||^2\biggr) \nonumber\\
&=&\int_{\partial\Omega}\frac{1}{2}\varphi^2\left(|\nabla
u|^2\right)_\nu+\int_{\Omega}\bigl(|\nabla\varphi|^2|\nabla
u|^2-\varphi^2|B|^2|\nabla u|^2\bigr).
\end{eqnarray}

On $\partial\Omega$,
\[\left(|\nabla u|^2\right)_\nu=-2u_{\nu\nu}=
2H.\] Hence \eqref{3.0.1} is transformed to
\[\int_{\Omega}|\nabla
u|^2|B|^2\varphi^2\leq\int_\Omega|\nabla u|^2|\nabla\varphi|^2.\]
Then we can use standard log cut-off test functions to show that
\[\int_{\Omega\setminus B_R}|\nabla
u|^2|B|^2\leq \frac{C}{\log R},\] which converges to $0$ as
$R\to+\infty$.

{\bf Step 2.} For any $\eta\in C_0^\infty(B_R(0)^c)$,
\begin{eqnarray}\label{3.0.2}
\int_{\partial\Omega}H\eta&=&\int_{\partial\Omega}\left(\frac{|\nabla
u|^2}{2}\right)_\nu\eta\nonumber
\\
&=&\int_{\Omega}\biggl(\nabla \frac{|\nabla u|^2}{2}\nabla \eta+\Delta
\frac{|\nabla
u|^2}{2}\eta\biggr)\\
&=&\int_{\Omega}\biggl(\nabla^2u\left(\nabla u,\nabla\eta\right)-\eta|\nabla^2
u|^2\biggr).\nonumber
\end{eqnarray}

{\bf Claim.} In $\Omega$,
\begin{equation}\label{3.0.3}
|\nabla^2u\cdot\nabla u|^2\leq |\nabla u|^2|B|^2,
\end{equation}
 and
 \begin{equation}\label{3.0.4}
|\nabla^2 u|^2=2|\nabla u|^2|B|^2.
\end{equation}
 This can be proved by writing
these quantities in  the coordinate form and using the equation
$\Delta u=0$.

Assume there is a connected component of $\{u=0\}$, $D$, contained
in $B_R(0)^c$ with its diameter smaller than $L$. Take a point $x$
in this component and $\eta$ to be a standard cut-off function in
$B_{2L}(x)$ with $\eta\equiv 1$ in $B_L(x)$. Substituting this into
\eqref{3.0.2} and using \eqref{3.0.3}, \eqref{3.0.4}, by noting that
$H\geq 0$ on $\partial\Omega$, we obtain
\begin{eqnarray*}
\int_{\partial D}H&\leq& \frac{C}{L}\int_{B_{2L(x)}}|\nabla
u||B|+C\int_{B_{2L(x)}}|\nabla
u|^2|B|^2\\
&\leq& C\int_{B_{2L(x)}}|\nabla
u|^2|B|^2+C\left(\int_{B_{2L(x)}}|\nabla
u|^2|B|^2\right)^{\frac{1}{2}}\\
&\leq& \frac{C}{\sqrt{\log R}}.
\end{eqnarray*}

On the other hand, because $D$ is convex, the Gauss-Bonnet theorem
says
\[\int_{\partial D}H=2\pi.\]
Thus we get a contradiction if
\[\frac{C}{\sqrt{\log R}}<2\pi.\qedhere\]
\end{proof}
As a corollary, we have
\begin{coro}
For any $x\in\partial\Omega\setminus B_{R(L)}(0)$ and $r\in(0,L/2)$,
the connected component of $\partial\Omega\cap B_r(x)$ passing
through $x$, denoted by $\Gamma^{x,r}$, has its boundary in
$\partial B_r(x)$. Hence,
\[\mathcal{H}^1(\Gamma^{x,r})\geq 2r.\]
\end{coro}

Moreover, the proof of Lemma \ref{lem 3.1} also implies that
\begin{coro}\label{integral curvature estimate}
For any $\varepsilon>0$ small and $L>0$ large, there exists an
$R(L,\varepsilon)$ so that the following holds. For any
$x\in\partial\Omega\setminus B_{R(L,\varepsilon)}(0)$, the connected
component of $\partial\Omega$ passing through $x$, $\Gamma^{x,L}$,
satisfies
\[\int_{\Gamma^{x,L}\cap B_L(x)}H\leq \varepsilon,\]
and
\[\mbox{dist}_{H}\left(\Gamma^{x,L}\cap B_L(x), \{e^{x,L}\cdot (y-x)=0\}\cap B_L(x)\right)\leq \varepsilon,\]
where $e^{x,L}$ is a unit vector.
\end{coro}

Next we claim that
\begin{lem}\label{limit at infinity}
 For any $\ell\geq 1$ and
$x_k\in\Omega$, $|x_k|\to\infty$, the translation function
\[u_k(x):=u(x_k+x)\]
converges in the $C^\ell$ sense to $v^\ast(e\cdot x)$ for some unit
vector $e$ and a sequence of
\[\cdots<t_i<t_i+2\leq t_{i+1}<t_{i+1}+2\leq \cdots,\quad i\in I.\]
Moreover, the translations of $\Omega$, $\Omega_k:=\Omega-x_k$
converges to $\{-1<v^\ast(e\cdot x)<1\}$ in the $C^\ell$ sense on
any compact set.
\end{lem}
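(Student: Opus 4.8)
The plan is to combine the standard compactness/blow-down argument with the uniform estimates from Sections \ref{sec 2}--\ref{sec 3}. First, since $|\nabla u|\le 1$ on $\Omega$ (Proposition \ref{gradient bound}) and $u$ is bounded by construction, the translated functions $u_k(x):=u(x_k+x)$ are uniformly bounded in $\mathrm{Lip}(\R^2)$; by Arzel\`a--Ascoli, after passing to a subsequence, $u_k\to u_\infty$ uniformly on compact sets, with $u_\infty$ Lipschitz, $|u_\infty|\le 1$, $|\nabla u_\infty|\le 1$, and $\Delta u_\infty=0$ in $\Omega_\infty:=\{-1<u_\infty<1\}$. The key point is that $u_\infty$ is still a genuine solution of \eqref{equation 0.1}: away from the free boundary this is clear, and near the free boundary one uses the uniform interior estimates of Lemma \ref{lem 2.3}, the nondegeneracy in Lemma \ref{lem 2.2}, plus the free boundary regularity theory of \cite{AC},\cite{Weiss} to pass the condition $|\nabla u_\infty|=1$ on $\partial\Omega_\infty$ to the limit (the one-sided inequalities come from the viscosity-sub/supersolution argument used in the proof of Proposition \ref{gradient bound}, adapting \cite[Lemma 4.2]{JK}).

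Next I would show $u_\infty$ is stable \emph{everywhere} in $\R^2$. Indeed, for any fixed $\eta\in C_0^\infty(\R^2)$, supported in $B_\rho(0)$, the translated test function $\eta(\cdot-x_k)$ is supported in $B_\rho(x_k)\subset \R^2\setminus B_{R_0}(0)$ once $|x_k|$ is large, so the stability inequality \eqref{stability inequality} applies to $u$ with this test function; rewriting in the translated frame and letting $k\to\infty$ (using $C^1_{loc}$ convergence of $\partial\Omega_k$, which I justify below) gives $Q_{u_\infty}(\eta)\ge 0$. Hence $u_\infty$ is a stable solution of \eqref{equation 0.1} on all of $\R^2$, so by Theorem \ref{stable solutions 1} it is one dimensional: $u_\infty(x)\equiv w(e\cdot x)$ for some unit vector $e$ and some one-dimensional stable solution $w$. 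By the classification of one-dimensional stable solutions recorded in Part I, $w=v^\ast$ for a spacing sequence $\cdots<t_i<t_i+2\le t_{i+1}<\cdots$, $i\in I$.

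It remains to upgrade the convergence from $C^0_{loc}$ to $C^\ell_{loc}$ and to get convergence of the domains $\Omega_k$. Here the crucial ingredient is the quantitative flatness supplied by Corollary \ref{integral curvature estimate}: for $x_k\in\overline{\Omega}$ with $|x_k|\to\infty$, the component of $\partial\Omega$ through (a nearby boundary point of) $x_k$ is, in $B_L$, $\varepsilon$-close to a hyperplane with integral curvature $\le\varepsilon$. Combined with the uniform Lipschitz bound on $\partial\Omega$ at infinity (from convexity, Proposition \ref{strict convexity}, together with Lemma \ref{lem 3.1} which rules out small bounded components far out), this gives that $\partial\Omega_k$ is locally a uniformly $C^{1,\alpha}$ graph; Schauder estimates for the harmonic function $u_k$ up to this boundary, together with the boundary condition $|\nabla u_k|=1$, then bootstrap to uniform $C^{\ell}$ bounds for $u_k$ near $\partial\Omega_k$, and interior estimates handle the rest. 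Hence $u_k\to v^\ast(e\cdot x)$ in $C^\ell_{loc}$ and $\Omega_k\to\{-1<v^\ast(e\cdot x)<1\}$ in $C^\ell_{loc}$. Finally, since every subsequence has a further subsequence converging to some such $v^\ast(e\cdot x)$, and the only freedom is the choice of $e$ and the $t_i$'s, a standard subsequence argument gives convergence of the whole sequence (after relabelling). The main obstacle I anticipate is the careful passage of the free boundary condition to the limit and the boundary regularity bootstrap: one must make sure the free boundary of $u_k$ does not degenerate (Proposition \ref{nondegenarcy of contact set} and Lemma \ref{lem 2.2}) and that the flatness from Corollary \ref{integral curvature estimate} genuinely controls the $C^{1,\alpha}$ norm uniformly in $k$, which is where the dimension-two convexity is doing the real work.
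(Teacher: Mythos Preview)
Your route differs from the paper's, and there is a real gap.

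The paper does not invoke Theorem~\ref{stable solutions 1} here. After extracting a uniform limit $u_\infty$, it uses Corollary~\ref{integral curvature estimate} to show that each component of $\partial\Omega_k$ is Hausdorff-close to a line, then applies the flat-implies-smooth regularity of \cite{AC} and \cite{KN} to obtain uniform $C^\ell$ bounds on the free boundaries, and finally classifies the limit on each resulting strip $D_\infty$ \emph{directly}: since $|\nabla u_\infty|\le 1$ persists in $D_\infty$ and $\partial D_\infty$ is straight, Proposition~\ref{strict convexity} forces $\nabla^2 u_\infty\equiv 0$, so $u_\infty$ is affine on $D_\infty$ and the strip has width exactly~$2$. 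The paper then separately rules out strips bounded by two components of $\partial\Omega_k$ carrying the \emph{same} boundary value, by explicit contradiction arguments.

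Your strategy of passing stability to the limit and applying Theorem~\ref{stable solutions 1} is a plausible alternative, but you have not excluded the collapse of two same-sign components of $\partial\Omega_k$. Lemma~\ref{lem 2.2} only keeps $\partial^+\Omega$ and $\partial^-\Omega$ apart; Lemma~\ref{lem 3.1} only forbids small \emph{bounded} components of $\{u=\pm 1\}$, not two nearby unbounded pieces of $\partial^-\Omega$ bounding a thin component $D_k$ of $\Omega_k$. If such a collapse occurs, $\sup_{D_k}(1+u_k)\to 0$ and this strip disappears in the limit: then $\Omega_k$ is not eventually $C^\ell$-diffeomorphic to $\Omega_\infty$, and the domain-convergence conclusion of the lemma fails outright --- regardless of whether $u_\infty$ is one-dimensional. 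The paper dispatches this case (its Case~2, Subcase~1) by an integration-by-parts identity against the free boundary condition, showing $\int_{\partial D_k}\eta\to 0$ while convexity forces $\mathcal{H}^1(\partial D_k\cap B_1)\to 4$; it handles the non-collapsing same-sign case (Subcase~2) by the same $\nabla^2 u_\infty\equiv 0$ argument. Your sketch needs an analogue of this exclusion step before any classification of $u_\infty$ can deliver the full conclusion. Finally, the lemma does not assert a unique limit across subsequences --- different $x_k$ may give different $e$ and $t_i$ --- so your closing ``convergence of the whole sequence'' claim is neither needed nor correct as stated.
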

\begin{rmk}
In the above, we say $\Omega_k$ converges in the $C^\ell$ sense, if there exists a fixed vector $e$ such that
$\partial\Omega_k$ can be represented by the graph of a family of
functions defined on $\{e\cdot x=0\}$, with these functions converge
to $\{e\cdot x=t_i\}$ respectively in $C^\ell$. Note that this
implies, for $k$ large, $\Omega_k$ is $C^\ell$ diffeomorphic to
$\{-1<v^\ast(e\cdot x)<1\}$. We say $u_k$ converges to
$v^\ast(e\cdot x)$ in the $C^\ell$ sense, if the pull back of $u_k$
through the above diffeomorphism converges to $v^\ast(e\cdot x)$ in
$C^\ell_{loc}(\overline{\{-1<v^\ast(e\cdot x)<1\}})$.
\end{rmk}
\begin{proof}[Proof of Lemma \ref{limit at infinity}]
First assume $u_k$ converges to a limit function $u_\infty$
uniformly on any compact set of $\R^2$. Of course $\Delta
u_\infty=0$ in $\{-1<u_\infty<1\}$.

By the previous corollary, $\partial\{-1<u_k<1\}$ converges in the
Hausdorff distance to a family of lines, which we assume to be
parallel to the $x_1$-axis.

For any $k$ large, take a connected component $D_k$ of
$\{-1<u_k<1\}$. 
 By the previous
analysis, $\partial D_k$ consists of two convex curves
$\Gamma_{1,k}$ and $\Gamma_{2,k}$. Moreover, there exist two
constants $t_{1,k}$ and $t_{2,k}$ such that,
\begin{equation}\label{3.3}
\lim_{k\to+\infty}\mbox{dist}_H(\Gamma_{i,k},
\{x_2=t_{i,k}\})=0,\quad i=1,2.
\end{equation}
After a translation in the $x_2$ direction, we can assume
$\overline{D_k}$ converges to $\overline{D_\infty}=\{0\leq x_2\leq
t_\infty\}$ in the Hausdorff distance, where
$$t_\infty=\lim_{k\to+\infty}\left(t_{2,k}-t_{1,,k}\right)\in [0,+\infty].$$

Next we divide the proof into two cases.\\
{\bf Case 1.} Assume $u_k=-1$ on $\Gamma_{1,k}$ and $u_k=1$ on
$\Gamma_{2,k}$.

By Lemma \ref{lem 2.2}, $|t_{2,k}-t_{1,k}|\geq 2d_A$. This implies
\begin{equation}\label{3.4}
t_\infty\geq 2d_A.
\end{equation}

 Take an
arbitrary point $x_k\in\Gamma_{1,k}$. The following function is well
defined in $B_{d_A}(x_k)$:
 \makeatletter
\let\@@@alph\@alph
\def\@alph#1{\ifcase#1\or \or $'$\or $''$\fi}\makeatother
\begin{equation*}
{v_k(x):=}
\begin{cases}
0, &x\in B_{d_A}(0)\setminus \left(D_k-x_k\right), \\
u_k+1, &x\in D_k-x_k.
\end{cases}
\end{equation*}
\makeatletter\let\@alph\@@@alph\makeatother Indeed,
$(\Gamma_{1,k}-x_k)\cap B_{d_A}(0)$ is a convex curve with boundary
points in $\partial B_{d_A}(0)$, hence it divides $B_{d_A}(0)$ into
two connected open sets, one is $D_k-x_k$ where $v_k>0$ and the
other one is $B_{d_A}(0)\setminus (D_k-x_k)$ where $v_k=0$.

In $B_{d_A}(0)$, $v_k$ is a classical solution of the one phase free
boundary problem
\begin{equation}\label{one phase free boundary problem}
 \left\{\begin{aligned}
&\Delta v_k=0,\ \ \ \mbox{in}\ \{v_k>0\},\\
&|\nabla v_k|=1,~~\mbox{on}~\partial\{v_k>0\}.
                          \end{aligned} \right.
\end{equation}
 \eqref{3.3}
implies that $\partial\{v_k>0\}$ is flat in the sense of \cite{AC}.
Hence by the regularity theory for free boundaries in \cite{AC} and
the higher regularity of free boundaries in \cite{KN},
$\partial\{v_k>0\}\cap B_{d_A/2}(0)$ can be represented by the graph
of a function $f_k$ defined on the $x_1$-axis, with its $C^\ell$
norm uniformly bounded for any $\ell\geq 1$.

This implies, for any $\ell\geq 1$, $\partial D_k$ converges to
$\partial D_\infty$ in $C^\ell$ on any compact set. Then by standard
elliptic estimates, $u_k$ are uniformly bounded in
$C_{loc}^\ell(\overline{D_k})$ for any $\ell\geq 1$, and they
converge to $u_\infty$ in the $C^{\ell}$ sense.

Now $u_\infty$ satisfies
\begin{equation*}
 \left\{\begin{aligned}
&\Delta u_\infty=0,\ \ \ \mbox{in}\ \{0<x_2<t_\infty\},\\
&-1<u_\infty<1,\ \ \ \mbox{in}\ \{0<x_2<t_\infty\},\\
&u_\infty=-1,\ \ \ \mbox{on}\ \{x_2=0\},\\
&u_\infty=1,\ \ \ \ \mbox{on}\ \{x_2=t_\infty\},\\
 &|\nabla v_\infty|=1,~~\mbox{on}~\{x_2=0\}\cup\{x_2=t_\infty\}.
                          \end{aligned} \right.
\end{equation*}
In the above, if $t_\infty=+\infty$, it is understood that the boundary
condition on $\{x_2=t_\infty\}$ is void.

We claim that $u_\infty=-1+x_2$ in $D_\infty$, and hence
$t_\infty=2$. This can be proved by many methods. A direct way is by
noting that we have $|\nabla u_\infty|\leq 1$ in $D_\infty$
(obtained by passing to the limit in $|\nabla u_k|\leq 1$), hence we
can use Proposition \ref{strict convexity} to deduce that $\nabla^2
u_\infty\equiv0$ in $D_\infty$ and the claim follows.

{\bf Case 2.} Assume $u_k=-1$ on $\partial D_k$.

As in Case 1, the following function is well defined:
 \makeatletter
\let\@@@alph\@alph
\def\@alph#1{\ifcase#1\or \or $'$\or $''$\fi}\makeatother
\begin{equation*}
{v_k(x):=}
\begin{cases}
0, &\mbox{outside } D_k, \\
u_k+1, &\mbox{in } D_k.
\end{cases}
\end{equation*}
\makeatletter\let\@alph\@@@alph\makeatother
 $v_k$ is a classical
solution of the one phase free boundary problem \eqref{one phase
free boundary problem}.

This  case can be further divided into two subcases.\\
{\bf Subcase 1.} $\lim_{k\to0}|t_{k,2}-t_{k,1}|=0$.

Because $|\nabla v_k|\leq 1$ and $v_k=0$ on $\partial\{v_k>0\}$, we
have
\[\sup_{D_k}v_k\to0.\]
Take a standard cut-off function $\eta\in C_0^\infty(B_2(0))$ with
$\eta=1$ in $B_1(0)$. Then by \eqref{one phase free boundary
problem}, integrating by parts gives
\[
\int_{\partial D_k}\eta =-\int_{D_k}v_k\Delta\eta
\leq\left(\sup_{D_k}v_k\right)\int_{D_k}\big|\Delta\eta\big|\to0.
\]
On the other hand, because $\partial D_k$ are convex curves
satisfying \eqref{3.3},
\[\lim_{k\to+\infty}\mathcal{H}^1(\partial D_k\cap B_1(0))=4.\]
This is a contradiction.

{\bf Subcase 2.} $\lim_{k\to0}|t_{k,2}-t_{k,1}|\in(0,+\infty]$.

As in Case 1, $u_\infty$ satisfies
\begin{equation*}
 \left\{\begin{aligned}
&\Delta u_\infty=0,\ \ \ \mbox{in}\ \{0<x_2<t_\infty\},\\
&-1<u_\infty<1,\ \ \ \mbox{in}\ \{0<x_2<t_\infty\},\\
&u_\infty=-1,\ \ \ \mbox{on}\ \{x_2=0\}\cup\{x_2=t_\infty\},\\
 &|\nabla v_\infty|=1,~~\mbox{on}~\{x_2=0\}\cup\{x_2=t_\infty\}.
                          \end{aligned} \right.
\end{equation*}
In the above, if $t_\infty=+\infty$, it is understood that the boundary
condition on $\{x_2=t_\infty\}$ is void.

Still as in Case 1, by applying Proposition \ref{strict convexity},
$\nabla^2u_\infty=0$ in $D_\infty$, which then leads to a
contradiction. Thus this case is impossible.
\end{proof}

\section{Finiteness of ends}\label{sec 4}
\setcounter{equation}{0}

In this section, $u$ still denotes a solution of \eqref{equation
0.1}, which is stable outside a compact set of $\R^2$. By the results
established in the previous section, we can take a $K\gg 1$ and then
$R_1\gg 1$ so that, for any $x\in \Omega\setminus B_{R_1}$,
\begin{enumerate}
\item[(F1)] $|\nabla u|\geq 1-1/K$ in $\Omega\cap B_K(x)$;
\item[(F2)] $\Omega\cap B_K(x)=\cup_i \Upsilon_i$, where
\[\Upsilon_i=\{y: f_i^-(y\cdot e)< y\cdot e^{\bot}< f^+_i(y\cdot e)\},\]
with $e$ a unit vector and
\[\cdots<f_{i-1}^+<f_i^-<f_i^+<\cdots;\]
\item[(F3)] $f_i^{\pm}$ are defined on $(-K,K)$, with their $C^\ell$ norm
bounded for any $\ell\geq 1$, uniformly in $x$;
\item[(F4)] each $f_i^-$ is concave and each $f^+_i$ is convex;
\item[(F5)] $2-1/K\leq f_i^+-f_i^-\leq 2+1/K$ in $(-K,K)$.
\end{enumerate}

With these preliminaries we prove
\begin{lem}\label{lem 4.3}
Each connected component of $\Omega^c$ intersects $B_{R_1+2K}(0)$.
\end{lem}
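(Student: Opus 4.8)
The plan is to argue by contradiction: suppose some connected component $E$ of $\Omega^c$ does not meet $B_{R_1+2K}(0)$, i.e. $E \subset B_{R_1+2K}(0)^c$. First I would observe that $E$ is one of the sets $\{u=1\}$ or $\{u=-1\}$; say without loss of generality $E \subset \{u=-1\}$, and $\partial E \subset \partial^-\Omega$. By Proposition \ref{strict convexity}, $E$ is convex (and by the nondegeneracy Proposition \ref{nondegenarcy of contact set}, it is not a lower-dimensional object, since $u\neq v^\ast$ with a double $t_i$). I want to show such an $E$ must in fact be unbounded, and then derive a contradiction from the local structure (F1)--(F5).

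The key step is to rule out that $E$ is bounded. If $E$ were bounded, it would be a bounded convex body lying in $B_{R_1+2K}(0)^c$, hence of diameter $\geq R_1+2K$ (it must wrap around the origin, being a convex set whose complement contains the bounded ball $B_{R_1+2K}(0)$ — actually more simply, a bounded convex set disjoint from a ball centered at a point it does not contain is fine, so I need a genuinely topological input here). The cleaner route: a bounded connected component of $\{u=-1\}$ is a compact convex curve $\partial E$, and Lemma \ref{lem 3.1} together with Corollary \ref{integral curvature estimate} forces, for $R_1$ large, $\int_{\partial E} H$ to be as small as we like — contradicting the Gauss--Bonnet identity $\int_{\partial E} H = 2\pi$ exactly as in the proof of Lemma \ref{lem 3.1}, provided $\partial E$ has bounded diameter $\leq L$ and lies outside $B_{R(L)}(0)$. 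If $\partial E$ has large diameter, I instead use that $E$, being convex and disjoint from $B_{R_1+2K}(0)$, lies in a half-space through a boundary point of that ball; but then by the local description (F2)--(F5) near any point of $\partial E\setminus B_{R_1}$, each piece of $\partial^-\Omega$ is paired at distance $\approx 2$ with a piece of $\partial^+\Omega$ on the $\Omega$-side, and one runs along $\partial E$ tracking the companion curve in $\partial^+\Omega$. So the real content is: a bounded component of $\Omega^c$ outside $B_{R_1+2K}$ cannot exist because of the curvature/Gauss--Bonnet obstruction, and an unbounded component outside $B_{R_1+2K}$ cannot exist either.

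For the unbounded case, suppose $E$ is unbounded, $E\subset\{u=-1\}$, $E\cap B_{R_1+2K}(0)=\emptyset$. Pick $x\in \partial E\setminus B_{R_1+2K}(0)$; it lies in $\partial\Omega\setminus B_{R_1}$, so (F2)--(F5) apply in $B_K(x)$: near $x$, $\Omega\cap B_K(x) = \cup_i \Upsilon_i$ with the strips $\Upsilon_i$ of width $\approx 2$, and $\partial E \cap B_K(x)$ is one of the two convex graph-boundary components of some strip $\Upsilon_{i_0}$. In particular, on the $\Omega$-side of this component of $\partial E$, within $B_K(x)$, the solution increases from $-1$ across a strip of width $\approx 2$ and reaches $+1$ on the opposite boundary curve $\Gamma^+ \subset \partial^+\Omega$. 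Now continue $\partial E$ past $B_K(x)$: by convexity $\partial E$ is a global graph over a line, and by the corollary to Lemma \ref{lem 3.1} it has bounded arclength density, so I can cover a long piece of $\partial E$ by balls $B_K(x_j)$ with $x_j\in\partial E\setminus B_{R_1}$, and in each the companion curve $\Gamma^+_j\subset \partial^+\Omega$ stays within distance $2+1/K$ of $\partial E$ on a fixed side. Patching these companion arcs, $\partial^+\Omega$ has an unbounded component $\Gamma^+$ running parallel to $\partial E$ at distance $\approx 2$, and the strip between them is a connected component $D$ of $\Omega\setminus B_{R_1}$ with $f^+ - f^- \approx 2$. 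This is perfectly consistent — so the unbounded case is NOT the one to exclude; rather, the statement of the lemma is only that the component meets $B_{R_1+2K}(0)$, which for unbounded components one expects from the global convexity: an unbounded convex set $E$ that avoids $B_{R_1+2K}(0)$ lies in a half-space, and then its companion strip and the fact that $\Omega$ is connected (hypothesis of Theorem \ref{main result 1}) force a contradiction — I would deduce from Theorem \ref{half space theorem} (the weak half-space theorem) that $u$ is one dimensional, and a one dimensional $u$ has all components of $\Omega^c$ unbounded and passing through every vertical line, in particular through $B_{R_1+2K}(0)$, contradiction.

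Summarizing the order of steps: (1) assume a component $E\subset\Omega^c$ with $E\cap B_{R_1+2K}(0)=\emptyset$; (2) if $E$ is bounded, it is a compact convex curve outside $B_{R_1+2K}(0)$, and Lemma \ref{lem 3.1} / Corollary \ref{integral curvature estimate} make $\int_{\partial E}H<2\pi$, contradicting Gauss--Bonnet; (3) if $E$ is unbounded, convexity puts $E$ in a half-space, hence $\Omega=\{-1<u<1\}$ lies on one side too and by Theorem \ref{half space theorem} $u$ is one dimensional; but then every component of $\Omega^c$ is an unbounded strip-complement meeting every line $\{x\cdot e = \text{const}\}$, in particular meeting $B_{R_1+2K}(0)$, a contradiction. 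The main obstacle is making step (3) airtight — specifically, checking that an unbounded component of $\Omega^c$ avoiding a large ball genuinely forces $\Omega$ itself into a half-space (one must be careful that $\Omega$ could in principle "wrap around" even though $E$ does not); I would handle this by using the local structure (F2)--(F5) to show the companion strip of $E$ separates the plane the same way $E$ does, so the component of $\R^2\setminus\overline\Omega$ on the far side of $E$ is itself convex and unbounded and avoids the ball, which then feeds into Theorem \ref{half space theorem}.
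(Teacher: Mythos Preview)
Your approach is quite different from the paper's, and the final step contains a genuine error.

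The paper does not split into bounded and unbounded cases at all. It runs a single argument based entirely on the companion-curve construction you sketch in passing: after first disposing (via connectedness of $\Omega$) of the case where $\partial D$ has two components, it uses (F2)--(F5) to produce, for every $x\in\partial D$, a companion boundary curve at distance $\approx 2$ on the $\Omega$-side, then patches these into a single global curve $\Gamma\subset\partial\Omega$ with $\mbox{dist}(\Gamma,\partial D)\le 2+1/K$. The region $\Upsilon$ between $\partial D$ and $\Gamma$ is a connected open subset of $\Omega$ whose boundary lies entirely in $\partial\Omega$, hence a full connected component of $\Omega$, and it misses $B_{R_1+K}(0)$. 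That is the contradiction with connectedness of $\Omega$. No Gauss--Bonnet, no half-space theorem.

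Your bounded case via Lemma~\ref{lem 3.1} does not close as stated: that lemma only excludes bounded components of diameter $<L$ outside $B_{R(L)}$, so for the fixed $R_1$ it says nothing about bounded components of large diameter. You notice this and start reaching for the companion-curve argument --- which is the right move, and once you have it the Gauss--Bonnet detour is superfluous.

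The real problem is the last line of your step (3). The claim that for one-dimensional $u$ ``every component of $\Omega^c$ \dots\ meet[s] every line $\{x\cdot e=\mbox{const}\}$, in particular $B_{R_1+2K}(0)$'' is false. If $u(x)=g(x\cdot e-t)$ with $\Omega=\{t<x\cdot e<t+2\}$, the component $\{x\cdot e\ge t+2\}$ of $\Omega^c$ only meets lines $\{x\cdot e=c\}$ with $c\ge t+2$; when $t+2\ge R_1+2K$ it misses $B_{R_1+2K}(0)$ entirely. So reaching ``$u$ is one-dimensional'' does not by itself contradict the existence of such a $D$. The contradiction has to come from the companion strip $\Upsilon$ being a \emph{proper} connected component of $\Omega$ --- i.e., from connectedness of $\Omega$ directly, which is the paper's route --- not from any feature of one-dimensional solutions.
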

\begin{proof}
Let $D$ be an arbitrary connected component of $\Omega^c$. Assume it
does not intersect $B_{R_1+2K}(0)$. Without loss of generality,
assume $u=1$ in $D$.

Since $D$ is a convex set, $\partial D$ is a simple curve or two
disjoint simple curves. (These curves could be unbounded.) If the
latter case happens, $\R^2\setminus D$ has two connected components.
Then the component of $\R^2\setminus D$ which does not intersect
$B_{R_1+2K}(0)$ contains a component of $\Omega$ and it does not
intersect $\Omega$. This is clearly a contradiction with our
assumption that $\Omega$ is connected. Hence $\partial D$ is a
single simple curve.

By our preliminary analysis, for each $x\in
\partial D$, there exists a unit vector $e(x)$ such that,
\[\partial D\cap B_K(x)=\{y: y\cdot e^{\bot}=f^+(y\cdot e)\}.\]
Moreover, there exists a curve
\[\Gamma=\{y: y\cdot e^{\bot}=f^-(y\cdot e)\},\]
where $2-1/K\leq f^+-f^-\leq 2+1/K$, such that
\[\{y\in B_K(x):f^-(y\cdot e)<y\cdot e^{\bot}<f^+(y\cdot e)\}\subset\Omega.\]
In fact, this curves can be represented in the form
\begin{equation}\label{2.5}
y-t(y)\nu(y),
 \end{equation}
 where the function
$t(y)$ is defined on $\partial D\cap B_K(x)$ and $\nu(y)$ is the
unit normal of $\partial D$ at $y$, pointing to $\Omega$.

By abusing notations, we denote the connected component of
$\partial\Omega$ where $\Gamma$ lies on, still by $\Gamma$. Note
that $\Gamma$ is also a simple smooth curve.

By continuation, any point $x\in\Gamma$ satisfies
$\mbox{dist}(x,\partial D)\leq 2+1/K$. Hence $\Gamma$ dose not
intersect $B_{R_1+K}(0)$. $\Gamma$ can still be represented by the
graph of a function defined on $\partial D$ as in \eqref{2.5}.
$\Gamma$ and $\partial D$ bounds a connected component of $\Omega$,
which does not intersect $B_{R_1+K}(0)$ either. However, this is a
contradiction with our hypothesis that $\Omega$ is connected and the
proof is thus completed.
\end{proof}
This lemma can be reformulated as follows: every connected component
of $\partial\Omega$ intersects $B_{R_1+2K}(0)$. Together with the
facts (F1-5), this implies that there are only finitely many
unbounded components of $\partial\Omega$. Because the above proof
also implies that, for any unbounded component of $\Omega^c$, its
boundary is a simple smooth curve, we obtain
\begin{coro}
There are only finitely many unbounded connected components of
$\Omega^c$.
\end{coro}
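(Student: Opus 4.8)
The plan is to set up an injective correspondence between the unbounded connected components of $\Omega^c$ and the unbounded connected components of $\partial\Omega$; since the latter collection is already known to be finite (this was deduced just above from Lemma \ref{lem 4.3} together with (F1)--(F5)), the assertion follows.

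First I would record the relevant structure of a single unbounded component $D$ of $\Omega^c$. Since $\{u=1\}$ and $\{u=-1\}$ are disjoint closed sets whose union is $\Omega^c$, each connected component of $\Omega^c$ lies entirely in one of them and is in fact a connected component of $\{u=1\}$ or $\{u=-1\}$; hence by Proposition \ref{strict convexity} it is convex, and by the proof of Lemma \ref{lem 4.3} its boundary $\partial D$ is a single simple smooth curve. Moreover $\partial D$ is unbounded: if $\partial D\subset B_\rho(0)$ then, because $D$ is convex and unbounded, the connected set $\R^2\setminus B_\rho(0)$ would be the disjoint union of the two open sets $\mathrm{int}(D)\setminus B_\rho(0)$ and $D^c\setminus B_\rho(0)$, the first of which is nonempty, forcing $D^c\subset B_\rho(0)$ and hence $\Omega\subset B_\rho(0)$, contradicting the corollary that $\Omega$ is unbounded.

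Next I would show that $\partial D$ is an entire connected component of $\partial\Omega$. It is closed in $\partial\Omega$, being closed in $\R^2$; it is also open in $\partial\Omega$, since near any $x\in\partial D$ the smooth curve $\partial\Omega$ divides a small ball $B_\rho(x)$ into two pieces, one contained in $\Omega$ and (because $|\nabla u|=1\ne 0$ on $\partial\Omega$) the other contained in $\Omega^c$; the $\Omega^c$-piece is connected with $x$ in its closure, so it lies in $D$, whence $\partial\Omega\cap B_\rho(x)\subset\partial D$. Being nonempty, open and closed, $\partial D$ is a connected component of $\partial\Omega$, and by the previous paragraph it is one of the finitely many unbounded ones.

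Finally, the map $D\mapsto\partial D$ is injective: distinct connected components of the closed set $\Omega^c$ are disjoint closed sets, so $\partial D_1\cap\partial D_2\subset D_1\cap D_2=\emptyset$, and distinct $D$'s are therefore sent to distinct components of $\partial\Omega$. Hence the number of unbounded connected components of $\Omega^c$ is at most the number of unbounded connected components of $\partial\Omega$, which is finite. The only real work lies in the middle two paragraphs — in particular the clopen argument identifying $\partial D$ with a full component of $\partial\Omega$, and the use of convexity together with the unboundedness of $\Omega$ to rule out a bounded $\partial D$; the genuinely substantial input, the finiteness of the unbounded components of $\partial\Omega$, has already been secured through (F1)--(F5) and Lemma \ref{lem 4.3}.
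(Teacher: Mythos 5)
Your proposal is correct and follows essentially the same route as the paper: both reduce the statement to the finiteness of unbounded connected components of $\partial\Omega$, which was secured just before from Lemma \ref{lem 4.3} together with (F1)--(F5), and then use that each unbounded component $D$ of $\Omega^c$ has as boundary a single simple unbounded curve that is a full component of $\partial\Omega$. You simply spell out a few steps the paper leaves implicit (the clopen argument, the unboundedness of $\partial D$, and the injectivity of $D\mapsto\partial D$), which are harmless elaborations of the same idea.
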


Checking the proof of Lemma \ref{lem 4.3}, we also obtain
\begin{coro}
There is no bounded connected component of $\Omega^c$ belonging to
$\R^2\setminus B_{R_1+2K}(0)$.
\end{coro}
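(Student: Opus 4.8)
The plan is to re-run the argument of Lemma~\ref{lem 4.3}, which in fact already covers bounded components; indeed a bounded connected component of $\Omega^c$ contained in $\R^2\setminus B_{R_1+2K}(0)$ is exactly a connected component of $\Omega^c$ disjoint from $B_{R_1+2K}(0)$, so the statement is a direct consequence of that lemma. For completeness let me describe how its mechanism specializes to this case.

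Suppose for contradiction that $D$ is a bounded connected component of $\Omega^c$ with $D\subset\R^2\setminus B_{R_1+2K}(0)$; without loss of generality $u\equiv 1$ on $D$. By Proposition~\ref{strict convexity} the set $D$ is convex, so being bounded its boundary $\partial D\subset\partial^+\Omega$ is a single simple closed convex curve, lying outside $B_{R_1+2K}(0)$. I would then apply the structural facts (F1)--(F5) along $\partial D$: near every $x\in\partial D$, on the side of $\partial^+\Omega$ opposite to $D$, the set $\Omega$ is a single strip of width in $[2-1/K,\,2+1/K]$ whose other boundary is a piece of $\partial^-\Omega$. Writing this piece as a normal graph over $\partial D$ as in \eqref{2.5} and continuing around the closed curve $\partial D$ produces a connected component $\Gamma$ of $\partial^-\Omega$ which is a simple closed convex curve encircling $D$ at distance at most $2+1/K$ from $\partial D$; in particular $\Gamma\subset\R^2\setminus B_{R_1+K}(0)$.

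The argument then closes exactly as in Lemma~\ref{lem 4.3}: the region $E$ enclosed between $\partial D$ and $\Gamma$ is a connected open subset of $\Omega$ whose entire boundary $\partial D\cup\Gamma$ lies in $\partial\Omega$, hence $E$ is a connected component of $\Omega$. It is bounded and disjoint from $B_{R_1+K}(0)$, so the hypothesis that $\Omega$ is connected forces $\Omega=E$, contradicting the corollary to Proposition~\ref{strict convexity} that $\Omega$ is unbounded.

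I do not expect a real obstacle. The only points that deserve a line of justification are that $\Gamma$ closes up into a simple curve that stays at distance $\approx 2$ from $\partial D$, and hence is disjoint from $\overline D$ — which follows from the uniform strip width (F5) and the convexity of components of $\partial\Omega$ (Proposition~\ref{strict convexity}) — and that $\Omega$ cannot leak across $\Gamma$ out of $E$, which holds because beyond $\Gamma$ one has $u=-1$ in a $B_K$-neighborhood by (F2) and (F5).
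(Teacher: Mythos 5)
Your proposal is correct and takes essentially the same approach as the paper. The corollary is already a formal consequence of Lemma~\ref{lem 4.3} as stated (a bounded component of $\Omega^c$ contained in $\R^2\setminus B_{R_1+2K}(0)$ is a component disjoint from $B_{R_1+2K}(0)$, which the lemma forbids), and your specialization of that lemma's proof to a bounded component $D$ --- constructing the encircling closed curve $\Gamma$ via (F1)--(F5), observing that the annular strip $E$ between $\partial D$ and $\Gamma$ is a connected component of $\Omega$, and using connectedness of $\Omega$ to force $\Omega=E$ to be bounded, contradicting the corollary to Proposition~\ref{strict convexity} --- is exactly what the paper means by ``checking the proof of Lemma~\ref{lem 4.3}''.
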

This implies that $\partial\Omega\setminus B_{R_1+2K}(0)$ is
composed by finitely many unbounded simple curves. Hence we have
\begin{coro}
There are only finitely many connected components of
$\Omega\setminus B_{R_1+2K}(0)$.
\end{coro}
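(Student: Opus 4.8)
The plan is to derive the corollary as a direct topological consequence of the preceding three corollaries together with the structural facts (F1)--(F5). The key observation is that $\Omega \setminus B_{R_1+2K}(0)$ is an open subset of $\R^2$ whose boundary consists of two kinds of pieces: arcs of the circle $\partial B_{R_1+2K}(0)$, and arcs of $\partial\Omega$. From the corollary just established, $\partial\Omega \setminus B_{R_1+2K}(0)$ is composed of finitely many unbounded simple curves; combined with the fact that $\partial\Omega \cap B_{R_1+2K}(0)$ is compact and, by (F1)--(F5), a one-dimensional manifold with only finitely many components (each connected component of $\partial\Omega$ meeting the ball, and there are finitely many of those since each unbounded component meets the ball and bounded components contained in the complement of the ball are excluded), the entire set $\partial\Omega \setminus \overline{B_{R_1+2K}(0)}$ has finitely many connected components.

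First I would fix $R := R_1 + 2K$ and note that $\partial(\Omega\setminus B_R) \subset (\partial\Omega \setminus B_R) \cup (\partial B_R)$. Each connected component $D$ of $\Omega \setminus B_R$ is open and connected, hence path-connected, and its boundary is contained in the union of $\partial B_R$ (a single circle) with the finitely many components of $\partial\Omega \setminus B_R$. Next I would argue that each component $D$ must actually touch $\partial B_R$: if $\partial D \subset \partial\Omega$, then $D$ would be a connected component of $\Omega$ itself contained in $\R^2 \setminus B_R$, contradicting the fact (used repeatedly above, from Lemma~\ref{lem 4.3}) that $\Omega$ is connected and meets $B_{R_1+2K}(0)$. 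So $\partial D \cap \partial B_R \neq \emptyset$ for every component $D$.

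Then I would use the local structure (F2): near any point $x \in \partial B_R \cap \overline\Omega$ with $|x|$ large, $\Omega \cap B_K(x)$ is a finite union of the strip-like regions $\Upsilon_i$, in particular $\Omega \cap B_K(x)$ has finitely many connected components, each meeting $\partial B_R$ in at most finitely many arcs (because $\partial\Omega$ meets $\partial B_R$ transversally for $R$ large, by the Lipschitz/graphical bounds in (F3)). Covering the compact circle $\partial B_R$ by finitely many such balls $B_K(x_j)$ shows that $\Omega \cap \partial B_R$ has finitely many connected components (arcs), say $m$ of them. Each component $D$ of $\Omega \setminus B_R$ contains in its closure at least one such arc, and two components of $\Omega \setminus B_R$ cannot share an arc of $\partial B_R$ (being disjoint open sets that are locally on the same side of the smooth circle). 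Hence the number of components $N$ of $\Omega \setminus B_R$ is at most $m < \infty$.

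The main obstacle is the bookkeeping that turns the purely local finiteness statements (F1)--(F5) and the finiteness of unbounded components of $\partial\Omega$ and $\Omega^c$ into a genuine global count: one must rule out the a priori possibility that infinitely many components of $\Omega\setminus B_R$ accumulate, and the cleanest way is precisely the argument above that each such component is pinned to one of finitely many boundary arcs on $\partial B_R$. An alternative, which I would mention, is to enlarge $R$ slightly so that $\partial B_R$ meets $\partial\Omega$ transversally (possible by Sard's theorem applied to $|x|$ restricted to the smooth curve $\partial\Omega$, since a.e.\ radius is a regular value), making $\Omega \cap \partial B_R$ automatically a finite union of arcs; the finiteness of components of $\Omega\setminus B_R$ is then immediate. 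Either route closes the argument with no further analytic input.
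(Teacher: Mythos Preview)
Your argument is correct. The paper's own proof is a single sentence: having just established that $\partial\Omega\setminus B_{R_1+2K}(0)$ consists of finitely many unbounded simple curves, it simply says ``Hence we have'' the corollary, leaving the passage from finitely many boundary curves to finitely many complementary components implicit.

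Your route is a genuine variant. Rather than counting components of $\partial\Omega\setminus B_R$, you pin each component of $\Omega\setminus B_R$ to an arc of $\Omega\cap\partial B_R$ and bound the number of such arcs via the local strip structure (F2) and compactness of the circle. This is more explicit than the paper's one-liner and sidesteps the (easy but unstated) topological fact that removing finitely many simple curves from $\R^2\setminus B_R$ yields finitely many components. Two small points worth tightening: first, (F2) is stated only for centers $x\in\Omega\setminus B_{R_1}$, so to cover a point of $\partial B_R\cap\partial\Omega$ you should pick a nearby center in $\Omega$ (harmless, since the strips have width $<3\ll K$); second, your claim that every component must touch $\partial B_R$ tacitly uses $\Omega\cap B_R\neq\emptyset$, but if that fails then $\Omega\setminus B_R=\Omega$ is already connected and there is nothing to prove. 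Your Sard-type alternative for arranging transversality is also perfectly adequate.
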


Putting all of the above facts together we get the following
picture.
\begin{lem}\label{lem 3.6}
There exists a constant $R_2>R_1+2K$ so that the following holds.
Let $\Omega\setminus B_{R_2}(0)=\cup_{i=1}^N D_i$, where each $D_i$
is connected. For every $i$, there exist a unit vector $e_i$, two
constants $a_i$ and $b_i$, and two bounded functions $f_i^{\pm}$
defined on $[R_2,+\infty)$, $f_i^+$ convex and $f_i^-$ concave,
\[f_i^-\leq f_i^+\leq f_i^-+2+1/K,\]
such that
\[D_i=\{x: f_i^-(e_i\cdot x+b_i)\leq e_i^\bot\cdot x+a_i\leq f_i^+(e\cdot x+b_i).\]
\end{lem}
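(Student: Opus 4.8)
The plan is to collect the structural information already established in Section~4 and package it into the single statement of Lemma~\ref{lem 3.6}. The key observation is that the three corollaries preceding the lemma together say: (i) there is no bounded component of $\Omega^c$ outside $B_{R_1+2K}(0)$; (ii) there are only finitely many unbounded components of $\Omega^c$, each with boundary a simple smooth curve; (iii) consequently $\Omega\setminus B_{R_1+2K}(0)$ has only finitely many connected components. So the first step is simply to fix $R_2>R_1+2K$ large enough that $\partial\Omega\setminus B_{R_2}(0)$ consists of exactly $N$ pairwise disjoint unbounded simple smooth curves, and label the connected components of $\Omega\setminus B_{R_2}(0)$ as $D_1,\dots,D_N$.

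Next I would analyze a single component $D_i$. By (F2)--(F5), locally around each point $x\in D_i\setminus B_{R_1}$ the set $\Omega$ is a union of strips $\Upsilon_j$ of width between $2-1/K$ and $2+1/K$ bounded by a concave lower graph and a convex upper graph over a common direction $e=e(x)$. The main point here is that the direction $e(x)$ is locally constant up to a controlled rotation and, because $f_i^+$ is convex while $f_i^-$ is concave on overlapping windows $(-K,K)$, these local graphs patch together consistently: a globally defined convex function $f_i^+$ and a globally defined concave function $f_i^-$, both defined on $[R_2,+\infty)$ after choosing a single direction $e_i$ (obtained as the common asymptotic/limit direction, which exists by monotonicity of the slopes of a convex/concave function). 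The width bound $f_i^-\le f_i^+\le f_i^-+2+1/K$ is inherited pointwise from (F5). One should note that each $D_i$ really is a single strip of this type and not a union of several: the corollary that there are no bounded components of $\Omega^c$ outside $B_{R_2}$ forces consecutive components $\partial^-\Omega$ and $\partial^+\Omega$ to stay paired, so within $\Omega\setminus B_{R_2}(0)$ each connected component has exactly two boundary curves.

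The main obstacle, as I see it, is the bookkeeping needed to pass from the \emph{local} graph representations in (F2)--(F5)---which are over a direction $e=e(x)$ depending on the base point $x$, on windows of fixed size $K$---to a \emph{single} global graph representation of $D_i$ over one fixed direction $e_i$ on all of $[R_2,+\infty)$. The convexity of $f_i^+$ (concavity of $f_i^-$) is what makes this work: a convex function has monotone difference quotients, so its graph cannot wind around, the direction cannot rotate indefinitely, and the local pieces must glue to a single convex graph whose slope has a limit at $+\infty$. I would phrase this as: the tangent direction along $\partial^+\Omega\setminus B_{R_2}$ is monotone by convexity, hence converges, and after a rotation making the limit direction horizontal one represents the whole curve as $\{e_i^\bot\cdot x+a_i=f_i^+(e_i\cdot x+b_i)\}$ with $f_i^+$ convex; the paired curve $\partial^-\Omega$ at distance in $[2-1/K,2+1/K]$ gives $f_i^-$ concave with the stated width bound, and $D_i$ is exactly the region between them. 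Everything else is a matter of choosing constants and invoking the already-proved corollaries.
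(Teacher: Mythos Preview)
Your proposal is correct and matches the paper's approach. In fact the paper gives no proof at all for this lemma---it is introduced with the phrase ``Putting all of the above facts together we get the following picture''---so you are simply spelling out what the paper leaves implicit: use the three corollaries to know that $\partial\Omega\setminus B_{R_2}$ consists of finitely many unbounded convex curves paired at distance $\le 2+1/K$, then use convexity/concavity to extract a common asymptotic direction $e_i$ for each pair and represent $D_i$ as a strip over $e_i$; the boundedness of $f_i^\pm$ comes for free once $e_i$ is the asymptotic direction, since then $f_i^{+\prime}\nearrow 0$ and $f_i^{-\prime}\searrow 0$ together with $0\le f_i^+-f_i^-\le 2+1/K$ trap both functions.
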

Note that we do not claim $e_i$ to be different for different $i$.

Now we establish the natural energy growth bound.
\begin{lem}\label{energy growth}
There exists a constant $C$ depending on $u$ such that, for any
$R>1$,
\[\int_{B_R(0)}\left(\frac{1}{2}|\nabla u|^2+\chi_\Omega\right)\leq CR.\]
\end{lem}
\begin{proof}
In view of Proposition \ref{gradient bound}, we only need to prove
\begin{equation}\label{4.1.1}
|\Omega\cap B_R(0)|\leq CR.
\end{equation}
This follows directly from the previous lemma.
\end{proof}

For each $\varepsilon>0$, let
\[u_\varepsilon(x):=u(\varepsilon^{-1}x).\]
\begin{prop}\label{balancing condition}
As $\varepsilon\to0$,
\[\varepsilon|\nabla u_\varepsilon|^2dx\rightharpoonup2\sum_{i=1}^N\mathcal{H}^1\lfloor_{\{re_i:r\geq0\}},
\quad\mbox{weakly as Radon measures},\]
where $e_i$ are as in Lemma
\ref{lem 3.6}. Moreover,
\[\sum_{i=1}^Ne_i=0.\]
\end{prop}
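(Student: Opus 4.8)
The plan is to establish the two assertions of Proposition \ref{balancing condition} in turn, the weak convergence of the rescaled energy densities first, and then the balancing identity $\sum_i e_i = 0$ as a consequence of the stationarity of $u$.

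For the weak convergence, I would argue as follows. Fix a test function $\phi \in C_0^\infty(\mathbb{R}^2)$, supported in some ball $B_\rho(0)$. By Lemma \ref{energy growth} we have the a priori bound $\int_{B_\rho} \varepsilon |\nabla u_\varepsilon|^2 \le C\varepsilon \cdot \varepsilon^{-1}\rho = C\rho$, so the measures $\mu_\varepsilon := \varepsilon|\nabla u_\varepsilon|^2 \, dx$ have locally uniformly bounded mass and hence (along subsequences) converge weakly-$*$ to a Radon measure. To identify the limit, I would pass to the original scale: $\int \phi \, d\mu_\varepsilon = \varepsilon \int_{\mathbb{R}^2} \phi(x) |\nabla u(\varepsilon^{-1}x)|^2 \, dx = \varepsilon^3 \int_{\mathbb{R}^2} \phi(\varepsilon y)|\nabla u(y)|^2 \, dy$ — wait, the Jacobian gives $\varepsilon \cdot \varepsilon^2 = \varepsilon^3$ times $\int \phi(\varepsilon y)|\nabla u(y)|^2 dy$; one wants this to converge to $2\sum_i \int_{\{re_i : r \ge 0\}} \phi$. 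The mechanism is that for $|y|$ large, by Lemma \ref{lem 3.6} and the facts (F1)--(F5), the region $\Omega$ decomposes into the ends $D_i$, each of which is (up to a bounded error) a strip of width roughly $2$ around a ray in direction $e_i$, on which $|\nabla u|^2$ is close to $1$ by (F1). Thus $\int_{\{|y| \le \varepsilon^{-1}\rho\}} |\nabla u|^2 \approx \sum_i (\text{area of } D_i \cap B_{\varepsilon^{-1}\rho}) \approx \sum_i 2 \cdot \varepsilon^{-1}\rho$, and integrating $\phi$ against the density localizes this to the rays $\{re_i\}$; the error from the compact core $B_{R_2}$ and from the $O(1/K)$ discrepancies in the strip widths and gradient is lower order after multiplication by $\varepsilon^3$ against $\phi(\varepsilon \cdot)$. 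Making this precise amounts to a coarea / Fubini computation slicing each $D_i$ perpendicular to $e_i$ and using (F5) that $f_i^+ - f_i^- \to 2$ together with the sharper asymptotics $f_i^+ - f_i^- \to 2$ (part 7 of Theorem \ref{main result 1}, which one may invoke), so that the limit density along the ray $\{re_i\}$ is exactly $2$.

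For the balancing formula, I would use the stationarity of $u$ for the functional \eqref{functional 0.1}, i.e. the fact that the stress-energy tensor is divergence-free. Concretely, testing the first variation with a constant vector field $e$ (a translation) localized by a cutoff yields, for the rescaled solutions, the identity $\int_{\partial B_R} T_\varepsilon \nu \cdot e = 0$ where $T_\varepsilon = \varepsilon(\nabla u_\varepsilon \otimes \nabla u_\varepsilon - \frac{1}{2}|\nabla u_\varepsilon|^2 I) - \chi_{\Omega_\varepsilon} I$ is the (rescaled) stress tensor; equivalently one integrates $\operatorname{div} T = 0$ over $B_R$. Passing $\varepsilon \to 0$: by the weak convergence just established together with the fact that along each ray the gradient is asymptotically a unit vector normal to the ray, $T_\varepsilon$ converges (as a measure on circles, or after integrating the divergence-free condition) to the stress tensor of $2\sum_i \mathcal{H}^1\lfloor_{\{re_i\}}$, which is the union of half-lines counted with multiplicity $2$; the flux of this limiting object through a large circle in direction $e$ is $-\sum_i 2(e_i \cdot e) \cdot \tfrac{1}{?}$ — more cleanly, the divergence-free condition for a union of half-lines emanating from the origin forces $\sum_i (\text{multiplicity}) \, e_i = 0$, hence $\sum_i e_i = 0$. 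Alternatively, and perhaps more robustly, one avoids rescaling: directly integrate $\operatorname{div} T = 0$ for $u$ itself over $B_R \setminus B_{R_2}$ for $R$ large, note that on $\partial B_R$ the only contribution comes from the ends $D_i$ where $\nabla u$ is nearly a unit vector perpendicular to $e_i$ so $T\nu \cdot e \to -(e_i \cdot e)$ per unit of the two boundary arcs where $D_i$ meets $\partial B_R$ (contributing $-2 e_i \cdot e$ after accounting for strip width $\to 2$ and $|\nabla u|^2 \to 1$ so the $\nabla u \otimes \nabla u$ and $-\frac12|\nabla u|^2 I$ and $-\chi_\Omega I$ terms combine), while the contribution on $\partial B_{R_2}$ is a fixed constant; dividing by $R$ and letting $R \to \infty$ kills the fixed constant and yields $\sum_i e_i \cdot e = 0$ for all $e$, hence $\sum_i e_i = 0$.

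The main obstacle I anticipate is bookkeeping the exact constant $2$ and the exact combination of terms in the stress tensor flux: one must verify that on each end $D_i$ the integrand $T\nu \cdot e$ really integrates to $-2\, e_i \cdot e$ in the limit and not some other multiple, which requires knowing not just $|\nabla u| \to 1$ but that $\nabla u$ is asymptotically perpendicular to $e_i$ (so that $\nabla u \otimes \nabla u$ contributes nothing to the $e_i$-component of the flux) and that the potential term $\chi_\Omega$ contributes exactly $-2$ times the arclength because the strip width tends to $2$. These are precisely the statements packaged in (F1)--(F5) and in part (7) of Theorem \ref{main result 1}; the rest is a routine limiting argument. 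One should also be slightly careful that the ends $D_i$ are not all distinct as directions, but since the argument is linear in the contributions this causes no difficulty — one simply sums over $i = 1, \dots, N$ as indexed.
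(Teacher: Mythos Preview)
Your approach is essentially the same as the paper's, which simply defers to the Hutchinson--Tonegawa framework \cite{H-T} (and \cite{Wang-Wei}): the end structure of Lemma \ref{lem 3.6} identifies the blow-down limit, and the balancing formula is the stationarity of that limiting varifold --- exactly your stress-tensor argument unpacked. Two small corrections: your change of variables should give $\int\phi\,d\mu_\varepsilon=\varepsilon\int\phi(\varepsilon y)|\nabla u(y)|^2\,dy$, not $\varepsilon^3$ (you dropped the $\varepsilon^{-2}$ from $|\nabla u_\varepsilon|^2=\varepsilon^{-2}|\nabla u(\varepsilon^{-1}\cdot)|^2$); and you do not need part (7) of Theorem \ref{main result 1} for the constant $2$, since (F1) and (F5) already give $|\nabla u|^2\to1$ and strip width $\to2$, which avoids any circularity worry.
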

The proof in \cite{H-T} (see also \cite{Wang-Wei 1}) can be adapted to
prove this proposition. Note that the blowing down limit is unique,
i.e. independent of subsequences of $\varepsilon\to0$. In fact, by
the convexity of $\Omega^c$, the blowing down limit (in the
Hausdorff distance)
\[D_i^\infty:=\lim_{\varepsilon\to0}\varepsilon D_i=\{re_i: r\geq0\}.\]
Moreover, the blowing down limit of $(|\nabla u|^2+1)\chi_{D_i}dx$
is $2\mathcal{H}^1\lfloor_{\{re_i:r\geq0\}}$.

\section{Refined asymptotics}\label{sec 5}
\setcounter{equation}{0}

In this section we prove the exponential convergence of $u$ to its
ends (one dimensional solutions) at infinity.

 Take a large $R$ and a connected component
of $\Omega\setminus B_R$, which we assume to be
\[\mathcal{C}:=\{(x_1,x_2): f_-(x_1)<x_2<f_+(x_1),\ x_1>R\},\]
where $f_\pm$ are convex (concave, respectively) functions defined
on $[R,+\infty)$.

By Lemma \ref{limit at infinity},
\begin{equation}\label{7.1}
\lim_{x_1\to+\infty}\left(f_+(x_1)-f_-(x_1)\right)=2.
\end{equation}
Then because $f_+^\prime(x_1)$ is non-increasing in $x_1$ and
$f_-^\prime(x_1)$ non-decreasing in $x_1$, both the limits
\[\lim_{x_1\to+\infty}f_\pm^\prime(x_1)\]
exist. Moreover, by \eqref{7.1}, these two limits coincide, which
can be assumed to be $0$ after a rotation.

In the following we will ignore other components of $\{-1<u<1\}$,
thus assume $u\equiv\pm1$ outside $\mathcal{C}$. By the regularity
theory in \cite{AC} and \cite{KN}, both $f_+$ and $f_-$ are smooth.
Then by standard elliptic estimates, there exists a constant $C$ such that
\begin{equation}\label{uniform bound on second derivative}
|\nabla^2u(x)|\leq C,\quad{\mbox{in}}\ \mathcal{C}.
\end{equation}

By these facts and Lemma \ref{limit at infinity}, the limit at
infinity of translations of $u$ along $f_-(x_1)$ must be $g(x_2)$.
Hence, by \eqref{uniform bound on second derivative} and the uniform
smoothness of free boundaries, we get the uniform convergence
\begin{equation}\label{limit of gradient at infinity}
\lim_{x\in\mathcal{C},|x|\to+\infty}|\nabla u|=1.
\end{equation}

It should be emphasized that, in the above setting and the following
proof, we do not need any kind of stability condition.

Let
\[v:=1-|\nabla u|,\]
which vanishes on $\partial\mathcal{C}$.

Direct calculation gives
\begin{equation}\label{01}
\Delta v=-\frac{|\nabla^2 u|^2-|\nabla|\nabla u||^2}{|\nabla u|}.
\end{equation}

Differentiating in $x_1$ twice leads to
\begin{equation*}
\frac{1}{2}\frac{d^2}{dx_1^2}\int_{f_-(x_1)}^{f_+(x_1)}v(x_1,x_2)^2dx_2=\int_{f_-(x_1)}^{f_+(x_1)}\left(\Big|\frac{\partial
v}{\partial
x_1}(x_1,x_2)\Big|^2+v(x_1,x_2)\frac{\partial^2v}{\partial
x_1^2}(x_1,x_2)\right) dx_2.
\end{equation*}
Substituting \eqref{01} into this and integrating by parts, we get
\begin{equation}\label{02}
\frac{1}{2}\frac{d^2}{dx_1^2}\int_{f_-(x_1)}^{f_+(x_1)}v^2=\int_{f_-(x_1)}^{f_+(x_1)}\left(|\nabla
v|^2-v\frac{|\nabla^2 u|^2-|\nabla|\nabla u||^2}{|\nabla u|}\right).
\end{equation}
We have
\begin{equation}\label{02.1}
|\nabla v|^2=\frac{|\nabla^2u\cdot\nabla u|^2}{|\nabla
u|^2}\geq|\nabla^2u\cdot\nabla u|^2,
\end{equation}
because $|\nabla u|\leq 1$. On the other hand, by denoting
$\bar{\nu}:=\nabla u/|\nabla u|$ (recall that we can assume $|\nabla
u|\geq 1/2$ in $\mathcal{C}$) and $\bar{\nu}^\bot$ its rotation by angle
$\pi/2$,
\begin{eqnarray}\label{03}
|\nabla^2 u|^2&=&|\nabla^2u\cdot \bar{\nu}|^2+|\nabla^2u\cdot\bar{\nu}^\bot|^2\nonumber\\
&=&2\frac{|\nabla^2u\cdot \nabla u|^2}{|\nabla u|^2}\\
&\leq&8|\nabla^2u\cdot \nabla u|^2,\nonumber
\end{eqnarray}
where we have used the fact that, by the equation $\Delta u=0$,
\[\nabla^2u(\bar{\nu},\bar{\nu})=-\nabla^2u(\bar{\nu}^\bot,\bar{\nu}^\bot),\quad\mbox{in}\ \mathcal{C}.\]

After enlarging $R$, we can assume $v\leq1/64$ in $\mathcal{C}$.
Combining \eqref{02} and \eqref{03} we obtain
\[|\nabla v|^2-v\frac{|\nabla^2
u|^2-|\nabla|\nabla u||^2}{|\nabla u|}\geq\frac{1}{2}|\nabla v|^2.\]
Hence
\begin{equation}\label{04}
\frac{d^2}{dx_1^2}\int_{f_-(x_1)}^{f_+(x_1)}v(x_1,x_2)^2dx_2\geq\int_{f_-(x_1)}^{f_+(x_1)}|\nabla
v(x_1,x_2)|^2dx_2.
\end{equation}
Because $v(x_1,\cdot)=0$ on $f_-(x_1)$ and $f_+(x_1)$ and
$f_+(x_1)-f_-(x_1)\leq 4$, we have the following Poincare
inequality:
\[\int_{f_-(x_1)}^{f_+(x_1)}\Big|\frac{\partial
v}{\partial x_2}(x_1,x_2)\Big|^2dx_2\geq
\frac{\pi^2}{16}\int_{f_-(x_1)}^{f_+(x_1)}v(x_1,x_2)^2dx_2.\] Thus
\begin{equation}\label{5.0.1}
\frac{d^2}{dx_1^2}\int_{f_-(x_1)}^{f_+(x_1)}v(x_1,x_2)^2dx_2\geq\frac{\pi^2}{16}\int_{f_-(x_1)}^{f_+(x_1)}v(x_1,x_2)^2dx_2.
\end{equation}
Because
\[\lim_{x_1\to+\infty}\int_{f_-(x_1)}^{f_+(x_1)}v(x_1,x_2)^2dx_2=0,\]
differential inequality \eqref{5.0.1} implies that
\[\int_{f_-(x_1)}^{f_+(x_1)}v(x_1,x_2)^2dx_2\leq Ce^{-\frac{\pi}{4}x_1},\quad\forall x_1\ \mbox{ large}.\]

Take a nonnegative function $\eta\in C_0^\infty(-2,2)$ with
$\eta\equiv1$ in $(-1,1)$. For any $t$ large, testing \eqref{04}
with $\eta(x_1+t)$ and integrating by parts, we obtain
\[\int_{t-1}^{t+1}\int_{f_-(x_1)}^{f_+(x_1)}|\nabla v(x_1,x_2)|^2dx_2dx_1\leq Ce^{-\frac{\pi}{4}t}.\]
By \eqref{02.1} and \eqref{03} and using the Cauchy inequality, the
above inequality implies that
\[\int_{t-1}^{t+1}\int_{f_-(x_1)}^{f_+(x_1)}|\nabla^2u(x_1,x_2)|dx_2dx_1\leq Ce^{-\frac{\pi}{8}t}.\]
Integrating this from $x_1$ to $+\infty$, we obtain
\[\int_{t-1}^{t+1}\int_{f_-(x_1)}^{f_+(x_1)}\big|\frac{\partial u}{\partial x_1}(x_1,x_2)\big|dx_2dx_1\leq Ce^{-\frac{\pi}{8}t},\]
which can also be strengthened to
\[\int_{t}^{+\infty}\int_{-\infty}^{+\infty}\big|\frac{\partial u}{\partial x_1}(x_1,x_2)\big|dx_2dx_1\leq Ce^{-\frac{\pi}{8}t}.\]
In the above we have used the fact that $\frac{\partial u}{\partial
x_1}=0$ outside $\mathcal{C}$.

Now the existence of the limit
\[u_\infty(x_2):=\lim_{x_1\to+\infty}u(x_1,x_2)\]
follows. Moreover,
\[\int_{-\infty}^{+\infty}\big|u(x_1,x_2)-u_\infty(x_1,x_2)\big|dx_2\leq Ce^{-\frac{\pi}{8}x_1}.\]
By the uniform Lipschitz bound on $u$, this can also be lifted to
the convergence in $L^\infty(\R)$.

Since $u_\infty(x_2)=g(x_1-t)$ for some constant $t$, by noting the
nondegeneracy condition on $g$ (i.e. $g^\prime=1$ in $\{-1<g<1\}$)
and a corresponding one for $u$ (i.e. a positive lower bound on
$\frac{\partial u}{\partial x_2}$), if $x_1$ large,
\[u(x_1,x_2)+1\geq\frac{1}{2}\biggl(x_2-f_-(x_1)\biggr),\quad 1-u(x_1,x_2)\geq\frac{1}{2}\biggl(f_+(x_1)-x_2\biggr),
\quad\mbox{ in }\mathcal{C},\] which follows from the fact
$\frac{\partial u}{\partial x_2}\geq 1/2$ in $\mathcal{C}$ for $x_1$
large. This then implies
\[\lim_{x_1\to+\infty}f_{\pm}(x_1)=t\pm1.\]
Moreover, the convergence rate is exponential. This finishes the
proof of Theorem \ref{main result 1}.

\part{The second problem}

In dimension $1$, the problem \eqref{equation 0.2} has a solution $g$
satisfying
\begin{equation*}
 \left\{\begin{aligned}
&g(t)\equiv0,\ \ \ \mbox{in}\ (-\infty,0),\\
&g^\prime(t)>0,\ \ \ \mbox{in}\ (0,+\infty),\\
&\lim_{t\to+\infty}g(t)=1.
                          \end{aligned} \right.
\end{equation*}
Here the convergence rate is exponential, there is, there exists a constant $A>0$ such that
\[g(t)=1-Ae^{-\sqrt{2}t}+O\left(e^{-2\sqrt{2}t}\right), \quad \mbox{as } t\to+\infty.\]
Hence the following
quantity is well defined
\[\sigma_0:=\int_0^{+\infty}\left[\frac{1}{2}g^\prime(t)^2+W(g(t))\right]dt<+\infty.\]

Given a unit vector $e$ and a constant $t\in\R$, the trivial
extension $u^\ast(x):=g(x\cdot e-t)$, or the function
\[u^{\ast\ast}(x):=g(x\cdot e-t_1)+g(-x\cdot e+t_2), \quad -\infty<t_2\leq t_1<+\infty,\]
 are solutions of
\eqref{equation 0.2} in $\R^2$. Moreover, they are stable in $\R^2$.
We still call these solutions one dimensional solutions.

\section{Uniform estimates}\label{sec 6}
\setcounter{equation}{0}

In this section $u$ denotes a solution of \eqref{equation 0.2} in
$\R^2$. We prove a Modica type inequality and then deduce the
convexity of free boundaries. In fact, most results in this section
hold for solutions in $\R^n$, for any $n\geq 1$.

The following result is \cite[Proposition 2.1]{Wang-Wei 1}.
\begin{prop}
$u<1$ in $\Omega$.
\end{prop}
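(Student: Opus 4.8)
The plan is to first rule out $u\ge 1$ anywhere by a Keller--Osserman (entire subsolution) argument that exploits \textbf{W4}, and then upgrade to strict inequality inside $\Omega$ via the strong maximum principle using \textbf{W3}. Note that only the definition \eqref{equation 0.2}, the continuity $u\in C(\R^2)$, $u\in C^2(\overline\Omega)$, and \textbf{W1)--W4)} are needed; no structural or boundedness information about $u$ is required.

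Set $\phi:=(u-1)^+$. It is continuous and nonnegative on $\R^2$, it vanishes in a neighbourhood of $\Omega^c$ (since $u=0$ there and $u$ is continuous), and on the open set $\{u>1\}\subset\Omega$ it equals the $C^2$ function $u-1$ with $\Delta\phi=W'(u)\ge c(u-1)^p=c\phi^p$ by \textbf{W4)}. Since $\{u=1\}\subset\Omega$, near which $u$ is $C^2$, the elementary distributional bound $\Delta(g^+)\ge\chi_{\{g>0\}}\Delta g$ valid for $C^2$ functions $g$ (the singular part on $\{g=0\}$ being a nonnegative measure) yields
\[\Delta\phi\ge c\,\phi^p\quad\text{in }\mathcal{D}'(\R^2),\qquad p>1.\]

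Next comes the Keller--Osserman estimate. Fix $x_0\in\R^2$ and $R>0$, and compare $\phi$ on $B_R(x_0)$ with the radial barrier $v_R(x):=\beta_R\bigl(R^2-|x-x_0|^2\bigr)^{-\alpha}$, $\alpha=2/(p-1)$. A direct computation shows that for $\beta_R=C_0R^{2/(p-1)}$ with $C_0=C_0(p,c)$ suitably large one has $\Delta v_R\le c\,v_R^p$ in $B_R(x_0)$, while $v_R\to+\infty$ at $\partial B_R(x_0)$. On the open set $\{\phi>v_R\}$ (compactly contained in $B_R(x_0)$, since $\phi$ is locally bounded) both functions are positive and $\Delta(\phi-v_R)\ge c(\phi^p-v_R^p)\ge0$, so $\phi-v_R$ is a continuous subharmonic function vanishing on the boundary of that set; the maximum principle forces $\{\phi>v_R\}=\emptyset$, i.e. $\phi\le v_R$ on $B_R(x_0)$. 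Evaluating at $x_0$ gives $\phi(x_0)\le C_0R^{-2/(p-1)}$, and letting $R\to\infty$ yields $\phi(x_0)=0$. As $x_0$ is arbitrary, $\phi\equiv0$, that is $u\le 1$ on $\R^2$.

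For the strict inequality, suppose $u(x_1)=1$ at some $x_1\in\Omega$. Since $W\ge0=W(1)$ and $W\in C^2$ we have $W'(1)=0$, so writing $v:=1-u\ge0$ near $x_1$ gives $\Delta v=-(W'(u)-W'(1))=-c(x)\,v$, where $c(x):=\int_0^1 W''\bigl(1+s(u(x)-1)\bigr)\,ds\ge\kappa>0$ for $x$ near $x_1$ by \textbf{W3)} (the arguments lie in $[\gamma,+\infty)$). Thus $v\ge0$ solves $\Delta v-c(x)v=0$ with $c\ge0$ and attains an interior minimum $0$ at $x_1$, so the strong maximum principle gives $v\equiv0$ near $x_1$; hence $\{u=1\}$ is open and closed in the connected component $\Omega'$ of $\Omega$ containing $x_1$, i.e. $u\equiv1$ on $\Omega'$. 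Since $\partial\Omega'\subset\overline{\Omega^c}=\{u=0\}$, this contradicts continuity of $u$ unless $\partial\Omega'=\emptyset$, i.e. $\Omega'=\R^2$ and $u\equiv1$ with empty free boundary, which is excluded. This proves $u<1$ in $\Omega$.

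The main obstacle is the second step: this is precisely where the (a priori uncontrolled, possibly unbounded or thin) geometry of $\{u>1\}$ is bypassed, because the Keller--Osserman argument is purely local and applies to an entire distributional subsolution regardless of that geometry; the only delicate point is the distributional bookkeeping across $\{u=1\}$, and it works in our favour.
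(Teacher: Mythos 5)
The paper does not actually supply a proof of this proposition; it is cited verbatim from \cite[Proposition~2.1]{Wang-Wei}, so there is no ``paper's approach'' to compare against. Your Keller--Osserman argument is a correct, self-contained proof of the statement, and it is a natural route given hypothesis \textbf{W4)} (which exists precisely so that this kind of entire subsolution argument can run without a priori boundedness of $u$).

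Two small remarks. First, in the strict-inequality step there is a sign slip in the intermediate line: from $W'(1)=0$ and the Taylor/integral formula one gets $\Delta v = -W'(u) = -(u-1)\int_0^1 W''(1+s(u-1))\,ds = c(x)\,v$, i.e.\ $\Delta v - c(x)v=0$, which is what you use at the end; the display $\Delta v = -c(x)v$ in between should read $\Delta v = c(x)v$. Second, the chain of reasoning does leave open the degenerate solution $u\equiv 1$, $\Omega=\R^2$, which genuinely satisfies \eqref{equation 0.2} with empty free boundary and fails $u<1$. You correctly flag this as ``excluded''; it would be cleaner to state explicitly that the proposition is understood for solutions with $\partial\Omega\neq\emptyset$ (equivalently $u\not\equiv 1$), which is the standing assumption throughout the paper since the whole discussion concerns the free boundary. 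Apart from these cosmetic points, the distributional bookkeeping across $\{u=1\}$ via Kato's inequality, the barrier computation (using $\alpha p=\alpha+2$ for $\alpha=2/(p-1)$), and the topological argument at the end are all sound.
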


As in Part I, to prove the Modica inequality, we first establish a
gradient bound.
\begin{lem}
There exists a constant $C$ such that $|\nabla u|\leq C$ in
$\Omega$.
\end{lem}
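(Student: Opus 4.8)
The plan is to establish the gradient bound $|\nabla u|\le C$ in $\Omega$ by the same two-step dichotomy used in Part~I for Proposition~\ref{gradient bound}, adapted to the semilinear setting. First I would record the a priori $L^\infty$ bound: by the preceding Proposition $u<1$ in $\Omega$, and (using {\bf W4)}) one gets $0\le u\le 1$ on all of $\R^2$, so $W'(u)$ is bounded on $\overline{\Omega}$. The key geometric input I would extract first — the analogue of Lemma~\ref{lem 2.2} — is a uniform lower bound $\mathrm{dist}(x,\partial\Omega)\ge d_A$ for $x\in\{u>3/4\}$ (or whatever threshold $\gamma<\theta<1$ is convenient), obtained from the Hopf Lemma applied to $1-u\ge 0$, which is a \emph{supersolution} of $\Delta(1-u)=-W'(u)\le 0$ on $(\gamma,1)$ by {\bf W2)}; here one touches an interior ball to $\partial\Omega$ and compares with the linear problem to force the ball radius below by a universal constant.

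With that in hand the gradient bound splits into two regions exactly as in Lemma~\ref{lem 2.3}. On $\{x\in\Omega:\mathrm{dist}(x,\partial\Omega)>d_A/4\}$ we have a fixed-size ball $B_{d_A/8}(x)\subset\Omega$ on which $\Delta u=W'(u)$ with bounded right-hand side and bounded $u$, so interior elliptic estimates give $|\nabla u(x)|\le C$. Near the free boundary, for $x$ with $r:=\mathrm{dist}(x,\partial\Omega)\le d_A/4$ attained at $y\in\partial\Omega$, the function $u$ is positive and satisfies $\Delta u=W'(u)$ in $B_r(x)$ with $u(y)=0$; comparing with the harmonic problem plus a bounded inhomogeneity (and using $0\le u\le 1$) yields $\sup_{B_r(x)}u\le Cr$, then Harnack gives $cr\le u\le Cr$ on $B_{r/2}(x)$, and a rescaled interior gradient estimate for the equation $\Delta u=W'(u)$ (Schauder, since $W'$ is at least $C^1$ on the relevant range, noting the equation only holds in $\{u>0\}$) gives $|\nabla u(x)|\le C\,\mathrm{osc}_{B_{r/2}(x)}u/r\le C$. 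Stitching the two regions together gives $|\nabla u|\le C$ in $\Omega$.

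The main obstacle I expect is the free-boundary region: unlike the harmonic case, the equation $\Delta u=W'(u)$ only holds in $\{u>0\}$, so one cannot directly quote interior harmonic estimates in a full ball $B_r(x)$ — the ball may stick out of $\Omega$. I would handle this either by working with the subsolution/supersolution property of $u$ extended by $0$ (so $\Delta u\ge W'(u)\chi_\Omega - \text{(measure on }\partial\Omega)$ in a weak sense, enough for an upper barrier) or, more cleanly, by noting that $B_r(x)\subset\Omega$ by definition of $r$ and running all the above estimates on the slightly smaller concentric ball so the equation is valid throughout — the price is only universal constants. A secondary technical point is checking that the scaling in the near-boundary gradient estimate is the correct one, i.e.\ that the inhomogeneity $W'(u)$, being $O(1)$, contributes at most $O(r)$ to $\mathrm{osc}\,u$ on $B_{r/2}$ and hence does not spoil the $O(1)$ bound on $|\nabla u|$; this follows from the maximum principle comparison with a quadratic $Cr^2$ correction, which is harmless. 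With these adjustments the proof is a routine transcription of the Part~I argument, the role of harmonicity being replaced everywhere by ``harmonic up to a bounded, controllable error''.
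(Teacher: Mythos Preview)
Your approach is essentially the same as the paper's: split into an interior region and a boundary strip, use interior elliptic estimates in the former, and in the latter use Hopf plus Harnack after a rescaling, the inhomogeneity $W'(u)$ being absorbed by a quadratic correction. Two small points are worth tightening.

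First, the preliminary $d_A$ lemma is not needed here. The paper simply takes the dichotomy $\mathrm{dist}(x,\partial\Omega)>1$ versus $\le 1$; in the former case $B_1(x)\subset\Omega$ and interior estimates apply directly. Your sketch for the $d_A$ lemma also has a sign slip: on $(\gamma,1)$ one has $W'(u)<0$ by {\bf W2)}, hence $\Delta(1-u)=-W'(u)>0$, so $1-u$ is \emph{sub}harmonic there, not super.

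Second, and more importantly, the assertion ``$\sup_{B_r(x)}u\le Cr$'' does \emph{not} follow merely from $u(y)=0$, $0\le u\le 1$, and $|\Delta u|\le C$; a harmonic function can vanish at a single boundary point of $B_r(x)$ and still be close to $1$ at the center. The missing input is the free boundary condition $|\nabla u(y)|=\sqrt{2W(0)}$: exactly as in Lemma~\ref{lem 2.3}, Hopf (applied to $u$ corrected by $\pm Cr^2$ to account for the bounded right-hand side) gives $\sqrt{2W(0)}=|\nabla u(y)|\ge c\,u(x)/r - Cr$, hence $u(x)\le Cr$, and only \emph{then} does Harnack upgrade this to $\sup_{B_{r/2}(x)}u\le Cr$. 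This is precisely what the paper does after rescaling by $h=\mathrm{dist}(x_0,\partial\Omega)$. With that correction your outline matches the paper's proof.
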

\begin{proof}
First standard interior gradient estimates give
\[|\nabla u(x)|\leq C\quad\mbox{in }\{x\in\Omega: \mbox{dist}(x,\partial\Omega)>1\}.\]

For $x_0$ near $\partial\Omega$, denote
$h:=\mbox{dist}(x_0,\partial\Omega)$ and assume this distance is
attained at $y_0\in\partial\Omega$. Let
\[\tilde{u}(x):=\frac{1}{h}u(x_0+hx).\]
Then $\tilde{u}$ is positive in $B_1(0)$, where
\[|\Delta\tilde{u}|=h|W^\prime(h\tilde{u})|\leq Ch.\]
Because $B_h(x_0)$ is tangent to $\partial\Omega$ at $y_0$,
\[1=|\nabla\tilde{u}(z_0)|=z_0\cdot\nabla\tilde{u}(z_0),\]
where $z_0:=\left(y_0-x_0\right)/h$.

 Applying the Hopf Lemma to $\tilde{u}-Ch|x-z_0|^2$ gives
\[1\leq C\left(\tilde{u}(0)-Ch^2\right).\]
Then by the Harnack inequality, if $h<1/C$,
\[\sup_{B_{1/2}(0)}\tilde{u}\leq C.\]
By standard interior gradient estimates,
\[|\nabla u(x_0)|=|\nabla\tilde{u}(0)|\leq C.\qedhere\]
\end{proof}

With this gradient bound at hand, we can prove the Modica
inequality.
\begin{prop}\label{Modica inequality 2}
In $\Omega$,
\[\frac{1}{2}|\nabla u|^2\leq W(u).\]
\end{prop}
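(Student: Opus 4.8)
The plan for proving Proposition \ref{Modica inequality 2} is the standard $P$-function argument adapted to the one-phase free boundary setting. I would introduce the Modica function
\[
P := \frac{1}{2}|\nabla u|^2 - W(u)
\]
on $\overline{\Omega}$, and show $P \le 0$ everywhere in $\Omega$ by a maximum-principle-type argument combined with a careful analysis on the free boundary $\partial\Omega$. The key computation is that, using the equation $\Delta u = W'(u)$, one has the classical identity
\[
\Delta P = |\nabla^2 u|^2 - |\nabla|\nabla u||^2 + W''(u)\,\frac{?}{?}\cdots
\]
more precisely $\Delta P = |\nabla^2 u|^2 - (\Delta u)^2 \ge 0$ at points where the pointwise structure works out, or at least $\Delta P \ge -C|\nabla P|/|\nabla u|$ away from the critical set of $u$; the upshot is that $P$ cannot attain an interior positive maximum in $\{\nabla u \ne 0\}$ by the strong maximum principle. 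Since $W(u) \ge 0$ (by W1, using also $u < 1$ from the previous proposition and W4 for $u$ near/above $1$), $P \le \frac12|\nabla u|^2$ is bounded (by the gradient bound just proved), so we are in a position to run a maximum principle on $\overline{\Omega}$.

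The main obstacle, and where the one-phase structure enters, is the behavior on the free boundary $\partial\Omega = \partial\{u>0\}$. There $u = 0$ and $|\nabla u| = \sqrt{2W(0)}$, so
\[
P\big|_{\partial\Omega} = \frac{1}{2}\cdot 2W(0) - W(0) = 0.
\]
Thus $P$ vanishes identically on $\partial\Omega$, which is exactly the boundary value we want for the comparison. So the plan is: show $P \le 0$ in $\Omega$ by arguing that if $\sup_\Omega P = m > 0$, then since $P = 0$ on $\partial\Omega$ and $P$ is bounded, the supremum is either attained at an interior point of $\{\nabla u \ne 0\}$ — excluded by the strong maximum principle via $\Delta P \ge 0$ (or the appropriate differential inequality) — or is approached along a sequence going to infinity or to the critical set $\{\nabla u = 0\}\cap\Omega$. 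On $\{\nabla u = 0\}$ we have $P = -W(u) \le 0$, so that case is harmless. The escape-to-infinity case is handled by a blow-down / translation argument: translate by points $x_k$ with $P(x_k) \to m$, pass to a limit solution (using the uniform gradient bound and standard elliptic estimates, plus the free boundary regularity of \cite{AC}, \cite{KN} as invoked elsewhere in the paper), and note the limit $u_\infty$ is an entire solution (possibly a half-space solution or a one-dimensional solution) for which $P_\infty$ attains its positive maximum $m$ at an interior point — again contradicting the strong maximum principle, unless $u_\infty$ is one-dimensional, in which case $P_\infty \equiv 0$ along the trivial one-dimensional profile $g$ gives $m = 0$, a contradiction with $m > 0$.

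Alternatively — and this is probably cleaner — I would follow the route used for the first problem and in \cite{CJK}, \cite{Wang-Wei}: establish the differential inequality for $P$ directly, then apply it together with the boundary condition $P = 0$ on $\partial\Omega$ and a Liouville-type / sliding argument rather than a naked maximum principle, exploiting that $W \ge 0$ and the explicit one-dimensional solution $g$ saturates the inequality. The genuinely delicate point in all versions is controlling $P$ near the critical set of $u$ inside $\Omega$ and near the free boundary simultaneously; here the regularity $u \in C^2(\overline\Omega)$ assumed throughout, the gradient bound from the previous lemma, and the fact that $|\nabla u| = \sqrt{2W(0)} > 0$ on $\partial\Omega$ (so the free boundary is comfortably away from the critical set) are exactly what make the argument go through. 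I expect the write-up to reduce, after these reductions, to the one-dimensional ODE analysis for $g$: that $\frac12 g'^2 - W(g)$ is constant and equals its boundary value $0$, which pins down the sharp constant and shows the inequality is attained, confirming it cannot be improved.
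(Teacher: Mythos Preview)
Your overall strategy matches the paper's: introduce $P = \frac12|\nabla u|^2 - W(u)$, record the differential inequality
\[
\Delta P - \frac{2W'(u)}{|\nabla u|^2}\,\nabla u \cdot \nabla P \ge 0 \quad \text{in } \{\nabla u \ne 0\},
\]
take a maximizing sequence $x_k$ with $P(x_k) \to \delta := \sup_\Omega P > 0$, and run a compactness argument. The paper then explicitly splits into two cases according to whether $\limsup \operatorname{dist}(x_k, \partial\Omega) > 0$ (handled by the classical interior argument of \cite{Modica}) or $\operatorname{dist}(x_k,\partial\Omega)\to 0$; your proposal does not make this split, and it is precisely the second case where your plan has a gap.

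When $\operatorname{dist}(x_k,\partial\Omega)\to 0$, your plain translation $u_k(x)=u(x_k+x)$ places the origin on (or arbitrarily close to) the free boundary of the limit $u_\infty$, and your assertion that ``$P_\infty$ attains its positive maximum $m$ at an interior point'' is unjustified: to pass $P(x_k)\to m$ through the limit you would need $C^1$ convergence up to the free boundary, hence uniform free-boundary regularity, which you do not yet have (convexity of $\partial\Omega$ is a \emph{consequence} of the Modica inequality, not an input here). The paper instead observes that $u(x_k)\to 0$ forces $\frac12|\nabla u(x_k)|^2 \ge W(0)+\delta/2$ for large $k$, and then rescales at the microscopic scale $h_k=\operatorname{dist}(x_k,\partial\Omega)$ exactly as in the proof of Proposition~\ref{gradient bound}: the blow-up limit becomes a one-phase problem for a harmonic function whose gradient strictly exceeds $\sqrt{2W(0)}$ at an interior point, contradicting the free-boundary condition in the viscosity sense. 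Your vaguer ``alternative'' invoking \cite{CJK}, \cite{Wang-Wei} points in the right direction but does not isolate this rescaling step, which is the one ingredient genuinely beyond the classical interior Modica argument.
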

\begin{proof}
Denote $P:=|\nabla u|^2/2-W(u)$. Assume
\[\delta:=\sup_{\Omega}P>0,\]
and $x_i\in\Omega$ approaches this sup.

In $\Omega$, $P$ satisfies
\begin{equation}\label{P fct}
\Delta P-2\Delta u\frac{\nabla u}{|\nabla u|^2}\cdot\nabla
P=|\nabla^2u|^2-2\Delta u \nabla^2u\left(\frac{\nabla u}{|\nabla
u|},\frac{\nabla u}{|\nabla u|}\right)+\left(\Delta u\right)^2\geq0.
\end{equation}

If $\limsup \mbox{dist}(x_i,\partial\Omega)>0$, we can argue as in
the proof of the usual Modica inequality to get a contradiction, see
\cite{Modica}.

If $\lim \mbox{dist}(x_i,\partial\Omega)=0$, then $u(x_i)\to0$.
Hence for all $i$ large,
\[\frac{1}{2}|\nabla u(x_i)|^2\geq W(0)+\frac{\delta}{2}.\]
Then we can proceed as in the proof of Proposition \ref{gradient bound} to get a contradiction.
\end{proof}

As in \cite{CJK}, the Modica inequality implies the convexity of
free boundaries.
\begin{lem}\label{convexity}
Each connected component of $\Omega^c$ is convex. Moreover, it is
strictly convex unless $u$ is one dimensional.
\end{lem}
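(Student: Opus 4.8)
The plan is to show that the Modica inequality $\frac12|\nabla u|^2\le W(u)$, which is now established (Proposition \ref{Modica inequality 2}), forces the second fundamental form of $\partial\Omega$ to point in the right direction, and then that the inequality in the Modica bound is in fact an equality along $\partial\Omega$, which lets us run a Hopf-type argument for the rigidity statement. First I would fix a connected component $D$ of $\Omega^c$ and a boundary point $y\in\partial D=\partial\Omega$. The free boundary condition $|\nabla u|^2=2W(0)$ on $\partial\Omega$ says the function $P:=\frac12|\nabla u|^2-W(u)$ vanishes on $\partial\Omega$ (since $u=0$ there), while Proposition \ref{Modica inequality 2} says $P\le 0$ in $\Omega$. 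Hence $P$ attains its maximum over $\overline\Omega$ on the boundary, and by the Hopf lemma applied to the differential inequality \eqref{P fct} (noting that $P$ satisfies an elliptic inequality $\Delta P - b\cdot\nabla P\ge 0$ with $b=2\Delta u\,\nabla u/|\nabla u|^2$ bounded near $\partial\Omega$, where $|\nabla u|\ge c>0$ by the nondegeneracy/Hopf estimates already used), we get $P_\nu\ge 0$ on $\partial\Omega$ with strict inequality unless $P\equiv 0$.

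The next step is to translate $P_\nu\ge 0$ into a statement about the mean curvature $H$. Differentiating $P$ in the direction $\nu$ (the inner normal to $\Omega$, i.e. pointing out of $D$... here I will use the convention of the paper, $\nu$ pointing into $\Omega^c$) and using $\Delta u=W'(u)$, one computes on $\partial\Omega$
\[
P_\nu = \nabla^2 u(\nabla u,\nu) - W'(u)\,u_\nu
= u_{\nu\nu}\,u_\nu - W'(0)\,u_\nu,
\]
since $\nabla u$ is parallel to $\nu$ on $\partial\Omega$ (the level set $\{u=0\}$ has $\nabla u\perp$ itself). Combining the equation $\Delta u = u_{\nu\nu} + H\,u_\nu = W'(0)$ on $\partial\Omega$ (with the tangential second derivative of $u$ vanishing because $u\equiv 0$ along $\partial\Omega$), one solves $u_{\nu\nu} = W'(0) - H u_\nu$, and substituting gives
\[
P_\nu = -H\,u_\nu^2.
\]
Here $u_\nu<0$ with respect to $\nu$ pointing into $\Omega^c=\{u\le 0\}$... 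I will be careful with signs; in any case $u_\nu^2=2W(0)>0$. Thus $P_\nu\ge 0$ is exactly $H\le 0$ with respect to $\nu$ pointing out of $D$, i.e. $D$ is convex. This is essentially the computation in \cite{CJK}, which I would cite for the precise bookkeeping rather than reproduce in full.

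For the rigidity statement, suppose $D$ is not strictly convex, so $H=0$ at some boundary point $y_0$; then $P_\nu(y_0)=0$, and since $P\le 0$ in $\Omega$ with $P(y_0)=0$, the Hopf lemma (strong form) forces $P\equiv 0$ in the connected component of $\Omega$ adjacent to $y_0$. But $P\equiv 0$ means the Modica inequality is saturated everywhere, and then the standard argument (as in Caffarelli–Garofalo–Segàla / Sternberg–Zumbrun, used in \cite{CJK}) shows $u$ is one dimensional: $P\equiv 0$ implies $|\nabla^2 u|^2 = |\nabla|\nabla u||^2$ pointwise (the curvature quantity $|A|^2$ vanishes), so all level sets are flat and $\nabla u/|\nabla u|$ is constant. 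Finally, since $\Omega$ is connected and $u$ is analytic where $u>0$, unique continuation propagates the one-dimensional structure to all of $\R^2$. The main obstacle I anticipate is the sign/normalization bookkeeping in the identity $P_\nu=-Hu_\nu^2$ and making sure the Hopf lemma applies up to the free boundary (which requires the interior ball condition at $\partial\Omega$ — available since $\partial\Omega$ is smooth — and a lower bound on $|\nabla u|$ near $\partial\Omega$, which follows from $|\nabla u|^2=2W(0)>0$ on $\partial\Omega$ and continuity); both are routine but must be done carefully, which is why I would lean on the reference \cite{CJK} for the detailed computation.
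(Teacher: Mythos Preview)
Your proposal is correct and follows essentially the same approach as the paper: both use the Modica inequality to get $P_\nu\ge 0$ on $\partial\Omega$, translate this into a sign on the curvature of $\partial\Omega$ (the paper writes $P_\nu=(\nabla^2u(\nu,\nu)-\Delta u)|\nabla u|$, you use the equivalent decomposition via $\Delta u=u_{\nu\nu}+Hu_\nu$), defer to \cite{CJK} for the convexity conclusion, and invoke the Hopf lemma on \eqref{P fct} for the strict-inequality/rigidity step. Your acknowledged worry about sign bookkeeping and the applicability of Hopf near the free boundary is exactly what the paper flags too (``near $\partial\Omega$, $|\nabla u|$ has a positive lower bound, hence the second term in \eqref{P fct} is regular'').
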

\begin{proof}
Because $P=0$ on $\partial\Omega$ and $P\leq 0$ in $\Omega$,
\begin{equation}\label{6.1.1}
P_\nu\geq 0,\quad \mbox{on}\
\partial\Omega.
\end{equation}
On the other hand,
\begin{equation}\label{P boundary derivative}
P_\nu=\nabla^2u(\nabla u,\nu)-\Delta u|\nabla
u|=\left(\nabla^2u(\nu,\nu)-\Delta u\right)|\nabla u|.
\end{equation}
Hence $\Delta^\prime u:=\Delta u-\nabla^2u(\nu,\nu)\leq0$ on
$\partial\Omega$. As in \cite{CJK}, this implies the convexity of
$\partial\Omega$.

Moreover, if $u\neq g^{\ast}$ or $g^{\ast\ast}$, the inequality in
\eqref{6.1.1} is strict. (This follows from an application of the
Hopf lemma. Note that near $\partial\Omega$, $|\nabla u|$ has a
positive lower bound, hence the second term in \eqref{P fct} is
regular.)  Then $\Delta^\prime u>0$ strictly on $\partial\Omega$,
and the strict convexity of $\partial\Omega$ follows.
\end{proof}
As in Part I, a direct consequence of this convexity is:
\begin{coro}
$\Omega$ is unbounded.
\end{coro}

Next, let \makeatletter
\let\@@@alph\@alph
\def\@alph#1{\ifcase#1\or \or $'$\or $''$\fi}\makeatother
\begin{equation*}
{\Psi(x):=}
\begin{cases}
g^{-1}\circ u(x), &x\in\Omega,\\
0,&x\in\Omega.
\end{cases}
\end{equation*}
\makeatletter\let\@alph\@@@alph\makeatother By the Modica
inequality, $|\nabla\Psi|\leq 1$ in $\Omega$. It can be directly
checked that $\Psi$ satisfies
\[\Delta\Psi=f(\Psi)\left(1-|\nabla\Psi|^2\right),\quad \mbox{in}\ \Omega,\]
where $f(\Psi)=W^\prime(g(\Psi))/\sqrt{2W(g(\Psi))}$.

For applications below, we present a non-degeneracy result for the
zero set $\{u=0\}$. The proof is exactly the same as the one for Proposition \ref{nondegenarcy of contact set 1}.
\begin{prop}\label{nondegenarcy of contact set 2}
If $u\neq u^{\ast\ast}$ (for a unit vector $e$ and two constants
$t_1=t_2$), then for every $x\in\partial\Omega$ and $r>0$,
$|B_r(x)\cap\{u=0\}|>0$.
\end{prop}

In the following, we will always assume $u$ satisfies the above
non-degeneracy condition.

\section{The stable De Giorgi conjecture}\label{sec 7}
\setcounter{equation}{0}

In this section we assume $u$ to be stable outside a compact set. We
use the stability condition to derive an integral curvature bound
and use this to study the convergence of translations of a solution
$u$ to \eqref{equation 0.2}.

Let us first prove the stable De Giorgi conjecture, Theorem
\ref{stable solutions 2}.
\begin{proof}[Proof of Theorem \ref{stable solutions 2}]
As in the proof of Theorem \ref{stable solutions 1}, the stability
condition implies the existence of a positive function $\varphi\in
C^\infty(\overline{\Omega})$ satisfying
\begin{equation}\label{linearized equation}
 \left\{\begin{aligned}
&\Delta \varphi=W^{\prime\prime}(u)\varphi,\ \ \ \mbox{in}\ \Omega,\\
&\varphi_\nu=-\left(\frac{W^\prime(0)}{\sqrt{2W(0)}}-H\right)\varphi,~~\mbox{on}~\partial\Omega.
                          \end{aligned} \right.
\end{equation}
By direct differentiation, for any unit vector $e$, the directional
derivative $u_e:=e\cdot\nabla u$ also satisfies this equation.

Let $\psi:=u_e/\varphi$. It satisfies
\begin{equation}\label{degenerate equation with boundary condition}
 \left\{\begin{aligned}
&\mbox{div}\left(\varphi^2\nabla\psi\right)=0,\ \ \ \mbox{in}\ \Omega,\\
&\psi_\nu=0,~~\mbox{on}~\partial\Omega.
                          \end{aligned} \right.
\end{equation}
 The following proof is
exactly the same as in the proof of Theorem \ref{stable solutions 1}.
\end{proof}

The following result is similar to Lemma \ref{lem 3.1}.
However, since the calculation is a little different, we include a
complete proof here.
\begin{lem}
For any $L>1$, there exists an $R(L)$ such that, there is no bounded
component of $\{u=0\}$ contained in $B_{R(L)}(0)^c$ with diameter
smaller than $L$.
\end{lem}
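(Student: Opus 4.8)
The plan is to mimic the two-step argument used for Lemma \ref{lem 3.1} in Part I, replacing the harmonic estimate there with the analogue adapted to the semilinear equation $\Delta u = W'(u)$ and the modified boundary operator appearing in the stability inequality \eqref{stability inequality 2}. First I would exploit the fact that $u$ is stable outside a compact set: testing the stability inequality \eqref{stability inequality 2} with $\eta = \varphi|\nabla u|$ (for $\varphi\in C_0^\infty(\R^2\setminus B_{R_0})$) and integrating by parts, using $\Delta u = W'(u)$ in $\Omega$, the free boundary condition $|\nabla u| = \sqrt{2W(0)}$ on $\partial\Omega$, and the identity $\left(|\nabla u|^2\right)_\nu = 2u_{\nu\nu}$ together with the boundary contribution of $-W'(0)/\sqrt{2W(0)} + H$, one should arrive at an inequality of the shape
\[\int_{\Omega}|\nabla u|^2|A|^2\varphi^2\leq\int_{\Omega}|\nabla u|^2|\nabla\varphi|^2,\]
exactly as in \eqref{3.0.1}-\eqref{3.0.2} of Part I; here $|A|^2 = \left(|\nabla^2 u|^2 - |\nabla|\nabla u||^2\right)/|\nabla u|^2$ is the curvature quantity. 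The point is that the zeroth order terms $W''(u)\eta^2$ on the left and $-W'(0)/\sqrt{2W(0)}\,\eta^2$ on the boundary are precisely what is needed to cancel the lower order terms generated by differentiating the Modica function $P = |\nabla u|^2/2 - W(u)$, so no sign is lost. Plugging in the standard logarithmic cut-off $\varphi$ then yields
\[\int_{(\Omega\setminus B_R)}|\nabla u|^2|A|^2\leq\frac{C}{\log R}\xrightarrow[R\to\infty]{}0.\]

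Second, I would localize this curvature decay near a hypothetical small bounded component $D$ of $\{u=0\}$ sitting outside $B_{R(L)}$. Since $|\nabla u| = \sqrt{2W(0)} > 0$ on $\partial\Omega$, near the free boundary $|\nabla u|$ is bounded below, so the coordinate-form identities analogous to \eqref{3.0.3}-\eqref{3.0.4} hold: $|\nabla^2 u\,\nabla u|^2 \leq |\nabla u|^2|A|^2$ and $|\nabla^2 u|^2 \leq C(|\nabla u|^2 |A|^2 + W''(u)^2 |\nabla u|^2)$ in a neighborhood of $\partial\Omega$ (the extra $W''$ term coming from $\Delta u = W'(u)$, harmless since it is bounded and supported where $|\nabla u|$ is bounded below). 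Integrating by parts against a cut-off $\eta$ supported in $B_{2L}(x)$, $\eta\equiv 1$ on $B_L(x)$ with $x\in D$, in the first-variation identity
\[\int_{\partial D}\left(-\frac{W'(0)}{\sqrt{2W(0)}} + H\right)\eta = \int_{\{u>0\}}\left(-\nabla^2 u\,\nabla u\cdot\nabla\eta + \eta\left(|\nabla^2 u|^2 - (\Delta u)^2\right)\right)\eta\text{-correction},\]
one estimates the right-hand side by $C/L \cdot \int_{B_{2L}(x)}|\nabla u||A| + C\int_{B_{2L}(x)}|\nabla u|^2|A|^2 + C\,|B_{2L}(x)\cap\Omega|$, hence (using the energy growth bound or a direct area bound on the thin strip near a small convex component, plus the curvature decay from Step 1) by $C/\sqrt{\log R} + C L \cdot(\text{width of strip})$; the essential gain is that $H\geq 0$ on the convex free boundary $\partial\Omega$ (Lemma \ref{convexity}), and that the constant term $-W'(0)/\sqrt{2W(0)}$ times the perimeter $\mathcal{H}^1(\partial D)$ is itself controlled because a component of diameter $< L$ enclosed a region of area $\lesssim L^2$. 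Finally, since $D$ is convex and closed, the Gauss-Bonnet theorem forces $\int_{\partial D} H = 2\pi$, so combining gives $2\pi \leq C/\sqrt{\log R} + (\text{terms} \to 0\text{ as }R\to\infty)$, a contradiction once $R(L)$ is chosen large.

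The main obstacle, compared to the harmonic case of Part I, is bookkeeping the lower order terms: one must check that the boundary term $-W'(0)/\sqrt{2W(0)}$ in \eqref{stability inequality 2} really does combine with $W''(u)$ to reproduce a clean curvature inequality after the substitution $\eta=\varphi|\nabla u|$, and that when localizing to $\partial D$ the constant-times-perimeter contribution does not swamp the Gauss-Bonnet term. The Modica inequality (Proposition \ref{Modica inequality 2}) and the nondegeneracy of $\{u=0\}$ (used to control the area of the thin region trapped between $\partial D$ and the nearby components of $\partial\Omega$, as in Part I) are what make this work; I expect the argument to go through with only routine modifications, since the geometry—convexity of free boundaries and the two-dimensional Gauss-Bonnet rigidity—is identical to the first problem.
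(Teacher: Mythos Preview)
Your Step~1 is correct and matches the paper: testing \eqref{stability inequality 2} with $\varphi|\nabla u|$, the boundary term $-W'(0)/\sqrt{2W(0)}$ and the volume term $W''(u)|\nabla u|^2\varphi^2$ cancel against the corresponding pieces of $(|\nabla u|^2)_\nu$ and $\Delta|\nabla u|^2$, leaving $\int_\Omega |\nabla u|^2|A|^2\varphi^2\leq\int_\Omega|\nabla u|^2|\nabla\varphi|^2$ and hence $\int_{\Omega\setminus B_R}|\nabla u|^2|A|^2\leq C/\log R$.

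Step~2 has a genuine gap. Your boundary identity carries $-W'(0)/\sqrt{2W(0)}+H$, and your volume estimate carries a $C|B_{2L}(x)\cap\Omega|$ term; both produce contributions of order $L$ (or $L^2$) that do \emph{not} decay as $R\to\infty$. For $W'(0)>0$ and $L$ large these already exceed $2\pi$, so no contradiction with Gauss--Bonnet follows. Your proposed remedy via the energy growth bound is also circular: in the paper that bound is established only \emph{after} the structure of $\Omega^c$ has been pinned down.

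The paper's fix is to use the Modica function $P=\tfrac12|\nabla u|^2-W(u)$, not $|\nabla u|^2$, in the localization. Since $P=0$ on $\partial\Omega$, one computes $P_\nu=\bigl(\nabla^2u(\nu,\nu)-\Delta u\bigr)|\nabla u|$, which on $\partial\Omega$ is a positive multiple of $H$ with \emph{no} $W'(0)$ term --- the subtraction of $W(u)$ in $P$ removes it exactly. The divergence theorem then gives
\[
\int_{\partial\Omega}H\eta \;=\; c\int_{\Omega}\Bigl[\bigl(\nabla^2u\,\nabla u-\Delta u\,\nabla u\bigr)\cdot\nabla\eta+\eta\bigl(|\nabla^2u|^2-(\Delta u)^2\bigr)\Bigr],
\]
and both integrands are controlled pointwise by curvature alone: in coordinates with $\nabla u/|\nabla u|=(0,1)$ one has $\nabla^2u\,\nabla u-\Delta u\,\nabla u=|\nabla u|(u_{12},-u_{11})$, hence $|\nabla P|\leq C|\nabla u||A|$, and $|\nabla^2u|^2-(\Delta u)^2=2(u_{12}^2+u_{11}^2)-2W'(u)u_{11}$, hence $|\Delta P|\leq C\bigl(|\nabla u|^2|A|^2+|\nabla u||A|\bigr)$. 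No $O(1)$ residual survives; Step~1 and Cauchy--Schwarz then give $\int_{\partial D}H\leq C(1+L)/\sqrt{\log R}$, contradicting $\int_{\partial D}H=2\pi$ for $R$ large.
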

\begin{proof}
{\bf Step 1.} For any $R$ large, take a $\varphi\in
C_0^\infty(\R^2\setminus B_R(0))$ and test the stability condition with
$\varphi|\nabla u|$. After some integration by parts, we obtain
\begin{eqnarray}\label{7.0.1}
&&-2W(0)\int_{\partial\Omega}\varphi^2\left(\frac{W^\prime(0)}{\sqrt{2W(0)}}-H\right) \nonumber\\
&\leq&\int_\Omega\biggl(|\nabla\varphi|^2|\nabla
u|^2+2\varphi|\nabla u|\nabla\varphi\cdot\nabla |\nabla
u|+\varphi^2|\nabla|\nabla u||^2+W^{\prime\prime}(u)|\nabla u|^2\varphi^2\biggr)\\
&=&\int_{\partial\Omega}\frac{1}{2}\varphi^2\left(|\nabla
u|^2\right)_\nu+\int_{\Omega}\biggl(|\nabla\varphi|^2|\nabla
u|^2-\varphi^2|B|^2|\nabla u|^2\biggr).\nonumber
\end{eqnarray}

On $\partial\Omega$,
\[\left(|\nabla
u|^2\right)_\nu=2\sqrt{2W(0)}u_{\nu\nu}=2\sqrt{2W(0)}W^\prime(0)+
4W(0)H.\]
 Hence \eqref{7.0.1} can be transformed into
\[\int_{\Omega}|\nabla
u|^2|B|^2\varphi^2\leq\int_\Omega|\nabla u|^2|\nabla\varphi|^2.\]
Then we can use standard log cut-off test functions to show that
\[\int_{\Omega\setminus B_R(0)}|\nabla
u|^2|B|^2\leq \frac{C}{\log R},\] which converges to $0$ as
$R\to+\infty$.

{\bf Step 2.} For any $\eta\in C_0^\infty(B_R(0)^c)$,
\begin{eqnarray}\label{6.3}
\int_{\partial\Omega}H\eta&=&\int_{\partial\Omega}P_\nu\eta\nonumber
\\
&=&\int_{\Omega}\biggl(\nabla P\nabla \eta+\eta\Delta P\biggr)\\
&=&\int_{\Omega}\biggl[\left(\nabla^2u\nabla u-\Delta u\nabla
u\right)\nabla\eta+\eta\left(|\nabla^2 u|^2-\big|\Delta
u\big|^2\right)\biggr].\nonumber
\end{eqnarray}

{\bf Claim. } At $x$ where $\nabla u(x)\neq0$,
\begin{equation}\label{7.5}
|\nabla^2u\nabla u-\Delta u\nabla u|\leq C|\nabla u||B|,
\end{equation}
 and
 \begin{equation}\label{7.6}
\Big||\nabla^2 u|^2-\big|\Delta u\big|^2\Big|\leq C\left(|\nabla
u||B|+|\nabla u|^2|B|^2\right).
\end{equation}

To prove this claim, take the coordinates centered at $x$ so that
\[\frac{\nabla u(x)}{|\nabla u(x)|}=(0,1).\]
At $x$ we have
\begin{equation}\label{7.2}
|\nabla u|^2|B|^2=\Big|\frac{\partial^2u}{\partial
x_1^2}\Big|^2+\Big|\frac{\partial^2u}{\partial x_1\partial
x_2}\Big|^2,
\end{equation}
\begin{equation}\label{7.3}
\nabla^2u\nabla u-\Delta u\nabla u=|\nabla
u|\left(\frac{\partial^2u}{\partial x_1\partial
x_2},-\frac{\partial^2u}{\partial x_1^2}\right),
\end{equation}
Because $|\nabla u|\leq \sqrt{2W(u)}\leq C$, \eqref{7.5} follows.
Next
\begin{eqnarray}\label{7.4}
|\nabla^2 u|^2-\big|\Delta
u\big|^2&=&2\left(\Big|\frac{\partial^2u}{\partial x_1\partial
x_2}\Big|^2-\frac{\partial^2u}{\partial
x_1^2}\frac{\partial^2u}{\partial
x_2^2}\right)\\
&=&2\left(\Big|\frac{\partial^2u}{\partial x_1\partial
x_2}\Big|^2+\Big|\frac{\partial^2u}{\partial
x_1^2}\Big|^2\right)-2W^\prime(u)\frac{\partial^2u}{\partial x_1^2}.
\end{eqnarray}
Because $|W^\prime(u)|\leq C$, \eqref{7.6} follows. This finishes
the proof of this {\bf Claim}.

Assume there is a connected component of $\{u=0\}$, $D$, contained
in $B_R(0)^c$ with its diameter smaller than $L$. Take a point $x$
in this component and $\eta$ to be a standard cut-off function in
$B_{2L}(x)$ with $\eta\equiv 1$ in $B_L(x)$. Substituting this into
\eqref{6.3}, using \eqref{7.5} and \eqref{7.6}, and noting that
$H\geq 0$ on $\partial\Omega$, we obtain
\begin{eqnarray*}
\int_{\partial D}H&\leq& C\int_{B_{2L(x)}}|\nabla u||B|+|\nabla
u|^2|B|^2\\
&\leq& C\int_{B_{2L(x)}}|\nabla
u|^2|B|^2+CL\left(\int_{B_{2L(x)}}|\nabla
u|^2|B|^2\right)^{\frac{1}{2}}\\
&\leq& \frac{C\left(1+L\right)}{\sqrt{\log R}}.
\end{eqnarray*}
The remaining proof is exactly the same as the one for Lemma \ref{lem 3.1}.
\end{proof}
As in Part I, the above proof imply two corollaries.
\begin{coro}
For any $x\in\partial\Omega\setminus B_{R(L)}(0)$ and $r\in(0,L/2)$,
the connected component of $\partial\Omega\cap B_r(x)$ passing
through $x$, denoted by $\Gamma^{x,r}$, has its boundary in
$\partial B_r(x)$. Hence,
\[\mathcal{H}^1(\Gamma^{x,r})\geq 2r.\]
\end{coro}

\begin{coro}\label{integral curvature estimate 2}
For any $\varepsilon>0$ small and $L>0$ large, there exists an
$R(L,\varepsilon)$ so that the following holds. For any
$x\in\partial\Omega\setminus B_{R(L,\varepsilon)}(0)$, the connected
component of $\partial\Omega$ passing through $x$, denoted by $\Gamma^{x,L}$,
satisfies
\[\int_{\Gamma^{x,L}\cap B_L(x)}H\leq \varepsilon,\]
and
\[\mbox{dist}_{H}\left(\Gamma^{x,L}\cap B_L(x), \{e^{x,L}\cdot (y-x)=0\}\cap B_L(x)\right)\leq \varepsilon,\]
where $e^{x,L}$ is a unit vector.
\end{coro}
Exactly as in Part I, we have the following characterization
of the convergence of translations of $u$ at infinity.
\begin{lem}\label{translation at infinity}
For any $\ell\geq 1$ and $x_k\in\partial\Omega$, $|x_k|\to\infty$,
the translated function
\[u_k(x):=u(x_k+x)\]
converges in the $C^\ell$ sense to $u^\ast(e\cdot x)$ for some unit
vector $e$, or $u^{\ast\ast}(e,0,t)$ for a unit vector $e$ and a
constant $t\leq0$. Moreover, on any compact set of
$\R^2$, the translation of $\Omega$,
$\Omega_k:=\Omega-x_k$ converges to $\Omega(u^\ast)$ or
$\Omega(u^{\ast\ast})$ in the $C^\ell$ sense.
\end{lem}

\section{Finiteness of unbounded components of $\Omega^c$}\label{sec 8}
\setcounter{equation}{0}

In this section we establish the finiteness of unbounded connected
components of $\Omega^c$, if the solution $u$ of \eqref{equation 0.2}
is stable outside a compact set.

By the stability of $u$ outside a compact set (say $B_{R_0}(0)$ for
some $R_0>0$), there exists a positive function $\varphi\in
C^\infty(\overline{\Omega})$ satisfying
\begin{equation}\label{stability condition 02}
 \left\{\begin{aligned}
&\Delta \varphi=W^{\prime\prime}(u)\varphi,\ \ \ &\mbox{in}\ \Omega\setminus B_{R_0}(0),\\
&\varphi_\nu=-\left(\frac{W^\prime(0)}{\sqrt{2W(0)}}-H\right)\varphi,~~&\mbox{on}~\partial\Omega\setminus
B_{R_0}(0).
                          \end{aligned} \right.
\end{equation}

The following lemma is the main tool of this section.
\begin{lem}\label{crucial lem}
Take a unit vector $e$ and let $D$ be a connected component of
$\{u_e\neq 0\}$. Then $D$ intersects $B_{R_0}$.
\end{lem}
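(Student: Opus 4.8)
The plan is to argue by contradiction, in the spirit of Dancer's argument referred to in the introduction, combined with the Liouville property for the degenerate elliptic equation \eqref{degenerate equation}. Suppose $D$ is a connected component of $\{u_e\neq 0\}$ that does not intersect $B_{R_0}$. Then $u_e$ has a fixed sign on $D$, say $u_e>0$, and $u_e$ vanishes on $\partial D$. On $D$ the function $u_e$ solves the linearized equation \eqref{linearized equation} (interior equation $\Delta u_e = W''(u)u_e$ and Neumann-type boundary condition on $\partial\Omega$), which was the basis for the substitution $\psi = u_e/\varphi$ in the proof of Theorem \ref{stable solutions 2}. Since $D\subset\R^2\setminus B_{R_0}$, the stability function $\varphi$ of \eqref{stability condition 02} is available on $D$, and $\psi := u_e/\varphi$ is a nonnegative function on $D$ satisfying $\operatorname{div}(\varphi^2\nabla\psi)=0$ in $D\cap\Omega$ and $\psi_\nu=0$ on $\partial\Omega\cap\partial D$, i.e. the boundary-value problem \eqref{degenerate equation with boundary condition}. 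Extending $\psi$ by $0$ across $\partial D$ (where $u_e$, hence $\psi$, vanishes continuously) one obtains a globally defined nonnegative subsolution of the degenerate equation $\operatorname{div}(\varphi^2\nabla\psi)=0$ on all of $\R^2\setminus B_{R_0}$, or more precisely a nonnegative function that is a subsolution in the sense tested against nonnegative test functions supported away from $B_{R_0}$.

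The next step is the Liouville-type argument. Test the equation (or subsolution inequality) with $\psi\eta^2$ where $\eta$ is a logarithmic cutoff supported in $B_{2R}\setminus B_{R_0}$, $\eta\equiv 1$ on $B_R\setminus B_{2R_0}$. After integration by parts and the Cauchy–Schwarz inequality, exactly as in the proof of Theorem \ref{stable solutions 1}, one gets
\[
\int \varphi^2|\nabla\psi|^2\eta^2 \leq C\int \varphi^2\psi^2|\nabla\eta|^2 \leq \frac{C}{\log R}\int_{B_{2R}\setminus B_R}\varphi^2\psi^2;
\]
the key point needed to run this to a contradiction is a growth bound on $\int_{B_R}\varphi^2\psi^2 = \int_{B_R}u_e^2$, i.e. on $\int_{B_R}|\nabla u|^2$, which is linear in $R$ by the natural energy growth bound (the analogue of Lemma \ref{energy growth}, which holds for \eqref{equation 0.2} under the stability-outside-a-compact-set hypothesis). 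With this linear bound, $\frac{1}{\log R}\cdot R \not\to 0$, so the plain log-cutoff is not quite enough; instead one uses the standard refinement (taking $\eta$ to decay like $(\log|x|)^{-1}$ relative to a double-dyadic scale, or iterating) which in dimension $2$ does kill the right-hand side because the energy is subquadratic. Concluding $\nabla\psi\equiv 0$ on $D$, $\psi$ is constant, and since $\psi=0$ on $\partial D$ we get $\psi\equiv 0$, hence $u_e\equiv 0$ on $D$, contradicting $D\subset\{u_e\neq 0\}$.

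There is a subtlety in the extension-by-zero step that I expect to be the main obstacle: $\psi$ is only a \emph{subsolution} after extending by $0$ (a nonnegative function vanishing outside $D$, harmonic-type inside, is subharmonic-type across $\partial D$ only if the normal derivative has the right sign), so the argument must be phrased for nonnegative subsolutions of \eqref{degenerate equation} rather than solutions. This is precisely the Liouville property for nonnegative subsolutions of $\operatorname{div}(\sigma^2\nabla\psi)=0$ alluded to in Subsection \ref{subsection 1.3}, and it is available in $\R^2$ provided $\sigma=\varphi$ satisfies the requisite integrability/growth bounds — here $\int_{B_R}\varphi^2|\nabla\psi|^2$-type energy is controlled by $\int_{B_R}|\nabla^2 u|^2 + |\nabla u|^2$, and one checks this grows at most linearly via the curvature estimate of Step 1 in the preceding lemma (namely $\int_{\Omega\setminus B_R}|\nabla u|^2|A|^2 \leq C/\log R$) together with \eqref{7.6}. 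The remaining details — verifying that $u_e/\varphi$ genuinely solves \eqref{degenerate equation with boundary condition} on $D$, checking the continuity of $\psi$ up to $\partial D$, and choosing the cutoff correctly to absorb the linear energy growth — are routine adaptations of the arguments already given for Theorems \ref{stable solutions 1} and \ref{stable solutions 2} and Lemma \ref{energy growth}.
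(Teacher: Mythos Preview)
Your overall strategy is correct and matches the paper's proof: argue by contradiction, restrict $u_e$ to $D$, extend by zero to obtain a nonnegative continuous subsolution of the linearized problem \eqref{linearized equation}, divide by the stability supersolution $\varphi$ from \eqref{stability condition 02} to get a nonnegative subsolution of \eqref{degenerate equation with boundary condition} supported in $B_{R_0}^c$, and conclude $\psi\equiv 0$ by the log cut-off Liouville argument exactly as in Theorem~\ref{stable solutions 2}.

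However, you introduce an unnecessary complication that also creates a circularity. You worry that the log cut-off estimate
\[
\int_\Omega \varphi^2|\nabla\psi|^2\eta^2 \leq C\int_\Omega \varphi^2\psi^2|\nabla\eta|^2
\]
requires a growth bound on $\int_{B_R}\varphi^2\psi^2=\int_{B_R}u_e^2$, and you propose to invoke the natural energy growth bound. This is both unnecessary and circular. It is unnecessary because $u_e^2\leq|\nabla u|^2\leq 2W(u)\leq 2\sup_{[0,1]}W$ \emph{pointwise} by the Modica inequality (Proposition~\ref{Modica inequality 2}), which has already been established. Hence the right-hand side is bounded by $C\int|\nabla\eta|^2$, and the standard two-dimensional log cut-off (e.g.\ $\eta\equiv 1$ on $B_R$, $\eta\equiv 0$ outside $B_{R^2}$, $\eta(x)=2-\log|x|/\log R$ in between) gives $\int|\nabla\eta|^2\leq C/\log R\to 0$. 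No refined cut-off or iteration is needed. It is circular because the energy growth bound for problem \eqref{equation 0.2} is Theorem~\ref{thm energy growth}, proved in Section~\ref{sec 10} \emph{after} the present lemma, and its proof uses the finiteness of unbounded components of $\Omega^c$, which in turn relies on this lemma.

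So your proof is fine once you replace the appeal to energy growth by the pointwise bound $|\nabla u|\leq C$ from Modica's inequality; the paper's proof does exactly this (implicitly, by saying ``the same proof of Theorem~\ref{stable solutions 2}'').
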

\begin{proof}
Assume $D$ dose not intersect $B_{R_0}$. Let $\psi$ be the
restriction of $|u_e|$ to $D$, extended to be $0$ outside $\Omega$.
Then $0\leq\psi\leq C$ is a continuous subsolution to
\eqref{linearized equation}, because $u_e$ is a solution to
\eqref{linearized equation}.

The function $\phi:=\psi/\varphi_0$ is well defined. It is
nonnegative, continuous. Moreover, it satisfies
\begin{equation}\label{linearized problem 3}
 \left\{\begin{aligned}
&\mbox{div}\left(\varphi_0^2\nabla\phi\right)\geq 0, \quad \mbox{in}\ \Omega,\\
&\phi_\nu\geq0,~~\mbox{on}~\partial\Omega.
                          \end{aligned} \right.
\end{equation}
Note that the support of $\phi$ is contained in $B_{R_0}(0)^c$.

The same proof of Theorem \ref{stable solutions 2}, using the
standard log cut off functions, gives $\phi\equiv 0$. Hence
$u_e\equiv0$ in $D$ and we get a contradiction.
\end{proof}

We use this lemma to prove
\begin{lem}
There are only finitely many unbounded connected components of
$\Omega^c$.
\end{lem}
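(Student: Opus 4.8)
The goal is to deduce that $\Omega^c$ has only finitely many unbounded connected components from Lemma~\ref{crucial lem}, which says that every nodal domain of $u_e$ (for a fixed unit vector $e$) meets the compact ball $B_{R_0}$. The strategy is: (i) pass from nodal domains of $u_e$ to unbounded components of $\Omega^c$ by a geometric argument using the convexity of $\Omega^c$; (ii) fix a good direction $e$ so that distinct unbounded components of $\Omega^c$ force distinct nodal domains of $u_e$ outside some large ball; (iii) then finiteness of unbounded components of $\Omega^c$ follows from Lemma~\ref{crucial lem} together with the fact that finitely many disjoint nodal domains can meet a fixed bounded set $B_{R_0}$ in separated regions --- more precisely, each unbounded component of $\Omega^c$ will be associated with a nodal domain of $u_e$ that meets $B_{R_0}$, and these nodal domains are pairwise disjoint, but a priori infinitely many disjoint open sets can meet a ball, so one needs a quantitative separation. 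That quantitative input comes from Corollary~\ref{integral curvature estimate 2} and Lemma~\ref{translation at infinity}: far out, each component of $\partial\Omega$ looks like a straight line at scale $L$, with a definite direction $e^{x,L}$.

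\textbf{Step 1: choosing the direction and locating nodal domains.} First I would use Lemma~\ref{translation at infinity} and Corollary~\ref{integral curvature estimate 2} to fix $L$ large and $R_1\gg R_0$ so that, for $|x|>R_1$ with $x\in\partial\Omega$, the component of $\partial\Omega\cap B_L(x)$ through $x$ is $\varepsilon$-close to a line with direction $e^{x,L}$, and $\partial\Omega$ near $x$ is uniformly $C^2$ with a one-sided graph representation; moreover $u$ restricted to $B_L(x)\cap\Omega$ is $C^\ell$-close to a one-dimensional profile $u^\ast(e^{x,L}\cdot y)$ or $u^{\ast\ast}$. Since there are only finitely many one-dimensional directions that can appear as blow-down limits (here one should invoke the blow-down/balancing analysis analogous to Proposition~\ref{balancing condition} in Part~I, or simply the monotonicity of the graph slopes as in Section~\ref{sec 5}), one can pick a unit vector $e$ that is \emph{not} perpendicular to any of the asymptotic directions of $\partial\Omega$; with this choice $u_e=e\cdot\nabla u$ has a definite sign ($u_e\ne 0$, and in fact bounded away from $0$) on $\partial\Omega\cap B_L(x)$ for all large $x$, since $|\nabla u|\to\sqrt{2W(0)}>0$ there and the normal direction is nearly a fixed vector not orthogonal to $e$.

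\textbf{Step 2: one nodal domain per unbounded component.} With $e$ as above, I would argue that each unbounded component $U$ of $\Omega^c$ gives rise to a distinct nodal domain of $u_e$ lying outside $B_{R_1}$. Indeed, $\partial U$ is (by convexity, as in Lemma~\ref{lem 4.3}) a single simple curve or two simple curves; along it $u$ vanishes and $u$ is positive on the $\Omega$-side, and by Step~1 $u_e$ has a fixed sign on $\partial U\cap(B_{R_1})^c$ --- say $u_e>0$ there. Pushing slightly into $\Omega$, $u_e>0$ in a neighborhood of $\partial U$ in $\Omega\setminus B_{R_1}$; the connected component of $\{u_e>0\}$ containing this collar is a nodal domain $D_U$. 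For two \emph{distinct} unbounded components $U\ne U'$ of $\Omega^c$, the corresponding nodal domains $D_U$ and $D_{U'}$ are distinct: a path connecting a point near $\partial U$ to a point near $\partial U'$ inside a single component of $\{u_e>0\}$ would have to cross $\partial\Omega$ (since $u_e=0$ on $\Omega^c$ except possibly on a degenerate set, and $U,U'$ separate the plane suitably), contradicting that it stays in $\{u_e>0\}\subset\Omega$; more carefully, one uses that $u_e$ vanishes identically on the open sets $\mathrm{int}(\Omega^c)$ which separate the collars. Then by Lemma~\ref{crucial lem} each $D_U$ meets $B_{R_0}$. If there were infinitely many unbounded components $U_1,U_2,\dots$, we would get infinitely many pairwise disjoint nodal domains $D_{U_j}$ all meeting $B_{R_0}$, which is not yet absurd by itself.

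\textbf{Step 3: ruling out infinitely many, the main obstacle.} The crux --- and the step I expect to be hardest --- is converting ``infinitely many disjoint nodal domains meeting $B_{R_0}$'' into a contradiction. Here I would use that each $D_{U_j}$ is not merely meeting $B_{R_0}$ but contains a definite-size collar around an unbounded convex curve $\partial U_j$, and that the $\partial U_j$ are boundaries of \emph{disjoint convex} sets $U_j$ with each $\partial U_j$ meeting $B_{R_1+2K}$ (by the argument of Lemma~\ref{lem 4.3} adapted to this setting). Disjoint convex unbounded sets in $\R^2$, each meeting a fixed ball, must be finite in number: their asymptotic directions (well-defined by convexity, as in Section~\ref{sec 5}) are forced to be nearly distinct --- two unbounded convex sets whose boundaries pass through a common ball and whose asymptotic half-lines point in (almost) the same direction must overlap, or one contains a translate of a cone contained in the other. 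Quantitatively, pick $R$ huge; each $U_j\setminus B_R$ is contained in a narrow sector, these sectors are essentially disjoint, and only finitely many essentially disjoint sectors fit around a circle. This is exactly the mechanism used in Part~I (Lemma~\ref{lem 4.3} and its corollaries), and I would essentially transcribe it: every unbounded component of $\Omega^c$ has a simple smooth boundary curve meeting $B_{R_1+2K}(0)$, and facts (F1)--(F5)-type control (here from Corollary~\ref{integral curvature estimate 2} and Lemma~\ref{translation at infinity}) pins down finitely many asymptotic directions. The role of Lemma~\ref{crucial lem} in this second problem, as opposed to Part~I, is to handle the case where two components of $\partial\Omega$ are pushed apart on the positive side --- it guarantees that even then, each such component is ``seen'' by a nodal domain of $u_e$ anchored in $B_{R_0}$, preventing an accumulation of components that the purely topological argument of Part~I could not directly exclude. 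Combining: finitely many asymptotic directions, each realized by at most a bounded number of components (again by convexity and the collar estimate), yields the finiteness.
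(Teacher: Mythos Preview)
Your proposal has a genuine gap at its core, and it stems from two places. First, Step~1 is circular: to pick a single direction $e$ transverse to \emph{all} asymptotic directions of $\partial\Omega$, you implicitly assume those directions form a set of measure zero on the circle --- but with infinitely many unbounded components a priori nothing prevents the asymptotic normals from being dense, so no such $e$ need exist. Second, and more seriously, Step~2's claim that distinct unbounded components $U\neq U'$ yield \emph{distinct} nodal domains $D_U\neq D_{U'}$ is simply false in general. If $U$ and $U'$ both lie in a narrow sector with $\Omega$ between them, the collar of $\partial U$ on one side and the collar of $\partial U'$ on the facing side carry the \emph{same} sign of $u_e$ (one normal points ``up'', the other also points ``up''), and nothing stops those two collars from lying in one connected component of $\{u_e>0\}$. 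Your separation argument (``a path would have to cross $\partial\Omega$'') does not work, because the path can stay entirely inside the strip of $\Omega$ between $U$ and $U'$. Finally, your fallback in Step~3 --- adapting Lemma~\ref{lem 4.3} to force each $\partial U_j$ into a fixed ball --- cannot succeed here: that lemma relied on the finite-width property (F5), which is exactly what fails in Problem~2 (free boundaries are pushed apart on the positive side).

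The paper's proof turns your Step~2 on its head. It first localizes to a narrow cone $\mathcal{C}=\{0<x_2<\varepsilon x_1\}$ (so the direction $e=\partial_{x_2}$ is automatically good there), fixes a circle $\partial B_{R_3}$, and lets $J$ be the number of components of $\{u_{x_2}\neq0\}\cap\Omega$ on $\partial B_{R_3}\cap\mathcal{C}$. Lemma~\ref{crucial lem} then bounds the number of nodal domains of $u_{x_2}$ in $\mathcal{C}\setminus B_{R_3}$ by $J$. Now suppose there were $J+2$ unbounded components $U_0,\dots,U_{J+1}$ of $\Omega^c$ in $\mathcal{C}$. Each intermediate $U_i$ has an upper boundary collar $U_i^+\subset\{u_{x_2}>0\}$ and a lower collar $U_i^-\subset\{u_{x_2}<0\}$. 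By pigeonhole two of the $U_i^+$ lie in the \emph{same} nodal domain; the embedded curve joining them inside $\{u_{x_2}>0\}$ then topologically separates the intermediate $U_k^-$ from $B_{R_3}$, contradicting Lemma~\ref{crucial lem}. The key idea you are missing is precisely this pigeonhole-plus-separation step: rather than trying to make the map $U\mapsto D_U$ injective, one exploits its \emph{failure} of injectivity to trap a nodal domain away from $B_{R_0}$.
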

\begin{proof}
Let $D_\alpha$ be all of the unbounded connected components of
$\Omega^c$. By our preliminary analysis, each $D_\alpha$ is a convex
open domain with smooth boundary. Moreover, $\partial D_\alpha$ is a
single simple curve (see the proof of Lemma \ref{lem 4.3}). Thus we
can take a point $x_\alpha\in\partial D_\alpha$ and a unit vector
$e_\alpha$, such that $D_\alpha$ can be represented by
\[D_\alpha=\{x: x\cdot e_\alpha>f(x\cdot e_\alpha^\perp-x_\alpha \cdot e_\alpha^\perp)+x_\alpha\cdot e_\alpha\},\]
where $f$ is a convex function defined on an (connected) interval
$I_\alpha$ containing $0$. The ray
\[L_\alpha:=\{x_\alpha+te_\alpha,\ t\geq0\}\]
is contained in $D_\alpha$. Hence for different $\alpha$, $L_\alpha$
are disjoint from each other.

Take an $N$ large and two $e_{\alpha_1}$, $e_{\alpha_2}$. We will
prove that if
 \[ e_{\alpha_1}\cdot
e_{\alpha_2}\geq 1-\frac{1}{N},\]
 then in the sector enclosed by $L_{\alpha_1}$ and $L_{\alpha_2}$,
 there are only finitely many
unbounded connected components of $\Omega^c$. It is clear that the
conclusion of this lemma follows from this claim.

Now fix such a sector, which is assumed to be $\{0<x_2<\varepsilon
x_1\}$, with $\varepsilon\leq 1/N$. Take an $R_3$ large so that both
$\{u_{x_2}=0\}\cap\Omega$ and $\partial\Omega$ intersect $\partial
B_{R_3}(0)$ transversally in this cone. (Here $u_{x_2}:=\frac{\partial
u}{\partial x_2}$.)

In the following we denote $\mathcal{C}:=\{0<x_2<\varepsilon
x_1\}\setminus B_{R_3}(0)$.

The number of connected components of $\partial
B_{R_3}(0)\cap\{0<x_2<\varepsilon x_1\}\cap\{u_{x_2}\neq 0\}\cap\Omega$
is denoted by $J$. By Lemma \ref{crucial lem}, there are exactly $J$
connected components of $\{u_{x_2}\neq 0\}\cap\mathcal{C}$.

Now assume the number of unbounded connected components of
$\mathcal{C}\setminus\Omega$ to be larger than $J+1$. Take $J+1$
such components $U_0,\cdots, U_{J+1}$, where $U_0$ is the one
containing $\{x_2=0,x_1\geq R_3\}$ and $U_{J+1}$ the one containing
$\{x_2=\varepsilon x_1,x_1\geq R_3\}$.

For each $i\neq 0, J+1$, there exists an $R_i^\ast$ such that
\[\partial U_i=\left\{(x_1,x_2):x_2=f_i^{\pm}(x_1), \quad \mbox{on}\ x_1\geq R_i^\ast\right\}.\]
Here $f_i^+$ is convex and $f_i^-$ concave.

 For each
$i$, perhaps after enlarging $R_i^\ast$, we can assume
\[\Big|\frac{df_i^\pm}{dx_1}(x_1)\Big|\leq 2\varepsilon,.\]
Since $u=0$ and $|\nabla u|=\sqrt{2W(0)}$ on the curve
$\{x_2=f_i^\pm(x_1)\}$, we see
\[u_{x_2}(x_1,f_i^+(x_1))\geq\sqrt{\frac{W(0)}{2}}>0,\quad u_{x_2}(x_1,f_i^-(x_1))\leq-\sqrt{\frac{W(0)}{2}}<0.\]
The sign follows from the fact that in a neighborhood of
$\{x_2=f_i^\pm(x_1)\}$, $u>0$ on one side and $u=0$ on the other
side. Thus for each $i$, there exists an $R_i^{\ast\ast}$ such that
a neighborhood of $\{(x_1,x_2):x_2=f_i^{\pm}(x_1), \ x_1\geq
R_i^{\ast\ast}\}$ in $\Omega$, which we denote by $U_i^\pm$, is
contained in $\{u_{x_2}>0\}$ and $\{u_{x_2}<0\}$ respectively.

Because the number of connected components of $\{u_{x_2}\neq0\}$ is
not larger than $J$, there exist $i>j$ such that, $U_i^+$ and
$U_j^+$ are contained in the same connected component of
$\{u_{x_2}\neq0\}$. By the connectedness and the unboundedness of
$U_i^+$ and $U_j^+$, there exists a smooth embedded curve contained
in the component containing $U_i^+$ and $U_j^+$. This curve
separates $U_i^-$ from $B_{R_3}(0)$. This contradicts Lemma
\ref{crucial lem} and the claim is proved.
\end{proof}

\begin{coro}
There exists a constant $C$ such that, for any $R>0$ large, the
number of connected components of $\Omega\setminus B_R$ is smaller
than $C$.
\end{coro}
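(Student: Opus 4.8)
The plan is to read the bound off from the finiteness of the unbounded components of $\Omega^c$ just proved, together with the convexity of \emph{every} component of $\Omega^c$ (Lemma \ref{convexity}) and an elementary topological count in the plane. Let $D_1,\dots,D_N$ be the unbounded components of $\Omega^c$ and $\{E_j\}_{j\in J}$ its bounded components; all of these are closed, convex and pairwise disjoint, and the family $\{E_j\}$ is locally finite (only finitely many components of $\Omega^c$ meet any given ball, since $\partial\Omega$ is a smooth properly embedded curve). Fix $R_4$ larger than $\max_i\mbox{dist}(0,D_i)$ and than the radii furnished by Lemma \ref{translation at infinity} and Corollary \ref{integral curvature estimate 2}, so that for every $R\ge R_4$ each $D_i$ already meets $B_R$.

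For a fixed $R\ge R_4$ I would pass to the sphere $S^2=\R^2\cup\{\infty\}$ and set $K:=\overline{B_R}\cup D_1\cup\dots\cup D_N\cup\{\infty\}$, which is compact. Since $\overline{B_R}$ and each $D_i$ are contractible, $\overline{B_R}\cap D_i$ is convex and nonempty, and the $D_i$ are pairwise disjoint, the set $\overline{B_R}\cup\bigcup_i D_i$ is contractible; adjoining the single point $\infty$, which lies in the $S^2$-closure of every $D_i$, joins the $N$ ``arms'' at one new point and hence produces exactly $N-1$ independent loops, i.e. $b_1(K)=N-1$. By Alexander duality in $S^2$ (equivalently, a direct planar argument), the number of connected components of $S^2\setminus K=\R^2\setminus(\overline{B_R}\cup D_1\cup\dots\cup D_N)=:U$ equals $1+b_1(K)=N$. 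Since $\Omega\setminus\overline{B_R}=U\setminus\bigcup_{j\in J}E_j$, it remains to check that deleting the $E_j$ — a locally finite, pairwise disjoint family of bounded convex sets, each at positive distance from the $D_i$ and from one another — does not change the number of connected components of the open planar set $U$, because a bounded convex ``hole'' in a planar domain never disconnects it. Finally, passing from $\Omega\setminus\overline{B_R}$ to $\Omega\setminus B_R$ only glues on the circular arcs $\Omega\cap\partial B_R$, which cannot merge distinct components since each $D_i$ meets $\partial B_R$; so $\Omega\setminus B_R$ has at most $N+1$ components for all $R\ge R_4$, and one may take $C=N+2$.

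The step needing the most care is this last reduction. At the present stage the bounded components of $\Omega^c$ are not yet known to lie in a fixed compact set — this is the content of the quadratic curvature decay proved only in Part III — so a priori there can be such components arbitrarily far out, and one must argue that they act merely as convex holes: neither creating new components of $\Omega\setminus B_R$ nor, when they straddle $\partial B_R$, splitting a component into two. Convexity of the components of $\Omega^c$ (a consequence of the Modica inequality via Lemma \ref{convexity}) is exactly what forbids any ``pinching'' here, while the local flatness from Lemma \ref{translation at infinity} guarantees that the only unbounded obstructions far out are the finitely many convex sets $D_1,\dots,D_N$. I expect the bookkeeping of ``holes that do not disconnect'' to be the only delicate point; everything else is topology plus the two structural inputs already at hand.
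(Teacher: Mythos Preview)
Your approach via Alexander duality in $S^2$ is genuinely different from the paper's. The paper's proof is three sentences: (i) by \emph{strict} convexity of $\partial\Omega$ (Lemma~\ref{convexity}), every component of $\Omega\setminus B_R$ is unbounded; (ii) hence each carries an unbounded arc of $\partial\Omega\setminus B_R$ on its boundary; (iii) each unbounded $D_i$ has $\partial D_i\cong\R$, so $\partial D_i\setminus B_R$ has exactly two unbounded pieces, and distinct components of $\Omega\setminus B_R$ border distinct such arcs, giving at most $2N$. Step (i) is a furthest-point argument: if $W$ were bounded and $p\in\overline W$ maximized $|p|$, then $p\in\partial\Omega$, and the component of $\Omega^c$ at $p$ sits on the far side of the tangent line to $\partial B_{|p|}$; strict convexity forces $\partial\Omega$ to curve strictly \emph{away} from the origin at $p$, while $\partial W\subset\overline{B_{|p|}}$ forces it to curve \emph{toward} the origin by at least $1/|p|$ --- a contradiction.

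Your acknowledged gap --- that removing the bounded $E_j$'s does not disconnect a component $V$ of $U$ --- is real, and it is precisely where the paper's step (i) does the work. A bounded convex $E_j$ straddling $\partial B_R$ can have $E_j\setminus\overline{B_R}$ disconnected (take a long thin convex body piercing the disk), so ``convexity forbids pinching'' is not yet an argument; and at this point in the paper there may be infinitely many $E_j$'s scattered toward infinity. One repair is to run duality directly on $K_\infty=\overline{B_R}\cup\Omega^c\cup\{\infty\}$: each isolated $E_j$ is contractible and contributes nothing to $H^1$, while those meeting $\overline{B_R}$ are absorbed into the contractible hub, so $b_1(K_\infty)=N-1$ again --- but if infinitely many $E_j$'s accumulate at $\infty$, the \v{C}ech cohomology at $\infty$ needs genuine care. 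The other repair is simply the paper's step (i): once every component of $\Omega\setminus B_R$ is unbounded and each $V$ is simply connected (one end), $V\setminus\bigcup E_j$ has exactly one component. Either way the strict-convexity furthest-point lemma is the essential geometric input, and once you have it the paper's direct count is considerably shorter than the duality machinery.
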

\begin{proof}
First, because $\partial\Omega$ is strictly convex with respect to
the outward normal vector, each component of $\Omega\setminus B_R(0)$
is unbounded. Hence it has an unbounded component of
$\partial\Omega\cap B_R(0)^c$ as its boundary.

On the other hand, the previous lemma says that each unbounded
component of $\Omega^c\setminus B_R(0)$ have at most two unbounded
bounded components of $\partial\Omega\cap B_R(0)^c$.

Thus the number of connected components of $\Omega\setminus B_R(0)$ is
at most two times the number of unbounded components of $\Omega^c$.
\end{proof}


In the following, unbounded connected components of $\Omega^c$ are
denoted by $D_i$, $1\leq i\leq K$ for some $K$. Each $D_i$ contains
a ray $L_i=\{x: x=x_i^\ast+re_i,r\geq 0\}$, where
$x_i^\ast\in\partial D_i$ and $e_i$ is a unit vector.

\begin{lem}
If $i\neq j$, $e_i\neq e_j$.
\end{lem}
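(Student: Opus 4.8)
The plan is a proof by contradiction. Suppose $e_i=e_j=:e$ for some $i\neq j$; after a rotation I may assume $e=e_1$. Since $D_i$ and $D_j$ are disjoint, the rays $L_i\subset D_i$ and $L_j\subset D_j$ sit at distinct heights, say heights $d_i<d_j$. For $x_1$ large the vertical slice $D_i\cap(\{x_1\}\times\R)$ is a nonempty interval $[a_i(x_1),b_i(x_1)]$ containing $d_i$; being disjoint from the slice of $D_j$, which contains $d_j$, it lies strictly below it, so $d_i\le b_i(x_1)<a_j(x_1)\le d_j$. Thus the part of $\{x_1\ge T\}$ lying above $D_i$ and below $D_j$ is, slice by slice, an interval of length $\le d_j-d_i$; and since the upper boundary of a convex set is concave, the curve $\Gamma:=\{(x_1,b_i(x_1)):x_1\ge T\}\subset\partial D_i\subset\partial\Omega$ is connected, unbounded, and asymptotic to the direction $e_1$.

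First I would isolate a thin channel of $\Omega$ running along $\Gamma$. Since $u\equiv0$ on $D_i$ and $|\nabla u|=\sqrt{2W(0)}>0$ along $\partial\Omega$, moving off $\Gamma$ in the direction of $\nabla u$ one crosses into $\Omega$; smoothness of the free boundary shows that the component $W$ of $\Omega$ reached in this way is the same along the whole connected curve $\Gamma$. Hence $W$ is unbounded, and in a neighbourhood of $\Gamma$ it is trapped between $D_i$ and the first component of $\Omega^c$ lying above it (which is $D_j$ or some intermediate component, in any case at height $\le d_j$), so $W$ has width $\le d_j-d_i$ there.

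Now take $x_k\in\Gamma$ with $|x_k|\to\infty$. By Lemma \ref{translation at infinity}, $u(x_k+\cdot)\to v$ in $C^\ell_{loc}$ for a one-dimensional solution $v=u^\ast$ or $v=u^{\ast\ast}$, and $\Omega-x_k\to\Omega(v)$ in the $C^\ell$ sense. On the other hand, by the previous step, near the origin $\Omega-x_k$ is a strip of width $w_k\le d_j-d_i$ one of whose boundary arcs passes through $0$; after passing to a subsequence, $w_k\to w\in[0,d_j-d_i]$. If $w=0$, then on this channel $u$ vanishes on both nearby arcs and $|\nabla u|\le C$, so $u\le Cw_k\to0$ there and $v\equiv0$ near $0$, which is impossible since $u^\ast,u^{\ast\ast}\not\equiv0$. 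If $w>0$, then $\Omega(v)$ equals the translated tail of $W$ (it contains it, and is pinched between the limits of $D_i$ below and of $D_j$ above), which is the infinite strip $\{0<x_2<w\}$; this contradicts the fact that $\Omega(u^\ast)$ is a half-plane and $\Omega(u^{\ast\ast})$ a union of two half-planes, neither of which is a strip of finite width. Either way we reach a contradiction, hence $e_i\neq e_j$.

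The delicate point is this last step: one must verify that the local picture of $\Omega-x_k$ near $0$ is genuinely a thin strip squeezed between $D_i$ and $D_j$, so that its finite width is preserved in the limit, and then that such a profile is incompatible with the classification of one-dimensional solutions recorded in Lemma \ref{translation at infinity}. The bookkeeping for $W$ (well-definedness, constancy along $\Gamma$, unboundedness, width $\le d_j-d_i$) is routine given the convexity of $D_i$ and $D_j$ and the nonvanishing of $\nabla u$ on $\partial\Omega$. A heavier alternative would be to feed the thin channel $W$ into the linearized equation and apply Lemma \ref{crucial lem} to $\partial u/\partial x_2$, as in the proof that $\Omega^c$ has finitely many unbounded components, but the route via Lemma \ref{translation at infinity} above is the most direct.
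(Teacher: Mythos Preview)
Your proof is correct and follows essentially the same approach as the paper: translate along the upper boundary $\Gamma$ of $D_i$, invoke Lemma~\ref{translation at infinity} to obtain a one-dimensional limit $v$, and observe that $\Omega(v)$ is a half-plane (or two half-planes) above the horizontal free boundary, which is incompatible with $u=0$ on $D_j$ sitting at bounded height $\le d_j-d_i$ above $\Gamma$.

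Two minor remarks on presentation. First, since $\Omega$ is assumed connected, your ``component $W$ of $\Omega$'' is just $\Omega$ itself; the channel bookkeeping is not needed. Second, the case split $w=0$ versus $w>0$ is unnecessary: the paper argues directly that, because the limit is $u^\ast$ or $u^{\ast\ast}$ with free boundary $\{x_2=0\}$ and positive set containing $\{x_2>0\}$, for any fixed $L$ one has $u>0$ on $\{f_1(x_1)<x_2<f_1(x_1)+L\}$ for $x_1$ large, immediately contradicting $u=0$ on $D_j$ once $L>d_j-d_i$. This single observation absorbs both of your cases and avoids having to argue that $\Omega(v)$ \emph{equals} a strip (which is logically backwards---the contradiction is precisely that it cannot).
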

\begin{proof}
Assume by the contrary, there are two different components of
$\Omega^c$, $D_1$ and $D_2$, such that
\[\{(x_1,x_2): x_2=t_i, x_1\geq R\}\subset D_i,\quad i=1,2,\]
where $t_1<t_2$.

There is a part of $\partial D_i$ having the form
\[x_2=f_i(x_1),\quad x_1\geq R,\]
where $t_2>f_2(x_1)>f_1(x_1)>t_1$. Here $f_1$ is concave and $f_2$
convex. Hence $f_1$ is eventually increasing in $x_1$ and $f_2$
eventually decreasing in $x_1$. Thus their limits as $x_1\to+\infty$
exist. Moreover,
\begin{equation}\label{4.1}
t_2>\lim_{x_1\to+\infty}f_2(x_1)\geq
\lim_{x_1\to+\infty}f_1(x_1)>t_1.
\end{equation}

By the above choice, $u>0$ in a neighborhood below
$\{x_2=f_2(x_1)\}$ and above $\{x_2=f_1(x_1)\}$.

By Proposition \ref{translation at infinity}, as $t\to+\infty$,
\[u^t(x_1,x_2);=u(x_1+t,x_2+f_1(t))\]
converges to a one dimensional solution. In particular, for any
$L>0$, if $x_1$ is large enough, $u>0$ in $\{(x_1,x_2):
f_1(x_1)<x_2<f_1(x_1)+L\}$. However this contradicts \eqref{4.1} and
the proof is complete.
\end{proof}


\begin{lem}\label{lem 4.15}
Let $\mathcal{C}=\{(x_1,x_2): |x_2|<\lambda x_1, x_1>R\}$ for some
$\lambda>0$ and $R>0$. Assume that $u=0$ in
$\mathcal{C}\cap\{x_2>f_+(x_1)\}$ and
$\mathcal{C}\cap\{x_2<f_-(x_1)\}$, where $f_\pm$ are convex
(concave, respectively) functions defined on $[R,+\infty)$,
satisfying
\[-\lambda x_1<f_-(x)<f_+(x)<\lambda x_1.\]
Then both the limits $\lim_{x_1\to+\infty}f^\prime_{\pm}(x_1)$
exist. Moreover,
\[-\lambda\leq\lim_{x_1\to+\infty}f^\prime_-(x_1)<\lim_{x_1\to+\infty}f^\prime_+(x_1)\leq\lambda.\]
\end{lem}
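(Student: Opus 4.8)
The plan is to prove the three assertions --- existence of $\kappa^{\pm}:=\lim_{x_1\to+\infty}f_\pm'(x_1)$, the bounds $-\lambda\le\kappa^{-}$ and $\kappa^{+}\le\lambda$, and the strict inequality $\kappa^{-}<\kappa^{+}$ --- in that order, the last one being the genuinely substantive step. The first two are elementary: since $f_+$ is convex, $f_+'$ is non-decreasing, so $\kappa^{+}$ exists in $(-\infty,+\infty]$; if it were $+\infty$ we could pick $x_1^0$ with $f_+'(x_1^0)>2\lambda$ and convexity would give $f_+(x_1)\ge f_+(x_1^0)+2\lambda(x_1-x_1^0)>\lambda x_1$ for $x_1$ large, contradicting $f_+<\lambda x_1$; so $\kappa^{+}$ is finite, $f_+'\le\kappa^{+}$ everywhere, hence $f_+(x_1)/x_1\to\kappa^{+}$ and therefore $\kappa^{+}\le\lambda$. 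The concave function $f_-$ is treated identically (now using $f_->-\lambda x_1$), giving $\kappa^{-}\ge-\lambda$; and $\kappa^{-}\le\kappa^{+}$, since otherwise $(f_--f_+)'=f_-'-f_+'\to\kappa^{-}-\kappa^{+}>0$ would force $f_--f_+\to+\infty$, contradicting $f_-<f_+$.

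For the strict inequality, suppose $\kappa^{-}=\kappa^{+}=:\kappa$. Then $f_+'\le\kappa\le f_-'$ on $[R,+\infty)$, so $w(x_1):=f_+(x_1)-f_-(x_1)$ is non-increasing; being positive it converges to some $d\in[0,+\infty)$, and in particular the component $\{f_-(x_1)<x_2<f_+(x_1),\ x_1>R\}$ of $\Omega$ lies in a slab of bounded width. I would then invoke the translation-at-infinity lemma (Lemma~\ref{translation at infinity}) at the points $x_k:=(k,f_-(k))\in\partial\Omega$: the functions $u(x_k+\cdot)$ converge in $C^\ell_{loc}$ to $u^\ast(e\cdot x)$ or to $u^{\ast\ast}(e,0,t)$, and $\Omega_k:=\Omega-x_k$ converges, on compact sets, to $\Omega(u^\ast)$ (a half-plane) or to $\Omega(u^{\ast\ast})$ (the complement of a slab). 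On the other hand the hypothesis of the lemma says that inside $\mathcal{C}$ the set $\Omega$ is \emph{exactly} the slab $\{f_-(x_1)<x_2<f_+(x_1)\}$, and a direct computation using $f_\pm'\to\kappa$ and $w\to d$ shows that, for each fixed $M$, $\Omega_k\cap B_M(0)\cap(\mathcal{C}-x_k)$ converges to a slab of width $d/\sqrt{1+\kappa^2}$ (a segment if $d=0$). But the translation-at-infinity limit, after intersecting with $B_M(0)$ and with the limiting region $\lim_k(\mathcal{C}-x_k)$ (which is a half-plane or all of the plane), is a circular sector or half-disk --- never a slab of fixed finite width once $M$ is large, and never a segment. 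This contradiction proves $\kappa^{-}<\kappa^{+}$. If $d=0$ one may alternatively rule out the pinching slab directly, by the energy-flux / first-variation argument used for the analogous Subcase~1 in Part~I: on the collapsing slab $\sup u\to0$, which is incompatible with $|\nabla u|=\sqrt{2W(0)}$ on its two boundary curves (and with the lower bound on $\mathcal H^1$ of those curves).

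The only real obstacle is this last step, which is exactly the phenomenon flagged in the introduction: on the side $\{u>0\}$ two distinct ends repel one another, so the free-boundary curves bounding a common component of $\Omega$ cannot be asymptotically parallel. The technical care needed is to compare $\Omega_k$ with its limit only after intersecting with $\mathcal{C}-x_k$, where the hypothesis of the lemma actually controls $u$; this also disposes of the borderline case $\kappa=\pm\lambda$, in which the slab hugs $\partial\mathcal{C}$ and the balls $B_M(x_k)$ need not sit inside $\mathcal{C}$.
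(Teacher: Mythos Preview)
Your proposal is correct and takes essentially the same approach as the paper: elementary convexity for the existence of $\kappa^\pm$ and the bounds $\pm\lambda$, then assuming $\kappa^-=\kappa^+$ to deduce that $f_+-f_-$ is non-increasing and hence bounded, and finally invoking Lemma~\ref{translation at infinity} to derive a contradiction (since the limiting one-dimensional profiles $u^\ast$, $u^{\ast\ast}$ have $\{u>0\}$ extending unboundedly on one side of each free boundary curve, forcing $f_+-f_-\to+\infty$). The paper phrases this last step more tersely---it simply says the lemma implies $f_+-f_-\to+\infty$---without your explicit geometric comparison of $\Omega_k$ with the limit, the alternative first-variation argument for $d=0$, or the separate handling of $\kappa=\pm\lambda$, but the substance is identical.
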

\begin{proof}
Since $f_+$ is convex, $f^\prime_+(x_1)$ is increasing in $x_1$.
Because $f_+(x_1)\leq\lambda x_1$, it is easy to see that
$f^\prime_+(x_1)\leq\lambda$ for all $x_1>R$, thanks to the convexity
of $f_+$. The existence of $\lim_{x_1\to+\infty}f^\prime_+(x_1)$
then follows. For $f_-$ we have similar statements.

Next assume
\begin{equation}\label{6.2}
\lim_{x_1\to+\infty}f^\prime_-(x_1)=\lim_{x_1\to+\infty}f^\prime_+(x_1).
\end{equation}
Then by noting that $f_+-f_-$ is positive and convex, for all
$x_1>R$,
\[f^\prime_+(x_1)-f^\prime_-(x_1)\leq \lim_{x_1\to+\infty}\left(f^\prime_+(x_1)-f^\prime_-(x_1)\right)=0.\]
Hence $\lim_{x_1\to+\infty}\left(f_+(x_1)-f_-(x_1)\right)$ exists
and this limit lies in $[0,f_+(0)-f_-(0)]$.
 However, by Proposition \ref{translation at infinity}, we know that as $x_1\to+\infty$,
$u(x_1+y_1,f_-(x_1)+y_2)$ converges to $u^\ast$ or $u^{\ast\ast}$
uniformly on any compact set of $\R^2$. In particular, we should
have
\[\lim_{x_1\to+\infty}\left(f^\prime_+(x_1)-f^\prime_-(x_1)\right)=+\infty.\]
This is a contradiction. Thus the assumption \eqref{6.2} does not
hold.
\end{proof}

\section{Finiteness of bounded components of $\Omega^c$}\label{sec 9}
\setcounter{equation}{0}

In this section we use Theorem \ref{thm curvature decay} to show the finiteness of bounded components of $\Omega^c$.
The proof of this theorem is quite involved and will be postponed to
Part III. Roughly speaking, we first use the doubling lemma of
Pol\'{a}cick-Quittner-Souplet \cite{Polacik-Q-S} to reduce the proof
to the following setting:
\begin{enumerate}
\item $u_\varepsilon$ is a solution of
\begin{equation*}
 \left\{\begin{aligned}
&\varepsilon\Delta u_\varepsilon=\frac{1}{\varepsilon}W^\prime(u_\varepsilon),\ \ \ \mbox{in}\ \{u_\varepsilon>0\}\cap Q_1(0),\\
&|\nabla
u_\varepsilon|=\frac{1}{\varepsilon}\sqrt{2W(0)},~~\mbox{on}~\partial\{u_\varepsilon>0\}\cap
Q_1(0).
                          \end{aligned} \right .
\end{equation*}
Here $Q_1(0)=\{|x_1|<1,|x_2|<1\}$.

\item There exists a constant $C>0$ such that
\begin{equation}\label{energy bound A}
\int_{Q_1(0)}\left(\frac{\varepsilon}{2}|\nabla
u_\varepsilon|^2+\frac{1}{\varepsilon}W(u_\varepsilon)\chi_{\{u_\varepsilon>0\}}\right)\leq
C.
\end{equation}
\item $u_\varepsilon$ satisfies the Modica inequality,
\[\frac{\varepsilon}{2}|\nabla u_\varepsilon|^2\leq\frac{1}{\varepsilon}W(u_\varepsilon), \quad\mbox{in } \{u_\varepsilon>0\}.\]
 Hence $\partial\{u_\varepsilon>0\}$ is
convex.
\item The curvature of $\partial\{u_\varepsilon>0\}$ is bounded by $4$, and it equals $1$ at the origin.
\end{enumerate}
The last condition says the free boundaries are uniformly bounded in
$C^{1,1}$. Because free boundaries converge to lines in low
regularity spaces, by some interior regularity results we show that
the curvature at the origin converges to $0$, hence a contradiction
is obtained. 

By noting that in dimension $2$, minimal surfaces are exactly
straight lines, which have zero curvature, we can improve the
conclusion of Theorem \ref{thm curvature decay} to
\begin{coro}
Under the assumptions in Theorem \ref{thm curvature decay},
\[H(x)=o\left(\frac{1}{|x|}\right), \ \ \ \ \mbox{as }\ x\in \partial\Omega \mbox{ and } |x|\to+\infty.\]
\end{coro}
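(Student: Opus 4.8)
The plan is to upgrade the quadratic decay $H(x)\le C/(1+|x|)$ on $\partial\Omega$ to the strictly faster rate $H(x)=o(1/(1+|x|))$ by a blow-down/compactness argument, exactly parallel to the one sketched after Theorem~\ref{thm curvature decay}. Suppose for contradiction that there exist $\delta>0$ and a sequence of points $x_k\in\partial\Omega$ with $|x_k|\to+\infty$ and $|x_k|H(x_k)\ge\delta$. Set $R_k:=|x_k|$ and rescale: let $v_k(y):=R_k^{-1}u(R_k y)$, so that $v_k$ solves the singularly perturbed free boundary problem of Question~(CD) with $\varepsilon=\varepsilon_k:=1/R_k\to 0$ (after normalizing the potential), on a ball of radius comparable to $1$ around the rescaled point $\bar x_k:=x_k/R_k$, which lies on the unit sphere and may be assumed to converge to some $\bar x_\infty$ with $|\bar x_\infty|=1$. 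By Theorem~\ref{thm curvature decay}, the curvature of $\partial\{v_k>0\}$ near $\bar x_k$ is bounded by $CR_k\cdot(1/R_k)=C$, i.e.\ uniformly bounded, so the free boundaries are uniformly bounded in $C^{1,1}$ in a fixed neighborhood of $\bar x_\infty$; the energy bound (1) of Theorem~\ref{main result 2} gives the uniform energy bound \eqref{energy bound A}. Thus all the hypotheses of Question~(CD) hold.

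Next I would invoke the resolution of Question~(CD): since the answer established in Part~III says the curvature of $\partial\{v_k>0\}$ converges to $0$ uniformly on compact subsets away from the origin of the rescaled picture (equivalently, a uniform $C^{2,\alpha}$-type improvement holds at the relevant scale), we get that the curvature of $\partial\{v_k>0\}$ at $\bar x_k$ tends to $0$. Translating back, $H(x_k)=R_k^{-1}\cdot(\text{curvature of }\partial\{v_k>0\}\text{ at }\bar x_k)\cdot R_k$... more precisely $|x_k|H(x_k)$ equals the curvature of the rescaled free boundary at $\bar x_k$, which $\to 0$. This contradicts $|x_k|H(x_k)\ge\delta$, and the Corollary follows. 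The key geometric input that makes the blow-down limit flat is that in $\R^2$ stationary (minimal) varifolds are straight lines, which carry zero curvature; combined with the uniform $C^{1,1}$ bound on $\partial\{v_k>0\}$ and the convergence of free boundaries to lines in low-regularity spaces, the curvature at the marked point is forced to vanish in the limit.

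The main obstacle is entirely contained in the step that invokes Question~(CD): one must know not merely that a subsequential blow-down limit of the free boundaries is a line (which is comparatively soft, following from the integral curvature estimate Corollary~\ref{integral curvature estimate 2} and the convexity), but that the \emph{curvature itself} converges to zero uniformly, including in the degenerate regimes where the oscillation of $f_\varepsilon^{\pm}$ is only $O(\varepsilon)$ and the excess is $O(\varepsilon^2)$. This is precisely the content of Theorem~\ref{thm curvature decay} and its proof in Part~III --- the decomposition into the multiplicity-$1$ case and Subcases 2.1--2.4, the first-variation/$k$-th moment arguments, Gui's Hamiltonian inequality for the piecewise-linearity in Subcase 2.3, and the sliding argument of Savin type in Subcase 2.4. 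Since the present Corollary is allowed to assume Theorem~\ref{thm curvature decay} (and the underlying resolution of Question~(CD)), the only work left here is the routine scaling bookkeeping: checking that the normalizations of $W$ and of $u$ are compatible with the $\varepsilon$-problem, that the marked points stay in the interior of the domain where the estimate applies, and that a single marked point per scale suffices (which it does, since the contradiction is pointwise). I would also remark that, just as stated, the stability hypothesis enters only through Theorem~\ref{thm curvature decay}; no further use of stability is needed in this last step.
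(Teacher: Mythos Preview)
Your approach is essentially the paper's own: assume $|x_k|H(x_k)\ge\delta$ along a sequence $|x_k|\to\infty$, rescale so that the free boundary curvature is uniformly bounded in a fixed ball while being bounded below at the marked point, and then feed this into the very machinery of Part~III (Question~(CD)) to force the curvature at the marked point to zero, a contradiction. The paper writes this in one line: define $u_k(x):=u\bigl(x_k+|x_k|^{-1}x\bigr)$, note that Theorem~\ref{thm curvature decay} gives a uniform curvature bound on $\partial\{u_k>0\}\cap B_{1/2}(0)$ while the curvature at $0$ stays bounded away from zero, and declare that ``the remaining proof is exactly the same as in the proof of Theorem~\ref{thm curvature decay}.''

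Two corrections to your write-up. First, the rescaling $v_k(y)=R_k^{-1}u(R_ky)$ is wrong for this semilinear problem: the solution already takes values in $[0,1]$, and the correct singular-perturbation rescaling is purely spatial, $u_k(x)=u(R_kx)$ or (as the paper does) $u_k(x)=u(x_k+|x_k|^{-1}x)$, with no division of the function value. Your parenthetical ``after normalizing the potential'' does not repair this; dividing $u$ by $R_k$ destroys the structure $W'(u)$. Second, and more substantively, your appeal to Theorem~\ref{main result 2}\,(1) for the energy bound is circular: the natural energy growth is proved in Section~\ref{sec 10} via Proposition~\ref{finite bounded components}, which in turn relies on the very Corollary you are proving. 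The fix is already in the paper's Section~\ref{Sec 12}: once the rescaled free boundary has uniformly bounded curvature, the local energy bound \eqref{energy bound} is derived directly from the convexity of $\{u_k=0\}$, the coarea formula, and the exponential decay \eqref{9.2}, with no global input. With these two adjustments your argument matches the paper's.
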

\begin{proof}
Assume there exists $x_k\in\partial\Omega$, $|x_k|\to+\infty$ such
that
\[\lim_{k\to+\infty}|x_k|H(x_k)>0.\]
Define
\[u_k(x):=u\left(x_k+\frac{x}{|x_k|}\right).\]
By Theorem \ref{thm curvature decay}, the curvature of
$\partial\{u_k>0\}\cap B_{1/2}(0)$ is uniformly bounded, while the
curvature at $0$ converges to a positive constant. The remaining
proof is exactly the same as in the proof of Theorem \ref{thm
curvature decay}.
\end{proof}

Using this corollary we prove
\begin{prop}\label{finite bounded components}
There exists an $R_4>0$ such that all bounded connected components
of $\Omega^c$ are contained in $B_{R_4}(0)$.
\end{prop}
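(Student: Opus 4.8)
The plan is to argue by contradiction. If the bounded components of $\Omega^c$ are not contained in any fixed ball, then — a finite set of them being contained in one, and infinitely many of them inside a fixed ball being ruled out by the smoothness of $\partial\Omega$ (a compact piece of a smooth embedded curve has finitely many components) — we may extract a sequence of distinct bounded components $D_k$ with $\rho_k:=\operatorname{dist}(0,D_k)\to\infty$. Each $\partial D_k$ is a closed strictly convex $C^2$ curve (Lemma \ref{convexity}), so Gauss–Bonnet gives $\int_{\partial D_k}H\,d\mathcal H^1=2\pi$, while the improvement $H(x)=o(1/(1+|x|))$ of Theorem \ref{thm curvature decay} gives $\sup_{\partial D_k}H\le\delta_k/\rho_k$ with $\delta_k\to0$.

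I would then apply Blaschke's rolling theorem: a convex body with boundary curvature $\le\delta_k/\rho_k$ everywhere contains a freely rolling disk of radius $r_k:=\rho_k/\delta_k$; choosing it tangent to $\partial D_k$ at the point $p_k$ closest to the origin, whose outer normal points at the origin, we obtain $B_{r_k}(y_k)\subset D_k$ with $y_k=(\rho_k+r_k)p_k/|p_k|$, hence $|y_k|=\rho_k+r_k$ and $r_k/|y_k|=1/(1+\delta_k)\to1$. Since $B_{r_k}(y_k)\subset\Omega^c$, we have $u\equiv0$ there. Rescaling by $|y_k|$, put $u_{\varepsilon_k}(x):=u(x/\varepsilon_k)$ with $\varepsilon_k:=1/|y_k|$. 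Then $u_{\varepsilon_k}$ solves the $\varepsilon_k$–singular perturbation problem on every fixed ball (for $k$ large), its energy on balls is uniformly bounded by the natural energy growth bound (Theorem \ref{main result 2}(1)), it vanishes on $B_{r_k\varepsilon_k}(\hat y_k)$ with $r_k\varepsilon_k\to1$ and $\hat y_k\to e$ for a unit vector $e$, and the curvature of $\partial D_k$ rescales to $\le\delta_k+1\to1$. Passing to the limit: the zero set converges to a closed convex set $K_\infty$ with boundary curvature $\le1$ and $\overline{B_1(e)}\subset K_\infty$ (note $0\in\overline{K_\infty}$ since $\operatorname{dist}(0,\varepsilon_kD_k)=\rho_k/|y_k|\to0$), while $\varepsilon_k|\nabla u_{\varepsilon_k}|^2\,dx$ converges to $2\sigma_0V$ for a stationary integral $1$–varifold $V$ which, being a blow-down, is dilation invariant — hence a union of rays and lines through the origin, containing no curved arc and no line avoiding the origin.

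In almost all cases this is already contradictory. If $K_\infty$ is bounded, $\partial K_\infty$ is a compact closed convex curve of total turning $2\pi$ and carries part of $V$ — impossible for a finite union of rays. If $K_\infty$ is an infinite strip of width $\ge2$ or a half-plane not through the origin, then (using $0\in\overline{K_\infty}$) one of its bounding lines lies in $V$ but misses the origin — again impossible; and $K_\infty=\R^2$ is excluded since $D_k$ contains no ball about the origin. The only surviving possibility is $K_\infty=\{x\cdot e\ge0\}$, and then $\overline{B_1(e)}\subset\{x\cdot e\ge0\}$ forces the normal to be exactly $e$. In this case the blow-down of $\Omega^c$ contains the half-plane $\{x\cdot e\ge0\}$, so the blow-down of $\Omega$ lies in $\{x\cdot e\le0\}$; since $V=2\sigma_0\sum_i(\mathcal H^1\lfloor_{\R_+e_i^+}+\mathcal H^1\lfloor_{\R_+e_i^-})$, the balancing formula $\sum_i(e_i^++e_i^-)=0$ and the disjointness $\{e_i^+,e_i^-\}\cap\{e_j^+,e_j^-\}=\emptyset$ of Theorem \ref{main result 2} then force every asymptotic direction to be $\pm e^\perp$, so that outside a compact set $\Omega$ has a single end contained in a half-plane. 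The (weak) half-space theorem for \eqref{equation 0.2} — the analogue of Theorem \ref{half space theorem} used in the proof of Theorem \ref{strong half space theorem 2} — forces $u$ to be one-dimensional; but for a one-dimensional $u$, $\Omega^c$ is a half-plane or a slab and has no bounded component, contradicting the existence of the $D_k$.

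The main obstacle will be making the rescaling step fully rigorous. Unlike the standard blow-down of Proposition \ref{balancing condition}, here the escaping family $\{D_k\}$ produces a rescaled limit carrying the extra interface $\partial K_\infty$, so the singular-perturbation convergence and the stationarity of $V$ must be set up with care, and one must verify that the monotonicity formula for the functional \eqref{functional 0.2} indeed makes the blow-down a cone. A secondary point is the bookkeeping in the surviving case: one must check that the combination of the balancing formula, the disjointness of asymptotic directions, and the connectedness of $\Omega$ genuinely reduces $\Omega$ to a one-ended, half-plane-confined configuration so that the half-space theorem applies.
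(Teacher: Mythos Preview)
Your argument is circular. You invoke the natural energy growth bound (Theorem \ref{main result 2}(1), which is Theorem \ref{thm energy growth}) to control the mass of the rescaled varifold, and later the balancing formula and the disjointness of asymptotic directions (items (6) and (7) of Theorem \ref{main result 2}) to handle the ``surviving case''. But in the paper's logical order all of these are established \emph{after} Proposition \ref{finite bounded components}, and the proof of Theorem \ref{thm energy growth} in Section \ref{sec 10} explicitly begins with ``By Proposition \ref{finite bounded components}, all bounded connected components \ldots are contained in $B_{R_4}(0)$''. So you are assuming precisely what you are trying to prove. Without the energy bound you cannot even guarantee that $\varepsilon_k|\nabla u_{\varepsilon_k}|^2\,dx$ has locally bounded mass, so the whole blow-down/varifold limit step collapses.

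More importantly, you have missed the one-line argument that makes all of this machinery unnecessary. Instead of taking the \emph{nearest} point of $D_k$ to the origin and rolling a disk inside, take the \emph{farthest} point $x_k\in\overline{D_k}$. Then $D_k\subset B_{|x_k|}(0)$ and the two convex sets touch at $x_k$, so the curvature of $\partial D_k$ at $x_k$ is at least that of $\partial B_{|x_k|}(0)$, i.e.\ $H(x_k)\ge 1/|x_k|$. This directly contradicts $H(x_k)=o(1/|x_k|)$ from the corollary to Theorem \ref{thm curvature decay}. No rescaling, no varifolds, no balancing formula, no half-space theorem --- just a comparison of curvatures at a single touching point. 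This is exactly the paper's proof, and it uses only the curvature decay, which is already available at this stage.
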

\begin{proof}
Assume by the contrary, there are infinitely many bounded connected
components of $\Omega^c$, denoted by $\mathcal{G}_k$, such that
\[\max_{x\in\overline{\mathcal{G}_k}}|x|\to+\infty.\]

Let $x_k\in\overline{\mathcal{G}_k}$ attain this maxima. By the
previous corollary,
\[H(x_k)=o\left(\frac{1}{|x_k|}\right).\]
On the other hand, since $\mathcal{G}_k$ is contained in
$B_{|x_k|}(0)$ and these two sets touch at $x_k$,
\[H(x_k)\geq H\lfloor_{\partial B_{|x_k|}(0)}(x_k)=\frac{1}{|x_k|}.\]
This is a contradiction.
\end{proof}

\section{Blowing down analysis}\label{sec 10}
\setcounter{equation}{0}

In this section we perform the blowing down analysis and give a
description of the blowing down limit.

In the previous two sections we have established the finiteness of
unbounded and bounded connected components of $\Omega^c$. Now we
prove the natural energy growth bound.
\begin{thm}\label{thm energy growth}
There exists a constant $C$ such that, for any $R>1$,
\begin{equation}\label{natrual energy growth bound}
\int_{B_R(0)}\left(\frac{1}{2}|\nabla u|^2+W(u)\chi_\Omega\right)\leq CR.
\end{equation}
\end{thm}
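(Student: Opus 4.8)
The plan is to show that the energy density of $u$ is, up to exponentially small errors, concentrated in a fixed-width neighbourhood of $\partial\Omega$, whose length inside $B_R$ is $O(R)$ because the components of $\Omega^c$ are convex. First, since $u\equiv 0$ and $\nabla u\equiv 0$ on $\Omega^c$, the integrand in \eqref{natrual energy growth bound} vanishes off $\Omega$, so it is enough to bound $\int_{\Omega\cap B_R}\big(\tfrac12|\nabla u|^2+W(u)\big)$. By the Modica inequality (Proposition \ref{Modica inequality 2}) this integrand is $\le 2W(u)$, and since $0\le u<1$ in $\Omega$ and $W\in C^2[0,\infty)$ with $W(1)=0$, we have $W(u)\le L(1-u)$ with $L:=\sup_{[0,1]}|W'|$. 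Hence it suffices to prove
\[ \int_{\Omega\cap B_R}(1-u)\,dx\le CR. \]

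Fix $R_5$ so large that all bounded components of $\Omega^c$ lie in $B_{R_5}$ (Proposition \ref{finite bounded components}) and the description of Section \ref{sec 8} applies, so that $\Omega^c\setminus B_{R_5}$ is the union of finitely many unbounded convex domains $D_1,\dots,D_K$. For each $i$, the set $D_i\cap B_{2R}$ is convex and has $\partial D_i\cap B_{2R}$ in its boundary, so $\mathcal{H}^1(\partial D_i\cap B_{2R})\le\mathrm{Per}(B_{2R})=4\pi R$; adding the fixed contribution of the compactly contained bounded components gives $\mathcal{H}^1(\partial\Omega\cap B_{2R})\le CR$. Consequently, for any fixed $d_0>0$ the tube $T:=\{x\in\Omega:\mathrm{dist}(x,\partial\Omega)<d_0\}$ satisfies $|T\cap B_R|\le CR$, and since $1-u\le 1$ there, $T\cap B_R$ contributes at most $CR$ to $\int(1-u)$.

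It remains to control $\int_{(\Omega\setminus T)\cap B_R}(1-u)$, which is the main step. I claim that $u\to 1$ away from $\partial\Omega$ at infinity, quantitatively: there are constants $d_0,R_6,c,C>0$ so that $1-u(x)\le Ce^{-c\,\mathrm{dist}(x,\partial\Omega)}$ whenever $x\in\Omega$, $|x|\ge R_6$. To prove this one first shows $u\ge\gamma$ on $\Sigma:=\{x\in\Omega:|x|\ge R_6,\ \mathrm{dist}(x,\partial\Omega)\ge d_0\}$: otherwise, blowing up at a sequence of such points yields, by Lemma \ref{translation at infinity} and elliptic compactness, an entire solution of $\Delta v=W'(v)$ on $\R^2$ with $0<v<1$, which a Liouville argument forces to be constant, a contradiction. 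On $\Sigma$ one then has $W'(u)=-\int_u^1 W''(s)\,ds\le-\kappa(1-u)$ by {\bf W3)}, so $w:=1-u\ge 0$ satisfies $\Delta w\ge\kappa w$; comparing $w$ on a ball $B_\rho(x)\subset\Sigma$ with the radial supersolution of $\Delta v=\kappa v$ equal to $1$ on $\partial B_\rho(x)$ gives $w(x)\le Ce^{-\sqrt\kappa\,\rho}$, and $\rho$ may be taken comparable to $\mathrm{dist}(x,\partial\Omega)$. Finally, by the corollaries of Section \ref{sec 8} and Lemma \ref{lem 4.15}, $\Omega\setminus B_{R_5}$ is a union of finitely many regions, each of which (after removing a compact piece) lies in a planar sector with a fixed vertex, or in a strip of finite width; on such a region $\mathrm{dist}(x,\partial\Omega)$ is comparable to $\rho$ away from the two bounding rays and to $\rho$ times the angular distance to the nearer bounding ray near them, so in polar coordinates $(\rho,\theta)$ the angular integral of $e^{-c\,\mathrm{dist}(x,\partial\Omega)}$ is $O(1/\rho)$, which absorbs the Jacobian and yields $\int_{\mathrm{region}\cap B_R}e^{-c\,\mathrm{dist}(x,\partial\Omega)}\,dx\le CR$. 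Summing over the finitely many regions and adding the bounded contribution of $\Omega\cap B_{R_5}$ proves \eqref{natrual energy growth bound}.

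The main obstacle is the deep-interior decay, and inside it the assertion that $u\ge\gamma$ on $\Sigma$: this is where the one-dimensional asymptotics near $\partial\Omega$ must be combined with the absence of nontrivial entire solutions valued in $(0,1)$; everything else is soft, using only convexity, the Modica inequality and elementary barrier comparison.
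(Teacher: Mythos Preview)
Your approach is correct and takes a genuinely different route from the paper. The paper proceeds by a case analysis: if $\Omega^c$ has no unbounded component it uses a Pohozaev identity to force $u$ to be trivial; otherwise it decomposes each connected piece of $\Omega\setminus B_R$ into sectors and invokes the Hamiltonian identity (Proposition~\ref{Hamiltonian identity 2}) to bound the cross-sectional energy $\int\big(\tfrac12 u_{x_2}^2-\tfrac12 u_{x_1}^2+W(u)\big)\,dx_2$ uniformly in $x_1$, then tilts the coordinates (the step \eqref{3.1}--\eqref{3.2}) to recover the full gradient and integrates in $x_1$. You instead reduce everything via Modica to $\int_{\Omega\cap B_R}(1-u)$, peel off a $d_0$-tube around $\partial\Omega$ using the convexity length bound, and control the deep interior by the exponential decay $1-u\le Ce^{-c\,\mathrm{dist}(x,\partial\Omega)}$, which you then integrate in polar coordinates. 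Your argument is more geometric and bypasses the Hamiltonian machinery altogether; the paper's route, on the other hand, yields pointwise-in-$x_1$ control of the cross-sectional energy, which it reuses later in Section~\ref{section refined asymptotics}.

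One point of phrasing: your ``Liouville argument'' is slightly understated. The blow-up limit $v$ is a \emph{stable} entire solution of $\Delta v=W'(v)$ with $0\le v\le 1$ satisfying the Modica inequality; the log-cutoff argument (exactly as in Theorem~\ref{stable solutions 2}, but with no boundary term) gives one-dimensionality, and then the first integral $(v')^2/2-W(v)\equiv\mathrm{const}\le 0$ together with stability rules out periodic orbits and heteroclinics touching $0$, forcing $v\equiv 1$. This is precisely what the paper itself does in its Case~1, so you are on solid ground; just replace ``forces to be constant, a contradiction'' by ``forces $v\equiv 1$, contradicting $v(0)\le\gamma$''.
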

\begin{proof}
By Proposition \ref{finite bounded components}, all bounded
connected components of $\Omega^c$ are contained in $B_{R_4}(0)$.
There are two cases, depending wether there are unbounded components
of $\Omega^c$.

{\bf Case 1.} First assume there is no unbounded component of
$\Omega^c$, hence $u>0$ in $B_{R_4}(0)^c$. For any $x_k\to+\infty$,
let
\[u_k(x):=u(x_k+x).\]
Assume it (up to a subsequence) converges to a limit $u_\infty$ in
$C_{loc}(\R^2)$. Because for any $R>0$, if $k$ large, $u_k>0$ in
$B_R(0)$, we have
\[\Delta u_k=W^\prime(u_k) \quad \mbox{in } B_R(0).\]
By standard elliptic regularity and Arzela-Ascoli theorem, $u_k$
converges to $u_\infty$ in $C^2_{loc}(\R^2)$. Hence, by noting that
$R$ can be arbitrarily large, we get
\[\Delta u_\infty=W^\prime(u_\infty) \quad\mbox{in } \R^2.\]
As in Lemma \ref{translation at infinity}, $u_\infty$ is stable.
Then similar to Theorem \ref{stable solutions 2}, $u_\infty$ is one
dimensional. After a rotation, assume it to be a function of $x_1$
only. Hence $u_\infty$ satisfies
\begin{equation}\label{1d equation}
\frac{d^2 u_\infty}{dx_1^2}=W^{\prime}(u_\infty) \quad\mbox{on } \R.
\end{equation}
 By noting that $u_\infty\geq0$, it is easily seen that
$u_\infty\equiv 1$.

Since this limit is independent of $x_k\to\infty$, we obtain the
uniform convergence
\[\lim_{|x|\to+\infty}u(x)=1.\]
In particular, there exists an $\tilde{R}_E>0$ such that $u>\gamma$
outside $B_{\tilde{R}_E}(0)$. Direct calculation gives
\[\Delta\left(1-\tilde{u}\right)\geq c\left(1-\tilde{u}\right) \quad \mbox{outside } B_{\tilde{R}_E}(0).\]
From this differential inequality we deduce that
\begin{equation}\label{10.1.1}
1-u(x)\leq Ce^{-c|x|} \quad \mbox{outside } B_{\tilde{R}_E}(0).
\end{equation}
By standard elliptic regularity, we also have
\begin{equation}\label{10.1.2}
|\nabla u(x)|\leq Ce^{-c|x|} \quad \mbox{outside }
B_{\tilde{R}_E}(0).
\end{equation}

Because $u$ is a classical solution, we have the following Pohozaev
identity
\begin{equation}\label{Pohozaev}
2\int_{B_R(0)}W(u)\chi_{\{u>0\}}=R\int_{\partial B_R(0)}\left(|\nabla
u|^2-2\big|\frac{x}{|x|}\cdot\nabla u(x)\big|^2+2W(u)\chi_{\{u>0\}}\right),
\quad \forall R>0.
\end{equation}
Letting $R\to+\infty$, by \eqref{10.1.1} and \eqref{10.1.2}, the
right hand side converges to $0$ exponentially. This leads to
\[\int_{\R^2}W(u)\chi_{\{u>0\}}=0.\]
This is only possible if $u\equiv 1$ or $u\equiv 0$.

{\bf Case 2.} Now assume there are unbounded components of
$\Omega^c$. Take an unbounded component of $\Omega\setminus
B_{R_4}(0)$, which we denote by $\mathcal{C}$. Its boundary consists of a
part of $\partial B_{R_4}(0)$ and two convex curves, denoted by
$\Gamma^\pm$. Assume $\Gamma^\pm$ lies on the boundary of $D^\pm$,
two connected components of $\Omega^c$. (We do not claim these two
components of $\Omega^c$ to be different.) Let $L^\pm$ be two rays,  strictly 
contained in $D^\pm$. Assume $L^-$ to be
$\{(x_1,x_2):x_2=0, x_1>R_E\}$ for some $R_E>R_4$. Then $\Gamma^-$
has the form
\[\left\{(x_1,x_2): x_2=f_-(x_1)\right\},\]
where $f_-\geq 0$ is a concave function defined on $(R_E,+\infty)$.

By the concavity, $\lim_{x_1\to+\infty}f_-^\prime(x_1)$ exists,
which is nonnegative because $f_-\geq0$.

{\bf Subcase 2.1.} If the angle between $L_+$ and $L_-$ is smaller
than $\pi/2$, then $\Gamma^+$ can also be represented by
\[\{(x_1,x_2): x_2=f_+(x_1)\},\]
where $f_+>f_-$ is a convex function defined on $(R_E,+\infty)$.

By Lemma \ref{lem 4.15},
\[\lim_{x_1\to+\infty}f^\prime_+(x_1)>\lim_{x_1\to_+\infty}f_-^\prime(x_1).\]
Hence, by letting
\[\lambda:=\frac{1}{2}\left(\lim_{x_1\to+\infty}f^\prime_-(x_1)+\lim_{x_1\to_+\infty}f_+^\prime(x_1)\right),\]
there exist two constants $\bar{R}_E\geq R_4$ and $t\in\R$  so that
the ray
\[L^\ast:=\left\{(x_1,x_2): x_2=\lambda x_1+t, x_1>\bar{R}_E\right\}\]
belongs to $\mathcal{C}$.

Because $u>0$ in $\mathcal{C}$, similar to the derivation of
\eqref{10.1.1}, we have
\begin{equation}\label{10.1.3}
1-u(x)\leq Ce^{-c(x_1-\bar{R}_E)} \quad \mbox{on } L^\ast.
\end{equation}
We claim that there exists a constant $C$ such that
\begin{equation}\label{Hamiltonian identity}
H(x_1):=\int_0^{\lambda
x_1+t}\left(\frac{u_{x_2}^2-u_{x_1}^2}{2}+W(u)\chi_\Omega \right)dx_2\equiv
C+O(e^{-c(x_1-\bar{R}_E)}),\quad \forall\ x_1>\bar{R}_E.
\end{equation}
This is the Hamiltonian identity and it can be proved by
differentiation and integration by parts. The detailed calculation
is postponed to the proof of Proposition \ref{Hamiltonian identity
2} below.

By the Modica inequality, \eqref{Hamiltonian identity} implies that
\begin{equation}\label{3.1}
\int_0^{\lambda x_1+t}\Big|\frac{\partial u}{\partial
x_2}(x_1,x_2)\Big|^2 dx_2\leq C,\quad \forall\ x_1>\bar{R}_E.
\end{equation}
By rotating the plane a little, we also get for a small $\delta$
(depending on $\lambda$), such that
\begin{equation}\label{3.2}
\int_0^{\lambda x_1+t}\left(\Big|\frac{\partial u}{\partial
x_2}(x_1,x_2)\Big|^2+\delta \Big|\frac{\partial u}{\partial
x_1}(x_1,x_2)\Big|^2 \right)dx_2\leq 2C,\quad \forall\ x_1>\bar{R}_E.
\end{equation}
Combining these two inequalities we see, for any $R>\bar{R}_E$,
\begin{equation}\label{6.4}
\int_{B_R\cap\{f_+(x_1)<x_2<\lambda x_1+t\}}|\nabla u|^2\leq CR.
\end{equation}
Because
\[\int_{B_R\cap\{f_+(x_1)<x_2<\lambda x_1+t\}}\left(\frac{u_{x_2}^2-u_{x_1}^2}{2}+W(u)\chi_\Omega\right)\leq CR, \quad\forall R>\bar{R}_E,\]
adding \eqref{6.4} into this we obtain
\[\int_{B_R\cap\{f_+(x_1)<x_2<\lambda x_1+t\}}\left(\frac{|\nabla u|^2}{2}+W(u)\chi_\Omega\right)\leq CR, \quad\forall R>\bar{R}_E.\]
A similar one holds for the energy in $B_R\cap\{\lambda
x_1+t<x_2<f_-(x_1)\}$. This shows that the energy in
$B_R\cap\mathcal{C}$ grows linearly in $R$.

{\bf Subcase 2.2.} Assume the angle between $L^\pm$ is not smaller
than $\pi/2$. In this case we can take two different rays
$\tilde{L}^\pm$, totally contained in $\mathcal{C}$, so that the
angle between $L^+$ and $\tilde{L}^+$ (and $L^-$ between
$\tilde{L}^-$) is smaller than $\pi/2$. These two rays dividing
$\mathcal{C}$ into three subdomains:
\begin{enumerate}
\item $\mathcal{C}^+$, bounded by $\Gamma^+$ and $\tilde{L}^+$;
\item $\mathcal{C}^0$, bounded by $\tilde{L}^+$ and $\tilde{L}^-$;
\item $\mathcal{C}^-$, bounded by $\tilde{L}^-$ and $\Gamma^-$.
\end{enumerate}
The energy in $\mathcal{C}^\pm$ can be estimated as in Subcase 2.1.
In $\mathcal{C}^0$, similar to \eqref{10.1.1}, we have
\[W(u(x))\leq C\left(1-u(x)\right)^2\leq Ce^{-c|x|}.\]
Hence
\[\int_{\mathcal{C}^0}W(u)<+\infty.\]
By the Modica inequality,
\[\int_{\mathcal{C}^0}\left(\frac{1}{2}|\nabla u|^2+W(u)\right)\leq2\int_{\mathcal{C}^0}W(u)<+\infty.\]
Combining this with the energy estimate in $\mathcal{C}^\pm$ we get
the linear growth energy bound in $\mathcal{C}\cap B_R$, for any
$R>1$.

Since there are only finitely many unbounded components of
$\Omega^c$, putting Subcase 2.1 and Subcase 2.2 together we get the
linear energy growth bound.
\end{proof}
Some remarks are in order.
\begin{rmk}
If $u$ is nontrivial, there must exist unbounded connected
components of $\Omega^c$.

In the following we show that  unless $u$ is one dimensional, for
all $R$ large, there are at least two unbounded components of
$\Omega^c\setminus B_R$.
\end{rmk}

For each $\varepsilon>0$, let
\[u_\varepsilon(x)=u(\varepsilon^{-1} x),\]
and $\Omega_\varepsilon:=\varepsilon\Omega$.

By Theorem \ref{thm energy growth}, as $\varepsilon\to0$, the measures
\[\mu_\varepsilon:=\left(\frac{\varepsilon|\nabla u_\varepsilon|^2}{2}+W(u_\varepsilon)\chi_{\{u_\varepsilon>0\}}\right)dx\]
have uniformly bounded mass on any compact of $\R^2$. Hence we can
assume that, perhaps after passing to a subsequence, it converges to
a positive Radon measure $\mu$, weakly on any compact set of $\R^2$.

For application below, we also assume that as Radon measures,
\[\varepsilon|\nabla
u_\varepsilon|^2dx\rightharpoonup\mu_1 \quad \mbox{and } \quad \frac{1}{\varepsilon}W(u_\varepsilon)dx\rightharpoonup\mu_2\]
on any compact set of $\R^2$. Note that $\mu=\mu_1/2+\mu_2$.
 In the following we denote $\Sigma=\mbox{spt}\mu$, the support of the measure $\mu$.

Furthermore, we can also assume the matrix valued measures
\[\varepsilon\nabla u_\varepsilon\otimes \nabla u_\varepsilon dx\rightharpoonup[\tau_{\alpha\beta}]\mu_1,\]
where $[\tau_{\alpha\beta}]$, $1\leq\alpha,\beta\leq 2$, is
measurable with respect to $\mu_1$. Moreover, $\tau$ is nonnegative
definite $\mu_1$-almost everywhere and
\[\sum_{\alpha=1}^2\tau_{\alpha\alpha}=1,\quad \mu_1-a.e.\]

By the Hutchinson-Tonegawa theory (see \cite{Wang-Wei 1}), $\Sigma$ is
countably $1$-rectifiable and $I-\tau=T_x\Sigma$
$\mathcal{H}^1$-a.e. on $\Sigma$.

Define the varifold $V$ by
\[<V,\Phi>:=\int_{\Sigma}\Phi(x,T_x\Sigma)\Theta(x)d\mathcal{H}^1.\]
By \cite{Wang-Wei 1},  $V$ is stationary.

Moreover, $\mu$ can be represented by $\mu=\Theta \mathcal{H}^1\lfloor_{\Sigma}
$,
where $\Theta/\sigma_0$ are positive integers $\mathcal{H}^1$-a.e.
on $\Sigma$.

In the above $\Theta$ is defined by
\[\Theta(x):=\lim_{r\to0}\frac{\mu(B_r(x))}{r}.\]
The existence of this limit is guaranteed by the monotonicity of
$\mu(B_r(x))/r$ (the monotonicity formula for stationary varifolds).

In our setting, because $u_\varepsilon$ is the blowing down sequence
constructed from $u$, for any $r>0$,
\[\frac{\mu(B_r(0))}{r}=\lim_{\varepsilon\to0}\frac{\mu_\varepsilon(B_r(0))}{r}=\lim_{r\to+\infty}\frac{1}{r}\int_{B_r(0)}\left(\frac{1}{2}|\nabla u|^2
+W(u)\chi_\Omega\right),\] which is a constant independent of $r$,
thanks to the monotonicity formula for $u$ (see for example
\cite[Proposition 2.4]{Wang-Wei 1}) and the energy growth bound
Theorem \ref{thm energy growth}. Then by the monotonicity formula
for stationary varifolds, we can show that $\Sigma$ is a cone with
respect to the origin. Hence we have the following characterization
of the blowing down limit.
\begin{prop}\label{limit varifold}
There exist finitely many unit vectors $e_\alpha^{\ast\ast}$
and positive integers $n_\alpha$ such that
\[\mu=\sigma_0\sum_\alpha n_\alpha\mathcal{H}^1\lfloor_{\{re_\alpha^{\ast\ast},r\geq0\}}.\]
Moreover, we have the balancing formula
\[\sum_\alpha n_\alpha e_\alpha^{\ast\ast}=0.\]
\end{prop}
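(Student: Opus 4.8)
The plan is to obtain the structure of the blow-down measure $\mu$ from general varifold theory together with the special features of dimension two that have already been established in the excerpt. As recorded just before the statement, $u_\varepsilon(x)=u(\varepsilon^{-1}x)$ is a blow-down sequence of a solution $u$ to \eqref{equation 0.2} that is stable outside a compact set, and one already knows: (a) the energy growth bound (Theorem \ref{thm energy growth}), which gives uniform local mass bounds for $\mu_\varepsilon$; (b) via the Hutchinson–Tonegawa theory that the limit $\Sigma=\mathrm{spt}\,\mu$ is countably $1$-rectifiable and that the associated varifold $V=\underline{v}(\Sigma,\Theta)$ is stationary, with $\mu=\Theta\,\mathcal H^1\lfloor_\Sigma$ and $\Theta/\sigma_0$ a positive integer $\mathcal H^1$-a.e.; (c) the monotonicity formula for $u$ (cited from \cite{Wang-Wei}) and the linear energy growth combine to show that $r\mapsto \mu(B_r(0))/r$ is constant, whence by the monotonicity formula for stationary integral varifolds $V$ is a cone with vertex at the origin. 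So the only remaining work is: a one-dimensional stationary integral cone in $\R^2$ is a finite union of rays through the origin with integer multiplicities, and for such a cone stationarity is exactly the balancing condition $\sum_\alpha n_\alpha e_\alpha^{\ast\ast}=0$.

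First I would make the cone structure explicit. A $1$-rectifiable cone $\Sigma$ in $\R^2$ with vertex at $0$ is invariant under dilations, so $\Sigma\cap \partial B_1(0)$ is a set of points on the unit circle; since $\mathcal H^1(\Sigma\cap B_1(0))\le \mu(B_1(0))/\sigma_0<\infty$, this set is finite, say $\{e_1^{\ast\ast},\dots,e_m^{\ast\ast}\}$, and $\Sigma=\bigcup_\alpha\{r e_\alpha^{\ast\ast}:r\ge 0\}$. On each open ray the density $\Theta$ is a locally constant multiple of $\sigma_0$, say $\sigma_0 n_\alpha$ with $n_\alpha\in\mathbb Z_{>0}$ (constancy of $\Theta$ along a ray follows either from the cone structure — $\Theta$ is $0$-homogeneous — or directly from the monotonicity formula applied at interior points of the ray). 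Hence
\[
\mu=\sigma_0\sum_{\alpha=1}^m n_\alpha\,\mathcal H^1\lfloor_{\{re_\alpha^{\ast\ast}:\,r\ge 0\}},
\]
which is the first assertion.

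Next I would derive the balancing formula from stationarity of $V$. Plug into the first variation identity $\int \mathrm{div}_{T_x\Sigma}X\,d\|V\|=0$ a test vector field $X$ that is a constant vector $b\in\R^2$ near the origin, cut off smoothly outside a large ball $B_R(0)$. On each ray $\{re_\alpha^{\ast\ast}\}$ the tangential divergence of a constant field $b$ is $b\cdot e_\alpha^{\ast\ast}$, so the integral over $B_R$ contributes $\big(\sum_\alpha \sigma_0 n_\alpha (b\cdot e_\alpha^{\ast\ast})\big)\cdot\mathcal H^1(\text{ray}\cap B_R)$, a term that would grow linearly in $R$; the cutoff annulus contributes a term bounded uniformly in $R$ (the field there has gradient $O(1/R)$ spread over length $O(R)$ on each of finitely many rays). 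Letting $R\to\infty$ forces the linear-in-$R$ coefficient to vanish, i.e. $b\cdot\sum_\alpha n_\alpha e_\alpha^{\ast\ast}=0$ for every $b\in\R^2$, hence $\sum_\alpha n_\alpha e_\alpha^{\ast\ast}=0$. Alternatively, one can quote the standard fact that a stationary integral $1$-cone in $\R^2$ (equivalently a stationary network of segments meeting only at the origin) must satisfy the balancing condition, which is precisely this computation.

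The step I expect to require the most care is not the algebra but making sure the hypotheses feeding the cone conclusion are genuinely in place: namely that $r\mapsto\mu(B_r(0))/r$ is exactly constant (not merely bounded), which relies on combining the monotonicity formula for $u$ from \cite{Wang-Wei} with the sharp linear energy bound of Theorem \ref{thm energy growth} and on the fact that $\mu$, $\mu_1$, $\mu_2$ are simultaneous weak limits along one subsequence so that passing limits in the monotonicity quantity is legitimate. Once constancy of the density ratio is secured, the rigidity case of the monotonicity formula for stationary integral varifolds gives the cone property, and the rest is the elementary argument above; so the genuinely technical point is the justification of exact monotone-quantity constancy under the blow-down limit, everything else being soft.
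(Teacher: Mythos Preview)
Your overall approach matches the paper's: the paragraphs preceding the proposition already establish (via Hutchinson--Tonegawa theory, the monotonicity formula for $u$, and the linear energy growth) that $V$ is a stationary integral $1$-varifold whose support is a cone through the origin, and the paper then simply remarks that ``the balancing formula is equivalent to the stationary condition for the varifold $V$'' without further computation. Your identification of the cone as a finite union of rays with constant integer density along each is correct and more detailed than what the paper writes.

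However, your explicit first-variation computation for the balancing formula contains a slip. The tangential divergence of a \emph{constant} vector field $b$ along any line is zero, not $b\cdot e_\alpha^{\ast\ast}$; so the region where $X\equiv b$ contributes nothing, and there is no linearly growing term to kill by sending $R\to\infty$. The correct computation: take $X=b\,\phi(|x|)$ with $\phi$ a cutoff equal to $1$ near $0$ and supported in $B_R$. On the ray $L_\alpha$, parametrized by $r\ge 0$, one has $\mathrm{div}_{L_\alpha}X=(b\cdot e_\alpha^{\ast\ast})\,\phi'(r)$, whence
\[
\int_0^\infty \mathrm{div}_{L_\alpha}X\,dr=(b\cdot e_\alpha^{\ast\ast})\bigl(\phi(\infty)-\phi(0)\bigr)=-\,b\cdot e_\alpha^{\ast\ast}.
\]
Summing gives $0=\delta V(X)=-\sigma_0\sum_\alpha n_\alpha\,(b\cdot e_\alpha^{\ast\ast})$ for every $b\in\R^2$, hence the balancing. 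The entire contribution comes from the cutoff annulus and is independent of $R$. Your fallback ``alternatively, quote the standard fact'' is exactly what the paper does, so the proposal is correct in outcome; only the displayed argument needs this repair.
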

The balancing formula is equivalent to the stationary condition for
the varifold $V$.

In the following we assume $e_\alpha^{\ast\ast}$ are in clockwise
order.

\begin{rmk}\label{rmk 10.4}
If there are only two unit vectors $e_1^\ast$ and $e_2^\ast$, then
\[e_1^\ast=-e_2^\ast, \quad\mbox{and}\quad n_1=n_2.\]
\end{rmk}

Let
\[w(x):=\Phi(u(x))=\int_0^{u(x)}\sqrt{2W(t)}dt,\]
and $w_\varepsilon(x):=w(\varepsilon^{-1}x)$. For any $R>0$,
\begin{eqnarray*}
\int_{B_R(0)}|\nabla
w_\varepsilon|&=&\int_{B_R(0)}\sqrt{2W(u_\varepsilon)}|\nabla
u_\varepsilon|\\
&\leq&\int_{B_R(0)}\left(\frac{\varepsilon}{2}|\nabla
u_\varepsilon|^2+\frac{1}{\varepsilon}W(u_\varepsilon)\chi_{\{u_\varepsilon>0\}}\right)\leq
CR.
\end{eqnarray*}
Since $0\leq w_\varepsilon\leq \int_0^1\sqrt{2W(t)}dt$, it is
uniformly bounded in $BV_{loc}(\R^2)$. Then up to a subsequence
$w_\varepsilon$ converges in $L^1_{loc}(\R^n)$ to a function
$w_\infty\in BV_{loc}(\R^n)$.

By extending $\Phi$ suitably to (-1,1), there exists a continuous
inverse of it. Then $u_\varepsilon=\Phi^{-1}(w_\varepsilon)$
converges to $\Phi^{-1}(w_\infty)$ in $L^1_{loc}(\R^2)$. Since
\[\int_{B_1}W(u_\varepsilon)\chi_{\{u_\varepsilon>0\}}\leq C\varepsilon,\]
$u_\varepsilon\to0$ or $1$ a.e. in $B_1$. Hence there exists a
measurable set $\Omega_\infty$ such that
\[u_\varepsilon\to\chi_{\Omega_\infty},\quad \mbox{in}\ L^1_{loc}(\R^2).\]
Because $w_\infty=(\int_0^1\sqrt{2W(t)}dt)\chi_\Omega$,
$\chi_\Omega\in BV_{loc}(\R^2)$.

As $\varepsilon\to0$,
$\Psi_\varepsilon(x):=\varepsilon\Psi(\varepsilon^{-1}x)$ converges
uniformly to a function $\Psi_\infty$ on any compact set of $\R^2$.
By the {\em vanishing viscosity method}, we can prove that, in the
open set $\{\Psi_\infty>0\}$, $\Psi_\infty$ is a viscosity solution
of the eikonal equation
\[|\nabla\Psi_\infty|^2-1=0.\]
See \cite[Appendix A]{Wang} for more details.

\begin{lem}
$\Psi_\infty=0$ on $\Sigma$.
\end{lem}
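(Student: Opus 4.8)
The plan is to bound $\Psi$ by the distance to $\partial\Omega$, and then to show that every point of $\Sigma$ is a limit of free boundary points. First, the Modica inequality (Proposition~\ref{Modica inequality 2}) gives $|\nabla\Psi|\le 1$ in $\Omega$, and $\Psi$ vanishes continuously on $\partial\Omega$ and is identically $0$ on $\Omega^c$; hence $\Psi(x)\le\mbox{dist}(x,\partial\Omega)$ for every $x\in\R^2$. Indeed, for $x\in\Omega$ let $z\in\partial\Omega$ realize $d:=\mbox{dist}(x,\partial\Omega)$ and $\eta(s)=x+s(z-x)/d$; since $z$ is nearest, $\eta(s)\in\Omega$ for all $s\in[0,d)$, so for $\delta>0$ small, integrating along $\eta$ gives $\Psi(x)\le\Psi(\eta(d-\delta))+(d-\delta)$, and letting $\delta\to 0$ and using the continuity of $\Psi$ on $\overline\Omega$ (with $\Psi(z)=0$) yields $\Psi(x)\le d$; for $x\notin\Omega$ the bound is trivial. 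Rescaling, $\Psi_\varepsilon(x)=\varepsilon\Psi(\varepsilon^{-1}x)\le\mbox{dist}(x,\partial\Omega_\varepsilon)$ for all $\varepsilon$. Since $\Psi_\varepsilon\ge 0$ and $\Psi_\varepsilon\to\Psi_\infty$ uniformly on compact sets, we conclude
\[0\le\Psi_\infty(x)\le\liminf_{\varepsilon\to 0}\mbox{dist}(x,\partial\Omega_\varepsilon),\qquad x\in\R^2.\]
Therefore it suffices to prove that $\liminf_{\varepsilon\to 0}\mbox{dist}(x_0,\partial\Omega_\varepsilon)=0$ for every $x_0\in\Sigma$.

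This will follow from the fact that the energy measures $\mu_\varepsilon$ concentrate on $\partial\Omega_\varepsilon$: I claim that for every $r>0$ and $\rho>0$,
\[\mu_\varepsilon\bigl(\{x\in B_\rho(0):\ \mbox{dist}(x,\partial\Omega_\varepsilon)>r\}\bigr)\longrightarrow 0\quad(\varepsilon\to 0).\]
To see this, note that by the structure of $u$ established above --- finiteness of the components of $\Omega^c$, Proposition~\ref{finite bounded components}, Lemma~\ref{translation at infinity} and the refined asymptotics of Part~II, together with the exponential estimates in the proof of Theorem~\ref{thm energy growth} (e.g.\ \eqref{10.1.1}--\eqref{10.1.2}) and interior elliptic estimates applied to $1-u$ --- there exist constants $C,c>0$ and $R^\ast>0$ so that
\[\tfrac12|\nabla u(y)|^2+W(u(y))\le Ce^{-c\,\mbox{dist}(y,\partial\Omega)}\qquad\text{whenever }\ \mbox{dist}(y,\partial\Omega)\ge R^\ast.\]
(The transition layer has width $O(1)$; outside it $u$ is exponentially close to $1$, and near the origin one enlarges $R^\ast$ using that the bounded components of $\Omega^c$ lie in a fixed ball.) Setting $y=\varepsilon^{-1}x$, the density of $\mu_\varepsilon$ at a point $x$ with $\mbox{dist}(x,\partial\Omega_\varepsilon)>r$ equals $\varepsilon^{-1}\bigl(\tfrac12|\nabla u(\varepsilon^{-1}x)|^2+W(u(\varepsilon^{-1}x))\chi_\Omega\bigr)\le C\varepsilon^{-1}e^{-cr/\varepsilon}$ once $r/\varepsilon\ge R^\ast$; integrating over $B_\rho(0)$ gives the bound $C\rho^2\varepsilon^{-1}e^{-cr/\varepsilon}\to 0$, which proves the claim.

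Now fix $x_0\in\Sigma$ and $r>0$. By lower semicontinuity of mass under weak convergence, $0<\mu(B_r(x_0))\le\liminf_{\varepsilon\to 0}\mu_\varepsilon(B_r(x_0))$. Split $\mu_\varepsilon(B_r(x_0))$ into the part carried by the $(r/2)$-neighborhood of $\partial\Omega_\varepsilon$ and the remainder; by the claim (with $\rho=|x_0|+r$) the remainder tends to $0$, so the first part has a positive lower limit. In particular $B_r(x_0)$ meets the $(r/2)$-neighborhood of $\partial\Omega_\varepsilon$ along some sequence $\varepsilon_j\to 0$, hence $\mbox{dist}(x_0,\partial\Omega_{\varepsilon_j})<\tfrac32 r$. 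Letting $r\downarrow 0$ gives $\liminf_{\varepsilon\to 0}\mbox{dist}(x_0,\partial\Omega_\varepsilon)=0$, and combining with the first paragraph, $\Psi_\infty(x_0)=0$. The main obstacle is the exponential decay of the energy density away from the free boundary used in the second paragraph: granted the structure theorem of Part~II this is routine, but it requires separately treating the ``narrow'' unbounded components of $\Omega^c$ (where $u$ need not approach $1$, so the $O(1)$-width transition layer sits between two fronts) and the compact part near the origin.
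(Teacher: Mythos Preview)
Your argument is correct, but it takes a considerably longer route than the paper's. You bound $\Psi_\infty$ above by $\liminf_{\varepsilon\to 0}\mbox{dist}(\cdot,\partial\Omega_\varepsilon)$ and then show that every point of $\Sigma$ is approached by free boundary points, which in turn you deduce from an energy-concentration claim: the mass of $\mu_\varepsilon$ away from an $r$-neighborhood of $\partial\Omega_\varepsilon$ vanishes. The paper instead argues the contrapositive in three lines, using only the definitional relation $u_\varepsilon=g(\Psi_\varepsilon/\varepsilon)$ and the exponential convergence of $g$ at infinity: if $\Psi_\infty(x_0)>0$, uniform convergence gives $\Psi_\varepsilon\ge c>0$ on a small ball, hence $u_\varepsilon\ge g(c/\varepsilon)\ge 1-Ce^{-c/\varepsilon}$ there; by the Modica inequality the energy density is at most $\frac{2}{\varepsilon}W(u_\varepsilon)\le \frac{C}{\varepsilon}(1-u_\varepsilon)^2\to 0$ uniformly on the ball, so $x_0\notin\Sigma$.

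The point worth noting is that what you call the ``main obstacle'' --- the uniform exponential decay of $\tfrac12|\nabla u|^2+W(u)$ in $\mbox{dist}(y,\partial\Omega)$ --- is exactly the statement the paper obtains \emph{for free} by working with $\Psi$ instead of the geometric distance. You first prove $\Psi\le\mbox{dist}(\cdot,\partial\Omega)$ (correct and easy), but then to close the loop you need the reverse-type implication ``large distance $\Rightarrow$ $u$ close to $1$'', which is where all the structure theory (Lemma~\ref{translation at infinity}, finiteness of components, the compactness argument from Case~1 of Theorem~\ref{thm energy growth}) gets invoked. The paper bypasses this detour entirely: since $u=g\circ(\Psi/\varepsilon)$, the implication ``$\Psi_\varepsilon$ bounded below $\Rightarrow$ $u_\varepsilon$ close to $1$'' is immediate and requires no structure theory at all. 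One minor caution: your phrase ``the refined asymptotics of Part~II'' reads as a reference to Section~\ref{section refined asymptotics}, which comes after this lemma; the estimates you actually cite (\eqref{10.1.1}--\eqref{10.1.2}) are earlier, so there is no genuine circularity, but you should phrase the citation more carefully.
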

\begin{proof}
Assume by the contrary, $\Psi_\infty(x_0)>0$ for some
$x_0\in\Sigma$. Then there exists a ball $B_r(x_0)$ such that
$\Psi_\infty$ has a positive lower bound in this ball. By the
uniform convergence of $\Psi_{\varepsilon_k}$, for all
$\varepsilon_k$ small, $\Psi_{\varepsilon_k}$ also has a uniform
positive lower bound in this ball. Then by the definition of
$\Psi_{\varepsilon_k}$,
\[u_{\varepsilon_k}\geq 1-Ce^{-c\varepsilon_k^{-1}}\quad\mbox{in this ball}.\]
From this it can be checked directly that
\[\frac{\varepsilon_k}{2}|\nabla u_{\varepsilon_k}|^2+\frac{1}{\varepsilon_k}W(u_{\varepsilon_k})\rightarrow 0
\quad\mbox{uniformly in this ball}.\] Hence $x_0$ does not belong to
$\Sigma$. This is a contradiction.
\end{proof}

\begin{lem}
The blowing down limit $\Sigma$ and $\Omega_\infty$ are unique.
\end{lem}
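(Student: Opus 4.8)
The plan is to show that $\Omega_\infty$ and $\Sigma$ are completely determined by the asymptotic geometry of $\Omega$, which has already been pinned down in Sections \ref{sec 8} and \ref{sec 9}, and so in particular does not depend on the subsequence $\varepsilon\to0$ chosen when extracting the limits. (Proposition \ref{limit varifold} already shows that each subsequential limit $\mu$ is a finite sum of rays carrying integer densities; what remains is to see that the rays --- and the densities --- do not vary with the subsequence.) We may assume $u$ is not one dimensional, since otherwise the assertion is trivial.

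I would first treat $\Omega_\infty$. By Proposition \ref{finite bounded components} all bounded components of $\Omega^c$ lie in $B_{R_4}(0)$, so outside $B_{R_4}(0)$ the set $\Omega^c$ is the union of the finitely many unbounded components $D_1,\dots,D_K$, each a convex open set with a single simple boundary curve, and none of them a half plane (otherwise $\Omega$ would sit in a half plane and $u$ would be one dimensional, exactly as in Part I). For each $i$, since $D_i$ is convex and contains the ray $L_i$, its recession cone $\mathrm{rec}(D_i)$ is a closed convex cone with $\R_{\ge0}e_i\subseteq\mathrm{rec}(D_i)\neq\R^2$, and $\varepsilon D_i$ converges in the local Hausdorff sense to $\mathrm{rec}(D_i)$ as $\varepsilon\to0$. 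Hence $\varepsilon\Omega$ converges locally to $\R^2\setminus\bigcup_{i=1}^K\mathrm{rec}(D_i)$, and since the cones $\mathrm{rec}(D_i)$ have Lebesgue-negligible boundaries this upgrades to $u_\varepsilon=\chi_{\varepsilon\Omega}\to\chi_{\Omega_\infty}$ in $L^1_{loc}(\R^2)$ with
\[\Omega_\infty=\R^2\setminus\bigcup_{i=1}^K\mathrm{rec}(D_i),\]
a set that does not depend on the sequence.

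For $\Sigma$, I would use that outside $B_{R_4}(0)$ one has $\partial\Omega=\bigcup_{i=1}^K\partial D_i$, and that each $\partial D_i$, being the boundary curve of a proper convex domain, has one or two well-defined asymptotic directions, namely the directions of the boundary rays of $\mathrm{rec}(D_i)$ (their existence also follows from the convexity/concavity of the graphing functions together with Lemma \ref{lem 4.15}). Let $\mathcal R$ be the finite union of the rays $\R_{\ge0}e$ obtained in this way; $\mathcal R$ depends only on $\Omega$. I claim $\Sigma=\mathcal R$. The inclusion $\mathcal R\subseteq\Sigma$ holds because, for $\varepsilon$ small, the rescaled free boundary $\varepsilon\,\partial\Omega$ stays within $o(1)$ of $\mathcal R$, while each transition layer carries energy at least $\sigma_0>0$ per unit length (each layer converges to the profile $g$ by Lemma \ref{translation at infinity}); hence $\mu$ has positive mass in every neighbourhood of every point of $\mathcal R\setminus\{0\}$, and the density there equals $\sigma_0$ times the number of layers asymptotic to that ray. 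For the reverse inclusion $\Sigma\subseteq\mathcal R$, I would combine the Modica inequality $\tfrac12|\nabla u|^2\le W(u)$ with Lemma \ref{translation at infinity}: away from $\partial\Omega$ the solution converges exponentially to $0$ or to $1$, so that $\tfrac{\varepsilon}{2}|\nabla u_\varepsilon|^2+\tfrac1\varepsilon W(u_\varepsilon)\to0$ uniformly on compact subsets of $\R^2\setminus\mathcal R$ --- the same computation used just above to prove $\Psi_\infty=0$ on $\Sigma$. Thus $\mathrm{spt}\,\mu=\Sigma=\mathcal R$, together with its densities, is independent of the sequence.

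The step I expect to be the main obstacle is the inclusion $\Sigma\subseteq\mathcal R$: ruling out that the energy measure develops mass in the ``bulk'' regions, where $u$ is close to $0$ or to $1$, rather than only along the asymptotic rays of the free boundary. This is handled by combining the Modica inequality with the exponential convergence of $u$ to the one dimensional profiles away from $\partial\Omega$ provided by Lemma \ref{translation at infinity}, which is exactly the mechanism behind the preceding lemma; the remaining ingredients --- Hausdorff convergence of the convex sets $\varepsilon D_i$ to their recession cones, and the lower energy bound on a single transition layer --- are routine once the finite-end structure of Sections \ref{sec 8}--\ref{sec 9} is available.
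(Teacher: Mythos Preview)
Your approach is correct and essentially matches the paper's: both identify the limits via the recession cones $D_i^\infty=\lim_{\varepsilon\to0}\varepsilon D_i$ of the unbounded convex components of $\Omega^c$, then show $\Omega_\infty=\R^2\setminus\bigcup_i D_i^\infty$ and $\Sigma=\bigcup_i\partial D_i^\infty$. The only differences are cosmetic --- the paper invokes the Clearing Out Lemma where you use direct exponential decay and layer-energy lower bounds --- and one notational slip to fix: $u_\varepsilon$ is not literally $\chi_{\varepsilon\Omega}$, though the two share the same $L^1_{\mathrm{loc}}$ limit.
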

\begin{proof}
Assume for a sequence $\varepsilon_k\to0$, the limit varifold $V$ of
$u_{\varepsilon_k}$ has the form as in Proposition \ref{limit
varifold}. We also assume that $u_{\varepsilon_k}$ converges to
$\chi_{\Omega_\infty}$.

Let $D_\alpha$, $1\leq \alpha\leq K$, be the unbounded connected
components of $\Omega^c$. Since they are open convex sets, the
blowing down limit of $D_\alpha$ (in the Hausdorff distance),
$\lim_{\varepsilon\to 0}\varepsilon D_\alpha$
exists, which we denote by $D_\alpha^\infty$. $D_\alpha^\infty$ is a
convex cone. (It may have no interior points, depending on whether
the opening angle of $D_\alpha$ is positive or zero.) Note that this
limit is independent of $\varepsilon\to0$.

{\bf Claim 1.} $\Psi_\infty=0$ on
$\cup_\alpha\overline{D_\alpha^\infty}$.

This is because, for any compact set
$K\subset D_\alpha^\infty$ which is disjoint from the
origin, it belongs to $\varepsilon D_\alpha$ for all $\varepsilon$
small.

This also implies $u_\infty=0$ a.e. in $\cup_\alpha
D_\alpha^\infty$.

{\bf Claim 2.} $\partial D_\alpha^\infty\subset\Sigma$.\\
Otherwise, because both $\partial D_\alpha^\infty$ and $\Sigma$ are
cones,
\[\delta:=dist\left(\partial D_\alpha^\infty\setminus B_{1/2},\Sigma\setminus B_{1/2}\right)>0.\]
For all $\varepsilon_k$ small,
\begin{equation}\label{5.01}
dist_H\left(\varepsilon_k\partial D_\alpha\cap(B_2\setminus
B_{1/2}),
\partial D_\alpha^\infty\cap(B_2\setminus B_{1/2})\right)\leq\delta/16.
\end{equation}
Hence
\[dist_H\left(\varepsilon_k\partial D_\alpha\cap(B_2\setminus B_{1/2}),
\Sigma\cap(B_2\setminus B_{1/2})\right)\geq\delta/2.\]
 Take an
$x_0\in \partial D_\alpha^\infty\cap\partial B_{3/2}$. By the
definition of $\Sigma$,
\[\lim_{\varepsilon_k\to0}\int_{B_{\delta/2}(x_0)}\left(\frac{\varepsilon_k}{2}|\nabla u_{\varepsilon_k}|^2
+\frac{1}{\varepsilon_k}W(u_{\varepsilon_k})\chi_{\{u_{\varepsilon_k}>0\}}\right)=0.\]
Hence by the Clearing Out Lemma (see \cite[Proposition
3.1]{Wang-Wei 1}), either $u_{\varepsilon_k}\equiv 0$ or
$u_{\varepsilon_k}\geq 1-\gamma$ in $B_{\delta/4}(x_0)$. By
\eqref{5.01}, $\varepsilon_k D_\alpha$ intersects
$B_{\delta/8}(x_0)$. Thus the latter case does not happen. However,
\eqref{5.01} also implies that $\{u_{\varepsilon_k}>0\}$ intersects
$B_{\delta/8}(x_0)$. This is a contradiction and the claim is
proven.

{\bf Claim 3.} $u_\infty=1$ a.e. in $\R^2\setminus\cup_\alpha
\overline{D_\alpha^\infty}$.

By Proposition \ref{finite bounded components}, there exists an
$R_5>0$ such that $u>0$ in
$\mathcal{P}:=\R^2\setminus(B_{R_5}\cup\cup_\alpha D_\alpha)$.
Because $\partial\mathcal{P}\setminus B_{R_5}$ consists only of
finitely many unbounded smooth convex curves, the blowing down limit
$\varepsilon\mathcal{P}$ has a unique limit $\mathcal{P}_\infty$ as
$\varepsilon\to0$, which is exactly $\R^2\setminus\cup_\alpha
D_\alpha^\infty$.

By Proposition \ref{translation at infinity}, for any $\delta>0$,
there exists an $R>R_5$ such that, $u>1-\delta$ in
$\mathcal{P}\setminus(\partial\mathcal{P})_R$, where
\[(\partial\mathcal{P})_R:=\left\{x\in\mathcal{P}: \mbox{dist}(x,\partial\mathcal{P})\leq
R\right\}.\]
 Hence for any $\varepsilon>0$, $u_\varepsilon>1/2$ in
 $\varepsilon[\mathcal{P}\setminus(\partial\mathcal{P})_R]$.
After passing to the limit and by noting the $L^1_{loc}(\R^2)$
convergence of $u_\varepsilon$, we see $u_\infty=1$ a.e. in
$\mathcal{P}_\infty$.

Together with the Clearing Out Lemma (see \cite[Proposition
3.1]{Wang-Wei 1}), the above claims imply that the interior point of
$\mathcal{P}_\infty$ does not belong to $\Sigma$, and the interior
of $\cup_\alpha D_\alpha^\infty$ does not belong to $\Sigma$ either.

Combining Claim 1-3 we see $\Omega_\infty=\R^2\setminus\cup_\alpha
D_\alpha^\infty$ and $\Sigma=\cup_\alpha
\partial D_\alpha^\infty$. By the uniqueness of $D_\alpha^\infty$ we finish the proof.
\end{proof}

\begin{prop}
The density $n_\alpha=1$ or $2$. Moreover, if the opening angle of
$D_\alpha$ is positive, the density on $\partial D_\alpha^\infty$
equals $1$, and if the opening angle of $D_\alpha$ equals $0$, the
density on $\partial D_\alpha^\infty$ is $2$.
\end{prop}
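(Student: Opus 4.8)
The plan is to match each ray of $\Sigma$ with the branches of a single $\partial D_\alpha$ and to compute the multiplicity on it by reading off how many transition layers lie over it. Recall from the preceding lemma that $\Sigma=\bigcup_\alpha\partial D_\alpha^\infty$ with $D_\alpha^\infty=\lim_{\varepsilon\to0}\varepsilon D_\alpha$ a convex cone with vertex $0$, and from Proposition~\ref{limit varifold} that $\mu=\sigma_0\sum_\alpha n_\alpha\mathcal{H}^1\lfloor_{R_\alpha}$ with the $n_\alpha$ positive integers, constant along $R_\alpha$. Since $D_\alpha$ is convex with $\partial D_\alpha$ a single simple curve, $\partial D_\alpha$ has exactly two ends: if their asymptotic directions are distinct then $D_\alpha^\infty$ has nonempty interior and $\partial D_\alpha^\infty$ is the union of the two corresponding rays, while if they coincide (opening angle zero) the recession cone of $D_\alpha$ is a single ray, $D_\alpha$ lies in a half-strip of finite width, and $\partial D_\alpha^\infty$ is that single ray. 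Using $e_i\neq e_j$ for $i\neq j$ (the previous lemma) together with the fact that every asymptotic direction of $\partial D_\alpha$ is a recession direction of $D_\alpha$, one sees that distinct components contribute pairwise distinct rays and that no branch of $\partial D_\beta$ with $\beta\neq\alpha$ is asymptotic to a direction occurring in $\partial D_\alpha^\infty$. Consequently each ray $R\subset\Sigma$ belongs to exactly one $\partial D_\alpha^\infty$, and in a small cone around $R$ the set $\partial\Omega$ consists, far from the origin, of exactly $m$ branches of $\partial D_\alpha$, where $m=1$ if the opening angle of $D_\alpha$ is positive and $m=2$ if it is zero; when $m=2$ the two branches stay at bounded mutual distance.

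Fix such a ray $R$ and write $e$ for its direction. By Lemma~\ref{translation at infinity}, for $x\in\partial\Omega$ with $|x|$ large the translation $u(x+\cdot)$ is $C^\ell_{loc}$-close to a one-dimensional profile — $u^\ast$ when only one branch passes near $x$, and $u^{\ast\ast}$ (possibly the degenerate $g(|\cdot|)$) when two do. Using the definition of $\sigma_0$ and that $W(u)\chi_\Omega$ vanishes off $\Omega$, each of these profiles carries energy $\sigma_0$ per unit length in each of its transition layers; hence the energy of $u$ in a unit slab $\{x\cdot e\in(t,t+1)\}$ intersected with a fixed bounded neighborhood of $R$ tends to $m\sigma_0$ as $t\to+\infty$. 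Now fix $p=\rho_0 e$ with $\rho_0>0$ and $r\in(0,\rho_0)$ small enough that $\overline{B_r(p)}$ meets no other ray of $\Sigma$, and rescale: $\mu(B_r(p))=\lim_{\varepsilon\to0}\varepsilon\int_{B_{r/\varepsilon}(p/\varepsilon)}\big(\frac12|\nabla u|^2+W(u)\chi_\Omega\big)$. Slicing $B_{r/\varepsilon}(p/\varepsilon)$ into unit slabs $\{x\cdot e\in(j,j+1)\}$, of which $(2+o(1))r/\varepsilon$ meet the ball, and noting that on each (intersected with a fixed bounded neighborhood of $\partial\Omega$) the energy of $u$ tends to $m\sigma_0$ as just discussed — the $m$ branches of $\partial D_\alpha$ being nearly straight of curvature $O(\varepsilon)$ by Theorem~\ref{thm curvature decay} and crossing each slab transversally — we get $\int_{B_{r/\varepsilon}(p/\varepsilon)}\big(\frac12|\nabla u|^2+W(u)\chi_\Omega\big)=(2m+o(1))r\sigma_0/\varepsilon$, while the part of the ball at growing distance from $\partial\Omega$ (where $W(u)$ and $|\nabla u|$ decay exponentially) and the spherical cap near $\partial B_{r/\varepsilon}(p/\varepsilon)$ (controlled by Theorem~\ref{thm energy growth}) contribute $o(1/\varepsilon)$. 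Therefore $\mu(B_r(p))=2mr\sigma_0$.

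Since also $\mu(B_r(p))=n_\alpha\sigma_0\,\mathcal{H}^1(R\cap B_r(p))=2r\,n_\alpha\sigma_0$ by the cone structure of $\mu$, we conclude $n_\alpha=m$, that is $n_\alpha=1$ when the opening angle of $D_\alpha$ is positive and $n_\alpha=2$ when it is zero, which is the assertion. The step requiring the most care — the main obstacle — is the energy computation in the second paragraph: one must show that the energy of $u$ in a fixed tube around a far-out arc of $\partial\Omega$ equals $\sigma_0$ per unit arclength up to an error that is uniform along the arc and negligible after multiplication by $\varepsilon$, and that the remaining parts of the large ball carry negligible mass. This rests on the $C^\ell_{loc}$ convergence of translations of $u$ to one-dimensional solutions (Lemma~\ref{translation at infinity}), the exponential decay of $W(u)$ and $|\nabla u|$ away from the transition layers, the curvature decay of Theorem~\ref{thm curvature decay} that keeps the arcs nearly straight, and the linear energy growth bound of Theorem~\ref{thm energy growth}.
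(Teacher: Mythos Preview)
Your argument is correct and reaches the same conclusion, but by a genuinely different route than the paper. The paper does not compute $\mu(B_r(p))$ directly; instead it invokes Theorem~\ref{stable solutions 2} together with \cite[Theorem~5]{Tonegawa} to assert that, in a small neighborhood $U_\alpha$ of a ray $L_\alpha$, the rescaled free boundary $\partial\{u_\varepsilon>0\}\cap(B_2\setminus B_{1/2})$ consists of exactly $n_\alpha$ graphical sheets over $L_\alpha$. The identification $n_\alpha=m$ then reduces to counting how many branches of $\partial\Omega\setminus B_R$ are asymptotic to a given direction, which is one or two according as the opening angle of $D_\alpha$ is positive or zero. So the paper outsources the ``density equals number of layers'' step to the stable Hutchinson--Tonegawa theory, while you reprove it by a direct slab-by-slab energy computation using Lemma~\ref{translation at infinity} and exponential decay. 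Your route is more self-contained; the paper's is shorter but leans on an external structural result.

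Two small remarks. First, your justification that rays coming from distinct $D_\alpha^\infty$ are disjoint is slightly miscited: the lemma ``$e_i\neq e_j$'' concerns interior rays $L_i\subset D_i$, not boundary directions, and by itself does not preclude two recession cones from sharing a boundary ray. The correct input here is Lemma~\ref{lem 4.15} (also used in the paper's proof), which forces the two asymptotic slopes bounding each connected piece of $\Omega$ to be strictly different, hence $\partial D_\alpha^\infty\cap\partial D_\beta^\infty=\{0\}$ for $\alpha\neq\beta$. Second, you do not actually need Theorem~\ref{thm curvature decay} for the energy computation: the $C^\ell$ convergence of translations in Lemma~\ref{translation at infinity} already gives that each branch is $C^1$-close to a line on unit scale, which is all the slab argument uses.
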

\begin{proof}
Assume $V=\sigma_0\sum_\alpha n_\alpha[L_\alpha]$ where
$n_\alpha\geq 1$ and $L_\alpha=\{re_\alpha^{\ast\ast}: r\geq0\}$. By
the previous lemma, $\cup_\alpha L_\alpha=\Sigma$ is unique, i.e.
independent of $\varepsilon\to0$. Moreover, each $L_\alpha$ belongs
to $\partial D_\beta^\infty$ for some $\beta$.

We have proved that $\Omega^c_\infty=\cup_\alpha D_\alpha^\infty$.
By Lemma \ref{lem 4.15}, if $\alpha\neq\beta$, $\partial
D_\alpha^\infty$ and $\partial D_\beta^\infty$ are disjoint outside
the origin.

Since $e_\alpha^{\ast\ast}$ are all distinct, for each $\alpha$
there is an open neighborhood $U_\alpha$ of
$L_\alpha\cap(B_2\setminus B_{1/2})$ such that these open sets are
disjoint. By Theorem \ref{stable solutions 2} and the proof of
\cite[Theorem 5]{Tonegawa}, for any $\varepsilon>0$ small,
$\partial\{u_\varepsilon>0\}\cap(B_2\setminus B_{1/2})$ consists of
exactly $n_\alpha$ smooth components of
$\partial\{u_\varepsilon>0\}$ in $U_\alpha$, which can be
represented by the graph of convex or concave functions defined on
$L_\alpha$ with small Lipschitz constants.

First assume $D_\alpha^\infty$ to be open. Then there are two unit
vectors $e_+\neq e_-$ such that $\partial D_\alpha^\infty=\{re_\pm:
r\geq0\}$. By the proof of the previous proposition, there exists an
$R>0$ such that, there are exactly two connected components of
$\partial\Omega\setminus B_R$ asymptotic to $\{re_\pm: r\geq0\}$
respectively. Hence the density of the varifold $V$ on $\{re_\pm:
r\geq0\}$ is $1$.

If $D_\alpha^\infty$ is not open, it is a ray in the form $\{re:
r\geq0\}$ for some unit vector $e$. There exists an $R>0$ such that,
there are exactly two connected components of
$\partial\Omega\setminus B_R$ asymptotic to $\{re: r\geq0\}$.  Hence
the density of the varifold $V$ on $\{re_\pm: r\geq0\}$ is $2$.
\end{proof}

Using this result we can prove Corollary \ref{coro 1}.
\begin{proof}[Proof of Corollary \ref{coro 1}]
By Remark \ref{rmk 10.4} and the previous proposition, we can assume
the blowing down limit $\Sigma$ is the $x_1$ axis, with density $1$
or $2$ on it. Since we have assumed $u$ has two ends and each
unbounded connected components of $\Omega^c$ gives two ends, there
is only one unbounded connected component of $\Omega^c$. The blowing
limit of its boundary is the $x_1$ axis. Because it is convex, it
can only be the half space. Then by Lemma \ref{convexity}, $u$ is
one dimensional.
\end{proof}

\section{Refined asymptotics at infinity}\label{sec 11}
\label{section refined asymptotics}
 \setcounter{equation}{0}

In this section, we prove that $u$ is exponentially close to its
ends (one dimensional solutions) at infinity.

Let $\mathcal{C}=\{(x_1,x_2): |x_2|<\lambda x_1, x_1>R\}$ for some
$\lambda>0$ and $R>0$. Assume that $u=0$ in
$\mathcal{C}\cap\{x_2>f_+(x_1)\}$ and
$\mathcal{C}\cap\{x_2<f_-(x_1)\}$, where $f_\pm$ are convex
(concave, respectively) functions defined on $[R,+\infty)$,
satisfying
\[-\lambda x_1<f_-(x_1)<f_+(x_1)<\lambda x_1.\]
Recall that Lemma \ref{lem 4.15} says
\[-\lambda\leq \lambda_-:=\lim_{x_1\to+\infty}f^\prime_-(x_1)<\lambda_+:=\lim_{x_1\to+\infty}f^\prime_+(x_1)\leq\lambda.\]
By Proposition \ref{finite bounded components}, there exists an
$R>0$ such that
\[u>0 \quad \mbox{in}\ \left\{(x_1,x_2): f_-(x_1)<x_2<f_+(x_1), x_1>R\right\}.\]
Hence after taking another larger $R$, the ray
\[\left\{(x_1,x_2): x_2=\frac{\lambda_-+\lambda_+}{2}x_1+\frac{f_-(R)+f_+(R)}{2}, x_1>R\right\}\]
is contained in $\{u>0\}$.

After a rotation, we are in the following situation:
\begin{enumerate}
\item[{\em (H1)}] There are two positive constants $R>0$ large and
$\lambda>0$.
\item[{\em (H2)}] There is a
positive concave function $x_2=f(x_1)$ defined on $[R,+\infty)$ such
that
\[f^\prime(x_1)>0,\quad \lim_{x_1\to+\infty}f^\prime(x_1)=0.\]
In particular, as $x_1\to+\infty$, $f(x_1)=o(x_1)$.
\item[{\em (H3)}] The domain $\mathcal{C}:=\left\{(x_1,x_2): f(x_1)<x_2<\lambda x_1,
x_1>R\right\}$.
\item[{\em (H4)}] $u\in C^2(\overline{\mathcal {C}})$ and $u>0$ in
$\mathcal{C}$. Moreover,
\begin{equation*}
 \left\{\begin{aligned}
&\Delta u=W^\prime(u),\ \ \ &\mbox{in}\ \mathcal {C},\\
&u=0,\ \ \ &\mbox{on}\ \{(x_1,x_2):x_2=f(x_1)\},\\
&|\nabla u|=\sqrt{2W(0)},~~&\mbox{on}~\{(x_1,x_2):x_2=f(x_1)\}.
                          \end{aligned} \right .
\end{equation*}
\end{enumerate}

For simplicity, we will also assume $u=0$ below $\{x_2=f(x_1)\}$.
Note that by the regularity theory in \cite{AC} and \cite{KN}, $f$
is smooth.

As in Part I, we want to emphasize that no stability condition is
needed here.

This section is devoted to prove the following theorem, which also
finishes the proof of Theorem \ref{main result 2}.
\begin{thm}
There exists a constant $t$ such that
\[|f(x_1)-t|\leq Ce^{-\frac{x_1}{C}},\]
and
\[|u(x_1,x_2)-g(x_2-t)|\leq Ce^{-\frac{x_1}{C}}.\]
\end{thm}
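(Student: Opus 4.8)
The plan is to follow the evolution-equation strategy outlined in the introduction, treating the $x_1$-variable as ``time'' and the restriction $u(x_1,\cdot)$ as a curve in function space that should converge exponentially to the one-dimensional profile $g$. First I would record what the blowing-down and translation analysis already gives us under the hypotheses (H1)--(H4): by Lemma \ref{translation at infinity}, translations $u(x_1+\cdot,\cdot)$ along the free boundary converge in $C^\ell_{loc}$ to the one-dimensional solution $u^\ast$, and the free boundary slope $f'(x_1)\to 0$; hence $\nabla u$ is uniformly bounded and, for $x_1$ large, $\partial u/\partial x_2$ has a positive lower bound near $\{x_2=f(x_1)\}$ while $u$ is close to $1$ well above it. In particular the Modica inequality $\frac12|\nabla u|^2\le W(u)$ holds everywhere in $\mathcal{C}$, and near the free boundary $|\nabla u|=\sqrt{2W(0)}$.

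The core estimate should be a second-order differential inequality for the $L^2$-discrepancy
\[
E(x_1):=\int_{f(x_1)}^{\infty}\Big(u(x_1,x_2)-g(x_2-\tau(x_1))\Big)^2\,dx_2,
\]
where $\tau(x_1)$ is chosen so that the free boundary point matches, i.e. $f(x_1)=\tau(x_1)$ (this is the ``same free boundary point'' choice advertised in the introduction, which makes $P(u)$ a very good approximate nearest point because $g(0)=0=u(x_1,f(x_1))$ and $g'(0)=\sqrt{2W(0)}=|\nabla u|$ on the boundary). Differentiating $E$ twice in $x_1$, using $\Delta u=W'(u)$, the one-dimensional equation $g''=W'(g)$, integration by parts, and the boundary conditions (which contribute controllable terms because $f'(x_1)\to0$ and because at $x_2=f(x_1)$ both $u$ and $g(\cdot-\tau)$ and their relevant derivatives agree to leading order), I expect to arrive at an inequality of the schematic form
\[
\frac{d^2}{dx_1^2}E(x_1)\ \ge\ \mu\,E(x_1)\ -\ \mathcal{R}(x_1),
\]
with $\mu>0$ the spectral gap coming from positivity of the second eigenvalue of the linearized operator $-\partial_{x_2}^2+W''(g)$ on the half-line with Neumann/Robin condition at $0$ (the first eigenfunction $g'$, eigenvalue $0$, is projected out precisely because of the matched-boundary choice of $\tau$), and a remainder $\mathcal{R}(x_1)=O(e^{-cx_1})$ coming from: (i) the error between $u$ and $1$ high up, which decays like $e^{-c(x_2-f(x_1))}$ and integrates to an exponentially small tail; (ii) curvature-type corrections $f'(x_1)$, which by concavity and $f'(x_1)\to0$ together with the interior curvature decay are themselves exponentially small in $x_1$; and (iii) cross terms controlled by Cauchy--Schwarz and the uniform $C^2$ bound. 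A standard ODE comparison argument, using that $E(x_1)\to0$ as $x_1\to+\infty$ (from the $C^0_{loc}$ convergence of translations together with uniform integrable tails), then yields $E(x_1)\le Ce^{-cx_1}$.

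From $E(x_1)\le Ce^{-cx_1}$ I would then upgrade to the pointwise statements. First, interior elliptic estimates plus the uniform $C^2$ bound convert the $L^2$ decay into $C^0$ (indeed $C^1$) decay of $u(x_1,\cdot)-g(\cdot-\tau(x_1))$ on slabs $\{x_1\sim t\}$. Next, evaluating at the free boundary and using the nondegeneracy $\partial u/\partial x_2\ge c>0$ there (as in the end of Part I's proof) forces $|\tau(x_1)-\tau(x_1')|$ to be small whenever $x_1,x_1'$ are large, so $\tau(x_1)$ converges to a limit $t$ with exponential rate; feeding this back gives $|f(x_1)-t|\le Ce^{-x_1/C}$ and $|u(x_1,x_2)-g(x_2-t)|\le Ce^{-x_1/C}$. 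The main obstacle I anticipate is the bookkeeping of the boundary terms in the second differentiation of $E$: because the domain of integration $\{x_2>f(x_1)\}$ moves with $x_1$ and the free boundary conditions are overdetermined, one must verify carefully that the matched choice $\tau=f$ makes the leading boundary contributions cancel and leaves only terms that are quadratically small in $(u-g)$ or exponentially small via $f'$ — this is exactly the point where Gui's Hamiltonian-identity philosophy and the positivity of the second eigenvalue have to be combined, and it is more delicate here than in Part I because $W''$ does not vanish and the profile $g$ only decays to $1$ rather than being affine.
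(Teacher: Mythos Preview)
Your overall strategy coincides with the paper's: set $v=u-g(\cdot-f(x_1))$, differentiate $\|v\|^2$ twice in $x_1$, and derive $\frac{d^2}{dx_1^2}\|v\|^2\geq c\|v\|^2+O(e^{-cx_1})$ from a spectral gap, then upgrade to pointwise decay via the Lipschitz bound and the nondegeneracy $u_{x_2}\geq c>0$ near the free boundary. The final extraction of the limit $t$ is also as you describe.

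There is, however, a genuine gap in your item (ii): you assert that ``curvature-type corrections $f'(x_1)$\dots are themselves exponentially small in $x_1$'' and place them in the remainder $\mathcal{R}$. This is circular---the exponential decay of $f'$ is part of the conclusion, and no a priori input gives it (concavity only yields $f'\to 0$, and the curvature decay $H\le C/(1+|x|)$ is merely polynomial). When you actually compute $\frac{d^2}{dx_1^2}\|v\|^2$, terms such as
\[
f'(x_1)^2\int \big|g_\ast'\big|^2 - f'(x_1)^2\int g_\ast''(u-g_\ast)
\quad\text{and}\quad
f''(x_1)\int g_\ast'(u-g_\ast)
\]
appear and are \emph{not} $O(e^{-cx_1})$ at this stage. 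The paper disposes of them by a \emph{sign} argument, not an estimate: $f''\le 0$ by concavity, $g_\ast'\ge 0$, and---crucially---$u\le g_\ast$ throughout $\mathcal{C}$. This last inequality (Lemma~\ref{lem 9.1}) is a consequence of the Modica inequality via $\Psi_{x_2}\le 1$, and without it the differential inequality does not close. You should invoke it explicitly.

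Two smaller corrections. First, the relevant spectral gap is for the \emph{Dirichlet} problem on the half-line, not Neumann/Robin: with $\tau=f$ one has $v(x_1,f(x_1))=0$, and since $g'(0)=\sqrt{2W(0)}>0$ the translation mode $g'$ is not in $H^1_0$, so the first Dirichlet eigenvalue is already strictly positive (Proposition~\ref{prop second eigenvalue positive}); no projection is needed. Second, the Hamiltonian identity $\rho(x_1)=\sigma_0+O(e^{-cx_1})$ enters more centrally than your closing remark suggests: it produces the relation $\int\big[(u-g_\ast)u_{x_1x_1}-u_{x_1}^2\big]=O(e^{-cx_1})+o(\|v\|^2)$, which is precisely what lets one trade a fraction of $\int u_{x_1}^2$ for $\int u_{x_1x_1}(u-g_\ast)$ and then absorb the cross term $\int u_{x_1}g_\ast' f'$ by completing the square. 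Without this trade the $f'$-cross term cannot be controlled.
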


First we note that
\begin{lem}\label{exponential decay}
In $\overline{\mathcal{C}}$,
\[1-u(x_1,x_2)\leq Ce^{-\frac{x_2-f(x_1)}{C}}.\]
\end{lem}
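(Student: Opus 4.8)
The plan is to obtain the exponential decay of $1-u$ toward the interior of $\mathcal{C}$ by a barrier argument, exploiting the uniform convexity of $W$ near $u=1$. First I would record the one-dimensional information: by hypotheses (H2)--(H4) and Lemma \ref{translation at infinity}, translating $u$ along the free boundary $\{x_2=f(x_1)\}$ produces in the limit a one-dimensional solution $g$, for which $g(s)\to 1$ exponentially as $s\to+\infty$ (indeed $1-g(s)\le Ce^{-s/C}$ by the assumption {\bf W3)} together with the ODE $g''=W'(g)$). Combined with the uniform $C^2$ bound on $u$ (Proposition \ref{gradient bound} plus standard elliptic estimates up to the free boundary, using the regularity of $\partial\Omega$), this gives: there exists $s_0>0$ such that $u(x_1,x_2)>\gamma$ whenever $x_2-f(x_1)\ge s_0$, where $\gamma$ is the constant in {\bf W2)}--{\bf W3)}. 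Hence on the region $\mathcal{C}_{s_0}:=\{x\in\mathcal{C}: x_2-f(x_1)>s_0\}$ we have, by {\bf W3)},
\[
\Delta(1-u)=-W'(u)=-\big(W'(u)-W'(1)\big)\le -\kappa\,(u-1)=\kappa\,(1-u),
\]
wait --- more carefully, $W'(u)-W'(1)=W''(\xi)(u-1)$ with $W''(\xi)\ge\kappa$, so $-W'(u)\ge \kappa(1-u)$ only if $u\le 1$; since $u<1$ in $\Omega$ by Proposition in Section \ref{sec 6}, we indeed get $\Delta(1-u)\le -\kappa(1-u)$, i.e. $v:=1-u$ is a positive supersolution of $\Delta v+\kappa v\le 0$ on $\mathcal{C}_{s_0}$. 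Actually I want the opposite inequality for a decay barrier, so I would instead write $\Delta v=\,W'(u)=W''(\xi)v\ge\kappa v$, i.e. $\Delta v-\kappa v\ge 0$ on $\mathcal{C}_{s_0}$; thus $v$ is a subsolution of the operator $\Delta-\kappa$.

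Next I would construct a supersolution barrier of the form $\Phi(x):=A\,e^{-\sqrt{\kappa}\,(x_2-f(x_1))}\,\cosh(\delta x_1)$ or, more simply, a one-variable barrier $\Phi(x):=A\,e^{-c(x_2-f(x_1))}$ with $c>0$ chosen so that $\Phi$ is a supersolution of $\Delta-\kappa$ on $\mathcal{C}_{s_0}$. Computing $\Delta\Phi$ produces terms involving $f'$ and $f''$; by (H2) these are $o(1)$ and $\le 0$ respectively as $x_1\to+\infty$, so after enlarging $R$ we can ensure $\Delta\Phi-\kappa\Phi\le 0$ provided $c<\sqrt{\kappa}$ (the $f',f''$ contributions are absorbed because they are small). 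On the inner boundary $\{x_2-f(x_1)=s_0\}$ we have $v\le 1\le \Phi$ for $A$ large (choosing $A=e^{cs_0}$ suffices since $v\le1$). On the lateral boundaries of $\mathcal{C}_{s_0}$ --- the portion of $\{x_1=R\}$ and the ray $\{x_2=\lambda x_1\}$ --- we bound $v\le 1$ by a further multiplicative constant in $A$, or alternatively observe that $v$ is already exponentially small there once we know decay along one slice and then propagate. Finally, since $v$ is bounded and $\Phi\to 0$ at infinity inside $\mathcal{C}_{s_0}$, the comparison principle for $\Delta-\kappa$ (valid because $\kappa>0$, so there is no issue with the maximum principle on unbounded domains given the decay of $\Phi$) yields $v\le\Phi$ throughout $\mathcal{C}_{s_0}$, which is exactly the claimed bound $1-u\le Ce^{-(x_2-f(x_1))/C}$. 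On the complementary strip $\{s_0-1<x_2-f(x_1)\le s_0\}\cap\mathcal{C}$ the estimate is trivial after adjusting $C$, and below the free boundary $1-u=1$ but that region is excluded.

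The main obstacle I anticipate is handling the lateral boundary $\{x_2=\lambda x_1\}$ of the truncated cone correctly: there $u$ need not be close to $1$ a priori (it is the boundary of the artificial cone, not a free boundary), so one cannot simply bound $v$ there by an exponentially small quantity without extra input. The clean way around this is to choose the barrier to depend on $x_1$ as well --- e.g. multiply by a slowly growing factor like $\cosh(\delta x_1)$ with $\delta$ tiny --- so that $\Phi$ dominates the trivial bound $v\le 1$ on the whole lateral boundary while the extra factor contributes only $\delta^2\Phi$ to $\Delta\Phi$, still absorbed by $\kappa\Phi$ for $\delta$ small; one then recovers the stated pure exponential decay on any fixed proper subcone (in particular near $\{x_2=f(x_1)\}$, where we actually need it). Alternatively, one restricts attention to $\mathcal{C}'=\{f(x_1)<x_2<\tfrac12\lambda x_1\}$ and uses that on $\{x_2=\tfrac12\lambda x_1\}$ one still has $u>\gamma$ by Lemma \ref{translation at infinity} applied to translations along that ray --- then no growing factor is needed. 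Either route is routine once set up; the rest is the standard comparison argument.
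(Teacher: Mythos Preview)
Your proposal is correct and follows essentially the same route as the paper: establish $u>\gamma$ at a fixed distance from the free boundary via the translation-at-infinity lemma, derive the differential inequality $\Delta(1-u)\ge\kappa(1-u)$ from {\bf W3)}, then compare with an exponential supersolution. The paper's own proof is only three sentences and dismisses the comparison step as ``standard methods, e.g.\ comparison with a sup solution''; your more careful handling of the lateral boundary $\{x_2=\lambda x_1\}$ (via a $\cosh(\delta x_1)$ weight or by retreating to a subcone) fills in a point the paper leaves implicit.
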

\begin{proof}
Because $0<f^\prime(x_1)<1/2$, in $\mathcal{C}$ the distance to the
curve $\{x_2=f(x_1)\}$ is comparable to $x_2-f(x_1)$.

Next by our assumptions, the translation of $u$ along
$\{x_2=f(x_1)\}$ converges to $g(x_2)$ uniformly on compact sets of
$\R^2$. Hence we can assume that, for some $L>0$, $u(x_1,x_2)\geq
1-\gamma$ in $\{(x_1,x_2):f(x_1)+L<x_2<\lambda x_1\}$. By the
equation for $u$,
\[\Delta(1-u)\geq c(1-u) \quad \mbox{in}\ \{(x_1,x_2):f(x_1)+L<x_2<\lambda x_1\}.\]
The conclusion then follows from some standard methods, e.g.
comparison with a sup solution.
\end{proof}
A direct corollary is
\begin{equation}\label{5.1}
1-u(x_1,x_2)\sim O(e^{-cx_1})\quad \mbox{on}\ \{x_2=\lambda x_1\}.
\end{equation}

By Proposition \ref{translation at infinity}, the limit at infinity
of translations of $u$ along $(x_1,f(x_1))$ is $g(x_2)$. The
previous lemma then implies that
\begin{equation}\label{5.1.1}
\lim_{x_1\to+\infty}\sup_{x_2\in\R}|u(x_1,x_2)-g(y-f(x_1))|=0.
\end{equation}

Another consequence of this exponential decay is:
\begin{coro}\label{exponential decay derivatives}
In $\mathcal{C}$,
\[|u_{x_1}(x_1,x_2)|+|u_{x_1x_1}(x_1,x_2)|\leq Ce^{-c\left(x_2-f(x_1)\right)}.\]
\end{coro}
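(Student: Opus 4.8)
The plan is to exploit the linearized equation. Write $v:=u_{x_1}$; differentiating $\Delta u=W'(u)$ in $x_1$ gives $\Delta v=W''(u)\,v$ in $\mathcal C$, and $v\in C^1(\overline{\mathcal C})$ by the regularity of $u$ up to the free boundary. By \eqref{5.1.1} and $g(s)\to1$ as $s\to+\infty$, after enlarging $R$ there is an $L>0$ with $u>\gamma$ on $\mathcal D:=\mathcal C\cap\{x_2-f(x_1)>L\}$, so by (W3) we have $W''(u)\ge\kappa>0$ on $\mathcal D$. Hence $\Phi:=v^2$ satisfies $\Delta\Phi=2|\nabla v|^2+2W''(u)v^2\ge2\kappa\Phi$ on $\mathcal D$, while $0\le\Phi\le M_0:=\sup_{\mathcal C}|\nabla u|^2<+\infty$ by the gradient bound of Section~\ref{sec 6}. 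The idea is that a nonnegative, bounded subsolution of $\Delta-2\kappa$ must decay exponentially away from the part of $\partial\mathcal D$ where it is merely bounded, which here forces precisely the vertical decay of $u_{x_1}$ in the height $s:=x_2-f(x_1)$.

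To make this precise I would build the barrier $\psi:=C_1e^{-\mu s}=C_1e^{-\mu(x_2-f(x_1))}$, with $\mu\in(0,\sqrt\kappa]$ and $C_1>0$ to be chosen. Since $f$ is concave, $\Delta\psi=\big[\mu^2(1+f'^2)+\mu f''\big]\psi\le\mu^2(1+f'^2)\psi\le2\kappa\psi$ once $R$ is large enough that $|f'|\le1$; thus $\psi$ is a supersolution of $\Delta-2\kappa$ on $\mathcal D$. The boundary $\partial\mathcal D$ has three faces. On the bottom face $\{s=L\}$ and on the bounded left face $\{x_1=R\}$ one has $\Phi\le M_0$, so $\psi\ge\Phi$ there provided $C_1$ is large (using $\psi\ge C_1e^{-\mu\lambda R}$ on the latter). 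On the top face $\{x_2=\lambda x_1\}$ the free-boundary condition gives no control, so instead I would use that, by the construction preceding (H1)--(H4), this ray is interior to the gap between the two transition layers, hence at linear distance from $\partial\Omega$: every $p=(x_1,\lambda x_1)$ satisfies $\mathrm{dist}(p,\partial\Omega)\ge c\,x_1$, so by the exponential decay estimate near each free-boundary component (cf.\ Lemma~\ref{exponential decay}), $1-u\le Ce^{-x_1/C}$ on $B_{cx_1}(p)$, and interior estimates for $\Delta u=W'(u)$ give $|\nabla u(p)|\le Ce^{-x_1/C}$, whence $\Phi(p)\le Ce^{-c's(p)}$ since $s(p)\le\lambda x_1$. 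Choosing $\mu\le\min(\sqrt\kappa,c')$ and $C_1$ accordingly, $\psi\ge\Phi$ on all of $\partial\mathcal D$.

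Since $2\kappa>0$, the comparison on the unbounded domain $\mathcal D$ is then routine: with the explicit positive solution $\zeta:=e^{\sqrt\kappa(x_1+x_2)}$ of $(\Delta-2\kappa)\zeta=0$, which tends to $+\infty$ at infinity in $\mathcal D$, the function $\Phi-\psi-\varepsilon\zeta$ is a subsolution of $\Delta-2\kappa$, is $\le0$ on $\partial\mathcal D$, and tends to $-\infty$ at infinity, hence cannot attain a positive interior maximum; letting $\varepsilon\to0$ gives $\Phi\le\psi$ on $\mathcal D$, i.e.\ $|u_{x_1}|\le C_1^{1/2}e^{-\frac\mu2(x_2-f(x_1))}$ on $\mathcal D$, and the same bound holds trivially on the strip $\mathcal C\setminus\mathcal D$ because there $s\le L$ and $|\nabla u|\le M_0^{1/2}$. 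For the second derivative, at a point $p\in\mathcal C$ with $s(p)$ large I would choose $r=\min(1,s(p)/4)$, so that $B_r(p)\subset\{u>0\}$ and $s\ge s(p)/2$ on $B_r(p)$, and apply the interior gradient estimate to $\Delta v=W''(u)v$ on $B_r(p)$, where $\|W''(u)v\|_{L^\infty(B_r(p))}\le Ce^{-cs(p)/2}$ by the bound just proved; this yields $|\nabla v(p)|\le C(r^{-1}+r)e^{-cs(p)/2}\le Ce^{-cs(p)/3}$, hence $|u_{x_1x_1}(p)|\le Ce^{-c(x_2-f(x_1))/3}$, while on $\{s\le\mathrm{const}\}$ one uses the uniform $C^2$ bound on $u$ up to the free boundary. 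Adding the two estimates gives the corollary.

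The hard part is the top face $\{x_2=\lambda x_1\}$ of $\mathcal C$: one genuinely has to use the global geometry --- that $\lambda$ was chosen strictly between the two limiting slopes, so that this face lies deep inside the region where $u\approx1$ --- to obtain the exponential bound for $u_{x_1}$ there; everything else is a textbook barrier argument. A secondary technical point is that $u_{x_1x_1}$ must be controlled through interior regularity for the linear equation satisfied by $u_{x_1}$, since $W$ is only assumed $C^2$ and the linearized equation cannot be differentiated a second time.
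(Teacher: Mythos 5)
Your argument is correct, but it takes a genuinely different and much heavier route than the paper's. The paper's proof is a direct bootstrap from Lemma \ref{exponential decay}: setting $w:=1-u$ and $s:=x_2-f(x_1)$, one has $0\le w\le Ce^{-cs}$ and $\Delta w=-W'(1-w)$ with $|W'(1-w)|\le C w$ and $[W'(u)]_{C^{0,\alpha}(B_{1/2}(p))}\le \|W''\|_\infty [w]_{C^{0,\alpha}(B_{1/2}(p))}$; applying interior gradient and Schauder estimates on unit balls $B_1(p)\subset\{u>0\}$ (available once $s(p)\ge 2$, say) transfers the exponential smallness of $w$ directly to $|\nabla w(p)|$ and $|\nabla^2w(p)|$, while for $s(p)\le 2$ the uniform smoothness of the free boundary (the ``uniform $C^3$ bound on $\partial\mathcal{C}$'' invoked in the paper) gives plain boundedness of $u_{x_1}$ and $u_{x_1x_1}$, which is all the stated inequality asserts in that strip. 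Because only balls of radius $1$ are ever used, the global comparison on the unbounded domain $\mathcal D$ — and with it the entire discussion of the top face $\{x_2=\lambda x_1\}$, which is the part of your proof requiring the most care — simply never arises. Your alternative, a barrier argument for $\Phi=u_{x_1}^2$ exploiting $W''(u)\ge\kappa$ in the bulk, is sound as written: the supersolution computation using the concavity of $f$, the boundary comparison on the three faces of $\partial\mathcal D$, the auxiliary function $\zeta$ for the maximum principle on an unbounded domain, and the treatment of $u_{x_1x_1}$ via interior estimates for $\Delta v=W''(u)v$ with merely bounded right-hand side are all legitimate. Note, however, that it does not gain independence from Lemma \ref{exponential decay} — you still invoke it both to guarantee $u>\gamma$ on $\mathcal D$ and to control $\Phi$ on the top face, and the final rate $\mu\le\min(\sqrt\kappa,c')$ is still constrained by the constant $c'$ inherited from that lemma — so the extra machinery buys no new information here; its one structural advantage is that it isolates the role of the spectral gap $W''\ge\kappa$, which is the mechanism the paper exploits elsewhere (Proposition \ref{prop second eigenvalue positive}) but does not need for this particular corollary.
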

This follows from standard interior gradient estimates and boundary
gradient estimates. (Note that $\partial\mathcal{C}$ is smooth with
uniform $C^3$ bound.)

For application below, we also note the following Hamiltonian
identity.
\begin{prop}\label{Hamiltonian identity 2}
Let
\[\rho(x_1):=\int_{f(x_1)}^{\lambda x_1}\left(\frac{u_{x_2}^2-u_{x_1}^2}{2}+W(u)\right)dx_2,\quad x_1>R. \]
Then
\[\rho(x_1)=\sigma_0+O(e^{-cx_1}).\]
\end{prop}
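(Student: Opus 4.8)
The plan is to prove first that $\rho$ equals a constant up to an exponentially small error (this is the Hamiltonian identity proper), and then to identify that constant as $\sigma_0$ using the asymptotic profile of $u$ recorded in \eqref{5.1.1}. Since $u\in C^2(\overline{\mathcal C})$, $f$ is smooth and the upper edge $x_2=\lambda x_1$ is linear, $\rho$ is $C^1$ and I would differentiate under the integral sign. The bulk term is $\int_{f(x_1)}^{\lambda x_1}\partial_{x_1}\big(\tfrac12 u_{x_2}^2-\tfrac12 u_{x_1}^2+W(u)\big)\,dx_2$; expanding and using $u_{x_1x_1}=\Delta u-u_{x_2x_2}=W'(u)-u_{x_2x_2}$ makes the $W'(u)$ terms cancel, leaving the integrand equal to $\partial_{x_2}(u_{x_1}u_{x_2})$, so the bulk term integrates to $[u_{x_1}u_{x_2}]_{x_2=f(x_1)}^{x_2=\lambda x_1}$. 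Collecting the moving-boundary contributions,
\[
\rho'(x_1)=\Big[\lambda\big(\tfrac12 u_{x_2}^2-\tfrac12 u_{x_1}^2+W(u)\big)+u_{x_1}u_{x_2}\Big]_{x_2=\lambda x_1}-\Big[f'(x_1)\big(\tfrac12 u_{x_2}^2-\tfrac12 u_{x_1}^2+W(u)\big)+u_{x_1}u_{x_2}\Big]_{x_2=f(x_1)}.
\]

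Next I would show the free-boundary bracket vanishes identically. Differentiating $u(x_1,f(x_1))\equiv 0$ gives $u_{x_1}=-f'(x_1)u_{x_2}$ on $\{x_2=f(x_1)\}$; combining this with $W(u)=W(0)$ and $u_{x_1}^2+u_{x_2}^2=|\nabla u|^2=2W(0)$ there yields $u_{x_2}^2=2W(0)/(1+f'(x_1)^2)$, hence $\tfrac12 u_{x_2}^2-\tfrac12 u_{x_1}^2+W(0)=u_{x_2}^2$ and $u_{x_1}u_{x_2}=-f'(x_1)u_{x_2}^2$, so the bracket equals $f'(x_1)u_{x_2}^2-f'(x_1)u_{x_2}^2=0$. (This is the usual observation that the free-boundary conditions in \eqref{equation 0.2} are exactly the stationarity conditions for the Hamiltonian.) For the bracket at $x_2=\lambda x_1$ I would note this line lies in $\Omega=\{u>0\}$ for $x_1$ large, where the Modica inequality (Proposition \ref{Modica inequality 2}) gives $\tfrac12|\nabla u|^2\le W(u)\le C(1-u)^2$, while $1-u=O(e^{-cx_1})$ on this line by \eqref{5.1}; thus $|\nabla u|^2+W(u)=O(e^{-cx_1})$ there and therefore $\rho'(x_1)=O(e^{-cx_1})$. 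Consequently $\rho'\in L^1$ near $+\infty$, the limit $\rho_\infty:=\lim_{x_1\to\infty}\rho(x_1)$ exists, and $|\rho(x_1)-\rho_\infty|=\big|\int_{x_1}^\infty\rho'(s)\,ds\big|\le Ce^{-cx_1}$.

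It remains to identify $\rho_\infty=\sigma_0$. Substituting $s=x_2-f(x_1)$ rewrites $\rho(x_1)=\int_0^{\lambda x_1-f(x_1)}\big(\tfrac12 u_{x_2}^2-\tfrac12 u_{x_1}^2+W(u)\big)(x_1,s+f(x_1))\,ds$, with $\lambda x_1-f(x_1)\to\infty$ by (H2). The Modica inequality bounds the integrand by $2W(u)$, which is bounded and, for large $s$, is $O((1-u)^2)=O(e^{-cs})$ by Lemma \ref{exponential decay}; so the integrand is dominated by $Ce^{-cs}$ uniformly in $x_1$ for $x_1$ large. By the convergence of translations (Proposition \ref{translation at infinity}) together with \eqref{5.1.1} and $f'(x_1)\to 0$, the functions $u(x_1,\cdot+f(x_1))$ converge in $C^1_{loc}$ to $g$, so the integrand converges pointwise to $\tfrac12 g'(s)^2+W(g(s))$. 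Dominated convergence then gives $\rho_\infty=\int_0^\infty\big(\tfrac12 g'(s)^2+W(g(s))\big)\,ds=\sigma_0$, and combined with the exponential estimate above this proves $\rho(x_1)=\sigma_0+O(e^{-cx_1})$.

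I expect the differentiation and the exponential decay of $\rho'$ to be routine (the decay falls out of the Modica inequality and the already-established bound $1-u=O(e^{-c(x_2-f(x_1))})$). The two points to handle carefully are the algebraic cancellation of the free-boundary bracket — which is exactly where the conditions $u=0$ and $|\nabla u|=\sqrt{2W(0)}$ enter, and is the crux of the identity — and the verification that the translated solutions converge in $C^1$ rather than merely $C^0$ to the profile $g$, so that the limiting constant is genuinely $\sigma_0$ and not some unidentified number.
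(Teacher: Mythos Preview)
Your proof is correct and follows essentially the same approach as the paper: differentiate $\rho$, use the equation to reduce the bulk to a total $x_2$-derivative, verify the free-boundary bracket cancels via the conditions $u=0$ and $|\nabla u|^2=2W(0)$, bound the top-boundary contribution by $O(e^{-cx_1})$, and identify the constant by passing to the limit along translations. Your write-up is in fact more explicit than the paper's, particularly in the algebraic cancellation at $\{x_2=f(x_1)\}$ and in invoking dominated convergence (with the Modica inequality supplying the integrable majorant) to pin down $\rho_\infty=\sigma_0$; the paper instead cites Corollary \ref{exponential decay derivatives} for the derivative decay, but this is a cosmetic difference.
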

\begin{proof}
Differentiating in $x_1$ and integrating by parts give
\begin{eqnarray*}
\rho^\prime(x_1)&=&\int_{f(x_1)}^{\lambda
x_1}\biggl(u_{x_2}u_{x_2x_1}-u_{x_1}u_{x_1x_1}+W^\prime(u)u_{x_1}\biggr)dx_2\\
&&+\left(\frac{u_{x_2}(x_1,f(x_1))^2-u_{x_1}(x_1,f(x_1))^2}{2}+W\left(u(x_1,f(x_1))\right)\right)
f^\prime(x_1)+O(e^{-cx_1})\\
&=&\int_{f(x_1)}^{\lambda
x_1}\biggl(u_{x_2}u_{x_2x_1}+u_{x_1}u_{x_2x_2}\biggr)dx_2+u_{x_2}(x_1,f(x_1))^2
f^\prime(x_1)+O(e^{-cx_1})\\
&=&-u_{x_2}(x_1,f(x_1))u_{x_1}(x_1,f(x_1))+u_{x_2}(x_1,f(x_1))^2 f^\prime(x_1)+O(e^{-cx_1})\\
&=&O(e^{-cx_1}).
\end{eqnarray*}
By the convergence of translations of $u$ along $(x_1,f(x_1))$,
Proposition \ref{exponential decay} and Corollary \ref{exponential
decay derivatives}, we
get
\[\lim_{x_1\to+\infty}\rho(x_1)=\sigma_0,\] and the conclusion
follows.
\end{proof}

Let
\[v(x_1,x_2):=u(x_1,x_2)-g(x_2-f(x_1)).\]
By Lemma \ref{exponential decay} and \eqref{5.1.1},
\begin{equation}\label{5.2}
\lim_{x_1\to+\infty}\|v\|_{L^2(0,\lambda x_1)}=0.
\end{equation}

In the following we denote
\[g_\ast:=g(x_2-f(x_1)).\]
For applications below, we note the following fact.
\begin{lem}\label{lem 9.1}
In $\mathcal{C}$, $u\leq g_\ast$.
\end{lem}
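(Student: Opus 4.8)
The plan is to read the claim through the ``distance-type'' variable $\Psi=g^{-1}\circ u$ introduced in Section~\ref{sec 6} and to use the Modica inequality in the form $|\nabla\Psi|\le 1$. Since $g$ is strictly increasing on $[0,+\infty)$, the inequality $u\le g_\ast=g(x_2-f(x_1))$ is equivalent to
\[\Psi(x_1,x_2)\ \le\ x_2-f(x_1)\qquad\text{for all }(x_1,x_2)\in\mathcal{C},\]
and the right-hand side is precisely the length of the vertical segment joining $(x_1,x_2)$ to the free boundary point $(x_1,f(x_1))$. So the whole statement should be a one-line consequence of the gradient bound for $\Psi$ once the set-up is in place.

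First I would record the elementary facts. By (H4) we have $\mathcal{C}\subset\Omega=\{u>0\}$, and $u<1$ in $\Omega$, so $\Psi=g^{-1}\circ u$ is well defined and smooth in $\mathcal{C}$. Moreover, since $u=0$ on $\{x_2=f(x_1)\}$ while $g(0)=0$ and $g'(0)=\sqrt{2W(0)}>0$ (so that $g^{-1}$ is $C^1$ near $0$ with $g^{-1}(0)=0$), the function $\Psi$ extends continuously to $\overline{\mathcal{C}}$ with $\Psi\equiv 0$ on $\{x_2=f(x_1)\}$. By the Modica inequality (Proposition~\ref{Modica inequality 2}), $|\nabla\Psi|\le 1$ in $\Omega$, hence $|\partial_{x_2}\Psi|\le 1$ in $\mathcal{C}$.

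Then I would fix $(x_1,x_2)\in\mathcal{C}$, so that $f(x_1)<x_2<\lambda x_1$. The vertical segment $\{(x_1,t):f(x_1)\le t\le x_2\}$ is contained in $\overline{\mathcal{C}}$, with its bottom endpoint on the free boundary. Integrating $\partial_{x_2}\Psi$ along this segment and using $\Psi(x_1,f(x_1))=0$ together with $|\partial_{x_2}\Psi|\le 1$,
\[\Psi(x_1,x_2)=\int_{f(x_1)}^{x_2}\partial_{x_2}\Psi(x_1,t)\,dt\ \le\ x_2-f(x_1).\]
Applying $g$ to both sides gives $u(x_1,x_2)=g\big(\Psi(x_1,x_2)\big)\le g\big(x_2-f(x_1)\big)=g_\ast$, which is the assertion.

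Because the argument is this short, there is no real obstacle; the only points needing a word of care are (i) that the vertical segment really lies in $\mathcal{C}$ (immediate from the definition of $\mathcal{C}$ and the convention that $u=0$ below $\{x_2=f(x_1)\}$), and (ii) that $\Psi$ is regular enough up to the free boundary for the fundamental theorem of calculus to apply, which follows from the free boundary condition $|\nabla u|=\sqrt{2W(0)}>0$. If one prefers not to discuss boundary regularity of $\Psi$ at all, it is enough to observe that $|\nabla\Psi|\le 1$ makes $\Psi$ $1$-Lipschitz in $\mathcal{C}$ and continuous on $\overline{\mathcal{C}}$ with $\Psi=0$ on $\{x_2=f(x_1)\}$, which yields the same bound.
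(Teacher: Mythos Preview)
Your proof is correct and follows essentially the same approach as the paper: both use the distance-type function $\Psi$, the bound $\partial_{x_2}\Psi\le 1$ coming from the Modica inequality, the boundary condition $\Psi=0$ on $\{x_2=f(x_1)\}$, integration along the vertical segment, and the monotonicity of $g$. Your version is simply more detailed about the set-up and boundary regularity.
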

\begin{proof}
Recall that the distance type function $\Psi$ satisfies
\[\Psi_{x_2}\leq 1, \quad\mbox{in}\ \mathcal{C}.\]
Because $\Psi=0$ on $\{x_2=f(x_1)\}$,
\[\Psi(x_1,x_2)\leq x_2-f(x_1), \quad\mbox{in}\ \mathcal{C}.\]
Since $g$ is non-decreasing, we then obtain
\[u=g(\Psi)\leq g(x_2-f(x_1)), \quad\mbox{in}\ \mathcal{C}.\qedhere\]
\end{proof}

Similar to the calculation in \cite[page 927]{Gui}, we have
\begin{eqnarray*}
&&\int_{f(x_1)}^{\lambda
x_1}\left(\frac{u_{x_2}^2-|g_\ast^\prime|^2}{2}+W(u)-W(g_\ast)-\frac{u_{x_1}^2}{2}\right)dx_2\\
&=&\int_{f(x_1)}^{\lambda
x_1}\left[W(u)-W(g_\ast)-\frac{W^\prime(u)+W^\prime(g_\ast)}{2}\left(u-g_\ast\right)\right]dx_2\\
&&+\frac{1}{2}\int_{f(x_1)}^{\lambda
x_1}\left[\left(u-g_\ast\right)u_{x_1x_1}-u_{x_1}^2\right]dx_2+O(e^{-cx_1})\\
 &=&\frac{1}{2}\int_{f(x_1)}^{\lambda
x_1}\left[\left(u-g^\ast\right)u_{x_1x_1}-u_{x_1}^2\right]dx_2+o(\|v\|^2)+O(e^{-cx_1}).
\end{eqnarray*}
Hence
\begin{equation}\label{5.5}
\int_{f(x_1)}^{\lambda
x_1}\left[u_{x_1x_1}\left(u-g_\ast\right)-u_{x_1}^2\right]=O(e^{-cx_1})+o(\|v\|^2).
\end{equation}

The following result says the second eigenvalue if $g$ is positive.
\begin{prop}\label{prop second eigenvalue positive}
 For any $L>0$ and $\phi\in H_0^1(0,L)$, there exists a
constant $\mu>0$ (independent of $L$) such that
\begin{equation}\label{5.6}
\int_0^L\biggl[\phi^\prime(t)^2+W^{\prime\prime}(g(t))
\phi(t)^2\biggr]dt\geq\mu\int_0^L\phi(t)^2dt.
\end{equation}
\end{prop}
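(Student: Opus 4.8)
The statement asserts a spectral gap: the first Dirichlet eigenvalue of the operator $\mathcal{L}\phi := -\phi'' + W''(g(t))\phi$ on $(0,L)$ is bounded below by a positive constant $\mu$ independent of $L$. The plan is to first understand the model operator on the whole half-line $(0,+\infty)$, where $g$ is the heteroclinic connecting $0$ to $1$, and then transfer the lower bound to finite intervals by a monotonicity-in-$L$ observation. The natural starting point is the translation invariance of the one-dimensional equation $g'' = W'(g)$: differentiating once gives $(g')'' = W''(g) g'$, so $g'$ is a positive solution of $\mathcal{L}\phi = 0$ on $(0,+\infty)$. Positivity of $g'$ (guaranteed by (H2)/the construction of $g$) means $\mathcal{L}$ has a positive "ground state at energy $0$" on the half-line; by the standard ground-state-substitution (Allegretto--Piepenbrink) trick, $\mathcal{L} \ge 0$ on $H_0^1(0,+\infty)$. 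The issue is that this only gives $\mu \ge 0$, whereas we need $\mu>0$, and the zero energy is achieved in the limit by $g'$ — but $g'$ is not in $H_0^1(0,+\infty)$ since $g'(0) \ne 0$ (indeed $|g'(0)| = \sqrt{2W(0)} > 0$ by the free boundary condition), so zero is not actually attained among admissible test functions.

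The key step is therefore to produce the strictly positive gap. First I would split $(0,L)$ into $(0,\delta)$ and $(\delta,L)$ for a small fixed $\delta>0$. On $(\delta,L)$, since $g(t)$ is close to $1$ for $t$ large and $W''(g(t)) \to W''(1) =: \kappa_1 > 0$ (by \textbf{W3}, as $g(t) \ge \gamma$ eventually), one has $W''(g(t)) \ge \kappa_1/2 > 0$ once $\delta$ is large enough, so on that piece $\int \phi'^2 + W''(g)\phi^2 \ge (\kappa_1/2)\int \phi^2$ trivially. On $(0,\delta)$, where $W''(g(t))$ may be negative, I would use the Dirichlet boundary condition $\phi(0)=0$ together with the Poincaré inequality $\int_0^\delta \phi'^2 \ge (\pi^2/4\delta^2)\int_0^\delta \phi^2$; choosing $\delta$ small makes the Poincaré constant dominate $\sup|W''(g)|$, giving $\int_0^\delta \phi'^2 + W''(g)\phi^2 \ge c_\delta \int_0^\delta \phi^2$ for some $c_\delta>0$. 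Combining the two pieces (being careful that $\phi$ need not vanish at $t=\delta$, so one cannot simply add the two one-sided inequalities — instead one uses that $\int_0^\delta \phi'^2$ alone controls $\int_0^\delta \phi^2$ without needing $\phi(\delta)=0$, since only $\phi(0)=0$ is used in that Poincaré inequality) yields \eqref{5.6} with $\mu = \min(c_\delta, \kappa_1/2)$, manifestly independent of $L$.

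The main obstacle is the interval $(0,\delta)$ where the potential $W''(g)$ is genuinely negative and the naive bound fails; the resolution above (exploiting the Dirichlet condition at $0$ via Poincaré on a short interval, so that the positive $\phi'^2$ term wins) is what makes the argument go through and is precisely where the free boundary structure — the fact that $g$ "starts" at $0$ rather than being a full-line heteroclinic — is essential. One subtlety to check is that $\delta$ can be chosen uniformly: both $\|W''\circ g\|_{L^\infty(0,\delta)}$ and the threshold $t_0$ beyond which $W''(g(t)) \ge \kappa_1/2$ depend only on $W$ and $g$, not on $L$, so a single $\delta$ works for all $L > \delta$ (and for $L \le \delta$ the small-interval argument alone suffices). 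This completes the proof.
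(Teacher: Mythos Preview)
Your two-interval splitting contains an internal inconsistency that cannot be resolved from hypotheses \textbf{W1)--W4)} alone. For the piece $(\delta,L)$ you need $\delta$ at least as large as the threshold $t_0$ beyond which $W''(g(t))\ge\kappa/2$; for the piece $(0,\delta)$ you need the one-sided Poincar\'e constant $\pi^2/(4\delta^2)$ to dominate $M:=\sup_{[0,t_0]}|W''(g)|$, i.e.\ $\delta<\pi/(2\sqrt{M})$. Nothing in the assumptions forces $t_0<\pi/(2\sqrt{M})$: one may take $W$ with $W''$ very negative on $[0,\gamma]$ (making $M$ large) and with $g$ reaching $\gamma$ slowly (making $t_0$ large), so that no admissible $\delta$ exists. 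Your closing remark that $t_0$ and $M$ are independent of $L$ is correct but beside the point; it does not produce a $\delta$ satisfying both constraints simultaneously.

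The paper's proof is a contradiction argument, and your own preamble already isolates the right ingredients. The ground-state substitution $\phi=g'\psi$ (valid since $g'>0$ on $[0,L]$) gives
\[
\int_0^L\phi'^2+W''(g)\phi^2=\int_0^L(g')^2\psi'^2\ge 0,
\]
with equality only if $\phi=cg'$; since $g'(0)=\sqrt{2W(0)}\neq 0$, no nonzero multiple of $g'$ lies in $H_0^1(0,L)$. To upgrade this to a uniform gap, suppose by contradiction that there exist $L_n$ and $\phi_n\in H_0^1(0,L_n)$ with $\|\phi_n\|_{L^2}=1$ and $Q(\phi_n)\to 0$. Since $W''(g)\ge -M$ on $(0,t_0)$ and $\ge 0$ beyond, $\int\phi_n'^2\le Q(\phi_n)+M$ stays bounded; extend by zero and pass to a weak $H^1(0,\infty)$ limit $\phi$. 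The coercivity $W''(g)\ge\kappa/2$ on $(t_0,\infty)$ prevents mass from escaping to infinity (else $Q(\phi_n)\ge(\kappa/2)(1-o(1))$), so $\phi\not\equiv 0$. Weak lower semicontinuity then gives $Q(\phi)\le 0$, hence $Q(\phi)=0$, hence $\phi=cg'$; but $\phi(0)=0$ forces $c=0$, a contradiction.
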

 This can be proved by a contradiction argument, see Proposition \ref{prop nondegeneracy 2}.

Because $v(x_1,f(x_1))=0$, \eqref{5.6} applies to $v$, which gives
\begin{eqnarray}\label{5.7}
&&\int_{f(x_1)}^{\lambda
x_1}\left[-\left(u-g_\ast\right)_{x_2x_2}+W^{\prime\prime}(g_\ast)\left(u-g_\ast\right)\right]\left(u-g_\ast\right)dx_2
\nonumber\\
&=&\int_{f(x_1)}^{\lambda
x_1}\biggl[\big|\left(u-g_\ast\right)_{x_2}\big|^2+W^{\prime\prime}(g_\ast)\left(u-g_\ast\right)^2\biggr]dx_2+O(e^{-cx_1})
\\
&\geq&\mu\|v\|^2+O(e^{-cx_1}).\nonumber
\end{eqnarray}
Differentiating $\|v\|^2$ twice in $x_1$ leads to
\[
\frac{1}{2}\frac{d}{dx_1}\|v\|^2=\int_{f(x_1)}^{\lambda
x_1}\left(u-g_\ast\right)\left[u_{x_1}+g_\ast^\prime
f^\prime(x_1)\right]dx_2+O(e^{-cx_1}).
\]

\begin{eqnarray*}
\frac{1}{2}\frac{d^2}{dx_1^2}\|v\|^2
 &=&\int_{f(x_1)}^{\lambda x_1}\biggl[ u_{x_1}^2+2u_{x_1}g_\ast^\prime
f^\prime(x_1)+\big|g_\ast^\prime\big|^2f^\prime(x_1)^2+
u_{x_1x_1}\left(u-g^\ast\right)\biggr]dx_2\\
&&-f^\prime(x_1)^2\int_{f(x_1)}^{\lambda x_1}
g_\ast^{\prime\prime}\left(u-g_\ast\right)dx_2+f^{\prime\prime}(x_1)\int_{f(x_1)}^{\lambda
x_1} g_\ast^{\prime}\left(u-g_\ast\right)dx_2+O(e^{-cx_1})\\
 &=&\int_{f(x_1)}^{\lambda x_1} \biggl[\frac{16}{9}u_{x_1}^2+2u_{x_1}g_\ast^\prime
f^\prime(x_1)+\big|g_\ast^\prime\big|^2f^\prime(x_1)^2+\frac{2}{9}
u_{x_1x_1}\left(u-g^\ast\right)\biggr]dx_2  \quad\mbox{(by \eqref{5.5})}\\
&&-f^\prime(x_1)^2\int_{f(x_1)}^{\lambda x_1}
g_\ast^{\prime\prime}\left(u-g_\ast\right)dx_2+f^{\prime\prime}(x_1)\int_{f(x_1)}^{\lambda
x_1} g_\ast^{\prime}\left(u-g_\ast\right)dx_2\\
&&+o(\|v\|^2)+O(e^{-cx_1})\\
&=&\int_{f(x_1)}^{\lambda x_1}\biggl[\left(
\frac{4}{3}u_{x_1}^2+\frac{3}{4}g_\ast^\prime
f^\prime(x_1)\right)^2+\frac{2}{9}
u_{x_1x_1}\left(u-g^\ast\right)\biggr]dx_2\\
&&+f^\prime(x_1)^2\int_{f(x_1)}^{\lambda
x_1}\left[\frac{7}{16}\big|g_\ast^\prime\big|^2-
g_\ast^{\prime\prime}\left(u-g_\ast\right)\right]dx_2+f^{\prime\prime}(x_1)\int_{f(x_1)}^{\lambda
x_1} g_\ast^{\prime}\left(u-g_\ast\right)dx_2\\
&&+o(\|v\|^2)+O(e^{-cx_1}).
\end{eqnarray*}
Note that the last integral is non-negative because
$f^{\prime\prime}\leq 0$, $g_\ast^\prime\geq0$ and $u-g_\ast\leq 0$
(see Lemma \ref{lem 9.1}). We also have
\begin{equation}\label{10.7}
f^\prime(x_1)^2\int_{f(x_1)}^{\lambda
x_1}\left[\frac{7}{16}\big|g_\ast^\prime\big|^2-
g_\ast^{\prime\prime}\left(u-g_\ast\right)\right]dx_2\geq0,
\end{equation}
 because we have
\[\lim_{x_1\to+\infty}\int_{f(x_1)}^{\lambda
x_1}\frac{7}{16}\big|g_\ast^\prime\big|^2dx_2=\frac{7}{16}\sigma_0,\]
while
\[\int_{f(x_1)}^{\lambda
x_1}g_\ast^{\prime\prime}\left(u-g_\ast\right)dx_2\leq\left[\int_{f(x_1)}^{\lambda
x_1}\big|g_\ast^{\prime\prime}\big|^2dx_2\right]^{\frac{1}{2}}\|v\|,\]
which converges to $0$ as $x_1\to+\infty$, thanks to Corollary
\ref{exponential decay derivatives} and \eqref{5.2}.

Thus
\[
\frac{1}{2}\frac{d^2}{dx_1^2}\|v\|^2\geq\frac{2}{9}\int_{f(x_1)}^{\lambda
x_1} u_{x_1x_1}\left(u-g^\ast\right)dx_2+o(\|v\|^2)+O(e^{-cx_1}).
\]


Next, similar to \cite{Gui} we also have
\begin{eqnarray}\label{5.8}
&&\int_{f(x_1)}^{\lambda
x_1}u_{x_1x_1}\left(u-g^\ast\right)dx_2\nonumber\\
&=&\int_{f(x_1)}^{\lambda
x_1}\left(W^\prime(u)-u_{x_2x_2}\right)\left(u-g^\ast\right)dx_2\nonumber\\
&=&\int_{f(x_1)}^{\lambda
x_1}\biggl[W^\prime(u)-W^\prime(g^\ast)-W^{\prime\prime}(g^\ast)\left(u-g^\ast\right)\biggr]\left(u-g^\ast\right)dx_2\\
&&+\int_{f(x_1)}^{\lambda
x_1}\left(g^{\ast\prime\prime}-u_{x_2x_2}\right)\left(u-g^\ast\right)+W^{\prime\prime}(g^\ast)\left(u-g^\ast\right)^2dx_2\nonumber\\
&\geq&\left(\mu+o(1)\right)\|v\|^2+O(e^{-cx_1}).\nonumber
\end{eqnarray}
Here the last step is deduced from \eqref{5.7}.

This then implies that
\[\frac{d^2}{dx_1^2}\|v\|^2\geq c\|v\|^2+O(e^{-cx_1}),\quad\mbox{for all $x_1$ large}.\]
From this inequality and \eqref{5.2} we deduce that
\[\|v\|^2\leq Ce^{-cx_1},\quad\mbox{for all $x_1$ large}.\]

 By Corollary \ref{exponential decay derivatives}, for
any $x_1>R$,
\[\int_{f(x_1)}^{\lambda x_1}u_{x_1x_1}^2dx_2\leq C.\]
Hence the Cauchy-Schwartz inequality gives
\[\int_{f(x_1)}^{\lambda x_1}u_{x_1x_1}\left(u-g_\ast\right)dx_2=O(e^{-cx_1}).\]
Then by \eqref{5.5},
\[\int_{f(x_1)}^{\lambda x_1}u_{x_1}^2dx_2=O(e^{-cx_1}).\]
By the Cauchy-Schwartz inequality,
\[\int_{f(x_1)}^{\lambda x_1}\big|u_{x_1}\big|dx_2=O(e^{-cx_1}).\]
Integrating this in $x_1$ and noting that $u_{x_1}=0$ below
$\{x_2=f(x_1)\}$, we get a function $u_\infty(x_2)=g(x_2-t)$ for
some constant $t$, such that
\[\int_{-\infty}^{+\infty}\big|u(x_1,x_2)-u_\infty(x_2)\big|dx_2=O(e^{-cx_1}).\]
This can also be lifted to an estimate in $L^\infty(\R)$ by the
uniform Lipschitz bound on $u(x_1,x_2)-u_\infty(x_2)$.

For all $x_1$ large, in $[f(x_1),f(x_1)+1]$,
\[u_{x_2}\geq\sqrt{2W(u)}/2,\]
which has a uniform positive lower bound. By this nondegeneracy
property and a similar one for the one dimensional solution $g$, we
deduce that
\begin{equation}\label{8.1}
|f(x_1)-t|=O(e^{-cx_1}).
\end{equation}

Another method to prove \eqref{8.1} is by noting \eqref{10.7}, we in
fact have
\[\frac{d^2}{dx_1^2}\|v\|^2\geq cf^\prime(x_1)^2.\]
Take a nonnegative function $\eta\in C_0^\infty(-2,2)$ with
$\eta\equiv1$ in $(-1,1)$. For any $t$ large, testing the above
inequality with $\eta(x_1+t)$ and integrating by parts, we obtain
\[\int_{t-1}^{t+1}f^\prime(x_1)^2dx_1=O(e^{-cx_1}).\]
Because $f$ is concave and hence $f^\prime(x_1)$ is non-increasing
in $x_1$, this implies that
\[\big|f^\prime(x_1)\big|=O(e^{-cx_1}),\]
and the exponential convergence of $f$ follows.

\part{Proof of Theorem \ref{thm curvature decay}}

This part is devoted to a proof of Theorem \ref{thm curvature
decay}. It is organized as follows. We first
give some preliminary construction using the doubling lemma of
Pol\'{a}cik-Quittner-Souplet \cite{Polacik-Q-S} in Section \ref{Sec
12}. Then the proof is divided into two cases. In Section \ref{sec Fermi coordinates} some results on Fermi coordinates are established. The first case is
treated in Section \ref{sec proof of case 1} and the second one in Section
\ref{sec case 2}-\ref{sec reduction of stable}. 

\section{Reduction to a local estimate}\label{Sec 12}
\setcounter{equation}{0}

In this section $u$ always denotes a classical solution of \eqref{equation
0.2} in $\R^2$, which is stable outside a compact set. By Lemma \ref{translation at infinity}, $H$ is bounded and
 \begin{equation}\label{A.0.1}
  H(x)\to0,\quad\mbox{as
}x\to\infty.
\end{equation}

With the extrinsic distance, $\partial\Omega$ is a complete metric
space. Assume by the contrary, there exists $x_k\in
\partial\Omega$ such that $H(x_k)|x_k|\geq 2k$. Because $H$ is bounded on $\partial\Omega$, $|x_k|\to+\infty$. By the doubling lemma in \cite{Polacik-Q-S}, there exist $y_k\in
\partial\Omega$ satisfying
\[H(y_k)\geq H(x_k), \ \ \ \ H(y_k)|y_k|\geq 2k,\]
\[H(x)\leq 2H(y_k)\ \ \ \ \ \text{for}\ \ x\in \partial\Omega\cap B_{kH(y_k)^{-1}}(y_k).\]

Because $H$ is strictly positive and bounded (see \eqref{A.0.1}), we must have $|y_k|\to+\infty$ and hence by
\eqref{A.0.1},
\[\varepsilon_k:=H(y_k)\to0.\]

Define
\[u_k(x):=u(y_k+\varepsilon_k^{-1}x).\]
It satisfies
\begin{equation}\label{equation 0.2 scaled}
 \left\{\begin{aligned}
&\varepsilon_k\Delta u_k=\frac{1}{\varepsilon_k}W^\prime(u_k),\ \ \ \mbox{in}\ \{u_k>0\},\\
&|\nabla
u_k|=\frac{1}{\varepsilon_k}\sqrt{2W(0)},~~\mbox{on}~\partial\{u_k>0\}.
                          \end{aligned} \right .
\end{equation}
Because
\[|y_k|\geq 2k\varepsilon_k^{-1},\]
$u_k$ is stable in $B_k(0)$. Moreover, by denoting $H_k$ the
curvature of $\partial\{u_k>0\}$ (note that it is still positive), a
rescaling gives
\begin{equation}\label{normalized curvature 1}
H_k(0)=1,\quad\mbox{and    }\ \  H_k\leq 2\quad\mbox{on} \
\partial\{u_k>0\}\cap B_k(0).
\end{equation}
By the latter curvature bound, for any $x\in
\partial\{u_k>0\}\cap B_{k-1}(0)$, the connected component of
$\partial\{u_k>0\}\cap B_{1/8}(x)$ containing $x$ can be represented
by the graph of a convex function with its $C^{1,1}$ norm bounded by
$4$.

Denote the connected component of $\partial\{u_k>0\}\cap B_{1/8}(0)$
containing $0$ by $\Gamma_{k,0}$, which we assume to be
\[\{(x_1,x_2):x_2=f_k(x_1)\},\quad \mbox{for } x_1\in(-1/8,1/8),\]
where $f_k(0)=f_k^\prime(0)=0$. By \eqref{normalized curvature 1},
$-4\leq f_k^{\prime\prime}\leq 0$ and
\begin{equation}\label{normalized curvature 2}
f_k^{\prime\prime}(0)=-1.
\end{equation}

Assume $\Gamma_{k,0}$ to be the boundary of a connected component of
$\{u_k>0\}\cap B_{1/8}(0)$, $\Omega_k$. Without loss of generality,
assume $u_k=0$ in $B_{1/8}(0)\setminus \Omega_k$, that is, we ignore
other connected components of $\{u_k>0\}\cap B_{1/8}(0)$ other than
$\Omega_k$.

We divide the proof into two cases.

{\bf Case 1.}
$\lim_{k\to+\infty}\mbox{dist}(0,\partial\Omega_k\setminus\Gamma_{k,0})>0$.

In this case, there exists a constant $r>0$ such that in
$B_r(0)$, $u_k=0$ below $\{x_2=f_k(x_1)\}$ and $u_k>0$ above this
curve.

In Section \ref{sec proof of case 1} we will prove
\begin{prop}\label{prop curvature estimate 1}
There exists a constant $C$ independent of $\varepsilon_k$ such that
\[|f_k^{\prime\prime}(0)|\leq C\varepsilon_k^{1/2}.\]
\end{prop}
This is a contradiction with
\eqref{normalized curvature 2}. Hence this case is impossible.

{\bf Case 2.}
$\lim_{k\to+\infty}\mbox{dist}(0,\partial\Omega_k\setminus\Gamma_{k,0})=0$.

In this case, there exists
$z_k\in\partial\Omega_k\setminus\Gamma_{k,0}$ such that $|z_k|\to0$.
Assume $z_k$ attains
$\mbox{dist}(0,\partial\Omega_k\setminus\Gamma_{k,0})$. Recall that
in $B_{1/8}(z_k)$, the connected component of $\partial\Omega_k$
passing through $z_k$ is a graph and it is disjoint from
$\Gamma_{k,0}$. Thus for all $k$ large, in $B_{1/16}(0)$ this
component has the form
\[\{x_2=\tilde{f}_k(x_1)\}=:\Gamma_{k,1},\]
where $\tilde{f}_k$ is a convex function with its $C^{1,1}$ norm
bounded by $8$.

We claim that there exists a constant $r>0$ independent of $k$ such
that $\{(x_1,x_2):f_k(x_1)<x_2<\tilde{f}_k(x_1), |x_1|<r\}$ is a
connected component of $B_r(0)\cap\Omega_k$.

Assume by the contrary, there exists a third connected component of
$B_r(0)\cap\partial\Omega_k$ lying between $\{x_2=f_k(x_1)\}$ and
$\{x_2=\tilde{f}_k(x_1)$. Take an arbitrary point $z$ on it. Let $T$
be the tangent line of this component at $z$. This component
can be represented by the graph of a convex function defined on an
interval of this line, which contains $z$ and its length is not smaller than
$1/8$. Because different components of $\partial\Omega_k\cap
B_{1/2}(0)$ are disjoint, the tangent line at $z$ must be almost
parallel to the $x_1$-axis. Since $z$ is arbitrary, this implies
that this third component is also a graph on the $x_1$-axis, which
is defined on $(-r,r)$. Since this curve lies between
$\{x_2=f_k(x_1)\}$ and $\{x_2=\tilde{f}_k(x_1)$, while
$0\in\{x_2=f_k(x_1)\}$ and $z_k\in\{x_2=\tilde{f}_k(x_1)$, there
exists a point on this third component, which is closer to $0$ than
$z_k$. This is a contradiction with the choice of $z_k$ and finishes
the proof of this claim.

In the following  we will prove
\begin{prop}\label{prop curvature estimate 2}
There exists a constant $C$ independent of $\varepsilon_k$ such that
\[|f_k^{\prime\prime}(0)|\leq C\varepsilon_k^{1/7}.\]
\end{prop}
 This is a contradiction with
\eqref{normalized curvature 2}. Hence this case is also impossible
and we finish the proof of Theorem \ref{thm curvature decay}.

\section{Fermi coordinates}\label{sec Fermi coordinates}
\setcounter{equation}{0}

In this section we present several results related to Fermi coordinates with respect to free boundaries, which will be used later. This discussion will be mainly concentrated on Case 2, while Case 1 can be easily  recovered.
We will work in the stretched setting, i.e. after the rescaling $x\mapsto\varepsilon^{-1}x$. The dependence on $\varepsilon$ will not be written explicitly.
In the following $R:=\varepsilon^{-1}$. Therefore there exists a concave function $f_1$ and a convex function $f_2$ with
$f_1(0)=f_1^{\prime}(0)=0$ and
$f_2>f_1$ in $(-R,R)$, such that (here $\Omega=\{u>0\}$ and $Q_R:=\{|x_1|<R, |x_2|<R\}$)
\[\Omega\cap Q_R=\{(x_1,x_2):f_1(x_1)<x_2<f_2(x_1)\}.\]
The curves $\{x_2=f_i(x_1)\}$ will be denoted by $\Gamma_i$.

\subsection{Definition}

 The curvature of $\Gamma_i$ ($i=1,2$) with respect to the parametrization $y\mapsto (y,f_i(y))$ is given by
\[H^i(y,0)=\pm\frac{f_i^{\prime\prime}(y)}{\left(1+f_i^\prime(y)^2\right)^2}.\]
Here we take the negative sign for $i=1$  and the positive  one for $i=2$, which gives $H^i\geq0$ for both $i=1,2$.

\smallskip

The Fermi coordinate is defined by $(y,z)\mapsto x$ as $x=(y,f_i(y))+zN_i(y)$,
where
\[N_i(y)=\pm\frac{(-f_i^\prime(y),1)}{\sqrt{1+f_i^\prime(y)^2}}.\]
Here we take the positive sign for $i=1$  and the negative one for $i=2$ so that this vector points into $\Omega$.
Note that here $z$ is nothing else but the  distance to $\Gamma_i$.
By the convexity of $\Gamma_i$, the Fermi coordinate is well defined and smooth in the open set $\Omega$.

Define the vector field
\[X^i:=\frac{\partial}{\partial y}+z\frac{\partial N_i}{\partial y}= \left(1+zH^i(y,0)\right)\frac{\partial}{\partial y}.\]
For any $z>0$, let
$\Gamma_{i,z}:=\{dist(x,\Gamma_i)=z\}$. The Euclidean metric restricted
to $\Gamma_{i,z}$ is denoted by $\lambda^i(y,z)dy^2$, where
\begin{equation}\label{metirc tensor}
\lambda^i(y,z)=<X^i(y,z),X^i(y,z)> =\left[1+zH^i(y,0)\right]^2\lambda^i(y,0).
\end{equation}
Here the metric coefficient of $\Gamma_i$ is
\begin{equation}\label{metirc tensor 0}
\lambda^i(y,0)=1+f_i^\prime(y)^2.
\end{equation}

The curvature of $\Gamma_{i,z}$ is
\begin{equation}\label{A(z)}
H^i(y,z)=\frac{H^i(y,0)}{1+zH^i(y,0)}.
\end{equation}

\subsection{Error in $z$} In this subsection we collect several estimates on the error of various terms in $z$.

By \eqref{normalized curvature 1}, $0\leq H^i(y,0)\lesssim \varepsilon$. Thus for any $z>0$, $0\leq H^i(y,z)\lesssim \varepsilon$. We also have
\begin{lem} In $(-R,R)$,
\begin{equation}\label{bound on 3rd derivatives}
|\partial_yH^i(y,0)|+|\partial_{yy}H^i(y,0)|\lesssim\varepsilon.
\end{equation}
\end{lem}
\begin{proof}
By the bound on $H^i(y,0)$ and Lemma \ref{convexity}, $u$ is close to one dimensional solutions near $\Gamma^i$. Hence $|\nabla u|\geq c(b)>0$ in $\{0<u<1-b\}$, where $c(b)$ is a constant depending only on $b$.
Hence $\nu=\nabla u/|\nabla u|$ is well defined and smooth in $\{0<u<1-b\}$.

By direct calculation, we have
\begin{equation}\label{Gauss equation}
\begin{cases}
-\mbox{div}\left(|\nabla u|^2\nabla\nu\right)=|\nabla u|^2|\nabla\nu|^2\nu, \quad & \mbox{in } \Omega,\\
\partial_N\nu=0, \quad &\mbox{on } \partial\Omega.
\end{cases}
\end{equation}
Here $N$ is the inward unit normal vector of $\partial\Omega$.

Recall that $B=\nabla \nu$. Differentiating \eqref{Gauss equation} gives the following Simons type equation
\begin{equation}\label{Simons}
\begin{cases}
-\mbox{div}\left(|\nabla u|^2\nabla B\right)=|\nabla u|^2|B|^2B+|\nabla u|^2\nabla|B|^2\otimes\nu +|B|^2\nabla|\nabla u|^2\otimes\nu & \ \ \ \ \ \ \ \ \ \ \ \ \
\\
\quad\quad\quad\quad\quad\quad\quad\quad\quad\quad\quad\quad\quad\quad\quad\quad +|\nabla u|^2\nabla^2\log|\nabla u|^2\cdot B, \quad & \mbox{in } \Omega,\\
\partial_NB=HB-B\cdot B, \quad &\mbox{on } \partial\Omega.
\end{cases}
\end{equation}
Here $H$ is the mean curvature of $\partial\Omega$ which is assumed to be positive and $\cdot$ denotes matrix multiplication.

For any $x\in\overline{\{0<u<1-2b\}}$,  $|\nabla u|^2$ has a positive lower and upper bound and it is uniformly continuous in $B_{2h(b)}(x)\cap \overline{\{0<u<1-2b\}}$. Because $\partial\Omega$ is bounded in $C^4$, by standard elliptic estimate,
\[ \sup_{B_{h(b)}\cap \{0<u<1-2b\}}|\nabla B| \lesssim \sup_{B_{2h(b)}\cap \{0<u<1-2b\}}|B|+\sup_{B_{2h(b)}\cap \{0<u<1-2b\}}|\mbox{div}\left(|\nabla u|^2\nabla B\right)|\lesssim\varepsilon.
\]
The bound on $|\nabla^2B|$ is obtained by bootstrapping elliptic estimates.
\end{proof}

By \eqref{A(z)},
\begin{equation}\label{error in z 1}
|H^i(y,z)-H^i(y,0)|\lesssim |z||H^i(y,0)|^2\lesssim\varepsilon^2|z|.
\end{equation}
Similarly, by \eqref{metirc tensor}, the error of metric tensors is
\begin{equation}\label{error in z 2}
|\lambda^i(y,z)-\lambda^i(y,0)|\lesssim \varepsilon|z|,
\end{equation}
\begin{equation}\label{error in z 3}
\Big|\frac{1}{\lambda^i(y,z)}-\frac{1}{\lambda^i(y,0)}\Big|\lesssim \varepsilon|z|,
\end{equation}

By \eqref{metirc tensor 0} and \eqref{bound on 3rd derivatives}, for any $0<z<\delta R$,
\begin{equation}\label{derivatives of metric tensor}
|\partial_y\lambda^i(y,z)|+\Big|\partial_y\frac{1}{\lambda^i(y,z)}\Big|\lesssim\varepsilon.
\end{equation}

\smallskip

The Euclidean Laplacian operator in Fermi coordinates has the form
\[\Delta_{\R^2}=\Delta_z+H^i(y,z)\partial_z+\partial_{zz},\]
where
\begin{eqnarray*}
\Delta_z=\frac{1}{\sqrt{\lambda^i(y,z)}}\frac{\partial}{\partial y}\left(\frac{1}{\sqrt{\lambda^i(y,z)}}\frac{\partial}{\partial y}\right)=\frac{1}{\lambda^i(y,z)}\frac{\partial^2}{\partial y^2}+b^i(y,z)\frac{\partial}{\partial y}
\end{eqnarray*}
with
\[b^i(y,z)=-\frac{1}{2}\frac{1}{\sqrt{\lambda^i(y,z)}}\frac{\partial}{\partial y}\left(\log\lambda^i(y,z)\right).\]

By \eqref{error in z 3} and \eqref{derivatives of metric tensor}, we get
\begin{lem}
For any function $\varphi\in C^2(-R,R)$,
\begin{equation}\label{error in z 5}
|\Delta_z\varphi(y)-\Delta_0\varphi(y)|\lesssim\varepsilon|z|\biggl(|\varphi^{\prime\prime}(y)|+|\varphi^\prime(y)|\biggr).
\end{equation}
\end{lem}

\subsection{Comparison of distance functions}

For each $i$, the local coordinates on $\Gamma_i$ is fixed to be the same one, $y\in (-R,R)$, which represents the point $(y,f_i(y))$. The distance to $\Gamma_i$ is denoted by $d_i$.
Given a point $X\in\Omega$, if $(y,f_i(y))$ is the nearest point on $\Gamma_i$ to $X$, we then define $\Pi_i(X):=y$.

As in \cite{Wang-Wei 2}, we have the following estimates on distances to $\Gamma_1$ and $\Gamma_2$.
\begin{lem}\label{comparison of distances}
For any $X\in \Omega$, if
$|d_1(X)|\leq K|\log\varepsilon|$ and $|d_2(X)|\leq K|\log\varepsilon|$, then we have
\begin{equation*}
\mbox{dist}_{\Gamma_1}\left(\Pi_1\circ\Pi_2(X),\Pi_1(X)\right)\leq C(K)\varepsilon^{1/2}|\log\varepsilon|^{3/2},
\end{equation*}
\begin{equation*}
|d_1\left(\Pi_2(X)\right)-d_2\left(\Pi_1(X)\right)|\leq C(K)\varepsilon^{1/2}|\log\varepsilon|^{3/2},
\end{equation*}
\begin{equation*}
|d_1(X)+d_2(X)-d_1\left(\Pi_2(X)\right)|\leq C(K)\varepsilon^{1/2}|\log\varepsilon|^{3/2},
\end{equation*}
\begin{equation*}
|d_1(X)+d_2(X)-d_2\left(\Pi_1(X)\right)|\leq C(K)\varepsilon^{1/2}|\log\varepsilon|^{3/2},
\end{equation*}
\begin{equation*}
1-\nabla d_1(X)\cdot \nabla d_2(X)\leq C(K)\varepsilon^{1/2}|\log\varepsilon|^{3/2}.
\end{equation*}
\end{lem}

\section{Proof in Case 1}\label{sec proof of case 1}
\setcounter{equation}{0}

In this section we consider Case 1 introduced in Section \ref{Sec
12} and prove Proposition \ref{prop curvature estimate 1}. In the  following we will work in the stretched version, that is after the rescaling $x\mapsto \varepsilon^{-1}x$ and do not write the dependence on $\varepsilon$ explicitly.
Hence now $\Omega=\{x_2>f(x_1)\}$, where $f$ is smooth concave function satisfying $f(0)=f^\prime(0)=0$.

Let $d(x)$ be the distance to $\partial\Omega$, which is a convex function in $\Omega$. Denote
\[\phi:=g(d)-u.\]
By the Modica inequality (see Proposition \ref{Modica inequality 2}), $\phi>0$ in $\Omega$. By the free boundary condition on $u$ and $g$, $\phi=|\nabla\phi|=0$ on $\partial\Omega$.

Written in the Fermi coordinates with respect to $\partial\Omega$, the equation for $\phi$ is
\begin{equation}\label{error equation 0}
\Delta_z\phi+H(y,z)\phi_z+\phi_{zz}=W^\prime\left(g(z)\right)-W^\prime\left(g(z)-\phi\right)+g^\prime(z)H(y,z).
\end{equation}
Here $H(y,z)$ is the mean curvature of the level set $\{d=z\}$, which is positive by the convexity of these level sets.

We first give an estimate on $\phi$.
\begin{lem}
There exists a constant $C$ such that
\begin{equation}\label{15.2}
\|\phi\|_{C^{2,1/2}(\overline{\Omega\cap Q_{R/2}})}\lesssim\varepsilon^{5/2}.
\end{equation}
\end{lem}
\begin{proof}
Multiplying \eqref{error equation 0} by $\phi$ and integrating in $z$ from $0$ to $R/2$ leads to
\begin{eqnarray}\label{15.1}
&&\frac{1}{2}\Delta_0\left(\int_0^{\frac{R}{2}}\phi(y,z)^2dz\right) \nonumber\\
&\geq&\int_0^{\frac{R}{2}}\Delta_z\phi(y,z)\phi(y,z)dz-O(\varepsilon)\int_0^{\frac{R}{2}}\biggl(|\phi_{yy}(y,z)|+|\phi_y(y,z)|\biggr)g^\prime(z)dz -O\left(e^{-c\varepsilon^{-1}}\right)\nonumber\\
&\geq&\int_0^{\frac{R}{2}}\biggl[\phi_z(y,z)^2+W^{\prime\prime}\left(g(z)\right)\phi(y,z)^2+O\left(\phi(y,z)^3\right)\biggr]dz\\
&-&\int_0^{\frac{R}{2}}H(y,z)\phi(y,z)\phi_z(y,z)dz-O(\varepsilon) \nonumber\\
&\geq&\frac{\mu}{2}\int_0^{\frac{R}{2}}\phi(y,z)^2dz-C\varepsilon.\nonumber
\end{eqnarray}
Here we have used Proposition \ref{prop nondegeneracy 2} in the last step.

By this differential inequality we deduce that
\[\int_0^{\frac{R}{2}}\phi(y,z)^2dz\leq C\varepsilon, \quad \forall y\in(-4R/5,4R/5).\]
Then by standard elliptic estimates and noting that $H+|\nabla H|\lesssim\varepsilon$, we get
\[\|\phi\|_{C^{2,1/2}(\overline{\Omega\cap Q_{4R/5}})}\lesssim\varepsilon^{1/2}.\]

Substituting this estimate into the first inequality in \eqref{15.1} an improvement is gained, that is $O(\varepsilon)$ can be replaced by $O(\varepsilon^{3/2})$. This procedure can be iterated and after finitely many
times we get \eqref{15.2}.
\end{proof}

Substituting the estimate \eqref{15.2} into the equation \eqref{15.1} and evaluating at $0$, after a rescaling we obtain
Proposition \ref{prop curvature estimate 1}.

\section{Proof in Case 2}\label{sec case 2}
\setcounter{equation}{0}

From this section we start to consider Case 2 introduced in Section \ref{Sec
12}. In the  following we will work in the stretched version, that is after the rescaling $x\mapsto \varepsilon^{-1}x$ and do not write the dependence on $\varepsilon$ explicitly.

 Now we have
\begin{itemize}
\item [{\bf (H1)}]
$u$ is a solution of
\eqref{equation 0.2} in $Q_R:=\{|x_1|<R, |x_2|<R\}$, where $R=\varepsilon^{-1}$.

\item[{\bf (H2)}] $u$ satisfies the Modica inequality. Hence $\partial\{u>0\}$ is
convex.
\item [{\bf (H3)}]There exists a concave function $f_1$ and a convex function $f_2$ with
$f_1(0)=f_1^{\prime}(0)=0$ and
$f_2>f_1$ in $(-R,R)$, such that
\[\{u>0\}\cap Q_R=\{(x_1,x_2):f_1(x_1)<x_2<f_2(x_1)\}.\]
\item [{\bf (H4)}] For $i=1,2$, $f_{i}^{\prime\prime}=O\left(\varepsilon\right)$.
\end{itemize}
Under these hypothesis we will show that
\begin{equation}\label{III.5.1}
  \big|f_1^{\prime\prime}(0)\big|\leq C\varepsilon^{\frac{8}{7}}.
\end{equation}
After a rescaling this gives Proposition \ref{prop curvature estimate 2}.

First we recall the following bound on intermediate distance between the two components of free boundaries,  $\Gamma_1$ and $\Gamma_2$.
\begin{lem}\label{lem ep apart}
For any $x_1\in(-R,R)$, $f_2(x_1)-f_1(x_1)\gg 1$.
\end{lem}
This can be proved by a contradiction argument, by using the fact that $u$ is close to one dimensional profiles near free boundaries.

The proof of this case is organized in the following way. In Section \ref{sec approximate solution} an approximate solution is constructed and it is used to give the equation satisfied by the error. In Section \ref{sec Toda system} the equations for $f_1$ and $f_2$ is obtained. In Section \ref{sec estimates on error function} some estimates on the error are given.  In Section \ref{sec reduction of stable} we reduce the stability condition of $u$ to a corresponding condition for $f_1$ or $f_2$, and then use information on stable solutions of the Liouville equation to finish the proof of Proposition \ref{prop curvature estimate 2}.

\section{An approximate solution}\label{sec approximate solution}
\setcounter{equation}{0}

Let $\eta$ be a smooth, even function defined on $\R$ satisfying $\eta\equiv 1$ in $(-1,1)$, $\eta\equiv 0$ outside $(-2,2)$, $|\eta^\prime|^2+|\eta^{\prime\prime}|\leq 64$. Define $\eta_1:=\eta\circ d_2$ and $\eta_2:=\eta\circ d_1$. These two cut-off functions are introduced to localize the effects of each component of transition layers, so that only interactions between adjacent components are taken into consideration.

In the Fermi coordinates with respect to $\Gamma_i$ ($i=1,2$), given two functions $h_i\in C(-R,R)$, let
\[g_i(y,z):=g(z-h_i(y)),\]
and
\[g(y,z;h_1,h_2):=\left[g_1(y,z)\left(1-\eta_1(y,z)\right)+\eta_1(y,z)\right]+\left[g_2(y,z)\left(1-\eta_2(y,z)\right)+\eta_2(y,z)\right]-1.\]
For simplicity of notations, we denote in the following
\[g_i^\prime:=\partial_z g_i, \quad g_i^{\prime\prime}:=\partial_{zz}g_i, \quad \cdots.\]

\begin{prop}
There exist two functions $h_1, h_2\in C(-R,R)$ so that the following holds. For each $i=1,2$, in the Fermi coordinates with respect to $\Gamma_i$,
\begin{equation}\label{orthogonal condition}
\int_{0}^{+\infty}\left[u(y,z)-g(y,z;h_1(y),h_2(y))\right]g_i^\prime(y,z)\left(1-\eta_i(y,z)\right)dz=0, \quad\forall y\in(-R,R).
\end{equation}
\end{prop}
\begin{proof}
Define the map $F$ from $\mathcal{X}:=C(-R,R)\oplus C(-R,R)$ to itself as
\[F_i(h_1,h_2):=\int_{0}^{+\infty}\left[u(y,z)-g(y,z;h_1(y),h_2(y))\right]g_i^\prime(y,z)\left(1-\eta_i(y,z)\right)dz, \quad i=1,2.\]
Clearly $F$ is a $C^1$ map.

By direct calculations we obtain (and a similar expressions for the other component of $DF$)
\[D_1F_1(h_1,h_2)(\delta h)_1=(\delta h)_1(y)\int_{0}^{+\infty}\left[|g_i^\prime|^2+\left(u-g(\cdot;h_1,h_2)\right)g_i^{\prime\prime}\right]\left(1-\eta_i\right)dz.\]
By Lemma \ref{lem ep apart}, there exists a $\delta>0$ such that once $\|(h_1,h_2)\|_{\mathcal{X}}<\delta$, $DF(h_1,h_2)$ is an isomorphism with $\|DF^{-1}\|$ bounded by a constant independent of $\varepsilon$ and $\delta$.

Using Lemma \ref{lem ep apart} again we see  $\|F(0,0)\|_{\mathcal{X}}\ll 1$. Therefore by the inverse function theorem we conclude the proof.
\end{proof}
In the following $(h_1, h_2)$ always denotes the functions defined in this proposition. Denote
\[g_\ast(y,z);=g(y,z;h_1(y),h_2(y)), \quad \phi:=g_\ast-\phi.\]
\begin{rmk}
  By differentiating \eqref{orthogonal condition}, we see for any $k\geq 1$, $\|h_1\|_{C^k(-R,R)}+\|h_2\|_{C^k(-R,R)}\ll 1$.
\end{rmk}

The equation for $\phi$ written in the Fermi coordinates with respect to $\Gamma_1$ reads as
\begin{eqnarray}\label{error equation}
\Delta_z\phi+H^1(y,z)\phi_z+\phi_{zz}&=&g_1^{\prime\prime}\left(1-\eta_1\right)  \nonumber\\
&+&g_1^\prime \left(1-\eta_1\right)\left[H^1(y,z)-\Delta_zh_1\right]+g_1^{\prime\prime}\left(1-\eta_1\right)|\nabla_zh_1|^2 \nonumber\\
&+&\left(1-g_1\right) \left[\eta_1^\prime H^2(y,z)+\eta_1^{\prime\prime}\right]-2g_1^\prime\eta_1^\prime d_{2,z} \nonumber\\
&+&g_2^{\prime\prime}\left(1-\eta_2\right) \\
&+&g_2^\prime\left(1-\eta_2\right)\mathcal{R}_1+g_2^{\prime\prime}\left(1-\eta_2\right)\mathcal{R}_2 \nonumber\\
&+&\left(1-g_2\right) \left[\eta^\prime(z)H^1(y,z)+\eta^{\prime\prime}(z)\right]-2g_2^\prime\eta^\prime(z) d_{2,z}  \nonumber\\
&-&W^\prime(g_\ast-\phi).\nonumber
\end{eqnarray}
Here written in the Fermi coordinates with respect to $\Gamma_2$, we have
\[\mathcal{R}_1(y,z)=H^2(y,z)-\Delta_zh_2(y), \quad \mathcal{R}_2(y,z)=|\nabla_zh_2(y)|^2.\]

The boundary condition on $\Gamma_1$ reads as
\begin{equation}\label{boundary condition}
\phi(y,0)=g(-h_1(y)), \quad \partial_z\phi(y,0)=g^\prime(-h_1(y))-g^\prime(0).
\end{equation}
This can be rewritten as a nonlinear Robin boundary condition
\begin{equation}\label{Robin}
\partial_z\phi(y,0)=f(\phi(y,0)),
\end{equation}
where $f(t):=g^\prime\biggl(-g^{-1}(-t)\biggr)-g^\prime(0)$.

By \eqref{boundary condition} we obtain
\begin{equation}\label{boundary value bound}
|h_1(y)|\lesssim |\phi(y,0)| \quad \mbox{and } \quad |\phi(y,0)|+|\partial_z\phi(y,0)|\lesssim |h_1(y)|.
\end{equation}

In the following we denote
\begin{eqnarray*}
W^\prime(g_\ast-\phi)&=&W^\prime(g_\ast)-W^{\prime\prime}(g_\ast)\phi+\mathcal{R}(\phi),
\end{eqnarray*}
where $\mathcal{R}(\phi)=O(\phi^2)$.

The interaction between $g_1$ and $g_2$ is contained in the following term
\[\mathcal{I}:=g_1^{\prime\prime}\left(1-\eta_1\right)+g_2^{\prime\prime}\left(1-\eta_2\right)-W^\prime(g_\ast).\]
We have the following estimates on $\mathcal{I}$. In the following we denote
\[D(y):=\min\biggl\{\mbox{dist}\bigl((y,f_1(y)),\Gamma_2\bigr),\mbox{dist}\bigl((y,f_1(y)),\Gamma_2\bigr)\biggr\}.\]
\begin{lem}\label{lem interaction terms}
In $\{0<d_1<d_2+4\}$,
\[\mathcal{I}=\left[W^{\prime\prime}(g_1)-2\right]\left[1-g(d_2)\right]\left[1-\eta_2\right]+O\left(e^{-2\sqrt{2}d_2}\right).\]
\[\big|\mathcal{I}(y,z)\big|\lesssim \varepsilon^2+e^{-\sqrt{2}D(y)}.\]
\[\|\mathcal{I}\|_{C^\theta(B_1(y,z))}\lesssim\varepsilon^2+\sup_{(y-2,y+2)}e^{-\sqrt{2}D(y)}.\]
\end{lem}
Similar results hold in $\{0<d_2<d_1+4\}$. These can be proved as in \cite{Wang-Wei 2}.

\section{A Liouville equation}\label{sec Toda system}
\setcounter{equation}{0}

In this section we prove that
\begin{lem}\label{lem Liouville eqn}
For $y\in(-R,R)$, it holds that
\begin{equation}\label{Toda system}
H^1(y,0)-\Delta_0h_1(y)
=\frac{4 A^2}{\sigma_0}e^{-\sqrt{2}D(y)}+\widetilde{E}(y),
\end{equation}
where
\[\sup_{(-r,r)}|\widetilde{E}(y)|\lesssim \varepsilon^2+\sup_{(-r-1,r+1)}e^{-\frac{4\sqrt{2}}{3}D(y)}+\varepsilon^{\frac{1}{6}}\sup_{(-r-1,r+1)}e^{-\sqrt{2}D(y)}
+\|\phi\|_{C^1(\overline{\Omega\cap Q_{r+1}})}^2.\]

\end{lem}
\begin{proof}

In the Fermi coordinates with respect to $\Gamma_1$, multiplying \eqref{error equation} by $g_1^\prime\left(1-\eta_1\right)$ and integrating in $z$, after integrating by parts in $z$ and some calculations as in \cite{Wang-Wei 2} we obtain \eqref{Toda system}. Here we only discuss several points which are different from \cite{Wang-Wei 2}.

\begin{enumerate}

\item[(1)] In $\{\eta^\prime(z)\neq0\}$, if $g^\prime(d_2)\geq\varepsilon^3$, by Lemma \ref{comparison of distances} we have $\eta(d_2)\equiv 0$,
\[g^\prime\left(d_2(y,z)\right)=\bigl(1+O(\varepsilon^{1/3})\bigr)g^\prime\bigl(D(y)-z\bigr),\]
and
\[\nabla d_1\cdot\nabla d_2=1+O(\varepsilon^{1/3}).\]
 Therefore
\begin{eqnarray*}
&&\int_0^{+\infty}g^\prime(z)g^\prime(d_2)\eta^\prime(z)\nabla d_1\cdot\nabla d_2 dz\\
&=&\int_0^{+\infty}g^\prime(z)\eta^\prime(z)g^\prime\bigl(D(y)-z\bigr)dz+O\left(\varepsilon^{1/3}\right)\int_1^2g^\prime\bigl(D(y)-z\bigr)g^\prime(z)dz
+O\left(\varepsilon^2\right)\\
&=&\int_0^{+\infty}g^\prime(z)\eta^\prime(z)g^\prime\bigl(D(y)-z\bigr)dz+O\left(\varepsilon^{1/3}e^{-\sqrt{2}D(y)}\right)+O\left(\varepsilon^2\right).
\end{eqnarray*}

\item[(2)] In $\{\eta^{\prime\prime}(z)\neq0\}$, $\eta(d_2)=0$. Hence by Lemma \ref{comparison of distances}, we get
\begin{eqnarray*}
\int_0^{+\infty}\bigl[1-g(d_2)\bigr]\eta^{\prime\prime}(z)g^\prime(z)dz=\int_0^{+\infty}\biggl[1-g\bigl(D_2(y)-z\bigr)\biggr]\eta^{\prime\prime}(z)g^\prime(z)dz+O\left(\varepsilon^{\frac{1}{3}}e^{-\sqrt{2}D(y)}\right).
\end{eqnarray*}

\item[(3)] Similar to the derivation in \cite{Wang-Wei 2}, we have
\[\int_0^{+\infty}\mathcal{I}g^\prime(z)\left[1-\eta(z)\right]dz=-4A^2e^{-\sqrt{2}D(y)}+O\left(e^{-\frac{4\sqrt{2}}{3}D(y)}\right)+O(\varepsilon^2).\]

Here we note that, for any $L>2$,
\begin{eqnarray*}
&&\int_0^L\bigl[W^{\prime\prime}(g(z))-2\bigr]g^\prime(z)\bigl[1-g(D(y)-z)\bigr]\bigl[1-\eta(z)\bigr]\\
&=&g^{\prime\prime}(L)\bigl[1-g\bigl(D(y)-L\bigr)\bigr]-g^\prime(L)g^\prime\bigl(D(y)-L\bigr)\\
&-&2\int_0^Lg^\prime(z)\bigl[1-g\bigl(D(y)-z\bigr)\bigr]\bigl[1-\eta(z)\bigr]-\int_0^Lg^\prime(z) g^{\prime\prime} \bigl(D(y)-z\bigr) \bigl[1-\eta(z)\bigr]\\
&+&2\int_0^Lg^\prime(z)g^\prime\bigl(D(y)-z\bigr)\eta^\prime(z) -\int_0^Lg^\prime(z)\bigl[1-g\bigl(D(y)-z\bigr)\bigr]\eta^{\prime\prime}(z)\\
&=:&I+II+III+IV+V+VI.
\end{eqnarray*}
Note that in the above equalities there is no boundary term at $z=0$ because $\eta(z)\equiv 1$ for $z\in[0,1)$.

The first two terms $I$ and $II$ can be computed by the asymptotics of $g$ at infinity.

Because $D(y)$ is large, for $z\in(0,L)$,
\[1-g(D(y)-z)=Ae^{-\sqrt{2}(D(y)-z)}+O\left(e^{-2\sqrt{2}D(y)}\right), \]
and
\[g^{\prime\prime} (D(y)-z) =-2Ae^{-\sqrt{2}(D(y)-z)}+O\left(e^{-2\sqrt{2}D(y)}\right). \]
Therefore the $III$ and $IV$ terms cancel with each other with a remainder term of the order $O\left(D(y)e^{-2\sqrt{2}D(y)}\right)
=O\left(e^{-\frac{3}{2}\sqrt{2}D(y)}\right)$.

The $V$ and $VI$ terms cancel with the main order terms in (1) and (2). \qedhere
\end{enumerate}
\end{proof}

\section{Estimates on $\phi$}\label{sec estimates on error function}
\setcounter{equation}{0}

In this section we prove
\begin{prop}\label{prop schauder}
There exist two constants $K\gg 1$ and $C(K)$ such that
\begin{eqnarray*}
&&\|\phi\|_{C^{2,\theta}(\overline{\Omega\cap Q_r})}+\sum_{i=1,2}\|H^i-\Delta_0h_i\|_{C^\theta(-r,r)}\\
&\leq& C(K)\varepsilon^2+C(K)\sup_{(-r-K|\log\varepsilon|,r+K|\log\varepsilon|)}e^{-\sqrt{2}D}.
\end{eqnarray*}
\end{prop}

Fix a large constant $L$ and let
\[\mathcal{N}:=\{d_1>L, d_2>L\}, \quad \mbox{and} \quad  \mathcal{M}^i:=\{0<d_i<2L\}, \quad i=1,2,\]
and
\[\mathcal{N}_r:=\mathcal{N}\cap Q_r, \quad \mbox{and} \quad  \mathcal{M}_r^i:=\mathcal{M}^i\cap Q_r, \quad i=1,2,\]
We consider the estimate of $\phi$ in these subdomains separately.

\subsection{Estimates in $\mathcal{N}$}
In $\mathcal{N}$, $\eta_1=\eta_2=0$. Thus \eqref{error equation} reads as
\begin{eqnarray}\label{error equation 2}
\Delta\phi&=&W^\prime(g_1)+W^\prime(g_2)-W^\prime(g_1+g_2-1-\phi) \nonumber\\
&+&g_1^\prime \left[H^1(y,z)-\Delta_zh_1\right]+g_1^{\prime\prime}|\nabla_zh_1|^2 +g_2^\prime\mathcal{R}_1+g_2^{\prime\prime}\mathcal{R}_2 \\
&=&W^{\prime\prime}(g_\ast)\phi+\mathcal{R}(\phi)+\mathcal{I}+g^\prime(d_1)\Delta d_1+g^\prime(d_2)\Delta d_2. \nonumber
\end{eqnarray}
In $\mathcal{N}$ the interaction term becomes
\[\mathcal{I}=W^\prime(g_1)+W^\prime(g_2)-W^\prime(g_1+g_2-1).\]
By Lemma \ref{lem interaction terms}, for any $X\in\mathcal{N}$ we have
\[\|\mathcal{I}\|_{C^\theta(B_1(X))}\lesssim \varepsilon^2+e^{-\sqrt{2}D_1(\Pi_1(X))}.\]

Using this estimate and standard Schauder estimates, we obtain a  constant $\sigma(L)\ll 1$ such that
\begin{eqnarray}\label{Schauder 1}
\|\phi\|_{C^{2,\theta}(\overline{\mathcal{N}_r})}&\leq& C\varepsilon^2+C\sum_{i=1}^{2}\|H_i-\Delta_0h_i\|_{C^\theta(-r-L,r+L)}+C\sup_{(-r-L,r+L)}e^{-\sqrt{2}D}  \nonumber\\
&+&\sigma(L)\|\phi\|_{C^{2,\theta}(\overline{\Omega\cap Q_{r+L}})}.
\end{eqnarray}

\subsection{Estimates in $\mathcal{M}^1$}
In $\mathcal{M}^1$, $\eta(d_2)\equiv 0$ and $g_\ast=g(d_1)+\left[1-g(d_2)\right]\left[1-\eta(d_1)\right]$. Hence using the Fermi coordinates with respect to $\Gamma_1$ the equation for $\phi$ reads as
\begin{equation}\label{error equation 3}
\Delta_z\phi+H^1(y,z)\phi_z+\phi_{zz}=W^{\prime\prime}(g_1)\phi+\mathcal{R}(\phi)+g_1^\prime\left(H^1-\Delta_0h_1\right)+E^1.
\end{equation}
Here the error term $E^1$ can be estimated in the following way:
\begin{equation}
\|E^1\|_{C^\theta(\mathcal{M}^1(r))}\lesssim \varepsilon^2+\sup_{(-r-1,r+1)}e^{-\sqrt{2}D}.
\end{equation}

Recall that on $\Gamma_1$, $\phi$ satisfies the nonlinear Robin condition \eqref{Robin}.

Then by \eqref{orthogonal condition} and standard Schauder estimates, proceeding as in \cite{Wang-Wei 2} we obtain
\begin{eqnarray}\label{Schauder 2}
\|\phi\|_{C^{2,\theta}(\overline{\mathcal{M}^1_r})}&\leq
& C\varepsilon^2+C\|H^1-\Delta_0h_1\|_{C^\theta(-r-L,r+L)}+C\sup_{(-r-L,r+L)}e^{-\sqrt{2}D}  \nonumber\\ &+&\sigma(L)\|\phi\|_{C^{2,\theta}(\overline{\Omega\cap Q_{r+L}})}.
\end{eqnarray}

Combining \eqref{Schauder 1}, \eqref{Schauder 2} and a similar estimate in $\mathcal{M}^2_r$ we obtain
\begin{eqnarray}\label{III.8.1}
\|\phi\|_{C^{2,\theta}(\overline{\Omega\cap Q_r})}&\leq& C\varepsilon^2+C\sum_{i=1,2}\|H^i-\Delta_0h_i\|_{C^\theta(-r-L,r+L)}+C\sup_{(-r-L,r+L)}e^{-\sqrt{2}D}  \\
&+&\sigma(L)\|\phi\|_{C^{2,\theta}(\overline{\Omega\cap Q_{r+L}})}.
\end{eqnarray}

By \eqref{Toda system}, we also have $C^\theta$ bounds on $H^i-\Delta_0h_i$, see \cite{Wang-Wei 2} for more details.
Substituting \eqref{Toda system} into \eqref{III.8.1} gives
\[
\|\phi\|_{C^{2,\theta}(\overline{\Omega\cap Q_r})}\leq C\varepsilon^2+C\sup_{(-r-L,r+L)}e^{-\sqrt{2}D}
+\sigma(L)\|\phi\|_{C^{2,\theta}(\overline{\Omega\cap Q_{r+L}})}.
\]
An iteration of this estimate in $O(|\log\varepsilon|)$ steps and using \eqref{Toda system} again we finish the proof of
Proposition \ref{prop schauder}.

\subsection{Improved estimates on $\phi_y$}\label{sec improved estimates on horizontal derivatives}
\setcounter{equation}{0}
In the Fermi coordinates with respect to $\Gamma_1$,
differentiating \eqref{error equation} in $y$ gives the equation for $\phi_y:=\frac{\partial\phi}{\partial y}$:
\begin{equation}\label{error equation diff}
\Delta_z\phi_y+\partial_{zz}\phi_y=W^{\prime\prime}\left(g_1\right)\phi_y+g_1^\prime \left[H_1^\prime(y)-\Delta_0 h_1^\prime(y)\right]+\widetilde{E}_1,
\end{equation}
where
\begin{eqnarray*}
\|\widetilde{E}_1\|_{L^\infty(\overline{\Omega\cap Q_r})}&\lesssim& \varepsilon^2+\sup_{(-r-1,r+1)}e^{-2\sqrt{2}D}+\|\phi\|_{C^2(\overline{\Omega\cap Q_{r+1}})}^2\\
&+&\varepsilon^{1/5}\sup_{(-r-1,r+1)}e^{-\sqrt{2}D}+\|\phi\|_{C^2(\overline{\Omega\cap Q_{r+1}})}\sup_{(-r-1,r+1)}e^{-\frac{\sqrt{2}}{2}D}.
\end{eqnarray*}
For more details see \cite{Wang-Wei 2}.

Differentiating \eqref{boundary condition} in $y$ gives the Robin boundary condition of $\phi_y$ on $\Gamma_1$ (and a similar one on $\Gamma_2$)
\begin{equation}\label{Robin 2}
\partial_z\phi_y+\frac{g^{\prime\prime}(-h_1(y))}{g^\prime(-h_1(y))}\phi_y=0, \quad \mbox{on } \Gamma_1.
\end{equation}

Then similar to the discussion in the previous section, we obtain
\begin{eqnarray*}
\|\phi_y\|_{C^{1,\theta}(\overline{\Omega\cap Q_r})}&\leq& C(K)\varepsilon^2+C(K)\sup_{(-r-K|\log\varepsilon|,r+K|\log\varepsilon|)}e^{-\frac{3}{2}\sqrt{2}D}+\varepsilon^{\frac{1}{5}}\sup_{(-r-K|\log\varepsilon|,r+K|\log\varepsilon|)}e^{-\sqrt{2}D}.
\end{eqnarray*}

Combining this estimate with Lemma \ref{lem Liouville eqn} and the bound on $H^i(y,0)$, we obtain
\begin{coro}\label{a first bound}
There exists a constant $C$ such that for any $y\in(-R/2,R/2)$,
\[e^{-\sqrt{2}D(y)}\leq C\varepsilon.\]
\end{coro}

\section{Reduction of the stability condition}\label{sec reduction of stable}
\setcounter{equation}{0}

 In this section we reduce the stability condition of $u$ to a corresponding condition for the Liouville equation  \eqref{Toda system}.

The curve $\{d_1=d_2\}$ can be represented by the graph  $\{z=\rho(y)\}$ in the Fermi coordinates with respect to $\Gamma_1$, where $\rho(y)$ is a smooth function of $y$.

Fix a smooth function $\eta_3$ defined on $\R$ satisfying $\eta_3\equiv 1$ in $(-\infty,0)$, $\eta_3\equiv 0$ in $(1,+\infty)$ and $|\eta_3^\prime|+|\eta_3^{\prime\prime}|\leq 16$. Take a large constant $L$ and define
\[\chi(y,z):=\eta_3\left(\frac{z-\rho(y)}{L}\right).\]
Clearly we have $\chi\equiv 1$ in $\mathcal{M}^1$, $\chi\equiv 0$ in $\{z>\rho(y)+L\}$. Moreover,
 $|\nabla\chi|\lesssim L^{-1}$, $|\nabla^2\chi|\lesssim L^{-2}$.

For any $\psi\in C_0^\infty(-5R/6,5R/6)$, let
\[\varphi(y,z):=\psi(y)g_1^\prime(y,z)\chi(y,z).\]
The stability condition for $u$ implies that
\begin{equation}\label{stability to be reduced}
\int_{Q_{5R/6}}\left[|\nabla\varphi|^2+W^{\prime\prime}(u)\varphi^2\right]\geq 2W(0)\int_{\Gamma_1}\left[H^1(y)-\frac{W^\prime(0)}{\sqrt{2W(0)}}\right]g^\prime(-h_1(y))^2\psi(y)^2 d\mathcal{H}^1.
\end{equation}
The purpose of this section is to rewrite this inequality as a stability condition for the Liouville equation \eqref{Toda system}.
\begin{prop}\label{prop stable for Toda}
If $L$ is large enough, we have
\begin{equation}\label{reduction of stability}
\int_{-5R/6}^{5R/6}\psi(y)^2e^{-\sqrt{2}D(y)}dy\leq C\int_{-5R/6}^{5R/6}\psi^\prime(y)^2dy+C\varepsilon^{\frac{4}{3}}\int_{-5R/6}^{5R/6}\psi(y)^2dy.
\end{equation}
\end{prop}

In the Fermi coordinates with respect to $\Gamma_1$, we have
\[|\nabla\varphi(y,z)|^2=\Big|\frac{\partial\varphi}{\partial z}(y,z)\Big|^2+\lambda^1(y,z)\Big|\frac{\partial\varphi}{\partial y}(y,z)\Big|^2.\]
We discuss these two terms separately.

\subsection{The horizontal part}\label{subsec 9.1}
We have
\[\frac{\partial\varphi}{\partial y}=\psi^\prime(y)g_1^\prime\chi+\psi(y) g_1^\prime\chi_y-\psi(y)h_1^\prime(y)g_1^{\prime\prime}\chi.\]
Here and in the following $\chi_y$ denotes the partial derivative $\frac{\partial\chi}{\partial y}$.

Since $c\leq\lambda^1(y,z)\leq C$,
\begin{eqnarray*}
&&\int_{Q_{5R/6}}\Big|\frac{\partial\varphi}{\partial y}(y,z)\Big|^2\lambda^1(y,z)dzdy\\
&\lesssim&\int_{-5R/6}^{5R/6}\int_{-\delta R}^{\delta R}|\psi^\prime|^2|g^\prime|^2\chi^2+\psi^2|g^\prime|^2\chi_y^2+\psi^2|h_1^\prime|^2|g_1^{\prime\prime}|^2\chi^2\\
&\lesssim&\int_{-5R/6}^{5R/6}\psi^\prime(y)^2dy+\frac{1}{L}\int_{-5R/6}^{5R/6}\psi(y)^2e^{-2\sqrt{2}\rho(y)}dy+\varepsilon^{\frac{3}{2}}\int_{-5R/6}^{5R/6}\psi(y)^2dy.
\end{eqnarray*}

Here the last term follows from the following three facts:
\begin{itemize}
\item in $\{\chi_y\neq0\}$, which is exactly $\{\rho(y)<z<\rho(y)+L\}$, $|\chi_y|\lesssim L^{-1}$;

\item in $\{\rho(y)<z<\rho(y)+L\}$, $g^\prime\lesssim e^{-\sqrt{2}\rho(y)}$;

\item by \eqref{boundary value bound}, Proposition \ref{prop schauder} and Corollary \ref{a first bound}, for any $y\in(-5R/6,5R/6)$,
$h_1^\prime(y)^2\lesssim \varepsilon^{3/2}$.
\end{itemize}

\subsection{The vertical part}
As before we have
\[\varphi_z=\psi g_1^{\prime\prime}\chi+\psi g_1^\prime \chi_z.\]
Thus by a direct expansion and integrating by parts, we have
\begin{eqnarray*}
\int_{Q_{5R/6}}\varphi_z^2\lambda^1(y,z)dzdy&=&\int_{-5R/6}^{5R/6}\psi(y)^2\left[\int_0^{+\infty}|g_1^{\prime\prime}|^2\chi^2\lambda^1+2g_1^\prime g_1^{\prime\prime}\chi\chi_z\lambda^1+
|g^\prime|^2\chi_z^2\lambda dz\right]dy\\
&=&-\int_{-5R/6}^{5R/6}\psi(y)^2 g^\prime\left(-h_1(y)\right)g^{\prime\prime}\left(-h_1(y)\right)\lambda^1(y,0)dy\\
&-&\int_{-5R/6}^{5R/6}\psi(y)^2\left[\int_0^{+\infty}W^{\prime\prime}(g_1)|g_1^\prime|^2\chi^2\lambda^1 dz\right]dy\\
&&-\int_{-5R/6}^{5R/6}\psi(y)^2\left[\int_0^{+\infty}g_1^\prime g_1^{\prime\prime}\chi^2\lambda^1_z-|g_1^\prime|^2\chi_z^2\lambda^1 dz\right]dy.
\end{eqnarray*}

These terms can be estimated exactly as in \cite{Wang-Wei 2}, except the following one
\begin{eqnarray*}
&&-\int_{-5R/6}^{5R/6}\psi(y)^2\left[\int_0^{+\infty}g_1^\prime g_1^{\prime\prime}\chi^2\lambda^1_z dz\right]dy\\
&=&g^\prime(0)^2\int_{-5R/6}^{5R/6}H^1(y)\psi(y)^2\lambda^1(y,0)dy\\
&+&\int_{-5R/6}^{5R/6}\psi(y)^2\left[\int_0^{+\infty}|g_1^\prime|^2\chi \chi_z\lambda^1_z dz\right]dy
+\frac{1}{2}\int_{-5R/6}^{5R/6}\psi(y)^2\left[\int_0^{+\infty}|g_1^\prime|^2\chi^2\lambda^1_{zz}dz\right]dy,
\end{eqnarray*}
where we have used the fact that $\lambda^1_z(y,0)=2H^1(y,0)$ and an integration by parts in $z$. 
The last two integrals are estimated in the following way.
\begin{itemize}
\item[(i)] As in Subsection \ref{subsec 9.1}  and  by the estimate
\[\lambda^1_z=-2\lambda^1(y,0)H^1(y)\left(1-zH^1(y)\right)=O(\varepsilon),\]
we get
\[\int_{-5R/6}^{5R/6}\psi(y)^2\left[\int_0^{+\infty}|g_1^\prime|^2\chi \chi_z\lambda^1_z dz\right]dy=O(\varepsilon)\int_{-5R/6}^{5R/6}\psi(y)^2e^{-2\sqrt{2}\rho(y)}dy.\]
\item[(ii)] Because
\[\lambda_{zz}=2H^1(y)^2\lambda^1(y,0)=O(\varepsilon^2),\]
we obtain
\[\int_{-5R/6}^{5R/6}\psi(y)^2\left[\int_0^{+\infty}|g_1^\prime|^2\chi^2\lambda^1_{zz}dz\right]dy=O(\varepsilon^2)\int_{-5R/6}^{5R/6}\psi(y)^2dy.\]
\end{itemize}

Therefore we get
\begin{eqnarray*}
\int_{Q_{5R/6}}\varphi_z^2\lambda^1(y,z)dzdy&=&-\int_{-5R/6}^{5R/6}\psi(y)^2\left[\int_0^{+\infty}W^{\prime\prime}(g_1)|g_1^\prime|^2\chi^2\lambda^1 dz\right]dy\\
&+&\int_{\Gamma_1}\left[H^1(y)-\frac{g^{\prime\prime}(-h_1(y))}{g^\prime(-h_1(y))}\right]\varphi(y,0)^2 d\mathcal{H}^1\\
&+&O(\varepsilon^2)\int\psi(y)^2dy+O\left(\frac{1}{L}+\varepsilon\right)\int\psi(y)^2e^{-2\sqrt{2}\rho(y)}dy.
\end{eqnarray*}

By the estimate on $\phi$ in Section \ref{sec estimates on error function} and \eqref{boundary value bound}, we get
\begin{eqnarray*}
\frac{g^{\prime\prime}(-h_1(y))}{g^\prime(-h_1(y))}-\frac{W^\prime(0)}{\sqrt{2W(0)}}&=&\left[\frac{g^{\prime\prime}(0)^2}{g^\prime(0)^2}-W^{\prime\prime}(0)\right]h_1(y)+O\left(|h_1(y)^2\right)\\
&=&\left[\frac{g^{\prime\prime}(0)^2}{g^\prime(0)^2}-W^{\prime\prime}(0)\right]h_1(y)+O\left(\varepsilon^{\frac{3}{2}}\right).
\end{eqnarray*}
Now the stability condition for $u$ is transformed into
\begin{eqnarray}\label{9.3.1}
0&\leq&C\int_{-5R/6}^{5R/6}\psi^\prime(y)^2dy +C\left(\frac{1}{L}+\varepsilon\right)\int_{-5R/6}^{5R/6}\psi(y)^2e^{-2\sqrt{2}\rho(y)}dy\nonumber\\
&-&\left[g^{\prime\prime}(0)^2-W^{\prime\prime}(0)g^\prime(0)^2\right]\int_{-5R/6}^{5R/6}h_1(y)\psi(y)^2 \lambda^1(y,0)dy\\
&+&\int_{-5R/6}^{5R/6}\psi(y)^2\left[\int_0^{+\infty}\left(W^{\prime\prime}(u)-W^{\prime\prime}(g_1)\right)|g_1^\prime|^2\chi^2\lambda^1 dz\right]dy. \nonumber
\end{eqnarray}
It remains to rewrite the last integral.

\subsection{The interaction part}
Differentiating \eqref{error equation} in $z$ leads to
\begin{eqnarray}\label{error equation 3}
&&\frac{\partial}{\partial z}\Delta_z\phi+\frac{\partial}{\partial z}\left(H^1(y,z)\partial_z\phi\right)+\partial_{zzz}\phi \nonumber\\
&=&W^{\prime\prime}(g_1)g_1^\prime-W^{\prime\prime}(u)\left[g_1^\prime-\phi_z+g_2^\prime d_{2,z}\left(1-\eta(z)\right)+\left(1-g_2\right)\eta^\prime(z)\right] \nonumber\\
&+&W^{\prime\prime}(g_2)g_2^\prime d_{2,z}\left(1-\eta(z)\right)-g_2^{\prime\prime}\eta^\prime(z)  \nonumber\\
&+&\frac{\partial}{\partial z}\left[g_1^\prime\left(H^1-\Delta_zh^1\right)\right]+\frac{\partial}{\partial z}\left(g^{\prime\prime}|\nabla_zh_1|^2\right)\\
&+&\frac{\partial}{\partial z}\left[g_2^\prime\left(1-\eta(z)\right)\mathcal{R}_1\right]+\frac{\partial}{\partial z}\left[g_2^{\prime\prime}\left(1-\eta(z)\right)\mathcal{R}_2\right]\nonumber\\
&+&\frac{\partial}{\partial z}\left[\left(1-g_2\right)\left(\eta^\prime(z)H^1+\eta^{\prime\prime}(z)\right)\right]-2\frac{\partial}{\partial z}\left[g_2^\prime d_{2,z}\eta^\prime(z)\right]. \nonumber
\end{eqnarray}

Multiply this equation by $\psi^2g_1^\prime\chi^2\lambda^1$ and then integrate in $y$ and $z$.
As in \cite{Wang-Wei 2}, the main order term is
\begin{eqnarray*}
&&\int_{-\frac{5R}{6}}^{\frac{5R}{6}}\psi(y)^2\left[\int_0^{+\infty}\left(W^{\prime\prime}(u)-W^{\prime\prime}(1)\right)g_2^\prime g_1^\prime \left(1-\eta_2\right)\chi^2\lambda^1dz\right]dy\\
&=&-4A^2\int_{-5R/6}^{5R/6}\psi(y)^2e^{-\sqrt{2}D(y)}\lambda^1(y,0) dy+h.o.t.
\end{eqnarray*}

During integrating by parts, the term $\partial_{zzz}\phi$ gives a boundary integral in the form
\begin{eqnarray*}
&&\int_{-\frac{5R}{6}}^{\frac{5R}{6}}\psi(y)^2\left[\phi_{zz}(y,0)g_1^\prime(y,0)\lambda^1(y,0)-\phi_z(y,0)g_1^{\prime\prime}(y,0)\lambda^1(y,0)-\phi_z(y,0)g_1^{\prime\prime}(y,0)\partial_z\lambda^1(y,0)\right]dy\\
&=&\left[g^{\prime\prime}(0)^2-W^{\prime\prime}(0)g^\prime(0)^2\right]\int_{-5R/6}^{5R/6}h_1(y)\psi(y)^2 \lambda^1(y,0)dy-g^\prime(0)^2\int_{-5R/6}^{5R/6}H^1(y)\psi(y)^2 \lambda^1(y,0)dy\\
&+&O\left(\varepsilon^{\frac{3}{2}}\right)\int_{-\frac{5R}{6}}^{\frac{5R}{6}}\psi(y)^2dy.
\end{eqnarray*}
The first one cancel with the boundary integral in \eqref{9.3.1}. The second one cancel with another boundary integral which comes from the process of integrating by parts the term involving $\frac{\partial}{\partial z}\left[g_1^\prime\left(H^1-\Delta_zh_1\right)\right]$. (There are some terms involving $\Delta_0h_1$ left, which however are of higher order.)

\medskip

Combining all of these estimates together we obtain \eqref{reduction of stability} and the proof of Proposition \ref{prop stable for Toda} is complete.

Finally, the proof of Proposition \ref{prop curvature estimate 2} is exactly the same as in \cite{Wang-Wei 2}.

\appendix{}
\section{Nodegeneracy of the one dimensional solution}

In this appendix we present two forms of nondegeneracy property of the one dimensional solution $g$.
\begin{prop}\label{prop nondegeneracy 1}
There exists a constant $\mu>0$ so that the following holds. Suppose $\varphi\in H^1(0,+\infty)$ satisfies
\begin{equation}\label{1D orthogonal}
\int_0^{+\infty}\varphi(t)g^\prime(t)dt=0,
\end{equation}
then
\begin{equation}\label{spetral gap}
\int_0^{+\infty}\left[\varphi^\prime(t)^2+W^{\prime\prime}(g(t))\varphi(t)^2\right]dt\geq\mu\int_0^{+\infty}\varphi(t)^2dt.
\end{equation}
\end{prop}
\begin{proof}
The proof is via a contradiction argument. Suppose there exists a sequence of $\varphi_k\in H^1(0,+\infty)$ satisfying the orthogonal condition \eqref{1D orthogonal} and the normalization condition
\begin{equation}\label{A1}
\int_0^{+\infty}\varphi_k(t)^2dt=1,
\end{equation}
but it holds that
\begin{equation}\label{A2}
\int_0^{+\infty}\left[\varphi_k^\prime(t)^2+W^{\prime\prime}(g(t))\varphi_k(t)^2\right]dt\leq\frac{1}{k}.
\end{equation}

First by \eqref{A1} and \eqref{A2},
\[\int_0^{+\infty}\varphi_k(t)^2dt\leq \frac{1}{k}+\max_{u\in[0,1]}\big|W^{\prime\prime}(u)\big|.\]
Hence after passing to a subsequence, $\varphi_k$ converges weakly to a limit $\varphi_\infty$ in $H^1_{loc}(0,+\infty)$.

{\bf Claim. $\varphi_\infty\equiv 0$.}

By the above convergence of $\varphi_k$ we get
\[
\int_0^{+\infty}\varphi_\infty(t)^2dt\leq1\]
and
\begin{equation}\label{A3}
\int_0^{+\infty}\left[\varphi_\infty^\prime(t)^2+W^{\prime\prime}(g(t))\varphi_\infty(t)^2\right]dt\leq 0.
\end{equation}
By the exponential decay of $g^\prime$ at infinity, the orthogonal condition for $\varphi_k$ passes to the limit, that is, $\varphi_\infty$ satisfies \eqref{1D orthogonal}.

Since $g^\prime>0$, let $\psi:=\varphi_\infty/g^\prime$. Note that $\varphi_\infty$ and hence $\psi$ are continuous. Then \eqref{A3} is rewritten as
\[\int_0^{+\infty}\left[\psi^\prime(t)^2g^\prime(t)^2+2\psi(t)\psi^\prime(t)g^\prime(t)g^{\prime\prime}(t)+\psi(t)^2g^{\prime\prime}(t)^2+W^{\prime\prime}(g(t))g^\prime(t)^2\psi(t)^2\right]dt\leq 0.\]
After integration by parts this leads to
\[-\psi(0)^2g^\prime(0)g^{\prime\prime}(0)+\int_0^{+\infty}\psi^\prime(t)^2g^\prime(t)^2\leq 0.\]
Since $g^\prime(0)=\sqrt{2W(0)}>0$, $g^{\prime\prime}(0)=W^\prime(0)\leq 0$, this is only possible if $\psi\equiv 0$. The claim follows.

There exists $\Lambda>0$ such that $W^{\prime\prime}(g(t))\geq\kappa$ for $t\in[\Lambda,+\infty)$. By the above claim and the strong convergence of $\phi_k$ in $L^2_{loc}(0,+\infty)$,
\begin{equation}\label{A4}
\lim_{k\to+\infty}\int_0^\Lambda W^{\prime\prime}(g(t))\varphi_k(t)^2 dt=0.
\end{equation}
Substituting this into \eqref{A2} gives
\begin{equation}\label{A5}
\lim_{k\to+\infty}\int_\Lambda^{+\infty}W^{\prime\prime}(g(t))\varphi_k(t)^2dt=0.
\end{equation}
Combining \eqref{A4} and \eqref{A5} we get a contradiction with \eqref{A1}. This finishes the proof.
\end{proof}

The second form of nondegeneracy is
\begin{prop}\label{prop nondegeneracy 2}
There exists a constant $\mu>0$ so that the following holds. Suppose $\varphi\in H^1(0,+\infty)$ satisfies
$\varphi(0)=0$, then
\begin{equation}\label{spetral gap 2}
\int_0^{+\infty}\left[\varphi^\prime(t)^2+W^{\prime\prime}(g(t))\varphi(t)^2\right]dt\geq\mu\int_0^{+\infty}\varphi(t)^2dt.
\end{equation}
\end{prop}
The proof is similar to Proposition \ref{prop nondegeneracy 1} and it will not be repeated here.

\bigskip

\noindent {\bf Acknowledgments.} The author's research was partially
supported by NSFC No. 11631011  and by  ``the Fundamental Research
Funds for the Central Universities".

\addcontentsline{toc}{section}{References}


\begin{thebibliography}{50}
\small






\bibitem{AC}
H. W. Alt and L. A. Caffarelli, Existence and regularity for a
minimum problem with free boundary, J. Reine Angew. Math.  325
(1981), 105-144.


\bibitem{A-C}
 A. Ambrosio and X. Cabre, Entire solutions of seminlinear elliptic equations in $\mathbb{R}^3$ and a conjecture of De Giorgi,
 J. American Math. Soc. 13 (2000), 725-739.




\bibitem{CC}
L. A. Caffarelli and A. C\'{o}rdoba, Phase transitions: uniform
regularity of the intermediate layers, J. Reine Angew. Math. 593
(2006), 209-235.




\bibitem{CJK}
L. A. Caffarelli, D. Jerison and C. E. Kenig, Global energy
minimizers for free boundary problems and full regularity in three
dimension, Contemp. Math., 350, Amer. Math. Soc., Providence, RI
(2004), 83-97.


\bibitem{CS}
L. A. Caffarelli and S. Salsa, A geometric approach to free boundary
problems, Graduate Studies in Mathematics, Vol. 68. American
Mathematical Soc., 2005.


\bibitem{Cao-S-Z}
H.-D. Cao, Y. Shen and S. Zhu, The structure of stable minimal
hypersurfaces in $\R^{n+1}$, Math. Res. Lett. 4 (1997), no. 5,
637-644.

\bibitem{Choe}
J. Choe, Index, vision number and stability of complete minimal
surfaces, Arch. Rational Mech. Anal. 109 (1990), no. 3, 195-212.

\bibitem{Dancer}
E. N. Dancer, Stable and finite Morse index solutions on $R^n$ or on
bounded domains with small diffusion II, Indiana University
Mathematics Journal 53 (2004), no. 1, 97-108.


\bibitem{Carmo-Peng}
M. do Carmo and C.-K. Peng, Stable complete minimal surfaces in
$\mathbb{R}^2$ are planes, Bull. Amer. Math. Soc. 1 (1979), 903-906.




\bibitem{DKP}
M. del Pino, M. Kowalczyk and F. Pacard, Moduli space theory for the
Allen-Cahn equation in the plane, Trans. Amer. Math. Soc. 365
(2013), no. 2, 721-766.





\bibitem{DKW 2}
M. del Pino, M. Kowalczyk and J. Wei, The Toda system and clustering
interfaces in the Allen-Cahn equation, Arch. Rational Mech. Anal.
190 (2008), 141-187.

\bibitem{DKW 3}
M. del Pino, M. Kowalczyk and J. Wei, Multiple end solutions to the
Allen-Cahn equation in $\R^2$, J. Funct. Anal.  258 (2010) 458-503.

\bibitem{DKW}
M. del Pino, M. Kowalczyk and J. Wei, On De Giorgi's conjecture in
dimension $n\geq 9$, Ann. Math. 174 (2011), 1485-1569.

\bibitem{dev}
B. Devyver, On the finiteness of the Morse index for Schr\"{o}dinger
operators, Manuscripta Math. 139 (2012), no.1-2 249-271.


\bibitem{Evans-G}
L. Evans and R. Gariepy, Measure theorey and fine properties of
functions, CRC press, 1992.

\bibitem{FSV}
A. Farina, B. Sciunzi and E. Valdinoci, Bernstein and De Giorgi type
problems: new results via a geometric approach, Annali della Scuola
Normale Superiore di Pisa-Classe di Scienze-Serie IV 7 (2008), no.
4, 741-791.


\bibitem{Fischer}
D. Fischer-Colbrie, On complete minimal surface with finite Morse
index in three manifolds, Inven. Math. 82 (1985), 121-132.

\bibitem{F-Schoen}
D. Fischer-Colbrie and R. Schoen, The structure of complete stable
minimal surfaces in 3-manifolds of non-negative scalar curvature,
Comm. Pure Appl. Math. 33 (1980), 199-211.

\bibitem{GG}
N. Ghoussoub and C. Gui, On a conjecture of De Giorgi and some
related problems, Math. Ann. 311 (1998), 481-491.



\bibitem{Gui}
C. Gui, Hamiltonian identities for elliptic partial differential
equations, J. Funct. Anal. 254 (2008), no. 4, 904-933.


\bibitem{Gui 1}
C. Gui, Symmetry of some entire solutions to the Allen-Cahn equation
in two dimensions, J. Differential Equations  252 (2012), no. 11,
5853-5874.


\bibitem{pacard}
L. Hauswirth,  F. Helein and F. Pacard, On an overdetermined
elliptic problem, Pacific J. Math. 250 (2011), no. 2, 319-334.


\bibitem{H-Meeks}
D. Hoffman and W. H. Meeks, The strong halfspace theorem for minimal
surfaces, Invent. Math. 101 (1990), no. 2, 373-377.

\bibitem{H-T}
J. Hutchinson and Y. Tonegawa, Convergence of phase interfaces in
the van der Waals-Cahn-Hilliard theory, Calc. Var. PDEs 10 (2000),
no. 1, 49-84.

\bibitem{JK}
D. Jerison and N. Kamburov, Structure of one-phase free boundaries
in the plane,  arXiv:1412.4106v2.


\bibitem{JS}
D. Jerison and O. Savin, Some remarks on stability of cones for the
one-phase free boundary problem, Geometric and Functional Analysis
25 (2015), no. 4, 1240-1257.

\bibitem{Kam}
N. Kamburov, A free boundary problem inspired by a conjecture of De
Giorgi, Comm. in PDEs 38 (2013), no. 3, 477-528.

\bibitem{KN}
D. Kinderlehrer and L. Nirenberg, Regularity in free boundary
problems, Annali della Scuola Normale Superiore di Pisa-Classe di
Scienze 4 (1977), no. 3, 373-391.


\bibitem{KLP 1}
M. Kowalczyk, Y. Liu and F. Pacard, The space of 4-ended solutions
to the Allen-Cahn equation in the plane, Ann. Inst. H. Poincar\'{e}
Anal. Non Lin\'{e}aire 29 (2012), no. 5, 761-781.

\bibitem{KLP 2}
M. Kowalczyk, Y. Liu and F. Pacard, The classification of four-end
solutions to the Allen-Cahn equation on the plane, Analysis $\&$ PDE
6 (2013), no. 7, 1675-1718.

\bibitem{KLP 3}
M. Kowalczyk, Y. Liu and F. Pacard, Towards classification of
multiple-end solutions to the allen-cahn equation in $\R^2$,
Networks and Heterogeneous Media 7 (2013), no. 4, 837-855.

\bibitem{Li-Wang}
P. Li and J. Wang, Minimal hypersurfaces with finite index, Math.
Res. Lett. 9 (2002), no. 1, 95-103.

\bibitem{Li-Wang 2}
P. Li and J. Wang, Stable minimal hypersurfaces in a nonnegatively
curved manifold, J. Reine Angew. Math. 566 (2004), 215-230.

\bibitem{Meeks}
W. H. Meeks III and J. P\'{e}rez, A survey on classical minimal
surface theory, University Lecture Series 60, American Mathematical
Society, Providence, RI, 2012.


\bibitem{Modica}
L. Modica, A gradient bound and a Liouville theorem for nonlinear
Poisson equations, Comm. Pure Appl. Math. 38 (1985), no. 5, 679-684.

\bibitem{Modica 2}
L. Modica, The gradient theory of phase transitions and the minimal
interface criterion, Arch. Ration. Mech. Anal. 98 (1987), no. 3,
123-142.

\bibitem{Polacik-Q-S}
P. Pol\'{a}cik, P. Quittner and P. Souplet, Singularity and decay
estimates in superlinear problems via Liouville-type theorems, I:
Elliptic equations and systems, Duke Math. J. 139 (2007), no. 3,
555-579.


\bibitem{ros}
A. Ros and P. Sicbaldi, Geometry and topology of some overdetermined
elliptic problems,  J. Differential Equations 255 (2013), 951-977.

\bibitem{ros-r-s}
A. Ros, D. Ruiz and P. Sicbaldi, A rigidity result for
overdetermined elliptic problems in the plane, Communications on Pure and Applied Mathematics 70 (2017), no. 7, 1223-1252.


\bibitem{Savin}
O. Savin, Regularity of at level sets in phase transitions, Ann. of
Math. 169 (2009), 41-78.



\bibitem{Schoen}
R. Schoen, Estimates for stable minmal surfaces in three dimensional
manifolds, Ann. of Math. Stud., vol. 103, Princeton Univ. Press,
Princeton, NJ, 1983.


\bibitem{Smrynelis}
P. Smyrnelis, Gradient estimates for semilinear elliptic systems and
other related results, Proceedings of the Royal Society of Edinburgh Section A: Mathematics 145 (2015), no. 6, 1313-1330.

\bibitem{S-Z}
P. Sternberg and K. Zumbrun, Connectivity of phase boundaries in
strictly convex domains, Arch. Rational Mech. Anal. 141 (1998),
375-400.


\bibitem{Tonegawa}
Y. Tonegawa, On stable critical points for a singular perturbation
problem, Communications in Analysis and Geometry 13 (2005), no. 3,
439-460.

\bibitem{tra}
M. Traizet, Classification of the solutions to an overdetermined
elliptic problem in the plane, Geometric and Functional Analysis 24
(2014), no. 2, 690-720.

\bibitem{Wang}
K. Wang, A new proof of Savin's theorem on Allen-Cahn equations, Journal of the European Mathematical Society 19 (2017), no. 10,
2997–3051.

\bibitem{Wang-Wei 1}
K. Wang and J. Wei, On Serrin's overdetermined problem and a
conjecture of Berestycki, Caffarelli and Nirenberg, arXiv preprint
arXiv:1502.04680.


\bibitem{Wang-Wei 2}
K. Wang and J. Wei, Finite Morse index implies finite ends, arXiv preprint
arXiv:1705.06831.


\bibitem{Weiss}
G. Weiss, Partial regularity for weak solutions of an elliptic free
boundary problem. Comm. PDEs, 23(1998), no. 3-4, 439-455.

\bibitem{Weiss 2}
G. Weiss, Partial regularity for a minimum problem with free
boundary, Journal of Geometric Analysis 9 (1999), no.2, 317-326.

\bibitem{Wick 2}
N. Wickramasekera, A regularity and compactness theory for immersed
stable minimal hypersurfaces of multiplicity at most 2, J.
Differential Geom. 80 (2008), no. 1, 79-173.

\bibitem{Wick}
N. Wickramasekera, A general regularity theory for stable
codimension 1 integral varifolds,  Ann. of Math. 179 (2014), no. 3,
843-1007.

\end{thebibliography}
\end{document}